\newtheorem{theorem}{Theorem}[section]
\newtheorem{lemma}[theorem]{Lemma}
\newtheorem{conjecture}[theorem]{Conjecture}
\newtheorem{proposition}[theorem]{Proposition}
\newtheorem{corollary}[theorem]{Corollary}
\theoremstyle{definition}\newtheorem{definition}{Definition}[section]
\theoremstyle{definition}
\theoremstyle{remark}\newtheorem{remark}{Remark}[section]
\theoremstyle{remark}\newtheorem*{finalremark}{Final remark}
\let\bbordermatrix\bordermatrix
\patchcmd{\bbordermatrix}{8.75}{4.75}{}{}
\patchcmd{\bbordermatrix}{\left(}{\left[}{}{}
\patchcmd{\bbordermatrix}{\right)}{\right]}{}{}
\newcommand{\rank}{\operatorname{rank}}
\newcommand{\Iso}{\operatorname{Isom}}
\newcommand{\Crit}{\operatorname{Crit}}
\newcommand{\radii}{\operatorname{radii}}
\definecolor{colR}{rgb}{.932,.172,.172}
\definecolor{colB}{rgb}{.255,.41,.884}
\definecolor{colG}{rgb}{0,0.7,0}
\tikzstyle{vertex}=[circle, draw, fill=black, inner sep=0pt, minimum size=4pt]
\tikzstyle{smallvertex}=[circle, line width=1.5pt, draw, fill=black, inner sep=0pt, minimum size=2pt]
\tikzstyle{edge}=[line width=1.5pt]
\tikzstyle{dedge}=[edge,dashed]
\tikzstyle{redge}=[edge,colR]
\tikzstyle{bedge}=[edge,colB]
\tikzstyle{gedge}=[edge,colG]
\tikzstyle{lnode}=[circle,white,draw, fill=black,inner sep=1pt, font=\scriptsize]
\begin{document}

\title{Homothetic packings of centrally symmetric convex bodies}

\author{Sean Dewar}\thanks{Supported by the Austrian Science Fund (FWF): P31888}
\address{Johann Radon Institute for Computational and Applied Mathematics (RICAM), Austrian Academy of Sciences, 4040 Linz, Austria
}
\email{sean.dewar@ricam.oeaw.ac.at}

\subjclass[2010]{52C25, 52C15, 52A10}
\keywords{convex body packings, infinitesimal rigidity, normed spaces}

\begin{abstract}
	A centrally symmetric convex body is a convex compact set with non-empty interior that is symmetric about the origin.
	Of particular interest are those that are both smooth and strictly convex
	-- known here as regular symmetric bodies -- 
	since they retain many of the useful properties of the $d$-dimensional Euclidean ball.
	We prove that for any given regular symmetric body $C$,
	a homothetic packing of copies of $C$ with randomly chosen radii will have a $(2,2)$-sparse planar contact graph.
	We further prove that there exists a comeagre set of centrally symmetric convex bodies $C$ where any $(2,2)$-sparse planar graph can be realised as the contact graph of a stress-free homothetic packing of $C$.
\end{abstract}

\maketitle
\tableofcontents

\section{Introduction}\label{sec:intro}

Given a \emph{convex body} $C$ -- a compact convex subset of $d$-dimensional Euclidean space with non-empty interior --
a \emph{(finite) $C$-packing} is a finite collection $P = \{C_v: v \in V\}$ of homothetic copies of $C$ 
where the interiors of $C_v$ and $C_w$ are disjoint for distinct $v,w \in V$.
We define $G=(V,E)$ to be the \emph{contact graph} of $P$,
with an edge connecting distinct vertices $v,w \in V$ if and only if $C_v \cap C_w \neq \emptyset$.
We define the \emph{placement} $p: V \rightarrow \mathbb{R}^d$ and the \emph{radii} $r: V \rightarrow \mathbb{R}_{>0}$ of $P$ to be the unique maps where $C_v = r_v C +p_v$ for all $v \in V$.
By abuse of notation we write $P=(G,p,r)$ to indicate $P$ is the unique $C$-packing defined by the triple $(G,p,r)$.

The most well-known type of $C$-packing is the special case of $C$ being a Euclidean disc,
often referred to as a \emph{disc packing}.
The well-known Koebe-Andreev-Thurston (KAT) theorem states that every planar graph is the contact graph of some disc packing,
and this disc packing is unique (up to M\"{o}bius transformations and reflections) if and only if the graph is maximal planar \cite{andreev, koebe, thurston}.
As homothetic convex body packings are only a slight generalisation of disc packings,
one would naturally ask whether they have their own version of the KAT theorem.
This was proven to be true by Schramm \cite{schramm90} whenever the convex body is \emph{smooth},
i.e.~every boundary point has a unique supporting hyperplane.

\begin{theorem}\label{t:oded}\cite{schramm90}
	Let $C$ be a smooth convex body in the plane and $G$ a planar graph.
	Then there exists a $C$-packing $P$ with contact graph $G$.
\end{theorem}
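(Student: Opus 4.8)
My plan is to reduce the statement to a packing problem for a triangulated disc with prescribed boundary and then to solve that problem by a topological degree argument. If $G$ is disconnected we pack each component separately and translate the pieces far apart, so assume $G$ is connected, and (the cases $|V(G)|\le 2$ being trivial) that $|V(G)|\ge 3$. Fix a planar embedding and enlarge it to a triangulation $T$ of the sphere by adding only \emph{new} vertices -- after a barycentric-type subdivision of $G$, triangulate each face from an added interior vertex -- arranged so that every edge of $T$ not already in $G$ is incident with a new vertex; this is a routine, if slightly fiddly, piece of combinatorial topology. If $P$ is a $C$-packing whose contact graph is exactly $T$, then $\{C_v : v\in V(G)\}$ is a $C$-packing whose contact graph is exactly $G$, because two bodies indexed by old vertices touch in $P$ if and only if those vertices span an edge of $T$ lying inside $V(G)$, i.e.\ an edge of $G$. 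Finally, delete one face of $T$ to regard it as a triangulated disc with triangular boundary $\{a,b,c\}$, and place $C_a,C_b,C_c$ as three mutually externally tangent homothets of $C$ of arbitrary radii; such a triple exists because the Minkowski identity $rC+sC=(r+s)C$ makes the relevant tangency loci scaled copies of $\partial C$, and a short continuity argument then positions the third body. It remains to fill in the interior bodies.

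With the boundary fixed, a candidate interior configuration is a point of an open subset $\mathcal X$ of a Euclidean space whose dimension -- since prescribing the boundary removes the translation-and-scaling symmetry of the problem -- equals $3|V(T)|-9$, the number of non-boundary edges of $T$. Define $F\colon\mathcal X\to\mathbb R^{m}$, $m=3|V(T)|-9$, by letting the $e$-th coordinate $F_e$ be a signed contact gap along $e$: positive when the two bodies of $e$ are disjoint, negative when their interiors overlap, zero when they are tangent. Smoothness of $C$ is exactly what makes $F$ continuous (indeed piecewise smooth) and makes $\{F_e=0\}$ the expected codimension-one tangency condition. Any zero of $F$ at which, in addition, non-adjacent bodies have disjoint interiors is a $C$-packing realising $T$; and because $T$ is a triangulation, the local combinatorics at each interior vertex force a zero of $F$ in the relevant component of $\mathcal X$ to be embedded, so this extra requirement is automatic there. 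Thus everything reduces to finding such a zero of $F$.

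To find a zero I would invoke Brouwer degree -- equivalently, after a convenient change of coordinates, a Poincar\'e--Miranda-type existence theorem: exhibit a bounded region $\Omega\subset\mathcal X$ on which $F$ is proper, check that $F\ne 0$ on $\partial\Omega$, and compute that $F|_{\partial\Omega}$ has nonzero degree about the origin; then $0\in F(\Omega)$, and a short argument keeps the zero in the correct component. An alternative, closer to Schramm's original route, is an incompatibility dichotomy: suppose no realisation exists, pass to a limit of near-realisations, and read off from the limiting degenerate configuration a proper sub-triangulation that collapses, contradicting a minimal-counterexample hypothesis by way of Euler's formula.

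The main obstacle is the boundary analysis: classifying how candidate configurations can degenerate -- radii tending to $0$ or to $\infty$, bodies drifting apart, interior vertices migrating to the boundary -- and verifying that $F$ stays nonvanishing there with the correct winding number. What makes this genuinely harder than the Koebe--Andreev--Thurston circle case is the absence of a variational structure: $F$ is not a gradient, and there is no analogue of the Andreev--Thurston monotonicity inequalities for a general body, so the degree argument cannot be traded for a convex optimisation. A related technical nuisance is that, without strict convexity, three mutually tangent homothets of $C$ need not be rigid modulo translation; one clean way around this is to prove the theorem first for smooth \emph{strictly convex} $C$ and then recover the general smooth case by approximating $C$ and passing to a limit of the resulting packings.
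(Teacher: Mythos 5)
The paper quotes this theorem from Schramm's thesis \cite{schramm90} without reproving it, so there is no internal argument to compare against. On its own terms your outline captures the right shape of such a proof: enlarging $G$ to a maximal planar graph $T$ all of whose new edges meet a new vertex, pinning a boundary triangle, and balancing the $3|V(T)|-9$ interior degrees of freedom against the $3|V(T)|-9$ interior contact equations is the correct dimension count, and your second, ``incompatibility dichotomy'' alternative is in fact close to the machinery Schramm actually uses in \cite{schramm91}.

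There are, however, genuine gaps. First, the entire weight of the theorem sits in the step you defer: one must show that the contact-gap map $F$ is proper on a bounded region $\Omega$, that no degeneration (collapsing or exploding radii, interior vertices migrating to the boundary triangle, non-adjacent bodies colliding) occurs on $\partial\Omega$, and that the boundary degree is nonzero. You correctly identify this as ``the main obstacle'' and correctly observe the absence of the variational/monotonicity structure that powers the Koebe--Andreev--Thurston argument, but you do not resolve it, and the claim that embeddedness is ``automatic'' in ``the relevant component'' is unsupported. Second, the combinatorial reduction as stated is wrong: a genuine barycentric subdivision of $G$ replaces each edge $vw$ by a path through a new midpoint vertex, so $v$ and $w$ cease to be adjacent and the resulting triangulation does \emph{not} contain $G$ as a subgraph, defeating the whole restriction step. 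What you want instead is to cone each face from a new interior vertex (inserting a buffer cycle inside each face when $G$ is not $2$-connected, so that repeated vertices on a face boundary do not produce multi-edges), which does preserve $E(G)$ while ensuring every new edge meets a new vertex. As written, the proposal is a plausible plan for a proof, but it is not yet a proof.
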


The smoothness assumption is required for the theorem to hold;
for instance, if $C$ is a square then no triangulation on more than 4 vertices can be realised as a $C$-packing.
Interestingly,
Schramm also proved in \cite{schramm91} that we must forgo the uniqueness given in the KAT theorem unless we assume the convex body is also \emph{strictly convex},
i.e.~every supporting hyperplane intersects the convex body at exactly one point;
see Theorem \ref{t:odedmax} for the statement of Schramm's extended result for smooth and strictly convex bodies.

Any disc packing can be consider to have a ``sticky'' boundary by assuming that if two discs touch then the contact between them can no longer be broken.
We can then ask whether there exists a motion of the disc packing that maintains the radii of the discs and contacts between them;
we say a disc packing is \emph{sticky flexible} if such a motion exists,
and \emph{sticky rigid} otherwise.
Determining whether a given disc packing is sticky rigid is a geometric constraint problem,
and as such we can use many of the tools utilised in the study of framework rigidity,
including infinitesimal rigidity and independence \cite{asiroth, asiroth2}.
A disc packing $P=(G,p,r)$ is \emph{independent} or \emph{stress-free} if the only function $a :E \rightarrow \mathbb{R}$ that satisfies for each vertex $v \in V$ the equation $\sum_{w \in N(v)} a_{vw}(p_v-p_w)=0$ (with $N(v)$ being the neighbours of $v$) is the zero function.
Independence is an important property for packings and frameworks,
as it allows us to determine the structure of their underlying graphs \cite{laman, geiringer}.
The following result by Connelly, Gortler and Theran proves that all disc packings with generic radii are independent,
allowing us to determine sparsity properties of their contact graphs;
we remember that a graph $G=(V,E)$ is \emph{$(2,k)$-sparse} if $|E'| \leq 2|V'|-k$ for all subgraphs $G'=(V',E')$ with at least one edge,
and \emph{$(2,k)$-tight} if it is $(2,k)$-sparse and $|E|=2|V|-k$ also.

\begin{theorem}\cite{congortheran2019}\label{t:ConGortThur}
	Let $P=(G,p,r)$ be a disc packing.
	If the set $\{r_v:v \in V\}$ is algebraically independent over the rational numbers,
	then $G$ is a $(2,3)$-sparse planar graph.
	Furthermore,
	$G$ is $(2,3)$-tight also if and only if $P$ is sticky rigid.
\end{theorem}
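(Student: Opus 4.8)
The plan is to recast every assertion as a statement about the bar-joint framework $(G,p)$, isolate one analytic input concerning the algebraically independent radii, and then extract the combinatorics from the Maxwell count. The framework reformulation is immediate: the displayed self-stress equation is verbatim the self-stress equation of $(G,p)$, so ``$P$ is stress-free'' means exactly that $(G,p)$ carries no nonzero self-stress; and since every non-edge pair $C_v,C_w$ satisfies $\|p_v-p_w\|>r_v+r_w$ strictly --- an open condition --- a short deformation of $p$ preserving all edge lengths $\|p_v-p_w\|=r_v+r_w$ is again a $C$-packing with radii $r$, so ``$P$ is sticky rigid'' means exactly that $(G,p)$ is rigid as a bar-joint framework. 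Planarity of $G$ is classical: realising each edge $vw$ by the segment $[p_v,p_w]\subseteq C_v\cup C_w$, disjointness of the disc interiors forces two such segments over disjoint pairs of discs to meet, if at all, only at a common boundary point, and a short case check excludes a transversal crossing.

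Granting the key fact below --- algebraically independent radii force $(G,p)$ to be stress-free --- both conclusions follow by standard arguments. A self-stress of a subframework $(G',p|_{V'})$ extends by zero to a self-stress of $(G,p)$, so stress-freeness descends to all subframeworks; since $\operatorname{rank} R(G',p|_{V'})\le 2|V'|-3$ for every $G'$ with an edge (the three trivial motions always lie in the kernel), stress-freeness yields $|E'|\le 2|V'|-3$ throughout, i.e.\ $G$ is $(2,3)$-sparse, and $G$ is planar. Moreover $\operatorname{rank} R(G,p)=|E|$ is then globally maximal, so $(G,p)$ is a regular framework and, by Asimow--Roth, it is rigid iff infinitesimally rigid iff $\operatorname{rank} R(G,p)=2|V|-3$; since $|E|=\operatorname{rank} R(G,p)$ and $G$ is $(2,3)$-sparse, this holds iff $|E|=2|V|-3$, i.e.\ iff $G$ is $(2,3)$-tight. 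Hence $P$ is sticky rigid iff $G$ is $(2,3)$-tight.

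It remains to prove the key fact, which I would do by contradiction. Let $\omega\neq 0$ be a self-stress of $(G,p)$ of inclusion-minimal support, and $H=(V_H,E_H)$ the subgraph on that support, so $\omega|_H$ is nowhere zero and, by minimality, every proper edge-subset of $E_H$ is independent at $p|_H$, hence generically independent, hence $(2,3)$-sparse. Working over $\mathbb{Q}$, let $\mathcal{C}_H\subseteq(\mathbb{R}^2)^{V_H}\times\mathbb{R}^{V_H}$ be the extended tangency variety cut out by $\|q_u-q_v\|^2=(s_u+s_v)^2$ for $uv\in E_H$; let $\mathcal{W}_H\subseteq\mathcal{C}_H$ be the semialgebraic, $\mathbb{Q}$-defined locus where $(H,q)$ has a self-stress nowhere zero on $E_H$; and let $\mathcal{D}_H$ be the image of $\mathcal{W}_H$ under $(q,s)\mapsto s$. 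Then $(p|_H,r|_H)\in\mathcal{W}_H$, so $r|_H\in\mathcal{D}_H$, and hence if $\dim\overline{\mathcal{D}_H}<|V_H|$ we obtain a nonzero polynomial over $\mathbb{Q}$ vanishing on $\{r_v:v\in V_H\}$, contradicting algebraic independence. To bound the dimension, take a generic smooth point $(q_0,s_0)$ of $\mathcal{W}_H$ with a nowhere-zero self-stress $\omega_0$; writing the Jacobian of the equations of $\mathcal{C}_H$ as $2[\,R(H,q_0)\mid -S_0\,]$, every tangent vector to $\mathcal{W}_H$ at $(q_0,s_0)$ lies in its kernel, and projecting to the $s$-coordinates lands in the hyperplane $c_0^{\perp}$, where $c_0\in\mathbb{R}^{V_H}$ has $w$-entry $\sum_{u\sim w}(\omega_0)_{wu}\,\|q_{0,w}-q_{0,u}\|$ (using $S_0$ together with the tangency relations). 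So $\dim\overline{\mathcal{D}_H}\le|V_H|-1$, provided $c_0\neq 0$ for a generic stressed realisation of $H$ --- and that is all that remains.

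The non-vanishing of $c_0$ is, I expect, the main obstacle: it is a genuine statement about the interaction of the metric data (the radius-sum edge lengths) with the projective data (the rigidity self-stress), and it is exactly here that one must use that the $p_v$ are disc centres, not arbitrary points, and that the radii are algebraically independent. The base case $H=K_4$ is precisely the Descartes circle theorem: there the self-stress is $\omega_{ij}=\lambda_i\lambda_j$, where $\sum_i\lambda_i q_i=0,\ \sum_i\lambda_i=0$ is the affine dependence of the four centres, so $c_w=\lambda_w\sum_{u\neq w}\lambda_u\|q_w-q_u\|$, which is visibly not identically zero --- four generic radii admit no four mutually externally tangent circles. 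For a general rigidity circuit one would induct using the generation of $2$-dimensional rigidity circuits from $K_4$ by $1$-extensions and $2$-sums, checking that a generic radius assignment on the larger graph restricts to a generic one on a $K_4$-subconfiguration so that the constraint stays nontrivial; and the remaining case, where $H$ is itself $(2,3)$-sparse so that the self-stress comes from a positional degeneracy rather than from a non-sparse subgraph, requires the analogous --- and, I expect, more delicate --- fact that a self-stressed realisation of a $(2,3)$-sparse graph simply cannot occur at a disc-packing configuration with algebraically independent radii.
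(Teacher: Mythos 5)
The theorem you are proving is cited in the paper from Connelly--Gortler--Theran rather than proved there, but the paper's own Theorem~\ref{t:body} is a direct analogue and is proved by essentially the same architecture you set up: recast the packing as a framework, form a packing rigidity matrix $[R\,|\,{-S}]$, and argue that it has full row rank so that the radii projection has maximal rank and Sard-type/semialgebraicity arguments give the sparsity count. Your reduction of the ``furthermore'' clause to the stress-freeness claim via regularity and Asimow--Roth is correct, as is the planarity observation and the reformulation of sticky rigidity.

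The genuine gap is precisely the step you flag: the non-vanishing of $c_0$, i.e.\ the claim that at a (generic) stressed disc-packing configuration one cannot have
\[
\sum_{u\sim w}\omega_{wu}\,\|q_w-q_u\|=0\quad\text{for every }w,
\]
simultaneously with the usual equilibrium $\sum_{u\sim w}\omega_{wu}(q_w-q_u)=0$. In the paper's terminology this says the contact framework has no \emph{edge-length equilibrium stress}, and that is the single ingredient that makes the whole dimension count close. You have not proved it; your sketch via the Descartes circle theorem for $K_4$ and an induction over $1$-extensions and $2$-sums for general rigidity circuits is speculative (for instance, a $1$-extension destroys one contact and creates two new ones whose induced ``lengths'' are not independent of the old data, so the genericity bookkeeping is not straightforward), and you yourself note that the case of a $(2,3)$-sparse $H$ carrying a positional self-stress is unhandled. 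The actual proof is not inductive but topological--combinatorial: one shows directly that \emph{every} planar framework, regardless of genericity, has no edge-length equilibrium stress, by a Cauchy/Gluck index argument. Concretely, with the additional length constraint one shows every relevant vertex has index at least $4$ (two sign changes forced by the positional equilibrium and two more forced by the length equilibrium, as in Lemmas~\ref{lem:2.8.1}, \ref{lem:order}, and \ref{lem:2.8}), while planarity bounds the total index by $4|V'|-8$ (Lemma~\ref{lem:2.7}), a contradiction. This is the mechanism in both the cited result and in the paper's Lemma~\ref{lem:2.10}/Lemma~\ref{l:planarnoestress}, and it is what your write-up is missing. Without it, the crucial inequality $\dim\overline{\mathcal{D}_H}\le|V_H|-1$ is unsupported and the argument does not go through.

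Two smaller remarks. First, the paper (and CGT) prove the index lower bound for \emph{all} planar contact frameworks, not merely generic ones, which sidesteps the need to restrict to a generic smooth stratum of $\mathcal{W}_H$ and worry about whether your chosen smooth point inherits the semialgebraic structure you need. Second, your tangent-space computation that $d\rho$ at a stressed point lands in $\ker(\omega_0^\top S_0)=c_0^{\perp}$ is a correct dual reading of Lemma~\ref{l:ranknull}: full rank of the packing rigidity matrix and surjectivity of the radius projection are equivalent, which is exactly why absence of edge-length equilibrium stresses is the right thing to chase.
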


Our question now is the following;
can we state a similar result for any convex body?
To simplify this problem we shall restrict ourselves to \emph{centrally symmetric} convex bodies,
i.e.~if $x \in C$ then $-x \in C$ also.
Since every centrally symmetric convex body $C$ defines a unique norm $\|\cdot\|_C$ via its Minkowski functional,
the restriction allows us to restate the question as a framework rigidity problem with the Euclidean norm replaced with the new norm.
The effects of using different norms in rigidity theory has only been considered in recent years;
we refer the reader to \cite{noneuclidean} for the origin of research on the topic,
\cite{dew1, dew2} for some important classical rigidity theories applied to general norms,
\cite{polyhedra, matrixnorm, maxwell} for results regarding specific norms and symmetry,
and \cite{rotation, meera} for other areas that are closely related.
Most importantly for us,
our definitions of sticky rigidity, sticky flexibility and independence can all be defined in the more general setting (see Sections \ref{sec:prelimrig} and \ref{sec:defnsticky} for explicit definitions).

This generalisation to homothetic centrally symmetric convex body packings will not be enough, however,
as we can still fall prey to degenerate cases (for example, see Proposition \ref{p:counterex}).
We will be required to restrict ourselves to the set of \emph{regular symmetric bodies},
centrally symmetric convex bodies that are smooth and strictly convex.
This is not as strong an assumption as it first seems,
since the set of regular symmetric bodies is a comeagre subset of the set of centrally symmetric convex bodies
(Proposition \ref{p:baire}).
We can also ignore the case when the convex body is a linear transform of a disc,
as this is covered by Theorem \ref{t:ConGortThur};
remember, we are restricting to homothetic packings only.
With these added restrictions, we shall prove the following,
remembering that a property holds for \emph{almost all} elements of a $n$-dimensional Lebesgue measurable set if the set of elements in the set where it does not hold has measure zero.

\begin{theorem}\label{t:body}
	Let $C$ be a regular symmetric body in the plane that is not a linear transform of a disc.
	For almost all $r \in \mathbb{R}^{|V|}_{>0}$,
	the following holds for any $C$-packing $P = (G,p,r)$ with $k := 2|V|-|E|-1$;
	\begin{enumerate}[(i)]
		\item \label{t:body1} $G$ is a $(2,2)$-sparse planar graph,
		\item \label{t:body2} if $\|\cdot\|_C$ is $k$-times continuously differentiable on $\mathbb{R}^2\setminus \{0\}$
		then $P$ is independent, and 
		\item \label{t:body3} if $G$ is $(2,2)$-tight then $P$ is sticky rigid.
	\end{enumerate}
\end{theorem}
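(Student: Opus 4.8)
The plan is to treat the three parts together through a single linear-algebraic device: the \emph{rigidity matrix} of a $C$-packing, which for each edge $vw$ records the derivative of the constraint $\|p_v - p_w\|_C = r_v + r_w$ with respect to the placement and the radii. Since $C$ is smooth and strictly convex, $\|\cdot\|_C$ is differentiable away from the origin and its gradient at a contact point is the (unique) outer normal to $C$ at the touching boundary point; this gives a clean $|E| \times (2|V|+|V|)$ matrix whose columns are split into placement columns and radii columns. A row dependence of this matrix is exactly a \emph{self-stress}, i.e. a nonzero $a : E \to \mathbb{R}$ with $\sum_{w \in N(v)} a_{vw} \nabla\|p_v-p_w\|_C = 0$ for every $v$ and $\sum_{w \in N(v)} a_{vw} = 0$ for every $v$; independence of $P$ is the statement that no such $a$ exists. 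I would first set up this matrix carefully (referring to the rigidity preliminaries promised in Sections~\ref{sec:prelimrig} and~\ref{sec:defnsticky}) and record that generic independence of the rigidity matrix is equivalent to the generic rank being maximal, which is a determinantal (hence Zariski-closed, measure-theoretically small complement) condition in the radii --- \emph{provided} we know the entries depend suitably nicely on $r$.

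Next I would establish part~\ref{t:body1}. The key point is that for a $C$-packing the contact normals impose a \emph{parity/counting} obstruction analogous to the disc case: because $C$ is strictly convex, two bodies in contact touch at a single point and the contact direction is determined, so one can run the Connelly--Gortler--Theran style argument. Concretely, for any subgraph $G' = (V', E')$ induced on a sub-packing, the corresponding submatrix of placement columns has at most $2|V'|$ columns; strict convexity and the fact that $C$ is \emph{not} a linear image of a disc are what prevent the extra degeneracies (the affine stress space that forces the ``$-3$'' in the Euclidean case collapses to a ``$-2$'' here, since the norm has no conformal symmetry). I would argue that for generic $r$ the relevant submatrices have the rank forced by their column count, so $|E'| \le 2|V'| - 2$ for every subgraph with an edge, giving $(2,2)$-sparsity; planarity is immediate since a $C$-packing's contact graph is planar by the standard nerve/drawing argument (draw $p_v$ and connect along contact segments, using strict convexity for non-crossing). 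For part~\ref{t:body2}, with $k = 2|V| - |E| - 1 \ge 0$ the matrix is ``wide enough'' to have full row rank generically, so once one exhibits a \emph{single} choice of radii (and placement) for which the rigidity matrix of \emph{some} $C$-packing with that graph has independent rows, genericity propagates; the differentiability hypothesis ``$\|\cdot\|_C$ is $C^k$'' is exactly what guarantees the matrix entries are differentiable enough in $r$ that the non-vanishing of a maximal minor is a nonzero real-analytic (or at least $C^1$, enough for measure-zero complement) function on the open set of radii giving valid packings. Part~\ref{t:body3} then follows by the standard equivalence from rigidity theory: a $(2,2)$-tight independent packing has rigidity matrix of corank equal to the dimension of the space of trivial motions of the plane under $\|\cdot\|_C$ --- which for a generic (non-Euclidean) norm is exactly $2$, the translations, there being no nontrivial linear isometries to add rotations --- so the infinitesimal rigidity count is met, and infinitesimal rigidity implies sticky rigidity (again via the results referenced above, or an implicit-function-theorem argument on the constraint variety).

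The main obstacle, I expect, is part~\ref{t:body1}: showing that the genericity in $r$ alone --- not genericity of the \emph{placement} $p$, which is \emph{forced} by $G$ and $r$ once a packing is chosen --- suffices to rule out the bad rank drops. In the Connelly--Gortler--Theran setup one has enough freedom because the radii and the packing geometry are linked through the square-root of a polynomial system; here the same linkage holds but one must verify that the ``coupled'' parametrisation (radii determine, via Schramm's existence/uniqueness theorems \ref{t:oded} and \ref{t:odedmax}, a packing up to finitely much data) still yields a submersion onto the radius space on a dense open set, so that a measure-zero bad set in $r$ pulls back correctly. Handling the non-strictly-convex-free hypothesis ``not a linear transform of a disc'' rigorously --- i.e. showing this is precisely the condition under which the trivial-motion space drops from $3$ to $2$ and the sparsity count correspondingly sharpens from $(2,3)$ to $(2,2)$ --- is the conceptual heart, and I would isolate it as a lemma about the linear isometry group of $\|\cdot\|_C$ before assembling the three parts.
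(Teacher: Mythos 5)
Your high-level strategy lines up with the paper's — set up the packing rigidity matrix, project to radius space, invoke a Sard-type theorem to make the bad radii measure zero, and finish part~\ref{t:body3} via a Laman-type count plus the equivalence of infinitesimal and continuous rigidity for smooth norms. But there is a substantive gap at the centre of the argument, which you partially flag yourself but do not fill.

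The gap: to run the Sard argument you need the set $S_{G,C}$ of all $C$-packings with contact graph $G$ to actually be a $C^k$-manifold of the correct dimension $3|V|-|E|$, so that the projection $(p,r)\mapsto r$ is a $C^k$-map between manifolds whose critical values can be controlled. This does \emph{not} come from a determinantal/generic-rank argument over the radii, because $(p,r)$ is \emph{constrained} to the solution set $h_{G,C}(p,r)=0$: you cannot freely vary $r$ and examine a minor. The paper's crucial ingredient is an \emph{unconditional} fact (Lemma~\ref{lem:2.10}): for a regular symmetric body in the plane, \emph{no} $C$-packing has an edge-length equilibrium stress, hence $R_C(G,p,r)$ has full row rank $|E|$ at \emph{every} packing, and the constant-rank theorem gives the manifold structure (Lemma~\ref{lem:2.11}). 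This is proved by an index/winding-number argument on planar frameworks (Lemmas~\ref{lem:2.7}--\ref{lem:2.8}, using the cone-separation Lemma~\ref{l:conestuff} which is where strict convexity and smoothness enter via the duality map $\varphi_C$ and the strict convexity of $C^*$), combined with planarity of the contact framework (Lemma~\ref{l:planar}, which uses Danzer--Gr\"unbaum, not just a straight-line drawing argument, to exclude crossings). Without this lemma your proposal has no way to speak of the critical values of $\rho$ or to apply Sard; your proposed ``exhibit a single choice of radii with independent rows and propagate genericity'' is exactly the kind of argument this lemma is designed to make unnecessary, and on its own it begs the question of why such a choice exists and why propagation is measure-theoretically legitimate under the constraint.

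Two smaller issues. First, you misstate the edge-length equilibrium stress condition: the radii equation is $\sum_{w\in N(v)} a_{vw}\,\|p_v-p_w\|_C = 0$, not $\sum_{w\in N(v)} a_{vw} = 0$; the radii columns carry the factor $-(r_v+r_w)=-\|p_v-p_w\|_C$. Second, you locate the conceptual weight of the ``not a linear transform of a disc'' hypothesis incorrectly. In the paper, parts~\ref{t:body1} and~\ref{t:body2} do not use it at all: the $(2,2)$ in the sparsity count comes from translation invariance of $S_{G',C}(r|_{V'})$ forcing its dimension to be at least $2$ (so $2|V'|-|E'|\geq 2$). The hypothesis is used only in part~\ref{t:body3}, to put $k=2$ in Theorem~\ref{t:laman} (the Laman-type theorem for non-Euclidean normed planes), giving infinitesimal rigidity from $(2,2)$-tightness plus independence; smoothness and Proposition~\ref{p:infsr} then upgrade to sticky rigidity. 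Your framing — that the hypothesis collapses the stress space from $-3$ to $-2$ and is the ``conceptual heart'' of part~\ref{t:body1} — is not how the paper's argument works, and if you try to build the proof around it you will find yourself re-deriving rather than using the isometry-group dimension.
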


It was conjectured in \cite{congortheran2019} that any $(2,3)$-sparse graph is the contact graph of a disc packing with algebraically independent radii;
this is equivalent to the conjecture that any $(2,3)$-sparse graph is the contact graph of an independent disc packing.
We would also conjecture an equivalent result regarding regular symmetric bodies.

\begin{conjecture}\label{conj:generic}
	Let $C$ be a regular symmetric body in the plane that is not a linear transform of a disc and let $G$ be a $(2,2)$-sparse graph.
	Then there exists an independent $C$-packing with contact graph $G$.
\end{conjecture}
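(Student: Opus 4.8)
The plan is to prove the conjecture by an inductive construction, after two reductions. First, since a contact graph is necessarily planar, throughout we read ``$(2,2)$-sparse'' as ``$(2,2)$-sparse planar'', and it is enough to treat $(2,2)$-tight graphs: a $(2,2)$-sparse planar graph $G$ embeds as a spanning subgraph of a $(2,2)$-tight planar graph $G'$ (a planar analogue of the usual matroid-extension argument, itself a small lemma to be established), and once $G'$ is realised as an independent $C$-packing one may delete the surplus contacts of $E(G')\setminus E(G)$ one at a time. At each stage the current graph $H$ is $(2,2)$-sparse, so for a surplus contact $uw$ the graph $H-uw$ is still independent, hence its realisation space is a manifold near the current placement and contains nearby $C$-packings in which the $uw$ gap is strictly positive while every other contact and non-contact is preserved; as independence is an open condition the new packing is again independent. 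Second, for $(2,2)$-tight planar $G$ the quantity $k=2|V|-|E|-1$ equals $1$, so the independence statement of Theorem \ref{t:body} requires only that $\|\cdot\|_C$ be continuously differentiable on $\mathbb{R}^2\setminus\{0\}$, which holds for every regular symmetric body. Moreover $C$, not being a linear image of a disc, has finite linear isometry group, so frameworks in $\|\cdot\|_C$ carry no infinitesimal rotations; consequently the realisation space of a $(2,2)$-tight $G$, which has dimension $|V|$, maps to the radii $\mathbb{R}^{|V|}$ by a local diffeomorphism at any independent packing. Together these show the conjecture for $(2,2)$-tight $G$ to be equivalent to the assertion that the set of radii for which $G$ is realisable as a $C$-packing has positive Lebesgue measure. (For a linear image of a disc this can fail — already $K_4$ is realisable over only a codimension-one set of radii — which is exactly why such bodies are excluded, mirroring the shift from $(2,2)$ to $(2,3)$ in Theorem \ref{t:ConGortThur}.) We do not see how to establish this measure statement directly, so the main work is an explicit construction.

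The construction proceeds by induction along a Henneberg-type generation of the $(2,2)$-tight planar graphs that respects a planar embedding: $0$-extensions (adding a degree-$2$ vertex inside a face, joined to two of its boundary vertices), $1$-extensions (adding a degree-$3$ vertex inside a face, joined to three boundary vertices, while deleting an edge among them), together with whatever further moves are needed to generate the class. The base cases are the isolated vertex and $K_4$; the latter is realised by Theorem \ref{t:oded}, and its independence should follow from a transversality computation showing the $K_4$-packing variety has its expected dimension $|V|=4$ — a computation that again uses that $C$ is not a linear image of a disc, since for a disc the count is off by one. The $0$-extension is comparatively harmless: the two new neighbours can be chosen consecutive along the boundary of a hole of the planar packing, so a homothet of $C$ of a suitable size fits in that hole touching both, and independence is preserved because the new vertex contributes exactly two rows and two columns to the rigidity matrix, with linearly independent contact directions for any positive radius.

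The $1$-extension is the step I expect to be the principal obstacle. To carry it out one releases the contact $C_x$--$C_y$ (the graph with that edge deleted has one degree of freedom), flexes the remaining packing, and inserts a new body $C_v$ touching $C_x$, $C_y$ and $C_z$; the unknowns — one flex parameter, the radius $r_v$, and the two coordinates of $p_v$ — against the three tangency equations leave a one-parameter family of candidates, among which one hopes to find a genuine packing close to the original by a degree-theoretic or implicit-function argument. The hard points are the \emph{global} packing condition — ruling out that $C_v$, or the bodies displaced by the released contact, overlap distant bodies — and the propagation of independence through the $1$-extension at the non-generic configurations that packings provide. These are precisely the difficulties that leave the disc analogue, the conjecture of \cite{congortheran2019}, open, and I expect them to require the full strength of Schramm's topological methods (an invariance-of-domain argument on the space of $C$-packings with prescribed combinatorics) rather than an elementary local surgery.
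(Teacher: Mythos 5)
This statement is labeled a \emph{Conjecture} in the paper, and the paper does not prove it: immediately after stating it, the author writes that instead of proving Conjecture~\ref{conj:generic} they will ``take a different tact'' and prove only Theorem~\ref{t:symperfect}, the weaker statement that a comeagre subset of $\mathcal{K}_2$ is $G$-independent for every $(2,2)$-sparse planar $G$. So there is no ``paper's own proof'' to compare against. Your write-up is also not a proof, and you say so yourself: you flag the measure statement as something you ``do not see how to establish directly,'' and you identify the $1$-extension step, the global non-overlap condition, and the propagation of independence at non-generic configurations as open obstacles that you ``expect to require the full strength of Schramm's topological methods.'' Those flags are accurate; the conjecture remains open, as does its disc analogue from \cite{congortheran2019}.

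What you have produced is a plausible but incomplete strategy, and it is genuinely different from what the paper does. The paper avoids any Henneberg-type induction entirely: Lemma~\ref{l:symperfect} shows the set of $G$-independent bodies is open (Lemma~\ref{kl:open}, via a continuity/implicit-function argument on the packing rigidity matrix) and dense (Lemma~\ref{kl:dense}, by starting from Schramm's maximal-planar packing, arranging the general edge condition via Corollary~\ref{cor:generalpos}, and then perturbing the tangent directions $\varphi_C(x_{vw})$ with the local modifications of Lemma~\ref{l:dense2} so that the packing matrix attains full rank by Lemma~\ref{l:dense3}); the Baire category theorem then gives the comeagre set. Your reductions are compatible with the paper's machinery --- the equivalence with positivity of the Lebesgue measure of $\radii_C(G)$ is exactly Proposition~\ref{p:radii}, and the ``delete surplus contacts one at a time'' step is close in spirit to Lemma~\ref{l:genedgesub} --- but they sit on top of an inductive core whose hard step you do not resolve.

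Beyond the gaps you already name, two further steps in your plan need justification and are not obviously true. First, you assert that every $(2,2)$-sparse planar graph is a spanning subgraph of a $(2,2)$-tight \emph{planar} graph; the matroidal extension argument gives a $(2,2)$-tight supergraph but does not by itself preserve planarity, and a planar-respecting version is a separate lemma. Second, your induction presupposes a Henneberg-type generation of $(2,2)$-tight \emph{planar} graphs by moves that stay inside the planar class (adding vertices ``inside a face''); the standard Henneberg theorem for $(2,2)$-tight graphs does not automatically respect a planar embedding, and you yourself leave open ``whatever further moves are needed to generate the class.'' Finally, a small imprecision: at an independent $(2,2)$-tight packing the radii projection $\rho$ has $\rank d\rho = |V|$ while $\dim S_{G,C} = |V|+2$, so $\rho$ is a submersion (locally onto, which is what Lemma~\ref{l:isoimpgen} supplies), not a local diffeomorphism; this does not affect your intended conclusion but should be stated correctly.
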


Instead of proving Conjecture \ref{conj:generic},
we shall instead take a different tact and prove that the set of centrally symmetric convex bodies where this holds is a comeagre set, and hence the conjecture can be considered to be a ``generic'' property.

\begin{theorem}\label{t:symperfect}
	There exists a comeagre subset $\mathcal{G}$ of the centrally symmetric bodies in the plane where the following holds; 
	for any $C \in \mathcal{G}$ and any $(2,2)$-sparse planar graph $G$,
	there exists an independent $C$-packing with contact graph $G$.
\end{theorem}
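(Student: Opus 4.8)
The plan is to combine three ingredients: (a) an inductive construction of $(2,2)$-tight planar graphs via Henneberg-type moves, (b) a transfer of the "generic realisability" property of disc packings (which is known for $(2,3)$-sparse graphs with algebraically independent radii by Theorem~\ref{t:ConGortThur}, or at least for the graphs arising from the induction) to the normed setting by a perturbation/limiting argument, and (c) a Baire category argument to produce the comeagre set $\mathcal{G}$. The key observation is that independence of a $C$-packing $P=(G,p,r)$ is an \emph{open} condition on the data $(C,p,r)$: it is equivalent to the rigidity matrix (whose rows record the equations $\sum_{w\in N(v)} a_{vw}(q_v-q_w)=0$, with $q_v$ the contact-normal directions determined by $C$, $p$ and $r$) having full rank $|E|$, and full rank is preserved under small perturbations of the entries, which in turn depend continuously on the body $C$ in the Hausdorff metric together with $(p,r)$. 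Since it suffices to realise $(2,2)$-\emph{tight} graphs (any $(2,2)$-sparse graph extends to a tight one by adding edges, and a packing for the larger graph restricts to one for the subgraph with the same independence property — or one argues directly that independence of a spanning tight subgraph forces independence of the sparse graph), we may focus on tight graphs.

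First I would establish the combinatorial backbone: every $(2,2)$-tight planar graph is obtained from a small base graph (e.g.\ a single vertex or a triangle-type seed) by a sequence of $0$-extensions and $1$-extensions (Henneberg moves) that can be performed while preserving planarity. This is standard count-matroid theory for the $(2,2)$-sparsity matroid, and I would only need to cite or quote the planar version. Second, for a fixed body $C$ I would attempt to show that each Henneberg move can be realised geometrically: given an independent $C$-packing realising $G$, one can perturb and insert a new disc tangent to the appropriate existing discs so that the resulting packing realises the extended graph and is still independent. The $0$-extension (adding a vertex of degree $\le 2$) is easy — place a tiny new copy of $C$ touching the required one or two bodies, which is possible by smoothness and strict convexity and adds at most two independent rows. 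The $1$-extension (delete an edge $vw$, add a new vertex adjacent to $v$, $w$ and one more vertex) is the delicate one: one must "open up" the contact between $C_v$ and $C_w$, slide a new scaled copy of $C$ into the gap tangent to three prescribed bodies, and verify the rank goes up by exactly one; here regularity of $C$ (smoothness $+$ strict convexity, ruling out the disc case) is what guarantees the contact directions are in "general enough" position for the new row to be independent of the old ones.

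Third comes the Baire step, which I expect to be the \emph{main obstacle}. For each $(2,2)$-sparse planar graph $G$ (a countable set of graphs up to isomorphism), let $\mathcal{G}_G$ be the set of centrally symmetric bodies $C$ admitting an independent $C$-packing with contact graph $G$. Using the openness of independence noted above together with continuous dependence on $C$, $\mathcal{G}_G$ is open in the space of centrally symmetric bodies with the Hausdorff metric (a complete metric space, so Baire applies). To show $\mathcal{G}_G$ is dense, I would start from \emph{any} centrally symmetric body, approximate it by a regular symmetric body $C'$ (dense by Proposition~\ref{p:baire}), then approximate $C'$ further if needed; the real work is exhibiting, for a \emph{dense} family of bodies, an actual independent packing of $G$ — this is exactly where one needs either Conjecture~\ref{conj:generic}-type input for specific bodies or a direct construction. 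The natural route is: perturb $C'$ to be extremely close to an ellipse, use Theorem~\ref{t:ConGortThur} to get an independent disc packing of $G$ (after adding edges to make it $(2,3)$-tight, then noting $(2,2)$-tight $\subset$ $(2,3)$-sparse so the argument is actually the reverse — one must instead directly build a $(2,2)$-tight realisation, perhaps by the Henneberg construction applied with $C'$ itself), and then deform the Euclidean realisation into a $\|\cdot\|_{C'}$-realisation by an implicit-function/continuation argument, the independence (full-rank) condition persisting along the deformation. Then $\bigcap_G \mathcal{G}_G$ is a countable intersection of dense open sets, hence comeagre, and this is the desired $\mathcal{G}$. The crux is making the Henneberg insertion step work \emph{uniformly enough} that the resulting set of good bodies is dense, not merely nonempty for one cleverly chosen $C$; I would handle this by doing the insertions with explicit small circular/elliptical bodies and invoking openness to spread the property to a neighbourhood, then intersecting over the countably many moves needed to build each $G$.
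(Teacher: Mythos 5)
Your high-level skeleton --- define $\mathcal{G}_G$ as the set of bodies admitting an independent $C$-packing with contact graph $G$, show each $\mathcal{G}_G$ is open and dense in the space of regular symmetric bodies (a Baire space), then take the countable intersection over $(2,2)$-sparse planar $G$ --- is exactly the paper's framework (Lemmas \ref{kl:open}, \ref{kl:dense}, and the proof of Theorem \ref{t:symperfect}), and your observation that independence is an open condition on $(C,p,r)$ and that $\mathcal{G}_G$ is therefore open is essentially correct, matching Lemma \ref{kl:open} modulo the technical point that one must track a packing (not just a matrix) as $C$ varies, which the paper handles with a tailor-made implicit function theorem (Theorem \ref{t:implicit}).

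The density step, however, contains two concrete gaps. First, the disc-deformation route is dead on arrival: a $(2,2)$-tight graph with at least one edge satisfies $|E|=2|V|-2>2|V|-3$, so by Theorem \ref{t:ConGortThur} it is \emph{never} the contact graph of a disc packing with generic radii, and there is no independent Euclidean packing to deform from. (Also note that Theorem \ref{t:ConGortThur} is a statement about which contact graphs arise, not a realisability result; realisability of all $(2,3)$-sparse graphs by disc packings is the open conjecture of Connelly--Gortler--Theran mentioned in the introduction.) Second, the Henneberg route for a fixed body $C'$ is not developed, and if the $1$-extension step worked uniformly for all regular symmetric $C'$ (non-elliptical) it would prove Conjecture \ref{conj:generic}, which the paper explicitly leaves open; so there must be an obstruction your sketch glosses over, presumably in verifying the rank increment at the $1$-extension without generic-position hypotheses on $C'$ itself. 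Relatedly, the reduction "it suffices to realise $(2,2)$-tight graphs" via adding edges and then restricting the packing does not work as stated: if $P'$ realises the enlarged graph $G'\supsetneq G$, the extra contacts are still physically present, so $P'$ is not a $C$-packing with contact graph $G$; one must actually open up those contacts (the paper does this in the \emph{other} direction in Lemma \ref{l:genedgesub}, from a maximal planar supergraph down to $G$).

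The paper's density argument is a genuine inversion of your approach that sidesteps both difficulties. Instead of building a packing for a fixed body, it fixes (essentially) the packing and \emph{perturbs the body}: Schramm's theorem gives a $C'$-packing of $G$ for any regular $C'$; Corollary \ref{cor:generalpos} lets one choose $C'$ arbitrarily close to $C$ so that the contact directions $x_{vw}\in\partial C'$ are pairwise linearly independent; Lemma \ref{l:dense2} shows one can locally bump the $C^2$ boundary of $C'$ near these finitely many directions so that the new supports $\varphi_{C''}(x_{vw})$ can be pushed to essentially arbitrary nearby values while keeping $\|x_{vw}\|_{C''}=1$ (so the same $(p,r)$ remains a packing); and a White--Whiteley-type result (Lemma \ref{l:dense3}) says a generic choice of edge-direction vectors gives a full-rank rigidity matrix precisely when the graph is $(2,2)$-sparse. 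No Henneberg induction and no transfer from the Euclidean case is needed; the freedom to redesign the body near the contact points replaces both.
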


The hope for Theorem \ref{t:symperfect} would be to use the idea of approximating discs with other types of convex bodies in some way to prove the conjecture of Connelly, Gortler and Thurston \cite{congortheran2019}.

The layout of the paper shall be as follows.
In Section \ref{sec:prelim} we shall set out the required background material for convex geometry and rigidity theory for general normed spaces, as well as defining most of the notation we shall use for later sections.
In Section \ref{sec:stickyrigid} we shall prove Theorem \ref{t:body};
although many of the ideas implemented were originally set out in \cite{congortheran2019},
there are significant technicalities that first need to be addressed.
In Section \ref{sec:regsymbodies} we shall discuss the space of all centrally symmetric convex bodies, its topology and some important subspaces it has that will be required in later sections.
In Section \ref{sec:maxplanar} we shall prove a generalisation of Theorem \ref{t:oded} that allows us to determine how packings will change as we alter the corresponding convex body;
this will allow us to prove a technical result (Corollary \ref{cor:generalpos}) we will require later.
Finally,
in Section \ref{sec:symperfect} we shall prove Theorem \ref{t:symperfect}.

\section{Preliminaries}\label{sec:prelim}

\subsection{Normed spaces and convex bodies}

Let $\| \cdot \|_C$ be a norm of $\mathbb{R}^d$ with \emph{unit ball} $C = \{ x \in \mathbb{R}^d : \| x \|_C \leq 1\}$.
The unit balls of norms have three important properties:
\begin{enumerate}[(i)]
	\item They are centrally symmetric convex bodies.
	\item Two norms are identical if and only if they have the same unit ball.
	\item Two norms are isometric if and only if one unit ball is the linear transformation of the other;
	this follows from Mazur-Ulam's theorem \cite{mazurulam}.
\end{enumerate}
In fact,
if we have a centrally symmetric convex body $C$ then we can define a norm $\| \cdot \|_C$ with
\begin{align*}
	\|x \|_C := \inf \{ \lambda >0 : x \in \lambda C \}.
\end{align*}
Hence,
there is a one to one mapping between the centrally symmetric convex bodies and the norms of $\mathbb{R}^d$.
Given a centrally symmetric convex body $C$,
we see that
the interior $C^o$ of $C$ is exactly the set of points $x$ where $\|x\|_C <1$,
and the boundary $\partial C := C \setminus C^o$ of $C$  is exactly the set of points $x$ where $\|x\|_C =1$.
For any norm $\| \cdot \|_C$,
we define $\| \cdot \|_C^* := \sup_{y \in C} |\cdot . y|$ to be the \emph{dual norm}.

For $x:= (x(1), \ldots,x(d)) \in \mathbb{R}^d$,
we define $[x]_i := x(i)$.
We shall always reserve $\| \cdot\|$ to be the standard Euclidean norm of $\mathbb{R}^d$,
i.e.~for any $x  \in \mathbb{R}^d$ we will have
\begin{align*}
	\|x\| := \sqrt{ [x]_1^2 + \ldots + [x]_d^2}.
\end{align*}
We shall define $\mathbb{B}^d$ to be the closed unit ball of $(\mathbb{R}^d,\| \cdot \|)$;
in the special case of $d=2$ we shall refer to $\mathbb{D} := \mathbb{B}^2$ as the \emph{unit disc} and $\mathbb{S} := \partial \mathbb{B}^2$ as the \emph{unit circle}.
If a normed space $(\mathbb{R}^d,\| \cdot \|_C)$ is isometric to $(\mathbb{R}^d,\| \cdot \|)$ then we define it to be \emph{Euclidean},
and \emph{non-Euclidean} otherwise.
Equivalently,
$(\mathbb{R}^d,\| \cdot \|_C)$ is Euclidean if and only if $C$ is an ellipsoid (i.e.~there exists a linear transform $T$ so that $T(C)=\mathbb{B}^d$).

A convex body $C$ is \emph{strictly convex} if any tangent hyperplane intersects $C$ only at a single point,
and \emph{smooth} if every point in $\partial C$ has exactly one hyperplane tangent to them.
If $C$ is centrally symmetric then we define $\| \cdot \|_C$ to be \emph{strictly convex} (respectively, \emph{smooth}) if $C$ is strictly convex (respectively, \emph{smooth}).
Equivalently,
we can define $\|\cdot \|_C$ to be strictly convex if $\|x+y\|_C <\|x\|_C + \|y\|_C$ for all linearly independent $x,y$,
and $\|\cdot \|_C$ to be smooth if $\| \cdot \|_C$ is differentiable at every non-zero point.
Any convex body that is centrally symmetric, smooth and strictly convex is called a \emph{regular symmetric body}.

For any point $x \in \mathbb{R}^d$,
we define a point $z \in \mathbb{R}^d$ to be a \emph{support} of $x$ (with respect to a centrally symmetric convex body $C \subset \mathbb{R}^d$) if $z.x=\|x\|_C^2$ and $\|z\|_C^*= \|x\|_C$.
It follows from the Hahn-Banach theorem that every point has a support,
and every point is the support of another point.
For any non-zero point $x \in \mathbb{R}^d$,
we say $x$ \emph{smooth} if it has a unique support
and \emph{exposed} if no point that supports $x$ supports any other point.
Since tangent hyperplanes of $C$ correspond to supports of points in the boundary of $C$,
we have the following;
$C$ is smooth (respectively, strictly convex) if and only if every non-zero point is smooth (respectively, exposed).

It can be shown that a non-zero point $x$ is smooth if and only if $\| \cdot\|_C$ is differentiable at $x$,
and the unique support $x$ (denoted by $\varphi_C(x)$) is exactly the derivative of $\frac{1}{2}\|\cdot\|_C^2$ at $x$;
see \cite[Lemma 1]{maxwell} for a detailed proof.
Using the notation for the unique support of a point,
we define the \emph{duality map} $\varphi_C : x \mapsto \varphi_C(x)$ on the set of smooth points plus the zero point,
with $\varphi_C(0)=0$.

\begin{proposition}\label{p:support}
	For any centrally symmetric convex body $C \subset \mathbb{R}^d$,
	the following holds:
	\begin{enumerate}[(i)]
		\item \label{p:support0} Almost all points of $\mathbb{R}^d$ are smooth.
		\item \label{p:support1} $\varphi_C$ is continuous and homogeneous,
		i.e.~$\varphi_C(\alpha x)= \alpha \varphi_C(x)$
		for all smooth points $x$ and $\alpha \in \mathbb{R}$.
		\item \label{p:support2} $\varphi_C$ is injective if and only if $C$ is strictly convex.
		\item \label{p:support3} $\varphi_C$ is surjective if and only if $C$ is smooth.
		\item \label{p:support4} $\varphi_C$ is a homeomorphism of $\mathbb{R}^d$ to itself if and only if $C$ is a regular symmetric body.
	\end{enumerate}
\end{proposition}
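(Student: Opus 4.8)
The plan is to deduce everything from the single convex function $g:=\tfrac12\|\cdot\|_C^2$ — convex, being the composition of the nondecreasing convex map $t\mapsto\tfrac12 t^2$ on $[0,\infty)$ with the norm — together with the cited fact that, for $x\ne 0$, $g$ is differentiable at $x$ exactly when $x$ is smooth, in which case $\varphi_C(x)=\nabla g(x)$; note $g$ is also differentiable at $0$ with $\nabla g(0)=0=\varphi_C(0)$. For the first claim, a finite convex function on $\mathbb{R}^d$ is locally Lipschitz and hence differentiable off a Lebesgue-null set (Rademacher, or the classical differentiability theorem for convex functions), so by the dictionary above the set of non-smooth points is null.

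For homogeneity and continuity: for $\alpha>0$ one checks straight from the definition that $z$ is a support of $x$ iff $\alpha z$ is a support of $\alpha x$, while central symmetry of $C$ gives that $-z$ is a support of $-x$; combining these (with the trivial case $\alpha=0$) yields $\varphi_C(\alpha x)=\alpha\varphi_C(x)$. Continuity at $0$ is immediate from $\|\varphi_C(x)\|_C^*=\|x\|_C$; at a smooth $x\ne 0$, if $x_n\to x$ with each $x_n$ smooth then the $\varphi_C(x_n)$ stay bounded (same identity), and any subsequential limit $z$ satisfies $z.x=\|x\|_C^2$ and $\|z\|_C^*=\|x\|_C$, i.e.\ $z$ is a support of $x$, hence $z=\varphi_C(x)$; thus $\varphi_C(x_n)\to\varphi_C(x)$. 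Equivalently, the subdifferential of a convex function is a closed multifunction, and a closed multifunction that is single-valued at a point is continuous there.

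For the remaining parts I would first record the polarity dictionary that $C$ is smooth iff $C^*$ (the unit ball of $\|\cdot\|_C^*$) is strictly convex, and $C$ is strictly convex iff $C^*$ is smooth — each direction being a short computation from the definitions of dual norm and support together with the existence of supports. Granting this, the two ``positive'' implications are easy. If $C$ is strictly convex and $\varphi_C(x)=\varphi_C(y)=z$, then $x=y=0$ if $z=0$, and otherwise $\|x\|_C=\|z\|_C^*=\|y\|_C$, so by homogeneity we may rescale this common value to $1$; then $x$ and $y$ both lie in the exposed face $\{u\in C:z.u=1\}\subseteq\partial C$, so $x=y$, giving injectivity. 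If $C$ is smooth then $\varphi_C$ is defined on all of $\mathbb{R}^d$ and every $z$ is a support of some $x$, which must then be its unique support, so $\varphi_C(x)=z$ and $\varphi_C$ is onto. The implication ``$C$ not smooth $\Rightarrow\varphi_C$ not onto'' is also short: then $C^*$ is not strictly convex, so $\partial C^*$ contains a nondegenerate segment $[w_1,w_2]$, and if its midpoint equalled $\varphi_C(x)$ for some smooth $x$ one computes that $w_1$ and $w_2$ would both support $x$, contradicting smoothness of $x$. Finally, if $C$ is a regular symmetric body then $\varphi_C$ is everywhere defined, continuous, and bijective, hence a homeomorphism of $\mathbb{R}^d$ by invariance of domain (or by properness, since $\|\varphi_C(x)\|_C^*=\|x\|_C$); the converse is immediate, since a homeomorphism of $\mathbb{R}^d$ is in particular everywhere defined and bijective.

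The one step I expect to be genuinely delicate is ``$C$ not strictly convex $\Rightarrow\varphi_C$ not injective'': because $\varphi_C$ is only a partial map, non-injectivity is not merely dual to non-smoothness, and one must actually exhibit smooth points that get identified. Here the dictionary gives that $C^*$ is not smooth, and the key input I would need is that a non-smooth convex body has a \emph{non-smooth exposed point} $v_0$; then the exposed face $F=\{x\in C:v_0.x=1\}$ of $C$ is positive-dimensional, and because $v_0$ is exposed it is the unique support of every point in the relative interior of $F$, so $\varphi_C$ collapses that infinite set to $v_0$. The needed lemma I would prove by taking a boundary point of $C^*$ with two linearly independent outer normals $n_1,n_2$, using that the intersection of their exposed faces is again an exposed face containing that point, and then repeatedly exposing inside it until reaching an exposed point whose normal cone still contains $n_1$ and $n_2$; this relies only on the standard fact that an exposed face of an exposed face is exposed.
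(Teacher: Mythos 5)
Your proposal tracks the paper's approach closely for parts (i), (ii), (iv) and (v): the paper invokes \cite[Theorem 25.5]{rockafellar} for (i)--(ii) (which is what your subdifferential/closed-multifunction argument amounts to), gives essentially your Hahn--Banach argument for the forward direction of (iv), and proves (v) by invariance of domain. You in fact supply a detail the paper glosses over, namely the ``only if'' direction of (iv) (non-smooth $C$ implies $\varphi_C$ not onto), and your segment-midpoint argument for that is correct.

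The genuine gap is in the ``only if'' direction of (iii), where you depart from the paper (which simply cites \cite[Proposition 5.4.25]{megginson}) and attempt a direct proof. Your reduction to the lemma ``a non-smooth convex body has a non-smooth exposed point'' is a sensible move, and your deduction from that lemma (that $v_0$ exposed forces every point of the relative interior of $F$ to have $v_0$ as unique support) is correct. But the alleged ``standard fact that an exposed face of an exposed face is exposed,'' on which your proof of the lemma rests, is \emph{false}. Counterexample: let $B = \conv\bigl(\mathbb{D} \cup \{(0,2),(0,-2)\}\bigr) \subset \mathbb{R}^2$ (a centrally symmetric ``double ice-cream cone''). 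The segment $F$ joining the tangency point $q := (\sqrt{3}/2,\,1/2)$ to the apex $(0,2)$ is an exposed face of $B$ (it is $B \cap H$ for $H$ the common tangent line), and $\{q\}$ is an exposed face of the segment $F$; yet $\{q\}$ is \emph{not} an exposed face of $B$, because $q$ is a smooth point of $B$ whose unique supporting line is $H$, and $B \cap H = F \supsetneq \{q\}$. So your iterative ``repeatedly exposing inside'' step breaks down. (In $d=2$ you never need the iteration -- there $F_{n_1}\cap F_{n_2}=\{p_0\}$ automatically since two independent normals cannot both be orthogonal to a segment -- but the proposition is stated for all $d$, and for $d\ge 3$ the argument is incomplete. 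Whether the lemma itself holds in all dimensions is a delicate question in the facial structure of convex bodies, and it is not needed if one simply cites the duality-map literature as the paper does.)
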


\begin{proof}
	(\ref{p:support0}) \& (\ref{p:support1}): 
	Since the norm $\|\cdot \|_C$ is positively homogeneous,
	the duality map $\varphi_C$ is homogeneous.
	As $\| \cdot \|_C$ is convex,
	both (\ref{p:support0}), (\ref{p:support1}) now hold due to \cite[Theorem 25.5]{rockafellar}.
	
	(\ref{p:support2}):
	See \cite[Proposition 5.4.25]{megginson}.
	
	(\ref{p:support3}):
	The support of $0$ is itself,
	so choose any non-zero $z \in \mathbb{R}^d$.
	By the Hahn-Banach theorem,
	$z$ is the support of some point $x \neq 0$.
	As $x$ is smooth then $\varphi_C(x)=z$.	
	
	(\ref{p:support4}):
	By (\ref{p:support0})--(\ref{p:support3}),
	$C$ is strictly convex and smooth if and only if $\varphi_C$ is a continuous bijective map from $\mathbb{R}^d$ to $\mathbb{R}^d$.
	By Brouwer's theorem for invariance of domain (see \cite[Theorem 1.18]{manifoldlee}),
	$\varphi_C$ is a continuous bijective map if and only if it is a homeomorphism as required.
\end{proof}

For each centrally symmetric convex body $C \subset \mathbb{R}^d$ we define
\begin{align*}
	C^* := \{ x \in \mathbb{R}^d : x.y \leq 1 \text{ for all } y \in C \}.
\end{align*}
It is immediate that $C^{**}=C$,
$C \subset D$ implies $D^* \subset C^*$ and $(\lambda C)^* = \lambda^{-1}C^*$ for all $\lambda >0$.

\begin{proposition}\label{p:polar}
	For any centrally symmetric convex body $C \subset \mathbb{R}^d$,
	the following holds:
	\begin{enumerate}[(i)]
		\item \label{p:polar1} For all $x \in \mathbb{R}^d$,
		$\|x\|_{C^*} = \|x\|_C^*$.
		\item \label{p:polar2} If $C$ a regular symmetric body then $\varphi_{C^*} = \varphi_C^{-1}$.
		\item \label{p:polar3} $C$ is smooth (respectively, strictly convex) if and only if $C^*$ is strictly convex (respectively, smooth).
	\end{enumerate}
\end{proposition}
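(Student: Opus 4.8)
The plan is to reduce all three parts to a single symmetric reformulation of the support relation, which itself follows quickly from (i). For (i) I would argue directly from the definitions. Since $C$ is bounded with non-empty interior the same is true of $C^*$, so $\|\cdot\|_{C^*}$ is a genuine norm; and for non-zero $x$ and $\lambda>0$, the membership $x\in\lambda C^*$ says $(x/\lambda)\cdot y\le 1$ for all $y\in C$, i.e.\ $x\cdot y\le\lambda$ for all $y\in C$, and since $C=-C$ this is equivalent to $\sup_{y\in C}|x\cdot y|=\|x\|_C^*\le\lambda$. Taking the infimum over admissible $\lambda$ gives $\|x\|_{C^*}=\|x\|_C^*$.

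Next I would record the key fact: for non-zero $x,z\in\mathbb{R}^d$, the point $z$ is a support of $x$ with respect to $C$ if and only if $x$ is a support of $z$ with respect to $C^*$. For the forward direction, suppose $z\cdot x=\|x\|_C^2$ and $\|z\|_C^*=\|x\|_C$. Applying (i) to $C$ rewrites the second equation as $\|z\|_{C^*}=\|x\|_C$; applying (i) to $C^*$ and using $C^{**}=C$ gives $\|x\|_{C^*}^*=\|x\|_{C^{**}}=\|x\|_C=\|z\|_{C^*}$; and $x\cdot z=\|x\|_C^2=\|z\|_{C^*}^2$. These are precisely the conditions for $x$ to support $z$ with respect to $C^*$. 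Applying this implication to $C^*$ in place of $C$ (again using $C^{**}=C$) yields the converse.

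For (iii) I would use the characterisations recalled just before the proposition: $C$ is smooth iff every non-zero point has a unique support, and strictly convex iff every non-zero point is exposed. Suppose $C$ is smooth and some $x$ supports both $z$ and $z'$ with respect to $C^*$; by the key fact $z$ and $z'$ both support $x$ with respect to $C$, so smoothness of $C$ forces $z=z'$, and hence every non-zero point is exposed for $C^*$, i.e.\ $C^*$ is strictly convex. Suppose instead $C$ is strictly convex and $x,x'$ both support a non-zero $z$ with respect to $C^*$ (at least one support exists by the Hahn--Banach theorem); then $z$ supports both $x$ and $x'$ with respect to $C$, and since $x$ is exposed we get $x=x'$, so $C^*$ is smooth. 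Applying these two implications with $C^*$ in place of $C$ and using $C^{**}=C$ produces the reverse implications, which is exactly (iii).

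Finally, for (ii), if $C$ is a regular symmetric body then so is $C^*$ by (iii), so by Proposition~\ref{p:support}(\ref{p:support4}) both $\varphi_C$ and $\varphi_{C^*}$ are self-homeomorphisms of $\mathbb{R}^d$ and every non-zero point is smooth with respect to either body. For non-zero $x$, set $z:=\varphi_C(x)$; the key fact says $x$ supports $z$ with respect to $C^*$, and this support is unique since $C^*$ is smooth, so $\varphi_{C^*}(z)=x$. Combined with $\varphi_C(0)=\varphi_{C^*}(0)=0$ this gives $\varphi_{C^*}\circ\varphi_C=\operatorname{id}$, and since both maps are bijections, $\varphi_{C^*}=\varphi_C^{-1}$. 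I do not expect a genuine obstacle here; the only steps needing care are the bookkeeping in the symmetric support fact (invoking (i) for both $C$ and $C^*$, together with $C^{**}=C$) and, in (iii), obtaining the two equivalences via the bootstrap $C\leftrightarrow C^*$ rather than by any appeal to regularity.
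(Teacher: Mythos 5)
Your proposal is correct, and the overall strategy is the same as the paper's for (i) and (ii) but genuinely different for (iii). For (i) both arguments are the same direct computation from the definition of the polar. For (ii) the paper's entire proof is the observation that $y$ supporting $x$ with respect to $C$ is equivalent to $x$ supporting $y$ with respect to $C^*$; you give a full derivation of this symmetry from (i) together with $C^{**}=C$, which the paper leaves implicit, and then you spell out why smoothness of $C^*$ and the bijectivity of $\varphi_C$ (via Proposition~\ref{p:support}(\ref{p:support4})) close the argument. The genuine divergence is in (iii): the paper simply cites Megginson, Proposition~5.4.7, whereas you derive the equivalence from the same support-symmetry lemma, using the paper's own characterisations of smooth and strictly convex in terms of unique supports and exposed points, together with a bootstrap via $C^{**}=C$ to get both directions. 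Your route is more self-contained and makes the logical dependencies transparent — (iii) and (ii) both reduce to (i) plus elementary bookkeeping — at the cost of being longer than a one-line citation. One small point worth making explicit in a writeup: in the ``$C$ strictly convex $\Rightarrow C^*$ smooth'' step you use that any support $x$ of a non-zero $z$ with respect to $C^*$ is itself non-zero (so that exposedness of $x$ applies); this follows since $\|x\|_{C^*}^*=\|z\|_{C^*}>0$, and you do in effect use this but do not flag it.
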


\begin{proof}
	(\ref{p:polar1}): 
	This holds as
	\begin{align*}
		\|x\|_{C^*} = \inf \{ \lambda >0 : x \in \lambda C^* \} =  \inf \{ \lambda >0 : x.y \leq \lambda \text{ for all } y \in C \} = \sup \{ x.y : y \in C \} = \|x\|_C^*.
	\end{align*}
	
	(\ref{p:polar2}):
	This can be seen by noting that if $y$ is the the support of $x$ with respect to $C$ then $x$ is the support of $y$ with respect to $C^*$.
	
	(\ref{p:polar3}):
	See\cite[Proposition 5.4.7]{megginson}.
\end{proof}

\subsection{Rigidity of frameworks in normed spaces}\label{sec:prelimrig}

A \emph{framework (in $\mathbb{R}^d$)} is a pair $(G,p)$ where $G =(V,E)$ is a (finite simple) graph and $p := (p_v)_{v \in V} \in \mathbb{R}^{d|V|}$;
we define $p$ to be a \emph{placement} of $G$ in $\mathbb{R}^d$.
Given a norm $\| \cdot \|_C$ of $\mathbb{R}^d$ with isometry group $\Iso (\| \cdot \|_C)$,
we define two placements $p,q$ of $G$ to be \emph{congruent} (which we denote by $p \sim q$) if there exists $g \in \Iso(\|\cdot\|_C)$ so that $g(p_v) =q_v$ for all $v \in V$.

The \emph{(squared) rigidity map (of $G$ in $(\mathbb{R}^d,\| \cdot \|_C)$)} is the map
\begin{align*}
	f_{G,C} : \mathbb{R}^{d|V|} \rightarrow \mathbb{R}^{|E|}, ~ (p_v)_{v \in V} \mapsto \left(\frac{1}{2}\|p_v-p_w\|_C^2 \right)_{vw \in E}.
\end{align*}
A framework $(G,p)$ is \emph{well-positioned} (with respect to $\| \cdot \|_C$) if $f_{G,C}(p) \in \mathbb{R}^{|E|}_{>0}$
(i.e.~no edge has length $0$) and $f_{G,C}$ is differentiable at $p$.
If $(G,p)$ is well-positioned then we define the \emph{rigidity matrix (of $(G,p)$ in $(\mathbb{R}^d,\| \cdot \|_C)$)} to be the $|E| \times d|V|$ matrix $R_C(G,p)$ with entries 
\begin{align*}
	R_C(G,p)_{e,(v,i)}:=
	\begin{cases}
		[\varphi_C(p_v-p_w)]_i &\text{if } e = vw, \\
		0 &\text{otherwise}.
	\end{cases}
\end{align*}
As $\varphi_C(x)$ is the derivative of $\frac{1}{2}\|\cdot\|_C$ at $x$,
it follows that $R_C(G,p)$ is the derivative of $f_{G,C}$ at $p$.

Given $\mathcal{T}(\| \cdot \|_C)$ is the tangent space of $\Iso(\| \cdot \|_C)$ at the identity map
and $k$ is the dimension of $\Iso(\| \cdot \|_C)$ 
(i.e.~$k := \dim \mathcal{T}(\| \cdot \|_C)$),
we define the following for any framework $(G,p)$ in a normed space $(\mathbb{R}^d,\| \cdot \|_C)$:
\begin{enumerate}[(i)]
	\item A \emph{flexible motion} of $(G,p)$ is a continuous path $\alpha : [0,\delta] \rightarrow \mathbb{R}^{d|V|}$ ($\delta>0$) where $\alpha(0) = p$ and $f_{G,C}(\alpha(t)) = f_{G,C}(p)$ for all $t \in [0,\delta]$.
	If $\alpha(t) \sim p$ for all $t \in [0,\delta]$ then $\alpha$ is \emph{trivial},
	otherwise $\alpha$ is \emph{non-trivial}.
	\item Given $(G,p)$ is well-positioned,
	we define any element $u \in \mathbb{R}^{d|V|}$ to be an \emph{(infinitesimal) flex} of $(G,p)$ if $u \in \ker R_C(G,p)$.
	If there exists an affine map $g  \in \mathcal{T}(\| \cdot \|_C)$ where $g(p_v)=u_v$ then $u$ is \emph{trivial},
	otherwise it is \emph{non-trivial}.
	\item If every flexible motion of $(G,p)$ is trivial then $(G,p)$ is \emph{rigid} (with respect to $\| \cdot \|_C$),
	otherwise $(G,p)$ is \emph{flexible} (with respect to $\| \cdot \|_C$).
	\item If $(G,p)$ is well-positioned and every flex of $(G,p)$ is trivial then $(G,p)$ is \emph{infinitesimally rigid} (with respect to $\| \cdot \|_C$),
	otherwise $(G,p)$ is \emph{infinitesimally flexible} (with respect to $\| \cdot \|_C$).
	Equivalently,
	$(G,p)$ is infinitesimally rigid if and only if $\ker R_C(G,p) = k$.
	\item If $(G,p)$ is well-positioned and $\rank R_C(G,p) = |E|$ then $(G,p)$ is \emph{independent} (with respect to $\| \cdot \|_C$),
	otherwise $(G,p)$ is \emph{dependent} (with respect to $\| \cdot \|_C$).
	If $(G,p)$ is both infinitesimally rigid and independent then $(G,p)$ is \emph{minimally rigid} (with respect to $\| \cdot \|_C$).
\end{enumerate}

\begin{remark}
	While not immediately obvious,
	the definition for independence stated here is equivalent for frameworks in Euclidean spaces to that given in Section \ref{sec:intro}.
	We can define independence for frameworks in normed spaces in a similar fashion to how it is in the introduction by replacing each case of $(p_v-p_w)$ with the support of $(p_v-p_w)$.
\end{remark}

The following useful result tells us that the two types of rigidity mentioned above are equivalent when a framework $(G,p)$ is \emph{constant} (with respect to $\|\cdot\|_C$),
i.e.~there exists a neighbourhood $U \subset \mathbb{R}^{d|V|}$ of $p$ where for all $q \in U$,
$(G,q)$ is well-positioned and $\rank R_C(G,q) = \rank R_C(G,p)$.

\begin{theorem}\label{t:asiroth}\cite{dew1}
	Let $(G,p)$ be a framework in $\mathbb{R}^d$ that is constant with respect to the norm $\| \cdot \|_C$.
	Then the following are equivalent:
	\begin{enumerate}[(i)]
		\item $(G,p)$ is infinitesimally rigid.
		\item $(G,p)$ is rigid.
	\end{enumerate}
\end{theorem}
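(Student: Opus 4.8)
The plan is to adapt the classical Asimow--Roth argument, using the constant-rank theorem in place of real-analyticity of the rigidity map (which we cannot invoke, since $\|\cdot\|_C$ need not be analytic); the ``constant'' hypothesis is exactly what makes this substitution legitimate. Write $r:=\rank R_C(G,p)$, let $O_p := \{q\in\mathbb{R}^{d|V|}: q\sim p\}$ be the set of placements congruent to $p$, and let $\mathcal{T}_p := \{(g(p_v))_{v\in V} : g \in \mathcal{T}(\|\cdot\|_C)\}$ be the space of trivial infinitesimal flexes. By Mazur--Ulam the isometry group is a closed subgroup of the affine group acting properly on $\mathbb{R}^{d|V|}$, so $O_p$ is an embedded $C^\infty$-submanifold with $T_pO_p = \mathcal{T}_p$; differentiating $f_{G,C}\circ g = f_{G,C}$ for $g\in\Iso(\|\cdot\|_C)$ gives $\mathcal{T}_p\subseteq\ker R_C(G,p)$, hence $\dim\ker R_C(G,p)\ge\dim O_p$ always, with $(G,p)$ infinitesimally rigid precisely when equality holds.

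First I would exploit the hypothesis directly: since $(G,p)$ is constant there is a neighbourhood $U$ of $p$ on which $f_{G,C}$ is $C^1$ (continuity of $R_C(G,\cdot)$ here follows from continuity of $\varphi_C$ on smooth points, Proposition~\ref{p:support}(\ref{p:support1})) and has locally constant rank $r$. The constant-rank theorem then gives, after shrinking $U$ to the preimage of a coordinate cube, that the fibre $F := f_{G,C}^{-1}(f_{G,C}(p))\cap U$ is a connected $C^1$-submanifold of dimension $d|V|-r = \dim\ker R_C(G,p)$ with $T_pF = \ker R_C(G,p)$, and $O_p\cap U\subseteq F$.

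For (i) $\Rightarrow$ (ii): if $(G,p)$ is infinitesimally rigid then $\dim F = \dim\ker R_C(G,p) = \dim O_p$, so $O_p\cap U$ is a full-dimensional -- hence relatively open -- submanifold of the connected manifold $F$, and it is also relatively closed in $F$ (as $O_p$ is closed in $\mathbb{R}^{d|V|}$, the action being proper); thus $F = O_p\cap U$. Any flexible motion of $(G,p)$ lies in $f_{G,C}^{-1}(f_{G,C}(p))$ and starts at $p$, so it enters $F=O_p$ for small times and is congruent to $p$ near $0$; the standard connectedness argument, unchanged from the Euclidean setting, propagates this along the whole motion, giving rigidity. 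For (ii) $\Rightarrow$ (i) I argue the contrapositive: if $(G,p)$ is infinitesimally flexible then $\dim F = \dim\ker R_C(G,p) > \dim O_p$, so $F\not\subseteq O_p$; choosing $q\in F\setminus O_p$ and a $C^1$-path in $F$ from $p$ to $q$ produces a flexible motion reaching a placement not congruent to $p$, so $(G,p)$ is flexible.

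The genuinely delicate point is not in the constant-rank machinery but in the bookkeeping around $O_p$ and $\mathcal{T}_p$: one must verify that $O_p$ is an \emph{embedded} $C^1$-submanifold near $p$ with $\dim O_p=\dim\mathcal{T}_p$, and that ``infinitesimally rigid $\iff\dim\ker R_C(G,p)=\dim O_p$'' matches the definition used in the paper, i.e.\ that $\dim\mathcal{T}_p = k$ (equivalently, that the stabiliser of $p$ in $\Iso(\|\cdot\|_C)$ is discrete). For planar frameworks this holds as soon as $p$ is non-constant: a planar norm has $\dim\Iso(\|\cdot\|_C)\in\{2,3\}$, the translations already act freely on placements, and no non-trivial linear isometry of the plane fixes a non-degenerate point set -- so the two notions of infinitesimal rigidity coincide and the statement is the expected normed-space analogue of Asimow--Roth.
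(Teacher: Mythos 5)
The paper does not prove Theorem~\ref{t:asiroth}; it cites it as a black box from \cite{dew1}, so there is no in-paper argument to compare against. That said, your proposal correctly reproduces the expected argument: the hypothesis that $(G,p)$ is constant is precisely what replaces real-analyticity of the Euclidean rigidity map, and with it the $C^1$ constant-rank theorem (applicable because $\varphi_C$ is continuous on smooth points, Proposition~\ref{p:support}(\ref{p:support1}), so $f_{G,C}$ is $C^1$ on the neighbourhood of well-positioned placements that constancy supplies) produces the fibre $F$ as a connected $C^1$-submanifold with $T_pF=\ker R_C(G,p)$, after which the open-and-closed comparison with the orbit $O_p$ gives both implications exactly as in Asimow--Roth. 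The one substantive remark worth making is about the caveat you flag at the end: the dimension count you actually use is $\dim\ker R_C(G,p)$ versus $\dim O_p$, and ``every flex is trivial'' is equivalent to $\ker R_C(G,p)=\mathcal{T}_p=T_pO_p$, so your argument is complete and correct with the paper's primary (``every flex is trivial'') definition of infinitesimal rigidity and needs no separate hypothesis on the stabiliser. The identity $\dim\mathcal{T}_p=k$ is only needed to reconcile that definition with the paper's ``equivalently, $\dim\ker R_C(G,p)=k$'' reformulation; that is a gap in the paper's stated definitions for degenerate placements, not in your proof, and you correctly observe that it is harmless in the planar setting where the results of this paper live (and indeed whenever the linear isometry group of $\|\cdot\|_C$ is finite, since then $\mathcal{T}(\|\cdot\|_C)$ consists of translations alone and $\dim\mathcal{T}_p=d=k$ automatically).
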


We can also characterise which graphs will have an infinitesimally rigid placement in a given normed plane solely from the graph's combinatorics.
We remember that a well-positioned framework $(G,p)$ is \emph{regular} (with respect to the norm $\|\cdot\|_C$) if 
$\rank R_C(G,p) \geq \rank R_C(G,q)$
for any well-positioned placement $q$ of $G$.

\begin{theorem}\label{t:laman}\cite{geiringer}\cite{dew2}
	Let $(G,p)$ be a framework in $\mathbb{R}^2$ that is regular with respect to the norm $\| \cdot \|_C$.
	Let $k =3$ if $\| \cdot \|_C$ is Euclidean and $k =2$ otherwise.
	Then the following holds:
	\begin{enumerate}[(i)]
		\item $(G,p)$ is independent if and only if $G$ is $(2,k)$-sparse.
		\item $(G,p)$ is infinitesimally rigid if and only if $G$ contains a $(2,k)$-tight spanning subgraph.
	\end{enumerate}
\end{theorem}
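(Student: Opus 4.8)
The plan is to exploit the fact that there are only countably many finite graphs, so that the ``one comeagre set good for every $G$'' form of the statement follows from the ``a comeagre set for each $G$'' form together with the Baire property. The space of centrally symmetric convex bodies in the plane, with the topology set up in Section \ref{sec:regsymbodies} (in which ``comeagre'' is the notion appearing in Proposition \ref{p:baire}), forms a Baire space, so a countable intersection of comeagre subsets is again comeagre. Enumerate the $(2,2)$-sparse planar graphs up to isomorphism as $G_1,G_2,\dots$; this is possible since there are only finitely many graphs on each fixed vertex set. It therefore suffices to show that for each $i$ the set $\mathcal{G}_i$ of centrally symmetric convex bodies $C$ admitting an independent $C$-packing with contact graph $G_i$ is comeagre, and then to take $\mathcal{G}:=\bigcap_i \mathcal{G}_i$, further intersected with the comeagre set of Proposition \ref{p:baire} and with the comeagre set of non-elliptical bodies described below.

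\textbf{The single-graph statement.} Fix a $(2,2)$-sparse planar graph $G=(V,E)$. Schramm's theorem (Theorem \ref{t:oded}) already guarantees that every regular symmetric body $C$ carries \emph{some} $C$-packing with contact graph $G$; what is needed is one whose placement is in sufficiently general position. This is precisely the role of Corollary \ref{cor:generalpos}, whose proof rests on the deformation version of Schramm's theorem developed in Section \ref{sec:maxplanar}: tracking how a packing moves as the body is perturbed yields that, outside a meagre set of bodies $C$, there is a $C$-packing $P=(G,p,r)$ with contact graph $G$ whose framework $(G,p)$ is \emph{regular} with respect to $\|\cdot\|_C$ in the sense of Theorem \ref{t:laman}. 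Intersect this comeagre set with the comeagre set of regular symmetric bodies (Proposition \ref{p:baire}) and with the complement of the set of linear images of the disc --- the latter being a closed, finite-dimensional, nowhere-dense family and hence meagre. For a body $C$ in the resulting comeagre set, $\|\cdot\|_C$ is non-Euclidean and every non-zero point is smooth, so every edge of the packing (having positive $\|\cdot\|_C$-length) makes $(G,p)$ well-positioned; since $(G,p)$ is regular and $G$ is $(2,2)$-sparse, Theorem \ref{t:laman}(i) with $k=2$ gives that $(G,p)$ is independent, and hence so is the packing $P$ (the rigidity matrix, and therefore independence, does not involve the radii $r$). Thus each $\mathcal{G}_i$ is comeagre, and the countable intersection above produces the required $\mathcal{G}$: for $C\in\mathcal{G}$ and any $(2,2)$-sparse planar $G$, pick $i$ with $G\cong G_i$ and transport the independent $C$-packing with contact graph $G_i$ to one with contact graph $G$.

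\textbf{Where the difficulty lies.} The genuine work is Corollary \ref{cor:generalpos} --- promoting ``there exists a packing'' (Schramm) to ``there exists a packing with a regular placement, for a comeagre set of $C$''. Schramm's theorem alone does not supply this, since the packing it returns can be forced into a special position, for instance a symmetric one when $G$ has nontrivial automorphisms, and symmetric placements need not be regular. The strategy for that corollary is to use the results of Section \ref{sec:maxplanar} --- which describe how the essentially unique packing of a maximal planar graph varies with the body (Theorem \ref{t:odedmax} and its generalisation), and more generally how packings respond to perturbations of $C$ --- to show that the placement depends on $C$ continuously, and in a stratified/analytic way allowing a genericity argument, so that the locus of bodies for which every $C$-packing of $G$ has non-maximal rigidity-matrix rank is nowhere dense. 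The technical obstacles there are the non-uniqueness of the packing when $G$ is not maximal planar and the need to break the symmetry forced by automorphisms of $G$ using genericity of $C$; once these are handled, the Baire-category bookkeeping above completes the proof. A minor point to confirm along the way is that independence of a $C$-packing coincides with framework independence of $(G,p)$ in $(\mathbb{R}^2,\|\cdot\|_C)$, as indicated in the remark following the rigidity definitions in Section \ref{sec:prelimrig}.
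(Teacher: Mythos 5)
Your proposal does not address the statement in question. Theorem \ref{t:laman} is a Laman--Pollaczek-Geiringer type theorem about a \emph{single framework} $(G,p)$ that is regular with respect to a fixed norm $\|\cdot\|_C$: it asserts that such a framework is independent exactly when $G$ is $(2,k)$-sparse, and infinitesimally rigid exactly when $G$ contains a $(2,k)$-tight spanning subgraph, with $k=3$ in the Euclidean case and $k=2$ otherwise. What you have written is instead an outline of the proof of Theorem \ref{t:symperfect} (the comeagre-set realisation result for $(2,2)$-sparse planar graphs), built on Baire category, Schramm's theorem, and Corollary \ref{cor:generalpos}. None of that machinery bears on the combinatorial characterisation of independence and infinitesimal rigidity for regular placements, which concerns arbitrary graphs (not only planar ones, and not packings) and a fixed norm rather than a varying convex body.

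Worse, your argument explicitly invokes ``Theorem \ref{t:laman}(i) with $k=2$'' to conclude that a regular $(2,2)$-sparse framework is independent, so as a purported proof of Theorem \ref{t:laman} it is circular. In the paper this theorem carries no proof at all: it is imported from the literature (the Euclidean case $k=3$ from Pollaczek-Geiringer/Laman, the non-Euclidean case $k=2$ from \cite{dew2}). An actual proof would require genuinely different ingredients: an analysis of the rigidity matrix $R_C(G,p)$ at regular placements, the fact that the space of trivial flexes has dimension $k$ (3 for Euclidean norms, 2 for non-Euclidean ones, reflecting the smaller isometry group), and an inductive construction of $(2,k)$-tight graphs (Henneberg-type extensions) together with a proof that these moves preserve independence/rank at regular placements. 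You would need to either reproduce such an argument or explicitly defer to the cited sources; the Baire-category and packing-deformation material you supply does neither.
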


Using the following result we see how the two previous results are related in smooth normed spaces.

\begin{proposition}\label{p:smoothreg}
	Let $(G,p)$ be a framework in $\mathbb{R}^d$ that is regular with respect to the norm $\| \cdot \|_C$.
	If $C$ is smooth then $(G,p)$ is constant.
\end{proposition}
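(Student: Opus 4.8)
The plan is to exploit the fact that, for a smooth body $C$, being well-positioned is an open condition and the rigidity matrix varies continuously, so that the rank of $R_C(G,\cdot)$ cannot drop in a neighbourhood of a regular placement. The whole argument is soft.

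First I would record the consequences of smoothness. Since $C$ is smooth, every non-zero point of $\mathbb{R}^d$ is a smooth point, so the duality map $\varphi_C$ is defined on all of $\mathbb{R}^d$ and, by Proposition \ref{p:support}\,(\ref{p:support1}), it is continuous there; moreover $\varphi_C(x)$ is the derivative of $\tfrac12\|\cdot\|_C^2$ at every non-zero $x$, so $f_{G,C}$ is differentiable at any placement $q$ with $q_v \neq q_w$ for all $vw \in E$, and any such $q$ is automatically well-positioned. Hence the set
\begin{align*}
	U_0 := \{\, q \in \mathbb{R}^{d|V|} : q_v \neq q_w \text{ for all } vw \in E \,\}
\end{align*}
is an open neighbourhood of $p$ (note that $(G,p)$ is well-positioned, so $p \in U_0$) consisting entirely of well-positioned placements of $G$.

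Next I would note that on $U_0$ the matrix-valued map $q \mapsto R_C(G,q)$ is continuous, since each entry is either identically $0$ or of the form $q \mapsto [\varphi_C(q_v - q_w)]_i$, a composition of continuous maps. As matrix rank is lower semicontinuous, the set
\begin{align*}
	U := \{\, q \in U_0 : \rank R_C(G,q) \geq \rank R_C(G,p) \,\}
\end{align*}
is open and contains $p$. But every $q \in U$ lies in $U_0$ and is therefore well-positioned, so regularity of $(G,p)$ gives $\rank R_C(G,q) \leq \rank R_C(G,p)$; combined with the defining inequality for $U$ this yields $\rank R_C(G,q) = \rank R_C(G,p)$ for all $q \in U$. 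Thus $U$ witnesses that $(G,p)$ is constant.

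I do not expect a genuine obstacle here. The only step that needs a moment's care is the reduction of ``well-positioned'' to the open non-degeneracy condition $q_v \neq q_w$ for edges $vw$, which uses smoothness of $C$ essentially: without it, $f_{G,C}$ may fail to be differentiable even at placements with all edges non-degenerate, and the conclusion can genuinely fail.
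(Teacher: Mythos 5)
Your proof is correct and takes essentially the same route as the paper: smoothness of $C$ makes the set of well-positioned placements open and the map $q \mapsto R_C(G,q)$ continuous, from which openness of the set of regular placements follows. The only difference is that you inline this last step (via lower semicontinuity of rank) rather than citing \cite[Lemma 4.4]{dew1} as the paper does, which makes your argument self-contained but otherwise identical in substance.
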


\begin{proof}
	As $\| \cdot\|_C$ is continuously differentiable on $\mathbb{R}^d\setminus \{0\}$ (Proposition \ref{p:support}(\ref{p:support1})),
	the set of well-positioned placements is an open subset of $\mathbb{R}^{d|V|}$.
	By \cite[Lemma 4.4]{dew1},
	the set of regular placements of $G$ is an open subset of the set of well-positioned placements of $G$,
	hence every regular placement is constant.
\end{proof}

\section{Sticky rigidity for packings with random radii}\label{sec:stickyrigid}

\subsection{Centrally symmetric convex body packings}\label{sec:defnsticky}

Let $C$ be a centrally symmetric convex body in $\mathbb{R}^d$ and $P = \{C_v :v \in V\}$ be the $C$-packing uniquely defined by the triple $(G,p,r)$.
It is immediate that $\|p_v-p_w\|_C \geq r_v + r_w$ for all distinct $v,w \in V$,
with equality if and only if $vw \in E$.
We shall refer to the pair $(G,p)$ as the \emph{contact framework} of $P$.

For a given centrally symmetric convex body $C$ and $C$-packing $P=(G,p,r)$,
we define the following terminology:
\begin{enumerate}[(i)]
	\item $P$ is \emph{well-positioned}/\emph{regular}/\emph{constant} if $(G,p)$ is well-positioned/regular/constant with respect to $\|\cdot\|_C$.
	\item $P$ is \emph{sticky rigid} if $(G,p)$ is rigid with respect to $\|\cdot\|_C$,
	otherwise $P$ is \emph{sticky flexible}.
	\item Given $P$ is well-positioned,
	$P$ is \emph{sticky infinitesimally rigid} if $(G,p)$ is infinitesimally rigid with respect to $\|\cdot\|_C$,
	otherwise $P$ is \emph{sticky infinitesimally flexible}.
	\item Given $P$ is well-positioned,
	$P$ is \emph{independent} if $(G,p)$ is independent with respect to $\|\cdot\|_C$,
	otherwise $P$ is \emph{dependent}.
\end{enumerate}

Given a centrally symmetric body $C$,
a graph $G=(V,E)$ and $r \in \mathbb{R}^{|V|}_{>0}$,
we define 
\begin{eqnarray*}
	S_{G,C} &:=& \left\{ (q,s) \in \mathbb{R}^{d|V|} \times \mathbb{R}^{|V|}_{>0}: (G,q,s) \text{ is a $C$-packing} \right\}, \\
	S_{G,C}(r) &:=& \left\{ p \in \mathbb{R}^{d|V|}: (p,r) \in S_{G,C} \right\}.
\end{eqnarray*}
We immediately notice that $P$ is sticky rigid if and only if there is no continuous path in the quotient space $S_{G,C}(r)/\Iso(\|\cdot\|_C)$ that passes through $p$.
Furthermore,
by Proposition \ref{p:smoothreg} and Theorem \ref{t:asiroth} we have the immediate following result.

\begin{proposition}\label{p:infsr}
	Let $C$ be a smooth centrally symmetric convex body.
	Then any infinitesimally rigid $C$-packing is rigid.
\end{proposition}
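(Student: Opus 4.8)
The plan is to reduce the statement to the two cited results by checking that their hypotheses hold for the contact framework of the packing. Let $P=(G,p,r)$ be an infinitesimally rigid $C$-packing with $C$ smooth, and let $(G,p)$ be its contact framework. First I would observe that $(G,p)$ is well-positioned with respect to $\|\cdot\|_C$: since $P$ is a packing we have $\|p_v-p_w\|_C=r_v+r_w>0$ for every $vw\in E$, so $f_{G,C}(p)\in\mathbb{R}^{|E|}_{>0}$, and by Proposition~\ref{p:support}(\ref{p:support1}) the norm $\|\cdot\|_C$ is differentiable on $\mathbb{R}^d\setminus\{0\}$, hence $f_{G,C}$ is differentiable at $p$. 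Next I would argue that an infinitesimally rigid well-positioned framework is necessarily regular: the point is that $\rank R_C(G,q)\le d|V|-k$ for every well-positioned placement $q$ of $G$, because every element of the tangent space $\mathcal{T}(\|\cdot\|_C)$, evaluated at the points $q_v$, is a trivial infinitesimal flex and therefore lies in $\ker R_C(G,q)$; and provided $G$ has an edge (the edgeless case being trivial) the $q_v$ cannot all coincide, which makes this evaluation map injective and so contributes a full $k$-dimensional subspace to the kernel. Since $P$ is infinitesimally rigid we have $\rank R_C(G,p)=d|V|-k$, which is thus maximal among all well-positioned placements of $G$, so $(G,p)$ is regular.

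With well-positionedness and regularity established, Proposition~\ref{p:smoothreg} applies (using that $C$ is smooth) and gives that $(G,p)$ is constant. Theorem~\ref{t:asiroth} then says that for a constant framework infinitesimal rigidity and rigidity coincide, so $(G,p)$ is rigid, i.e.\ $P$ is sticky rigid.

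I expect the main obstacle to be the middle step: verifying that infinitesimal rigidity forces the rigidity matrix to attain the maximal possible rank $d|V|-k$, equivalently that the $k$-dimensional space of trivial flexes genuinely sits inside $\ker R_C(G,q)$ for every well-positioned $q$, so that no other placement of $G$ can have strictly larger rank than $p$. The only subtlety is the possibility of degenerate placements in which the points collapse, but these are ruled out here since the centres of a $C$-packing are pairwise distinct. Once this is granted, the remainder is simply the composition of Proposition~\ref{p:smoothreg} and Theorem~\ref{t:asiroth}, as the paper indicates.
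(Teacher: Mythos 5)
Your proof is correct and follows essentially the same route the paper intends: establish that the contact framework is well-positioned and regular, then invoke Proposition~\ref{p:smoothreg} to get constancy and Theorem~\ref{t:asiroth} to conclude rigidity. You usefully make explicit the step, left implicit in the paper, that infinitesimal rigidity forces the rigidity matrix to attain the maximal rank $d|V|-k$ and hence makes the framework regular.

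One small imprecision in your justification of that step: the claim that ``$q_v$ not all coincident $\Rightarrow$ the evaluation map $\mathcal{T}(\|\cdot\|_C)\to\mathbb{R}^{d|V|}$ is injective'' is valid when $d=2$ (both Euclidean and non-Euclidean) and for non-Euclidean norms in any dimension, because there $\mathcal{T}(\|\cdot\|_C)$ consists of translations together with at most a one-parameter rotation. For Euclidean $\mathbb{R}^d$ with $d\ge3$, however, a collinear (but pairwise-distinct) placement has a nontrivial stabiliser, so the evaluation map is not injective and the naive bound $\rank R_C(G,q)\le d|V|-k$ can actually fail for very small graphs such as $K_2$. In those degenerate cases the proposition is vacuous (no infinitesimally rigid placement exists), and the rank bound is recovered for the graphs where infinitesimal rigidity is possible, but the argument needs a more careful phrasing than ``pairwise distinct $\Rightarrow$ injective.'' This does not affect the planar setting, which is all the paper uses Proposition~\ref{p:infsr} for, so the proof is sound where it matters.
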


\subsection{Packing rigidity maps}\label{sec:packmaps}

For any normed space $(\mathbb{R}^d, \|\cdot\|_C)$ and graph $G$,
we define
\begin{align*}
	h_{G,C} : \mathbb{R}^{d|V|} \times \mathbb{R}_{>0}^{|V|} \rightarrow \mathbb{R}^{|E|}, ~ (p,r) \mapsto \frac{1}{2}\left( \|p_v-p_w\|_C^2 - (r_v+ r_w)^2 \right)_{vw\in E}
\end{align*} 
to be the \emph{packing rigidity map of $G$}.
A triple $(G,p,r)$ is a $C$-packing if and only if $h_{G,C}(p,r)=0$ and $h_{G',C}(p,r) \in \mathbb{R}^{|E'|}_{\geq 0} \setminus \{0\}$ for all graphs $G'=(V,E')$ where $E \subsetneq E'$.
It follows that the set $S_{G,C}$ is an open subset of $h_{G,C}^{-1}[0]$.

Let $(G,p)$ be a well-positioned framework with respect to a norm $\|\cdot\|_C$ and $r \in \mathbb{R}_{>0}^{|V|}$.
Define $I(G,r)$ to be the $|E| \times |V|$ matrix with entries 
\begin{align*}
	I(G,r)_{e,v}:=
	\begin{cases}
		-(r_v+r_w) &\text{if } e = vw, \\
		0 &\text{otherwise}.
	\end{cases}
\end{align*}
By adding the columns of $I(G,r)$ to $R_C(G,p)$ we form the \emph{packing rigidity matrix of $(G,p,r)$},
the matrix $R_C(G,p,r) := [ R_C(G,p) \quad I(G,r)]$ with $|E|$ rows and $(d+1)|V|$ columns.
We define any column corresponding to $R_C(G,p)$ to be a \emph{point column} and any column corresponding to $I(G,r)$ to be a \emph{radii column}.
Each row $vw$ of $R_C(G,p,r)$ is of the form
\begin{align*}
	\kbordermatrix{
	 &(v,i) \text{ point}& &(w,i) \text{ point}& &v, \text{ radii}& & w, \text{ radii}\\
	 & [\varphi_C(p_v-p_w)]_i & \cdots & [\varphi_C(p_w-p_v)]_i & \cdots & -(r_v+r_w) & \cdots &  -(r_v+r_w)\\ 
	},
\end{align*}
with $0$ at all other values.
The packing rigidity matrix $R_C(G,p,r)$ is the derivative of the packing rigidity map $h_{G,C}$ at the point $(p,r)$.
If $(G',p,r)$ is a well-positioned $C$-packing with $G'=(V,E')$ containing $G$ then $r_v+r_w = \|p_v-p_w\|_C$ for all $vw \in E$;
by substituting this equality into $R_C(G,p,r)$ we obtain the matrix with rows
\begin{align}\label{e:edgestress}
	\kbordermatrix{
	 &(v,i) \text{ point}& &(w,i) \text{ point}& &v, \text{ radii}& & w, \text{ radii}\\
	 & [\varphi_C(p_v-p_w)]_i & \cdots & [\varphi_C(p_w-p_v)]_i & \cdots & -\|p_v-p_w\|_C & \cdots & -\|p_v-p_w\|_C\\ 
	}.
\end{align}

\subsection{Edge-length equilibrium stresses}\label{sec:edgestress}

We begin with the following definition,
where we remember that $N(v)$ is the neighbourhood of a vertex $v$.

\begin{definition}
	Let $(G,p)$ be a well-positioned framework in a normed space $(\mathbb{R}^d,\|\cdot\|_C)$.
	A map $a : E \rightarrow \mathbb{R}$ is an \emph{edge-length equilibrium stress} if $a_{vw}\neq 0$ for some $vw \in E$ and 
\begin{align*}
	\sum_{w \in N(v)} a_{vw} \varphi_C(p_v-p_w) = 0, \qquad \sum_{w \in N(v)} a_{vw} \|p_v-p_w\|_C = 0
\end{align*}
for all $v \in V$.
\end{definition}

If $P=(G,p,r)$ is a well-positioned $C$-packing then, following from (\ref{e:edgestress}),
the matrix $R_C(G,p,r)$ has independent rows if and only if the framework $(G,p)$ has no edge-length equilibrium stresses.
In fact, no homothetic regular symmetric body packing in the plane has any edge-length stress.

\begin{lemma}\label{lem:2.10}
	Let $C$ be a regular symmetric body in the plane and $P=(G,p,r)$ be a $C$-packing.
	Then $(G,p)$ has no edge-length equilibrium stresses.
\end{lemma}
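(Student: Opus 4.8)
The plan is to argue by contradiction using the geometry of the packing together with strict convexity and smoothness. Suppose $a : E \to \mathbb{R}$ is an edge-length equilibrium stress for $(G,p)$. The key observation is that for an edge $vw \in E$ of the packing, the disc $C_v = r_vC + p_v$ and $C_w = r_wC + p_w$ touch, and the vector $p_v - p_w$ points ``outward'' from $C_w$ toward $C_v$ along the line through the two centres; moreover $\varphi_C(p_v - p_w)$ is the (unique, since $C$ is smooth) support functional at the contact point, so it behaves like an outward normal direction. First I would make this precise: I would like to find a linear functional $\psi$ and use the first equilibrium condition $\sum_{w \in N(v)} a_{vw}\varphi_C(p_v - p_w) = 0$ at each vertex to set up a global argument.

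The main idea I would pursue is a Maxwell-type or ``highest vertex'' argument. Consider the convex hull of the centres $\{p_v : v \in V\}$ and let $v$ be a vertex of this hull; pick a linear functional $\psi$ that is maximised on the hull uniquely at $p_v$ (or, more carefully, maximised on a face and then break ties). For every neighbour $w$ of $v$ we have $\psi(p_v - p_w) \ge 0$. Now I would want to show that $\psi(\varphi_C(p_v - p_w)) > 0$ for each such neighbour, i.e.\ that the support functional $\varphi_C(x)$ of a nonzero vector $x$ satisfies $\psi(\varphi_C(x)) > 0$ whenever $\psi(x) > 0$ — this is false in general, so instead I would exploit that the $p_v - p_w$ for $w \in N(v)$ all lie in an open half-plane $\{\psi > 0\}$ (after perturbing $\psi$ so $p_v$ is the strict max, which is possible since a polytope vertex is exposed), and that the contact points $p_v + r_v\cdot(\text{unit vector toward } w)$ all lie on $\partial C_v$. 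Applying $\psi$ to the first equilibrium equation at $v$, together with applying $\psi$ to the vector identity obtained by noting $\varphi_C(p_v - p_w)$ has $\|\varphi_C(p_v-p_w)\|_C^* = \|p_v-p_w\|_C$ and $\varphi_C(p_v-p_w)\cdot(p_v-p_w) = \|p_v - p_w\|_C^2 > 0$, should force all $a_{vw} = 0$ for $w \in N(v)$. The cleanest route is probably: among all edges $vw$ with $a_{vw} \ne 0$, choose one maximising $\|p_v - p_w\|_C$ or choose a vertex extremal in a suitable direction, then combine the two equilibrium equations — the second one, $\sum_w a_{vw}\|p_v - p_w\|_C = 0$, is exactly what lets one rule out the sign patterns that the first equation alone permits.

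The step I expect to be the genuine obstacle is controlling the interaction between the two equilibrium conditions and the geometry at an extremal vertex — specifically showing that one cannot have a nontrivial cancellation. I would handle this by taking $v$ a vertex of $\conv\{p_u\}$ and $\psi$ a functional strictly maximised at $p_v$; then $\psi(p_v - p_w) > 0$ for all $w \in N(v)$. Write $t_{vw} := \psi(\varphi_C(p_v - p_w))$. Since $\varphi_C(p_v - p_w) = r_v^{-1}\varphi_C(\text{something on }\partial C)$-type scaling and $\varphi_C$ of a vector in the open half-space $\{\psi > 0\}$ — hmm, this need not stay in that half-space. So instead I would use that $C$ is smooth and strictly convex to invoke Proposition~\ref{p:support}(\ref{p:support4}): $\varphi_C$ is a homeomorphism, and one can argue via the intermediate value / winding number that the directions $\varphi_C(p_v - p_w)$, as $w$ ranges over $N(v)$, all lie in the open half-space dual to $p_v$'s supporting direction — this is where smoothness is essential (it fails for a square). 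Then $\psi$ applied to the first equilibrium equation gives $\sum_w a_{vw} t_{vw} = 0$ with all $t_{vw} > 0$, forcing a sign change among the $a_{vw}$; iterating this ``no all-positive vertex'' conclusion around, combined with the second equilibrium equation to pin down the relative magnitudes, yields $a \equiv 0$, the desired contradiction. If the winding-number argument proves delicate, the fallback is to cite that for a smooth strictly convex body in the \emph{plane} the contact directions at a hull vertex are linearly ordered around the vertex and the support map preserves this cyclic order, reducing everything to a one-dimensional sign argument.
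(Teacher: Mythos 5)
Your observation that $\varphi_C$ maps open half-planes through the origin to open half-planes through the origin is correct (homogeneity sends the boundary line $\{te : t \in \mathbb{R}\}$ to the line $\{t\varphi_C(e)\}$, so no winding-number argument is actually needed), and this does give a sign change among the $a_{vw}$ at any vertex of $\conv\{p_u\}$. But this is where the proposal stops being a proof. You have shown, at best, that a relevant hull vertex has index $I_v \geq 2$ in the sense of the paper's Section~\ref{sec:edgestress}, and the phrase ``iterating this `no all-positive vertex' conclusion around \dots\ yields $a \equiv 0$'' is exactly the step where the real work lives and is left unspecified. A sign change at one (or even every) hull vertex does not propagate to a global contradiction on its own: equilibrium stresses on $(2,3)$-tight planar graphs in the Euclidean plane have a sign change at every relevant vertex without vanishing, so some additional, quantitative ingredient is unavoidable.

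The paper's actual argument supplies two things your sketch omits. First, it establishes that the contact framework $(G,p)$ is \emph{planar} (Lemma~\ref{l:planar}), which is not automatic and is proved via the Danzer--Gr\"{u}nbaum bound on mutually touching translates; your proposal never addresses planarity at all, yet it is indispensable for the second ingredient. Second, it runs a Gluck-style index count: Lemma~\ref{lem:2.7} gives the planar upper bound $\sum_{v\in V'} I_v \leq 4|V'| - 8$, and Lemmas~\ref{lem:2.8.1}--\ref{lem:2.8} show $I_v \geq 4$ at \emph{every} relevant vertex (not just hull vertices). The jump from $I_v \geq 2$ to $I_v \geq 4$ is precisely where both equilibrium equations interact with strict convexity (Lemma~\ref{l:conestuff} builds a separating functional $y$ with $\varphi_C(x).y \gtrless 1$ using strict convexity of $C^*$, and the second equilibrium equation $\sum_w a_{vw}\|p_v - p_w\|_C = 0$ is then what produces the contradiction in Lemma~\ref{lem:2.8}). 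Your ``fallback'' remark about $\varphi_C$ preserving cyclic order is indeed the right local fact, but without $I_v \geq 4$ and without Gluck's inequality, the argument cannot close. As written, the proposal contains a genuine gap: the local half-plane observation is sound, the global counting step is missing, and the planarity prerequisite is unaddressed.
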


In order to prove Lemma \ref{lem:2.10} we shall first define the following.
Let $(G,p)$ be a \emph{planar framework};
a framework $(G,p)$ in $\mathbb{R}^2$ with $p_v\neq p_w$ for all distinct vertices $v,w\in V$,
where if we consider each edge $vw$ as the line segment $[p_v,p_w]$, 
the edges only intersect each other at their ends.
Choose any function $a : E \rightarrow \mathbb{R}$,
and define a vertex $v \in V$ to be \emph{relevant} if $a(vw) \neq 0$ for some $w \in N(v)$.
For any relevant vertex $v \in V$,
the set
\begin{align*}
	N(v)^a_p := \left\{ \frac{p_w - p_{v}}{\|p_w - p_{v}\|} : w \in N(v), a(vw) \neq 0 \right\}
\end{align*}
is non-empty subset of the circle $\mathbb{S}$.
By starting at a point of the circle that is not in $N(v)^a_p$,
we may define an order $v_1, \ldots, v_n$ of the vertices $w$ of $N(v)$ with $a(vw) \neq 0$ as we travel clockwise around the circle and back to the starting point,
i.e.,
we first reach $\frac{p_{v_1} - p_{v}}{\|p_{v_1} - p_{v}\|}$,
then $\frac{p_{v_2} - p_{v}}{\|p_{v_2} - p_{v}\|}$,
etc.
We define $I_v$ (the \emph{index} of $v$ with respect to $a$) to be the number of times the sign changes in the sequence $a_{v_1 v} , a_{v_2 v}, \ldots, a_{v_n v} , a_{v_1 v}$.
It is immediate that $I_v$ is invariant under changing our starting point on the circle or changing the direction we traverse around the circle.
See Figure \ref{fig:relvert} for an example of a relevant vertex and its index.

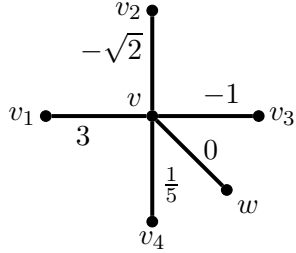
\begin{figure}[ht]
	\begin{tikzpicture}[scale=0.7]
		\node[vertex] (1) at (-2,0) {};
		\node[vertex] (2) at (0,2) {};
		\node[vertex] (3) at (2,0) {};
		\node[vertex] (4) at (1.4,-1.4) {};
		\node[vertex] (5) at (0,-2) {};
		\node[vertex] (0) at (0,0) {};
		
		\node [left] at (-2,0) {$v_1$};
		\node [left] at (0,2) {$v_2$};
		\node [right] at (2,0) {$v_3$};
		\node [below] at (0,-2) {$v_4$};
		\node [below right] at (1.4,-1.4) {$w$};
		\node [above left] at (0,0) {$v$};
		
		\node [below] at (-1.3,0) {$3$};
		\node [left] at (0,1.3) {$-\sqrt{2}$};		
		\node [above] at (1.3,0) {$-1$};
		\node [right] at (0,-1.3) {$\frac{1}{5}$};
		\node [above] at (1.1,-1) {$0$};
		
		\draw[edge] (0)edge(1);
		\draw[edge] (0)edge(2);
		\draw[edge] (0)edge(3);
		\draw[edge] (0)edge(4);		
		\draw[edge] (0)edge(5);	
	\end{tikzpicture}
	\caption{A relevant vertex $v$ and its neighbours $v_1,v_2,v_3,v_4,w$;
	the map $a:E \rightarrow \mathbb{R}$ is represented by the numbers next to each edge.
	We order the neighbours $v_1,v_2,v_3,v_4$, ignoring $w$ as $a_{vw}=0$. 
	We note that $I_v = 2$, which is independent of where our ordering begins and whether we travel around the neighbours of $v$ clockwise or anti-clockwise.}
	\label{fig:relvert}
\end{figure}

The sum of indices of any edge function can be bounded by the amount of relevant vertices by the following result.

\begin{lemma}\label{lem:2.7}\cite[Lemma 5.2]{Glu}
	Let $(G,p)$ be a planar framework. 
	For any function $a : E \rightarrow \mathbb{R}$ we have the inequality $\sum_{v \in V'} I_v \leq 4|V'| - 8$,
	where $V' \subset V$ is the set of relevant vertices.
\end{lemma}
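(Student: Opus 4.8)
The plan is to reinterpret $\sum_{v\in V'} I_v$ as a count of ``bichromatic corners'' of a plane graph, and then to obtain the inequality by augmenting that graph to a triangulation and invoking Euler's formula. First I would pass to the \emph{active subgraph} $G'=(V',E')$, whose edge set consists of those $e$ with $a(e)\neq 0$; since every relevant vertex has an incident active edge, $V'$ is exactly the set of relevant vertices, and we may assume $a\not\equiv 0$, so that $V'\neq\emptyset$. Restricting the straight-line drawing of $(G,p)$ to $E'$ produces a plane graph, whose edges I would $2$-colour according to $\sgn a(e)\in\{+,-\}$. Call the angular sector between two cyclically consecutive edges at a vertex a \emph{corner}, and call it \emph{bichromatic} if its two bounding edges carry opposite colours; the definition of the index then says that $I_v$ is precisely the number of bichromatic corners at $v$. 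Since each corner of a plane graph lies on the boundary of exactly one face, $\sum_{v\in V'}I_v$ is the total number of bichromatic corners, and this can be computed face by face: as one traverses the boundary walk of a face $F$, the number of bichromatic corners met equals the number of sign changes in the cyclic sequence of colours read along $\partial F$.

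If $|V'|\le 2$ then, as a single relevant vertex is impossible, $|V'|=2$, the active subgraph is one edge, every index is $0$, and $4|V'|-8=0$, so the inequality holds. If $|V'|\ge 3$, I would extend the plane embedding of $G'$ to a connected plane triangulation $\tilde G$ on the same vertex set $V'$ -- a standard fact of planar graph theory, where the added edges may simultaneously be used to connect the components of $G'$ -- assigning each new edge an arbitrary colour. The key observation is that inserting an edge never decreases any vertex's index: the new edge splits exactly one corner at each of its two endpoints into two corners, and the triangle inequality $[\sigma_1\neq\sigma_3]\le[\sigma_1\neq\sigma_2]+[\sigma_2\neq\sigma_3]$ for colours shows that the number of bichromatic corners at that vertex does not decrease, while every other corner is untouched. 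Hence $\sum_{v\in V'}I_v\le\sum_{v\in V'}\tilde I_v$, where $\tilde I_v$ is the index computed in $\tilde G$. Now every face of the triangulation is a triangle, so it carries at most $2$ bichromatic corners (among three colours two must coincide), and by Euler's formula a plane triangulation on $|V'|$ vertices has exactly $2|V'|-4$ faces; summing over faces gives $\sum_{v\in V'}I_v\le\sum_{v\in V'}\tilde I_v\le 2(2|V'|-4)=4|V'|-8$.

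I expect the main difficulty to be conceptual rather than computational. A naive face-by-face estimate carried out inside $G'$ itself does not suffice, because a quadrilateral face can carry four bichromatic corners, so the count on a face cannot in general be bounded strictly below its length; this slack occurs only at even-length faces, and the monotonicity of the index under edge insertion is exactly what allows one to triangulate those faces away and reduce to the clean situation where the bound is merely Euler's formula. The remaining technical point worth spelling out is the standard fact that a plane embedding of a graph on at least three vertices extends to a plane triangulation on the same vertices.
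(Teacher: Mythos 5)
Your argument is correct, and it is worth noting that the paper itself offers no proof of this lemma -- it is quoted verbatim from Gluck's Lemma 5.2, whose classical (Cauchy-type) proof counts sign changes face by face in the given plane graph, bounds the number of changes on a face with $k$ edge-sides by $2\lfloor k/2\rfloor \le 2k-4$ for $k\ge 3$, and then sums using Euler's formula. You take a mild but genuine variant of that route: instead of the per-face length/parity estimate on arbitrary faces, you first pass to the active subgraph, prove that the vertex index is monotone non-decreasing under insertion of a (coloured) edge -- the corner-splitting inequality $[\sigma_1\neq\sigma_3]\le[\sigma_1\neq\sigma_2]+[\sigma_2\neq\sigma_3]$ -- and then triangulate, where every face is a genuine $3$-cycle carrying at most two bichromatic corners and the face count $2|V'|-4$ gives the bound directly. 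What this buys is that all the delicacy about boundary walks of non-triangular faces (bridges, cut vertices, repeated vertices, quadrilateral faces attaining four sign changes) is absorbed into the single standard fact that any plane graph on at least three vertices extends to a plane triangulation on the same vertex set; the price is that you must invoke that augmentation fact, whereas Gluck's counting works in the given graph. Two small points: you are right that a lone degree-$1$ ``full-angle'' corner also behaves correctly under splitting, and your reduction to $a\not\equiv 0$ is the correct reading of the statement -- as literally written the inequality fails for $a\equiv 0$ (it would assert $0\le -8$), but in the paper the lemma is only ever applied to edge-length equilibrium stresses, which are nonzero by definition, so this is a defect of the statement rather than of your proof.
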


For edge-length equilibrium stresses we can also give a lower bound.
	
\begin{lemma}\label{lem:2.8.1}
	Let $(G,p)$ be a well-positioned planar framework in a normed plane $(\mathbb{R}^2, \|\cdot\|_C)$.
	If $a :E \rightarrow \mathbb{R}$ is an edge-length equilibrium stress of $(G,p)$
	then $I_{v} > 0$ for all relevant vertices $v \in V$.
\end{lemma}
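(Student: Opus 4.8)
The plan is to extract everything from the second (scalar) equilibrium equation at a relevant vertex; the vector equation $\sum_w a_{vw}\varphi_C(p_v-p_w)=0$ and the planarity of $(G,p)$ will play no role. Fix a relevant vertex $v$, and write $W := \{\, w \in N(v) : a_{vw}\neq 0 \,\}$ for its set of relevant neighbours, so that the cyclic order $v_1,\ldots,v_n$ used to define $I_v$ runs over exactly the elements of $W$. Since $(G,p)$ is well-positioned we have $\|p_v-p_w\|_C>0$ for every $w\in W$.

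First I would check that $|W|\geq 2$, so that the cyclic ordering is non-degenerate. The edge-length equilibrium condition at $v$ reads $\sum_{w\in W} a_{vw}\|p_v-p_w\|_C=0$; if $W=\{w\}$ were a singleton this would force $a_{vw}\|p_v-p_w\|_C=0$, hence $a_{vw}=0$, contradicting $w\in W$. Thus $n=|W|\geq 2$.

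Next, the same identity $\sum_{w\in W} a_{vw}\|p_v-p_w\|_C=0$ is a linear relation with strictly positive coefficients $\|p_v-p_w\|_C$ among the nonzero scalars $a_{vw}$, $w\in W$. Consequently these scalars cannot all be positive, nor all negative: at least one is positive and at least one is negative. Reading this off the cyclic sequence $a_{v_1 v},a_{v_2 v},\ldots,a_{v_n v},a_{v_1 v}$ that defines $I_v$, both signs occur along it, so the sign changes at least once as we traverse it; in fact, since the sequence returns to its initial value, the number of sign changes is even and therefore at least $2$. In either reading $I_v>0$, as required.

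There is no serious obstacle here: the single point requiring care is the degenerate possibility of a relevant vertex with only one relevant neighbour, which is precisely what the well-positioned hypothesis rules out in the first step. I would keep the two steps in this order — degenerate case first, sign-change count second — so that the parity/counting argument is applied only to a genuinely cyclic sequence of length at least two.
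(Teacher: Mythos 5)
Your proposal is correct and takes essentially the same approach as the paper: both rely solely on the scalar equilibrium equation $\sum_{w\in N(v)} a_{vw}\|p_v-p_w\|_C=0$ and the positivity of the edge lengths to conclude that the nonzero $a_{vw}$ around a relevant vertex must take both signs, hence $I_v>0$. The paper argues by contradiction (assuming $I_{v_0}=0$, hence all signs equal, WLOG positive), whereas you argue directly and add a harmless but unnecessary check that the vertex has at least two relevant neighbours and a parity remark that $I_v\geq 2$.
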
	

\begin{proof}
	Suppose $v_0$ is a relevant vertex with $N'(v_0) := \{ v \in N(v) : a_{v v_0} \neq 0 \}$ and $I_{v_0}=0$.
	Without loss of generality we may assume $a_{v v_0}>0$ for all $v \in N'(v_0)$.
	Then 
	\begin{align*}
		\sum_{v \in N(v_0)} a_{v v_0} \|p_v-p_{v_0}\|_C > 0,
	\end{align*}
	which contradicts that $a$ is an edge-length equilibrium stress.
\end{proof}

\begin{lemma}\label{l:conestuff}
	Let $C$ be a regular symmetric body in the plane and $S, T$ be two finite sets of points in $\partial C$.
	Suppose that there exists a non-negative cone $X := \{ a x_1 + bx_2 : a,b \geq 0\}$ for some linearly independent $x_1,x_2 \in \mathbb{R}^2$ where $S \subset X^\circ$ and $T \cap X = \emptyset$.
	Then there exists $y \in \mathbb{R}^2$ where $\varphi_C(x).y >1$ for all $x \in S$ and $\varphi_C(x).y<1$ for all $x \in T$.
\end{lemma}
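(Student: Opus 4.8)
The plan is to recast the conclusion as the feasibility of a finite system of strict linear inequalities in the unknown $y\in\mathbb{R}^2$: we need $\varphi_C(x).y>1$ for every $x\in S$ and $(-\varphi_C(x)).y>-1$ for every $x\in T$. By the standard theorem of the alternative for strict systems, such a $y$ fails to exist if and only if there is a non-zero function $\lambda:S\sqcup T\to\mathbb{R}_{\geq 0}$ with $\sum_{x\in S}\lambda_x\,\varphi_C(x)=\sum_{x\in T}\lambda_x\,\varphi_C(x)$ and $\sum_{x\in S}\lambda_x\geq\sum_{x\in T}\lambda_x$. So I would assume such a $\lambda$ exists and derive a contradiction; the degenerate cases $S=\emptyset$ or $T=\emptyset$ are then immediate, and otherwise the argument below applies verbatim.

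First I would record a geometric fact about the duality map. Since $\varphi_C$ is an odd, positively homogeneous homeomorphism of $\mathbb{R}^2$ (Proposition~\ref{p:support}(\ref{p:support1}),(\ref{p:support4})), it descends to an odd self-homeomorphism of the Euclidean unit circle $\mathbb{S}$, and since $\varphi_C$ maps $\partial C$ bijectively onto $\partial C^*$ it follows that $\varphi_C$ carries an arc of $\partial C$ of angular width less than $\pi$ to an arc of $\partial C^*$ of angular width less than $\pi$ (an arc of width $\geq\pi$ contains an antipodal pair of directions, which would force an antipodal pair in the preimage arc, impossible for a width-$<\pi$ arc). Applied to the open arc of directions spanned by $X^\circ$ — whose width is less than $\pi$ because $x_1,x_2$ are linearly independent — this shows $\varphi_C(S)$ lies in an open half-plane through the origin, so $0\notin\conv\varphi_C(S)$, and likewise for any non-empty subset. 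Writing $a:=\sum_{x\in S}\lambda_x$, $b:=\sum_{x\in T}\lambda_x$ and $p:=\sum_{x\in S}\lambda_x\varphi_C(x)=\sum_{x\in T}\lambda_x\varphi_C(x)$: if $a=0$ then $\lambda$ vanishes on $S$ and the second inequality forces $b=0$, hence $\lambda=0$; so $a>0$, giving $p=a\bar u$ with $\bar u\in\conv\varphi_C(S)$ and thus $p\neq 0$; this forces $b>0$, giving $p=b\bar v$ with $\bar v\in\conv\varphi_C(T)$. Since $a\geq b>0$, $\bar v=(a/b)\,\bar u$ is a positive multiple of $\bar u$ with $\|\bar v\|\ge\|\bar u\|$.

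The contradiction should come from a one-dimensional comparison along the ray $\rho$ from the origin through $\bar u$. Let $a_1,a_2\in\partial C$ be the boundary points of $C$ in the directions of $x_1,x_2$ and let $K:=[\varphi_C(a_1),\varphi_C(a_2)]$ be the corresponding chord of $\partial C^*$. Using that $C^*$ is also a regular symmetric body (Proposition~\ref{p:polar}(\ref{p:polar3})) one checks: the line through $K$ avoids the origin and meets $\partial C^*$ only at the two endpoints of $K$; the ``short'' arc $\varphi_C(X^\circ\cap\partial C)$ (which contains $\varphi_C(S)$) lies strictly on one side of this line, while the ``long'' arc $\varphi_C(\partial C\setminus X)$ (which contains $\varphi_C(T)$) and the origin lie strictly on the other; the direction of $\bar u$ (equivalently of $\bar v$) is strictly inside the angular range of the short arc; and $\rho$ crosses $K$ at a single point $c$. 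Then $\conv\varphi_C(S)$ lies strictly on the far side of the line from the origin, so $\bar u$ lies on $\rho$ strictly beyond $c$ and $\|\bar u\|>\|c\|$; whereas $\conv\varphi_C(T)$ is contained in the hull of the long arc, a bounded convex region having $0$ in its interior and $K$ as its unique straight boundary edge, which $\rho$ leaves through $c$, so $\bar v$ lies on the segment $[0,c]$ and $\|\bar v\|\le\|c\|$. Hence $\|\bar v\|<\|\bar u\|$, contradicting $\|\bar v\|\ge\|\bar u\|$. Therefore no such $\lambda$ exists, and $y$ exists.

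The main obstacle, and the step needing the most care, is the convex-geometric bookkeeping in the last paragraph — establishing which side of the chord $K$ each of the two arcs and the origin lies on, and that $\rho$ genuinely meets $K$ in its interior — all of which rests on the strict convexity and central symmetry of $C^*$ supplied by Propositions~\ref{p:support} and~\ref{p:polar}; by contrast the theorem-of-the-alternative step and the arc-width lemma are routine once stated precisely.
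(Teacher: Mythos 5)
Your proof is correct, but it follows a genuinely different and considerably longer route than the paper's. The paper's argument is constructive: after normalizing $\|x_1\|_C = \|x_2\|_C = 1$, it sets $y_i := \varphi_C(x_i)$, observes that $\varphi_C$ (a homogeneous homeomorphism) carries $X$ onto the cone $Y$ spanned by $y_1, y_2$, and simply chooses $y$ to be the unique vector with $y.y_1 = y.y_2 = 1$. Writing $\varphi_C(x) = a y_1 + b y_2$ one then has $\varphi_C(x).y = a+b$, and the strict triangle inequality for the strictly convex dual norm $\|\cdot\|_{C^*}$ gives $a+b > 1$ when $a,b > 0$ (i.e.\ $x \in S$) and $a+b < 1$ when one of $a,b$ is negative (i.e.\ $x \in T$) --- a two-line verification with no duality theorem. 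You instead recast the existence of $y$ as a strict-feasibility problem, invoke a Gordan/Motzkin alternative to produce a putative non-negative dependency $\lambda$, and reach a contradiction by comparing the Euclidean norms of the barycentres $\bar u \in \conv \varphi_C(S)$ and $\bar v \in \conv \varphi_C(T)$ along a common ray. The geometric facts you rely on are correct --- the short arc $\varphi_C(X^\circ \cap \partial C)$ lies strictly beyond the chord $K = [\varphi_C(a_1),\varphi_C(a_2)]$ while the origin and the long arc lie strictly on the near side, so $\|\bar u\| > \|c\| > \|\bar v\|$, contradicting $\|\bar v\| = (a/b)\|\bar u\| \geq \|\bar u\|$ --- and they ultimately encode the same strict convexity of $C^*$ that the paper exploits directly. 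So your proof does close, but it trades a constructive choice of $y$ for a non-constructive contradiction that depends on the delicate chord-and-arc bookkeeping you yourself flag as the main obstacle; the explicit dual vector $y$ in the paper's proof makes both the alternative theorem and the chord geometry unnecessary.
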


\begin{proof}
	We may assume that $\|x_1\|_C=\|x_2\|_C=1$.
	As $\varphi_C$ is a homogeneous homeomorphism (Proposition \ref{p:support}),
	it maps non-negative cones to non-negative cones.
	Hence $\varphi_C(X) = Y$ for some non-negative cone $Y= \{ a y_1 + by_2 : a,b \geq 0\}$ with $y_1=\varphi_C(x_1)$ and $y_2=\varphi_C(x_2)$ linearly independent,
	$\varphi_C(S) \subset Y^\circ$, $\varphi_C(T) \cap Y = \emptyset$ and $\varphi_C(S \cup T) \subset \partial C^*$.
	We now choose $y \in \mathbb{R}^2$ to be the unique point where $y.y_1=y.y_2=1$.
	
	Suppose $\varphi_C(x)=ay_1 +by_2 \in \varphi_C(S)$.
	As $C^*$ is strictly convex (Proposition \ref{p:polar}) and $a,b>0$,
	then
	\begin{align*}
		1=\|\varphi_C(x)\|_{C^*} < a\|y_1\|_{C^*} + b\|y_2\|_{C^*} = a+b.
	\end{align*}
	Hence as $y.y_1=y.y_2=1$ then $\varphi_C(x).y =a+b > 1$.
	Now suppose $\varphi_C(x)=ay_1 +by_2 \in \varphi_C(T)$.
	Since $\varphi_C(T) \cap Y = \emptyset$,
	either $a<0$ or $b<0$.
	If they are both negative then clearly $\varphi_C(x).y<0$,
	so without loss of generality assume that $a>0>b$.
	As $C^*$ is strictly convex,
	then
	\begin{align*}
		1=\|\varphi_C(x)\|_{C^*} > |a|\|y_1\|_{C^*} - |b|\|y_2\|_{C^*} = a + b.
	\end{align*}
	It now follows that $\varphi_C(x).y =a + b < 1$ as required.
\end{proof}

For a planar framework $(G,p)$ with map $a : E \rightarrow \mathbb{R}$ and relevant vertex $v_0$,
we shall now define $N^+(v_0) := \{ v \in N(v_0) : a_{v v_0} >0 \}$ and $N^-(v_0) := \{ v \in N(v_0) : a_{v v_0} <0 \}$.
	
\begin{lemma}\label{lem:order}
	Let $(G,p)$ be a planar framework in a smooth and strictly convex normed plane $(\mathbb{R}^2,\|\cdot\|_C)$.
	Suppose $a : E \rightarrow \mathbb{R}$ is a function where $I_{v_0} =2$ for a vertex $v_0 \in V$.
	Then there exists $y \in \mathbb{R}^2$ such that either $\varphi_C(p_v-p_{v_0}).y  > \|p_v - p_{v_0}\|_C$ for all $v \in N^+(v_0)$ and $\varphi_C(p_v-p_{v_0}).y  < \|p_v - p_{v_0}\|_C$ for all $v \in N^-(v_0)$, or vice versa.
\end{lemma}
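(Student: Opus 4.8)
The plan is to exploit the index condition $I_{v_0}=2$ to separate the neighbours of $v_0$ cleanly by direction, and then to apply Lemma~\ref{l:conestuff} to the normalized contact directions. First I would restrict attention to the relevant neighbours $N'(v_0) = N^+(v_0) \cup N^-(v_0)$ and consider the finite set of unit vectors $D := \{\,(p_v - p_{v_0})/\|p_v - p_{v_0}\| : v \in N'(v_0)\,\} \subset \mathbb{S}$, recording for each direction the sign of $a_{v v_0}$. Because $(G,p)$ is a planar framework, the edges at $v_0$ only meet at $v_0$, so distinct relevant neighbours give \emph{distinct} directions in $D$ (two neighbours in the same direction would force overlapping edge segments); hence the cyclic order used to define the index is genuinely an order on $D$. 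The hypothesis $I_{v_0}=2$ says that as we traverse $\mathbb{S}$ clockwise the sequence of signs on $D$ changes exactly twice, which means $D$ splits into two contiguous circular arcs, one carrying only the $N^+(v_0)$-directions and one carrying only the $N^-(v_0)$-directions.

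Next I would convert ``contiguous arc'' into the cone hypothesis of Lemma~\ref{l:conestuff}. Since the $N^+(v_0)$-directions occupy a single open arc of $\mathbb{S}$ strictly shorter than a semicircle --- here I would need to rule out the degenerate case where the arc is at least a half-circle; if that happens one can perturb or, more robustly, note that we may shrink to the convex cone \emph{spanned} by the extreme two $N^+$-directions and argue on both a cone and its reflection --- there exist linearly independent $x_1, x_2 \in \mathbb{R}^2$ so that every $N^+(v_0)$-direction lies in the open cone $X^\circ = \{a x_1 + b x_2 : a, b > 0\}$ while every $N^-(v_0)$-direction lies outside the closed cone $X$. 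Scaling these directions back to $\partial C$ (they already have $\|\cdot\|_C$-norm $1$ since they are unit Euclidean vectors... no: I must instead take $S := \{(p_v-p_{v_0})/\|p_v-p_{v_0}\|_C : v \in N^+(v_0)\}$ and $T$ analogously, which lie in $\partial C$ and in the same cones up to positive scaling, hence still satisfy $S \subset X^\circ$, $T \cap X = \emptyset$), Lemma~\ref{l:conestuff} produces $y \in \mathbb{R}^2$ with $\varphi_C(x).y > 1$ for $x \in S$ and $\varphi_C(x).y < 1$ for $x \in T$.

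Finally I would unwind the homogeneity of $\varphi_C$ (Proposition~\ref{p:support}(\ref{p:support1})). For $v \in N^+(v_0)$, writing $\lambda_v := \|p_v - p_{v_0}\|_C > 0$ and $x = (p_v-p_{v_0})/\lambda_v \in S$, homogeneity gives $\varphi_C(p_v - p_{v_0}) = \lambda_v \varphi_C(x)$, so $\varphi_C(p_v-p_{v_0}).y = \lambda_v\,(\varphi_C(x).y) > \lambda_v = \|p_v - p_{v_0}\|_C$; symmetrically $\varphi_C(p_v-p_{v_0}).y < \|p_v-p_{v_0}\|_C$ for $v \in N^-(v_0)$. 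This is exactly the first alternative in the statement; swapping the roles of $N^+$ and $N^-$ (equivalently negating $y$ does not quite work since the threshold is not $0$, so instead one re-runs Lemma~\ref{l:conestuff} with the cone around the $N^-$-directions) gives ``vice versa'' when convenient. The main obstacle I anticipate is the bookkeeping around the $I_{v_0}=2$ condition: carefully justifying that two sign changes force two \emph{contiguous} arcs (rather than interleaved directions) using planarity to guarantee distinctness of directions, and handling the boundary case where an arc spans half the circle or more so that the required cone $X$ genuinely exists with $S$ in its \emph{open} part.
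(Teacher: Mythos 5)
Your plan is correct and matches the paper's proof: define $A^\pm$ by $\|\cdot\|_C$-normalizing the contact directions into $\partial C$, use $I_{v_0}=2$ (together with planarity to guarantee distinct directions) to place one of $A^+,A^-$ in the open part of a non-negative cone with the other outside, apply Lemma~\ref{l:conestuff}, and unwind the homogeneity of $\varphi_C$. Your observations about which sign class fits in a proper cone and about re-running Lemma~\ref{l:conestuff} with $S,T$ swapped to obtain the ``vice versa'' branch (rather than negating $y$) are exactly the details the paper's terse proof leaves implicit.
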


\begin{proof}
	Define $A^+ := \{(p_v-p_{v_0})/\|p_v-p_{v_0}\|_C : v \in N^+(v_0)\}$ and $A^- := \{(p_v-p_{v_0})/\|p_v-p_{v_0}\|_C : v \in N^-(v_0)\}$.
	As $I_{v_0} =2$ then one of $A^+,A^-$ must lie within a non-negative cone;
	we shall suppose without loss of generality that $A^+$ does.
	By Lemma \ref{l:conestuff},
	there exists $y \in \mathbb{R}^2$ such that 
	\begin{align*}
		\varphi_C\left( \frac{p_v-p_{v_0}}{\|p_v - p_{v_0}\|_C} \right).y  > 1 \text{ for all $v \in N^+(v_0)$}, \qquad \varphi_C\left( \frac{p_v-p_{v_0}}{\|p_v - p_{v_0}\|_C} \right).y <1 \text{ for all $v \in N^-(v_0)$};
		\end{align*}
	we now use that $\varphi_C$ is homogeneous to obtain the required result.
\end{proof}

\begin{lemma}\label{lem:2.8}
	Let $(G,p)$ be a planar framework in a smooth and strictly convex normed plane $(\mathbb{R}^2,\|\cdot\|_C)$.
	If $a$ is an edge-length equilibrium stress of $(G,p)$ and $v_0 \in V$ is a relevant vertex,
	then $I_{v_0} \geq 4$.
\end{lemma}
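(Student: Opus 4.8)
The plan is to rule out $I_{v_0}=2$. First I would record two easy facts. Since the cyclic order $v_1,\dots,v_n$ runs over exactly those neighbours $v$ of $v_0$ with $a_{vv_0}\neq 0$, the sequence $a_{v_1 v_0},\dots,a_{v_n v_0},a_{v_1 v_0}$ is a cycle of nonzero reals, so its number of sign changes $I_{v_0}$ is even; and by Lemma \ref{lem:2.8.1} it is positive. Hence $I_{v_0}\geq 2$, and it suffices to derive a contradiction from $I_{v_0}=2$, after which $I_{v_0}\geq 4$ is forced.

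So suppose $I_{v_0}=2$. Then Lemma \ref{lem:order} supplies a vector $y\in\mathbb{R}^2$ which, up to swapping the roles of $N^+(v_0)$ and $N^-(v_0)$ (the other alternative being handled identically with all strict inequalities reversed), satisfies $\varphi_C(p_v-p_{v_0}).y > \|p_v-p_{v_0}\|_C$ for all $v\in N^+(v_0)$ and $\varphi_C(p_v-p_{v_0}).y < \|p_v-p_{v_0}\|_C$ for all $v\in N^-(v_0)$. Writing $a_v:=a_{v_0 v}$, the edge-length equilibrium stress conditions at $v_0$ are
\[
\sum_{v\in N(v_0)} a_v\,\varphi_C(p_{v_0}-p_v)=0, \qquad \sum_{v\in N(v_0)} a_v\,\|p_{v_0}-p_v\|_C=0 .
\]
Using homogeneity of $\varphi_C$ (Proposition \ref{p:support}(\ref{p:support1})) to rewrite $\varphi_C(p_{v_0}-p_v)=-\varphi_C(p_v-p_{v_0})$, taking the inner product of the first equation with $y$, and subtracting the second, I obtain
\[
\sum_{v\in N(v_0)} a_v\Big(\varphi_C(p_v-p_{v_0}).y-\|p_v-p_{v_0}\|_C\Big)=0 .
\]
Only relevant neighbours, i.e.\ $v\in N^+(v_0)\cup N^-(v_0)$, contribute. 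For $v\in N^+(v_0)$ the coefficient $a_v$ is positive and the bracket strictly positive; for $v\in N^-(v_0)$ the coefficient is negative and the bracket strictly negative; so every summand is strictly positive. Since $v_0$ is relevant the index set $N^+(v_0)\cup N^-(v_0)$ is nonempty, so the left-hand side is strictly positive, contradicting the displayed identity. Hence $I_{v_0}\neq 2$, and therefore $I_{v_0}\geq 4$.

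I expect the only delicate point to be the sign bookkeeping in the two displays — correctly tracking the flip between $\varphi_C(p_{v_0}-p_v)$ and $\varphi_C(p_v-p_{v_0})$ and pairing it with the two directions offered by Lemma \ref{lem:order} so that the final sum is genuinely a sum of terms all of one strict sign rather than a mixed sum. I would guard against this by checking the $N^+(v_0)$ and $N^-(v_0)$ contributions separately, as above. No other obstacle is anticipated: Lemma \ref{lem:order} does all the geometric work, leaving only a linear manipulation of the equilibrium stress equations.
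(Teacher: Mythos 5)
Your proof is correct and follows essentially the same route as the paper: rule out $I_{v_0}=2$ (using parity and Lemma \ref{lem:2.8.1} to reduce to this case), invoke Lemma \ref{lem:order} to get the separating vector $y$, and derive a contradiction from the two equilibrium stress equations at $v_0$. The only cosmetic difference is that you combine the two equilibrium identities into a single sum whose terms are all of one strict sign, whereas the paper chains the same inequalities through $\sum_{v\in N^+(v_0)} a_{v_0 v}\|p_v-p_{v_0}\|_C$; the sign bookkeeping is identical.
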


\begin{proof}
	Suppose $I_{v_0} < 4$.
	As $I_{v_0}$ must be even and $I_{v_0} >0$ by Lemma \ref{lem:2.8.1},
	then $I_{v_0} = 2$.
	By Lemma \ref{lem:order} and multiplying $a$ by $-1$ if necessary,
	there exists $y \in \mathbb{R}^2$ where for all $v \in N^+(v_0)$ and $w \in N^-(v_0)$ we have
	\begin{equation}\label{e:plus}
		\varphi_C(p_v-p_{v_0}).y >  \|p_{v} - p_{v_0}\|_C, \qquad \varphi_C(p_w-p_{v_0}).y <  \|p_{w} - p_{v_0}\|_C.
	\end{equation}
	As $a$ is an edge-length equilibrium stress then
	\begin{equation}\label{e:radii}
		\sum_{v \in N(v_0)} a_{v_0 v} \varphi_C(p_{v} - p_{v_0}) = 0 , \qquad \sum_{v \in N(v_0)} a_{v_0 v} \|p_{v} - p_{v_0}\|_C = 0.
	\end{equation}	
	By combining equations (\ref{e:plus}) and (\ref{e:radii}) it follows that
	\begin{eqnarray*}
		\sum_{v \in N^+(v_0)}   a_{v_0 v} \|p_{v} - p_{v_0}\|_C &=& -\sum_{v \in N^-(v_0)} a_{v_0 v} \|p_{v} - p_{v_0}\|_C 
		\quad > \quad -\sum_{v \in N^-(v_0)} a_{v_0 v} \varphi_C(p_v-p_{v_0}).y \\
		&=& \sum_{v \in N^+(v_0)}  a_{v_0 v} \varphi_C(p_v-p_{v_0}).y 
		\quad > \quad \sum_{v \in N^+(v_0)}   a_{v_0 v} \|p_{v} - p_{v_0}\|_C,
	\end{eqnarray*}
	a contradiction.
\end{proof}

By combining our previous results, we have the following.

\begin{lemma}\label{l:planarnoestress}
	Let $(G,p)$ be a planar framework in a smooth and strictly convex normed plane $(\mathbb{R}^2,\|\cdot\|_C)$.
	Then $(G,p)$ has no edge-length equilibrium stresses.
\end{lemma}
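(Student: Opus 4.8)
The plan is to combine the upper bound from Lemma \ref{lem:2.7} with the lower bound from Lemma \ref{lem:2.8} to derive a contradiction with the assumed existence of an edge-length equilibrium stress. Suppose, for contradiction, that $(G,p)$ admits an edge-length equilibrium stress $a : E \rightarrow \mathbb{R}$, and let $V' \subseteq V$ denote the set of relevant vertices (those $v$ with $a_{vw} \neq 0$ for some $w \in N(v)$). Since $a$ is not identically zero, $V'$ is non-empty; moreover every edge contributing a non-zero value of $a$ joins two relevant vertices, so restricting attention to the subframework induced on $V'$ loses nothing, and this restricted framework is still planar.

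First I would note that, since $a$ is an edge-length equilibrium stress of $(G,p)$ and $(\mathbb{R}^2, \|\cdot\|_C)$ is smooth and strictly convex, Lemma \ref{lem:2.8} applies to each relevant vertex $v \in V'$, giving $I_v \geq 4$. Summing over $V'$ yields $\sum_{v \in V'} I_v \geq 4|V'|$. On the other hand, Lemma \ref{lem:2.7} gives $\sum_{v \in V'} I_v \leq 4|V'| - 8$. Chaining these two inequalities produces $4|V'| \leq 4|V'| - 8$, i.e.\ $0 \leq -8$, which is absurd. Hence no edge-length equilibrium stress exists, as claimed.

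The one point requiring a little care — and the only place where the argument is not completely mechanical — is checking that the hypotheses of Lemmas \ref{lem:2.7} and \ref{lem:2.8} are genuinely met, in particular that $(G,p)$ being a planar framework in the sense defined just before Lemma \ref{lem:2.7} is preserved, and that relevance is exactly the notion used in both lemmas. Since the definitions of ``relevant vertex'' and ``index $I_v$'' used in Lemma \ref{lem:2.7} (from \cite{Glu}) and in Lemmas \ref{lem:2.8.1}--\ref{lem:2.8} coincide, and since a planar framework remains planar after deleting vertices and edges, there is no real obstacle here; the main work has already been done in establishing Lemma \ref{lem:2.8}, which is where smoothness and strict convexity of $\|\cdot\|_C$ enter. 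Thus the present lemma is essentially a one-line counting corollary of the preceding two.
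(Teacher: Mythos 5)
Your proof is correct and matches the paper's own argument essentially line for line: apply Lemma \ref{lem:2.8} for the lower bound $\sum_{v\in V'} I_v \geq 4|V'|$, Lemma \ref{lem:2.7} for the upper bound $\sum_{v\in V'} I_v \leq 4|V'|-8$, and conclude by contradiction. The aside about passing to the subframework on $V'$ is harmless but unnecessary, since Lemma \ref{lem:2.7} already restricts the sum to relevant vertices.
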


\begin{proof}
	Suppose $(G,p)$ has an edge-length equilibrium stress $a$ and let $V'\subset V$ be the set of relevant vertices of $G$.
	By Lemma \ref{lem:2.7},
	we have $\sum_{v \in V'} I_v \leq 4|V'|-8$ for the indexing $I$ generated by $a$.
	However, by Lemma \ref{lem:2.8} we have $\sum_{v \in V'} I_v \geq 4|V'|$, a contradiction.
\end{proof}

The final ingredient we require is that the contact frameworks of regular symmetric body packings are planar.
To do so we first need the following result of Danzer and Gr\"{u}nbaum.

\begin{theorem}\label{t:DanzerGruenbaum}\cite[II b$\alpha$]{DanzerGruenbaum}
	Let $C$ be a convex body in $\mathbb{R}^d$ and $P=(G,p,r)$ be a $C$-packing with $|V|=n$ and $r_v=1$ for all $v \in V$.
	If $G$ is a complete graph then $n \leq 2^d$;
	further, if $n= 2^d$ then $C$ is a parallelotope.
\end{theorem}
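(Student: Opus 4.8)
The plan is to recast the statement as one about equilateral point sets in a normed space and then run a volume--packing argument in the spirit of Danzer and Gr\"{u}nbaum. Since every $r_v=1$, the bodies $C_v=C+p_v$ are translates of $C$, so $P$ being a $C$-packing with complete contact graph is exactly the condition that $(C+p_v)^\circ\cap(C+p_w)^\circ=\emptyset$ and $(C+p_v)\cap(C+p_w)\neq\emptyset$ for all distinct $v,w\in V$. Writing $\Delta:=C-C$ for the difference body -- a centrally symmetric convex body that is a parallelotope if and only if $C$ is -- and using that the interior of a Minkowski sum of convex bodies is the sum of their interiors, these become $p_v-p_w\notin\Delta^\circ$ and $p_v-p_w\in\Delta$ respectively, i.e.~$p_v-p_w\in\partial\Delta$ for all $v\neq w$. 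Equivalently, $\{p_v:v\in V\}$ is an equilateral set of common distance $1$ in $(\mathbb{R}^d,\|\cdot\|_\Delta)$.

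To bound $n$, I would pack the shrunk bodies $\tfrac12\Delta+p_v$. They have pairwise disjoint interiors: an overlap of $(\tfrac12\Delta+p_v)^\circ$ and $(\tfrac12\Delta+p_w)^\circ$ would force $p_v-p_w\in\tfrac12\Delta^\circ-\tfrac12\Delta^\circ=\Delta^\circ$, contradicting $p_v-p_w\in\partial\Delta$. They also all lie inside $\conv\{p_v:v\in V\}+\tfrac12\Delta$, so a volume comparison yields $n\cdot 2^{-d}\operatorname{vol}(\Delta)\le\operatorname{vol}\!\big(\conv\{p_v:v\in V\}+\tfrac12\Delta\big)$, and it is enough to bound the right-hand side by $\operatorname{vol}(\Delta)$. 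Translating so that $\tfrac1n\sum_{v}p_v=0$, each $p_v$ equals $\tfrac{n-1}{n}$ times an average of the $n-1$ points $p_v-p_w$ ($w\neq v$), all of which lie in $\partial\Delta$; hence $\conv\{p_v:v\in V\}\subseteq\tfrac{n-1}{n}\Delta$. Turning this into the sharp inequality $\operatorname{vol}\big(\conv\{p_v:v\in V\}+\tfrac12\Delta\big)\le\operatorname{vol}(\Delta)$ is the heart of the matter, and the step I expect to be the main obstacle: the crude inclusion $\conv\{p_v:v\in V\}+\tfrac12\Delta\subseteq\tfrac{3n-2}{2n}\Delta$ only gives $n<3^d$, so one must exploit the equilateral structure more efficiently. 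This refined packing estimate is essentially the content of Danzer and Gr\"{u}nbaum's argument, which I would either invoke directly or re-derive, for instance by induction on $d$ after slicing the configuration with a hyperplane supporting $\Delta$ at one of the points $p_v-p_w$.

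In the equality case $n=2^d$, every inequality above is forced to be an equality: the $2^d$ pairwise non-overlapping translates $\tfrac12\Delta+p_v$ cover $\conv\{p_v:v\in V\}+\tfrac12\Delta$ up to a set of measure zero, and this region has volume exactly $2^d\operatorname{vol}(\tfrac12\Delta)=\operatorname{vol}(\Delta)$. Such a rigid configuration -- $2^d$ non-overlapping translates of $\tfrac12\Delta$ tiling a region of $2^d$ times their common volume, with $\Delta$-equilateral translation vectors -- pins down $\Delta$: via the equality conditions in the Brunn--Minkowski inequality (or the classification of convex bodies tiling $\mathbb{R}^d$ by translations) one concludes that $\Delta$ is a parallelotope and that the $p_v$ are the vertices $\sum_{i}\varepsilon_ib_i$, $\varepsilon\in\{0,1\}^d$, of a parallelotope with some basis $b_1,\dots,b_d$. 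Finally, $\Delta=C-C$ being a parallelotope forces $C$ itself to be a parallelotope -- a standard fact, provable by splitting the support function $h_C$ into its even and odd parts and using that $h_C$ and $h_{-C}$ are both convex. This would complete the proof.
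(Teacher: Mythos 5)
The paper does not prove this theorem; it simply cites Danzer and Gr\"{u}nbaum, so the only question is whether your sketch would actually close. Your reduction is correct: with $\Delta := C-C$, the packing condition with complete contact graph is exactly $p_v - p_w \in \partial\Delta$ for all $v \ne w$, and your verification via interior Minkowski sums is sound. The gap you flag is, however, a genuine conceptual gap rather than a matter of tightening constants: shrinking the unit ball and containing in $\conv\{p_v\} + \tfrac12\Delta$ cannot yield $2^d$, because the equilateral condition alone does not control the size of $\conv\{p_v\}$ relative to $\Delta$ well enough, and the best you extract that way is of order $3^d$, as you compute. The mechanism Danzer and Gr\"{u}nbaum actually use is to shrink the \emph{convex hull}, not the ball. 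Set $H := \conv\{p_v\}$ and consider the translates $\tfrac12(H+p_v) = \tfrac12 H + \tfrac12 p_v$. Each is contained in $H$ (since $p_v\in H$ and $H$ is convex, $\tfrac12 H + \tfrac12 p_v \subset \tfrac12 H + \tfrac12 H = H$), so the container has volume $\operatorname{vol}(H)$, which cancels against the $n\cdot 2^{-d}\operatorname{vol}(H)$ on the left. Disjointness of interiors uses the antipodality hidden in the equilateral condition: if $\ell$ is a supporting linear functional of $\Delta$ at $p_v-p_w$, then $p_u - p_w\in\Delta$ gives $\ell(p_u)\le\ell(p_v)$ and central symmetry plus $p_u - p_v\in\Delta$ gives $\ell(p_u)\ge\ell(p_w)$, so $\ell$ attains its maximum over $H$ at $p_v$ and its minimum at $p_w$; an overlap $\tfrac12 h_1 + \tfrac12 p_v = \tfrac12 h_2 + \tfrac12 p_w$ with $h_1,h_2$ in the relative interior of $H$ would then force $\ell(p_v-p_w)=\ell(h_2)-\ell(h_1) < \max_H\ell-\min_H\ell = \ell(p_v-p_w)$, a contradiction. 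This gives $n\le 2^d$ directly, working in the affine span of $\{p_v\}$ if that is proper. Your setup instead keeps $\operatorname{vol}(\Delta)$ on both sides with an uncontrolled ratio, which is why it stalls.

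Your equality-case discussion also aims at the wrong object. The volume argument forces the $2^d$ translates $\tfrac12(H+p_v)$ to tile $H$, which identifies $H$ as a parallelotope with vertex set $\{p_v\}$; but the theorem concerns $\Delta$, not $H$. What closes this is the additional constraint that \emph{every} difference $p_v-p_w$ (edge, face-diagonal, and long-diagonal of $H$ alike) lies on $\partial\Delta$: in the coordinate system in which $H$ is an axis cube, this forces the supporting hyperplanes of $\Delta$ at each $\pm e_i$ to be the coordinate hyperplanes (any tilt would expel one of the diagonal points), hence $\Delta = H-H$ is a parallelotope. Invoking Brunn--Minkowski equality or the classification of translative tilings is both heavier than necessary and, by itself, only speaks about $H$. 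The final step, that $C-C$ being a parallelotope forces $C$ to be one, is fine to cite as a standard summand fact.
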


\begin{lemma}\label{l:planar}
	Let $C$ be a centrally symmetric convex body in $\mathbb{R}^2$ and $P=(G,p,r)$ be a $C$-packing.
	If $C$ is not a parallelogram,
	then $(G,p)$ is planar.
\end{lemma}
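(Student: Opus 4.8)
The plan is to show that if $(G,p)$ fails to be planar then $G$ contains a complete subgraph on $5$ vertices whose corresponding sub-packing has all contacts realised, and then invoke Theorem \ref{t:DanzerGruenbaum} with $d=2$ to force $C$ to be a parallelogram. The first point is that $(G,p)$ always has the property that distinct vertices get distinct placements (since $\|p_v-p_w\|_C = r_v+r_w > 0$) and that for each edge $vw$ the bodies $r_vC+p_v$ and $r_wC+p_w$ touch along the segment $[p_v,p_w]$; more precisely, since $C$ is centrally symmetric, the contact point of $C_v$ and $C_w$ lies strictly between $p_v$ and $p_w$ on that segment. So I would first record that the straight-line drawing of $G$ with vertex $v$ placed at $p_v$ and edge $vw$ drawn as $[p_v,p_w]$ has each edge contained in the union $C_v\cup C_w$, meeting the other bodies only at the touching points. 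The standard disc-packing argument then shows that two edges $vw$ and $xy$ with no common endpoint cannot cross: a crossing would force one of the four bodies $C_v,C_w,C_x,C_y$ to have interior overlap with another, contradicting the packing condition.

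Given that the straight-line drawing is a planar embedding, non-planarity of $(G,p)$ as a \emph{framework} (in the sense defined just before Lemma \ref{lem:2.7}) can only fail through edges overlapping at a non-endpoint: either two edges sharing exactly an endpoint overlap on a sub-segment, or two disjoint edges overlap. The second case is excluded by the non-crossing argument above (overlap is a degenerate crossing), so the remaining case is that two edges $uv$ and $uw$ at a common vertex $u$ are collinear, i.e. $p_v$, $p_u$, $p_w$ lie on a line with $p_u$ between them, and the bodies $C_v$, $C_u$, $C_w$ are mutually tangent along that line. Here I would argue directly: $C_v$ and $C_w$ then both touch $C_u$ at points on the line through their centres, on opposite sides, and since $C_u$ has nonempty interior the open segment separating the two contact points keeps $C_v^\circ$ and $C_w^\circ$ disjoint — so this configuration actually is consistent with the packing, but it forces $vw\notin E$ and, crucially, it cannot produce a violation of planarity unless some body interior is hit. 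I expect the cleanest route is instead: reduce to the claim that if the drawing is not a valid planar framework, then after discarding the offending collinearity we still must have $K_5$ or a forbidden overlap, and in all surviving cases Theorem \ref{t:DanzerGruenbaum} applies once we normalise radii.

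Concretely, the main line of attack: suppose for contradiction that $(G,p)$ is \emph{not} planar in the framework sense. Using the packing condition I rule out all crossings and overlaps of edges sharing at most one endpoint exactly as in Schramm's / the disc-packing setting, so the only remaining obstruction is a pair of edges at a common vertex lying along a common ray — but then, examining the three bodies involved, one checks their pairwise interiors are disjoint only if the middle body has empty interior, a contradiction, so even this is impossible. Hence $(G,p)$ \emph{is} planar whenever $C$ is a genuine (two-dimensional) centrally symmetric body. This would make the hypothesis "$C$ is not a parallelogram" unnecessary, which signals I have mis-identified where parallelograms enter; the correct reading is that non-planarity of the \emph{contact graph} $G$ (i.e. $G$ not being a planar graph) is what must be excluded, and a contact graph that is non-planar must contain a $K_5$ (or $K_{3,3}$), and a $K_5$ contact sub-packing, after rescaling all radii to a common value via a routine compactness/limit argument that preserves contacts, is ruled out by Theorem \ref{t:DanzerGruenbaum} since $2^2=4<5$ — unless $C$ is a parallelogram, in which case even $K_4$ is the maximum and $K_5$ is still excluded. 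So the genuine content is: a $C$-packing contact graph on $\geq 5$ mutually touching bodies forces $C$ to be a parallelogram, hence if $C$ is not a parallelogram the straight-line drawing is crossing-free and gives a planar embedding of $G$.

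The step I expect to be the main obstacle is the reduction from "the $K_5$ that witnesses non-planarity has contacts only along some of its edges, with arbitrary radii" to the unit-radius complete-graph hypothesis of Theorem \ref{t:DanzerGruenbaum}: one must produce, from five pairwise-touching homothets of $C$ with varying radii, a configuration of five pairwise-touching \emph{translates} of $C$ (equal radii). I would handle this by the homothety trick — shrink each body toward an interior contact structure, or more simply observe that pairwise tangency of homothets $r_iC+p_i$ already gives, by Danzer–Grünbaum's actual statement which I would need to check allows unequal radii, or by a short affine/continuity argument, the required bound; if the cited form truly needs equal radii, the fix is a standard "inflate the smallest, deflate the largest while maintaining tangencies" homotopy, whose existence I would assert rather than prove in detail.
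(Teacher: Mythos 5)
The central gap is in the very first step you treat as routine: the claim that a crossing of two disjoint edges ``would force one of the four bodies $C_v,C_w,C_x,C_y$ to have interior overlap with another.'' That implication is true for \emph{strictly convex} bodies (at a boundary point the tangent line separates, so a transversal segment enters the interior), but it fails in general: the paper's Proposition~\ref{p:counterex} exhibits a packing of squares where two edge segments cross at a point lying on the boundary of all four bodies, with no interior overlap. The whole content of Lemma~\ref{l:planar} is that this boundary-only crossing forces $C$ to be a parallelogram. Your argument therefore ``proves'' a statement that is false in the square case, which is why you correctly sense that the parallelogram hypothesis has gone missing. You do not recover: the pivot to ``$G$ is a non-planar abstract graph, so contains $K_5$ or $K_{3,3}$'' is a different (and irrelevant) notion of planarity, since the lemma concerns the straight-line drawing $(G,p)$ defined just before Lemma~\ref{lem:2.7}; moreover Kuratowski produces a \emph{subdivision}, not a complete subgraph, so Theorem~\ref{t:DanzerGruenbaum} would not apply as you intend even if the abstract-graph route were the right one.

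The paper's actual argument is shorter and sidesteps both of your obstacles. A crossing point $q$ of $[p_{v_1},p_{v_2}]$ and $[p_{v_3},p_{v_4}]$ lies in $C_{v_1}\cup C_{v_2}$ and in $C_{v_3}\cup C_{v_4}$; if $q$ were interior to any one body then some other body's interior would meet it, so $q\in\bigcap_{i=1}^4\partial C_{v_i}$. Thus $\{v_1,\dots,v_4\}$ induces a $K_4$ (any two of the four bodies touch at $q$), not a $K_5$. The radii normalisation you flag as the ``main obstacle'' is then completely trivial precisely because of this common boundary point: translate so $q=0$, replace $r_{v_i}C+p_{v_i}$ by $C+p_{v_i}/r_{v_i}$, and observe that rescaling a body about a point on its boundary keeps that point on the boundary and preserves pairwise disjointness of interiors by convexity. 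This yields four pairwise-touching \emph{translates} of $C$, i.e.\ $n=4=2^2$ in Theorem~\ref{t:DanzerGruenbaum}, whose equality case says $C$ is a parallelogram. No compactness or homotopy argument is needed, and no appeal to abstract graph non-planarity is made.
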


\begin{proof}
	Suppose $(G,p)$ is not planar.
	By translating $(G,p)$ we may assume the line segments $[p_{v_1},p_{v_2}]$, $[p_{v_3},p_{v_4}]$ cross at the point $0$ for some edges $v_1v_2,v_3v_4 \in E(G)$.
	We note that the line $[p_{v_1},p_{v_2}]$ is covered by $C_{v_1} \cup C_{v_2}$,
	hence $0 \in C_{v_1} \cup C_{v_2}$;
	similarly, $0 \in C_{v_3} \cup C_{v_4}$.
	If $0 \in C_{v_1}^o$ then it follows that the interiors of at least one of $C_{v_3}$ and $C_{v_4}$ must intersect with the interior of $C_{v_1}$,
	contradicting that $P$ is a $C$-packing.
	Since this will also be true for $v_2,v_3,v_4$,
	then $0 \in \bigcap_{i=1}^4 \partial C_{v_i}$.
	Hence the subgraph $G'$ of $G$ generated by $\{v_1,v_2,v_3,v_4\}$ is a complete graph.
	
	We note that as $C$ is convex then for any scalars $c_{v_1},c_{v_2},c_{v_3}, c_{v_4}>0$ we have that $0 \in \partial c_i C_{v_i}$ and $c_iC_{v_i}^\circ \cap c_j C_{v_j}^\circ = \emptyset$ for all distinct $1\leq i,j \leq 4$
	(to see this, first consider the case when the $c_i$'s are all less than $1$).
	If we define $P'=(G',p',r')$ with $p'_{v_i} := p_{v_i}/r_{v_i}$ and $r'_{v_i}=1$ for each $1\leq i\leq 4$,
	then $P'$ is a $C$-packing,
	and by Theorem \ref{t:DanzerGruenbaum},
	$C$ is a parallelogram.
\end{proof}

We are now able to prove the key lemma of the section.

\begin{proof}[Proof of Lemma \ref{lem:2.10}]
	As $C$ is a regular symmetric body then by Lemma \ref{l:planar},
	$(G,p)$ is planar.
	Hence by Lemma \ref{l:planarnoestress},
	$(G,p)$ has no edge-length equilibrium stresses.
\end{proof}

Lemma \ref{lem:2.10} does not hold for all packings of centrally symmetric bodies.

\begin{proposition}\label{p:counterex}
	Let $C$ be a centrally symmetric convex body in the plane that is not strictly convex.
	Then there exists a well-positioned $C$-packing $P=(G,p,r)$ where $(G,p)$ has an edge-length equilibrium stresses.
\end{proposition}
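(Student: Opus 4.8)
The plan is to exploit the one feature a non-strictly-convex centrally symmetric body $C$ possesses that a regular one does not: a supporting line that meets $\partial C$ in a whole segment. Pick a supporting hyperplane $H$ of $C$ whose intersection $[a,b] = H \cap C$ is a nondegenerate segment, and let $z$ be the common support of all points on the relative interior of this segment — that is, $z.x = \|x\|_C^2$ and $\|z\|_C^* = \|x\|_C$ for every such $x$. The key geometric fact is that for a segment's worth of directions $u$ along $\partial C$, the duality "map" can select the \emph{same} support vector $z$; equivalently, $\varphi_C$ (where it is single-valued) is non-injective along that segment, and we may choose support vectors that agree. I would build a small packing whose contacts all point in directions lying on (a scaling of) that flat face, so that every edge $vw$ gets the \emph{same} normal vector $\varphi_C(p_v - p_w) = \pm z$; the edge-length equilibrium stress equations then collapse to a pair of linear conditions that are easy to satisfy nontrivially.

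Concretely, first I would take $G$ to be a triangle on vertices $\{1,2,3\}$ (or, if needed to get the well-positioned hypothesis and genuinely distinct radii to cooperate, a path or a slightly larger graph), and place the three bodies so that the three contact points all lie on the flat portion of appropriately scaled translates of $\partial C$ — i.e. arrange $p_1, p_2, p_3$ to be collinear along the direction of the face $[a,b]$, with radii $r_1, r_2, r_3$ chosen so that consecutive bodies are tangent and the tangency directions all sit in the relative interior of the flat face. Because the face is flat, each difference vector $p_v - p_w$ (suitably normalized) lies in the relative interior of that segment of $\partial C$, hence each is a smooth point with $\varphi_C(p_v-p_w)$ equal to $\pm z$, the common support. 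For three collinear tangent discs $C_1, C_2, C_3$ with $C_1$–$C_2$, $C_2$–$C_3$ in contact but $C_1$–$C_3$ not, the contact graph is the path $1$–$2$–$3$; to get a cycle and a genuine stress I would instead take four bodies arranged around the flat face in a "rhombus" so that $G$ contains a $4$-cycle with all four edge directions parallel to the face.

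The stress equations for such a configuration read, at each vertex $v$, $\sum_{w} a_{vw}\,\varphi_C(p_v-p_w) = 0$ and $\sum_w a_{vw}\|p_v-p_w\|_C = 0$. Since every $\varphi_C(p_v-p_w)$ is $\pm z$ with a sign depending only on the orientation of the edge, the first (two-dimensional) equation degenerates into a single scalar equation per vertex; together with the second scalar equation we get two homogeneous linear equations per vertex in the $|E|$ unknowns $a_{vw}$. With the $4$-cycle (four edges, four vertices) this is an $8 \times 4$ homogeneous system, but the equations are highly redundant because all the normals are collinear — the first-coordinate and second-coordinate rows of the $\varphi_C$-block are proportional — so the effective rank is small and a nonzero solution $a$ exists; I would verify this by direct computation with the explicit symmetric placement, where the symmetry of the rhombus makes the solution transparent (e.g. $a \equiv 1$ on a $4$-cycle whose opposite edges cancel in pairs). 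Finally I must check that the triple $(G,p,r)$ really is a $C$-packing — interiors disjoint, contacts exactly along $E$ — which follows from the convexity of $C$ and the fact that two translates of scaled $C$ touching along a flat face have disjoint interiors; and that $(G,p)$ is well-positioned, which holds because the relevant difference directions are smooth points of $C$ (they lie in the \emph{relative interior} of the face), so $f_{G,C}$ is differentiable there.

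The main obstacle I anticipate is \emph{not} the algebra but the geometric bookkeeping: one must choose the flat face, the number of bodies, their centres and radii so that (a) all the chosen contact directions genuinely land in the relative interior of the flat segment of the \emph{appropriately scaled} boundary $\partial(r_vC)$ — not at its endpoints, where smoothness could fail — (b) the non-adjacent bodies are kept strictly apart so the contact graph is exactly $G$ and no extra edges appear, and (c) the resulting framework is well-positioned. Getting a bona fide \emph{stress} (i.e. a \emph{cycle} all of whose edges are parallel) rather than a trivial degeneracy is the point where the face must be exploited most carefully: a path would have no stress, so the construction must fit at least a parallelogram-like $4$-cycle of mutually parallel contacts onto the single flat face, which is exactly what a non-strictly-convex body allows and a strictly convex one forbids.
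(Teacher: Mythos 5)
Your guiding idea is the same as the paper's: exploit a segment $[x_1,x_2]\subset\partial C$ so that several pairwise non-parallel contact directions all share one support functional $z$, then build a $4$-cycle packing and read off a stress. However, the concrete geometric recipe you give would not work. You identify ``the contact points lie on the flat face'' with ``the centres are collinear along the direction of the face'', and these are not equivalent: if $p_v-p_w$ is parallel to $x_1-x_2$, then since $z.(x_1-x_2)=0$ the normalised difference $(p_v-p_w)/\|p_v-p_w\|_C$ has $z$-value $0\neq 1$, so it does not lie on the face at all; the contact happens at an unrelated (possibly non-smooth) boundary point and the common-normal mechanism is unavailable. Worse, a $4$-cycle ``with all four edge directions parallel to the face'' is impossible: four parallel edges force the four centres onto one line, and on a line the edge $v_4v_1$ would make $C_{v_4}$ and $C_{v_1}$ overlap the bodies between them. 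What is actually needed -- and what the paper's proof writes down explicitly -- is four edge vectors lying (up to sign) strictly inside the two-dimensional cone $\{ax_1+bx_2:a,b\geq 0\}$: with $p_{v_1}=(1+t)(x_1+x_2)$, $p_{v_2}=t(x_1-x_2)$, $p_{v_3}=-p_{v_1}$, $p_{v_4}=-p_{v_2}$ and radii $1+2t,1,1+2t,1$, the edge vectors are $x_1+(1+2t)x_2$ and $(1+2t)x_1+x_2$ and their negatives; these are pairwise non-parallel, yet $\varphi_C(ax_1+bx_2)=(a+b)z$ for $a,b>0$, so all four normals are multiples of $z$ and all four edge lengths equal $2+2t$. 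Exhibiting such a placement and checking it really is a packing with exactly those four contacts is precisely the ``geometric bookkeeping'' you defer, and it is the substance of the proof rather than a routine afterthought.

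Your proposed stress is also wrong as stated: $a\equiv 1$ can never be an edge-length equilibrium stress, because the second defining equation $\sum_{w\in N(v)}a_{vw}\|p_v-p_w\|_C=0$ then has only positive terms. Likewise, ``the effective rank is small, so a nonzero solution exists'' is not automatic for an $8\times 4$ homogeneous system and must be checked. In the correct configuration both checks are immediate: since all normals are positive or negative multiples of $z$ and all lengths are equal, the two equations at each vertex coincide and force the signs to alternate around the cycle, giving $a_{v_1v_2}=a_{v_2v_3}=1$, $a_{v_3v_4}=a_{v_4v_1}=-1$ as in the paper. So keep the key idea (a flat face yields distinct contact directions with a common normal), but replace ``edges parallel to the face'' by edge vectors inside the cone over the face, and take an alternating-sign stress.
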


\begin{proof}
	As $C$ is not strictly convex,
	there exists a line segment $[x_1,x_2] \subset \partial C$.
	Choose large enough $t >0$ such that $C + t(x_1-x_2)$ and $C- t(x_1-x_2)$ are disjoint.
	Define $G=(V,E)$ to be the graph with $V := \{v_1,v_2,v_3, v_4\}$ and $E:= \{ v_1 v_2, v_2 v_3, v_3 v_4, v_4v_1\}$.
	Define $P=(G,p,r)$ to be the well-positioned $C$-packing with
	\begin{align*}
		p_{v_1} = (1+t)(x_1 + x_2), \quad & p_{v_2} = t(x_1-x_2), \quad & p_{v_3} = -(1+t)(x_1 + x_2), \quad & p_{v_4} = -t(x_1-x_2) \\
		r_{v_1} = 1+2t, \quad & r_{v_2} = 1, \quad  & r_{v_3} = 1+2t, \quad & r_{v_4} = 1; &
	\end{align*}
	see Figure \ref{fig:sq} for an illustration of the described packing.
	To see that $P$ is a well-positioned $C$-packing we first note that $\varphi_C(ax_1 +bx_2)= (a+b)\varphi_C(x)$ for all $a,b \geq 0$ and $x \in [x_1,x_2]$,
	hence $\|x_1+x_2\|_C = \|x_1\|_C + \|x_2\|_C$ and
	\begin{align*}
		\varphi_C(p_{v_1} - p_{v_2}) = \varphi_C(p_{v_2} - p_{v_3}) =\varphi_C(p_{v_1} - p_{v_4}) = \varphi_C(p_{v_4} - p_{v_3}) = (2+2t)\varphi_C(x_1).
	\end{align*}
	We now note that $(G,p)$ has an edge-length equilibrium stress,
	namely
	\begin{align*}
		a_{v_1v_2} = 1, \quad a_{v_2v_3} = 1, \quad a_{v_3v_4} = -1, \quad a_{v_4v_1} = -1.
	\end{align*}
\end{proof}
%
%
%
%
%

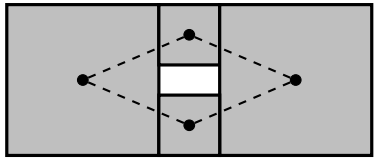
\begin{figure}[ht]
	\begin{tikzpicture}[scale=0.4]
		
		
		\draw[very thick, fill=lightgray] (-6,-2.5) rectangle (-1,2.5);
		\draw[very thick, fill=lightgray] (-1,0.5) rectangle (1,2.5);
		\draw[very thick, fill=lightgray] (1,-2.5) rectangle (6,2.5);
		\draw[very thick, fill=lightgray] (-1,-2.5) rectangle (1,-0.5);

		\node[vertex] (1) at (-3.5,0) {};
		\node[vertex] (2) at (0,1.5) {};
		\node[vertex] (3) at (3.5,0) {};
		\node[vertex] (4) at (0,-1.5) {};
		
		\draw[thick, dashed] (1)edge(2);
		\draw[thick, dashed] (2)edge(3);
		\draw[thick, dashed] (3)edge(4);
		\draw[thick, dashed] (4)edge(1);	
	\end{tikzpicture}
	\caption{A $C$-packing for $C:= [-1,1]^2$ with an edge-length equilibrium stress.}
	\label{fig:sq}
\end{figure}

\subsection{Proof of Theorem \ref{t:body}}

From Lemma \ref{lem:2.10} we obtain the following immediate result.

\begin{lemma}\label{lem:2.10a}
	Let $C$ be a regular symmetric body in the plane and $P=(G,p,r)$ be a $C$-packing.
	Then $R_C(G,p,r)$ is surjective with rank $|E|$.
\end{lemma}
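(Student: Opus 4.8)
Lemma \ref{lem:2.10a} states that for a regular symmetric body $C$ and a $C$-packing $P=(G,p,r)$, the packing rigidity matrix $R_C(G,p,r)$ is surjective with rank $|E|$. The plan is to deduce this directly from Lemma \ref{lem:2.10}, which tells us that $(G,p)$ has no edge-length equilibrium stresses. The crucial observation, already recorded in the excerpt just after equation (\ref{e:edgestress}), is that a $C$-packing $P=(G,p,r)$ is necessarily well-positioned — indeed $\|p_v-p_w\|_C = r_v+r_w > 0$ for each edge $vw \in E$, and since $C$ is smooth the norm is differentiable away from the origin, so $f_{G,C}$ (and hence $h_{G,C}$) is differentiable at $(p,r)$. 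Thus $R_C(G,p,r)$ is a genuine $|E| \times (d+1)|V|$ matrix and we are entitled to talk about its rank.

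The main step is to identify the left kernel of $R_C(G,p,r)$ with the space of edge-length equilibrium stresses of $(G,p)$ together with the zero map. First I would observe that, after substituting $r_v + r_w = \|p_v-p_w\|_C$ into the radii columns (valid since $P$ is a $C$-packing), each row of $R_C(G,p,r)$ takes the form displayed in (\ref{e:edgestress}). A vector $a = (a_{vw})_{vw \in E} \in \mathbb{R}^{|E|}$ lies in the left kernel precisely when, for every vertex $v \in V$, summing the point-column entries over incident edges gives $\sum_{w \in N(v)} a_{vw}\varphi_C(p_v-p_w) = 0$, and summing the radii-column entries gives $\sum_{w \in N(v)} a_{vw}\|p_v-p_w\|_C = 0$. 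These are exactly the two defining equations of an edge-length equilibrium stress. By Lemma \ref{lem:2.10}, no nonzero $a$ satisfies both, so the left kernel of $R_C(G,p,r)$ is trivial. A matrix with trivial left kernel has full row rank, so $\rank R_C(G,p,r) = |E|$, and full row rank is equivalent to surjectivity of the associated linear map $\mathbb{R}^{(d+1)|V|} \to \mathbb{R}^{|E|}$.

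I do not anticipate any serious obstacle here: this is essentially a bookkeeping argument translating "no edge-length equilibrium stress" into "full row rank", and all the non-trivial content — planarity of the contact framework via Lemma \ref{l:planar} and the index-counting argument of Lemmas \ref{lem:2.7}--\ref{l:planarnoestress} — has already been absorbed into Lemma \ref{lem:2.10}. The only point requiring a little care is making explicit that a $C$-packing is automatically well-positioned, so that "rank $|E|$" is meaningful; this follows from smoothness of $C$ as noted above. One could write the proof in two sentences: since $C$ is a regular symmetric body, $(G,p)$ has no edge-length equilibrium stresses by Lemma \ref{lem:2.10}; hence by the discussion following (\ref{e:edgestress}) the rows of $R_C(G,p,r)$ are linearly independent, so $R_C(G,p,r)$ has rank $|E|$ and is therefore surjective.
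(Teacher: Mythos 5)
Your proposal matches the paper's reasoning exactly: the paper declares Lemma~\ref{lem:2.10a} an ``immediate'' consequence of Lemma~\ref{lem:2.10}, relying on the observation made just after equation~(\ref{e:edgestress}) that, for a well-positioned $C$-packing, the rows of $R_C(G,p,r)$ are linearly independent if and only if $(G,p)$ has no edge-length equilibrium stress. You have simply spelled out the bookkeeping that the paper leaves implicit, including the harmless point that smoothness of $C$ guarantees well-positionedness, so the argument is correct and follows the same route.
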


It now follows that if $C$ is a regular symmetric body then the set $S_{G,C}$ of all $C$-packings with contact graph $G$ will have a differentiable manifold structure.

\begin{lemma}\label{lem:2.11}
	Let $C \subset \mathbb{R}^2$ be a regular symmetric body where $\|\cdot\|_C$ is $C^k$-differentiable on the set $\mathbb{R}^2\setminus \{0\}$.
	Then for any graph $G = (V,E)$,
	either $G$ is not planar and $S_{G,C}=\emptyset$,
	or $G$ is planar and $S_{G,C}$ is a $(3|V|-|E|)$-dimensional $C^k$-differentiable manifold with tangent space $\ker R_C(G,p,r)$ at $(p,r)$.
\end{lemma}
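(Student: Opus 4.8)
The plan is to apply the regular value theorem (submersion theorem) to the packing rigidity map $h_{G,C}$ restricted to an appropriate open set, using Lemma \ref{lem:2.10a} to verify the submersion hypothesis. First I would dispose of the non-planar case: if $G$ is not planar, then by Lemma \ref{l:planar} (noting that a regular symmetric body is strictly convex and hence not a parallelogram) there is no $C$-packing with contact graph $G$, so $S_{G,C}=\emptyset$ as claimed. Now assume $G$ is planar. Recall from Section \ref{sec:packmaps} that a triple $(G,p,r)$ is a $C$-packing if and only if $h_{G,C}(p,r)=0$ together with the strict inequalities $h_{G',C}(p,r)\in\mathbb{R}^{|E'|}_{\geq 0}\setminus\{0\}$ for every proper edge-superset $G'=(V,E')$ of $G$; consequently $S_{G,C}$ is an open subset of $h_{G,C}^{-1}[0]$. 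Since $\|\cdot\|_C$ is $C^k$ on $\mathbb{R}^2\setminus\{0\}$ and every $C$-packing framework is well-positioned (no edge has length zero), $h_{G,C}$ is a $C^k$ map on an open neighbourhood of $S_{G,C}$.

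Next I would invoke Lemma \ref{lem:2.10a}: at every point $(p,r)\in S_{G,C}$ the derivative $Dh_{G,C}(p,r)=R_C(G,p,r)$ is surjective with rank $|E|$. Hence $0$ is a regular value of $h_{G,C}$ on the open set $S_{G,C}$, and by the regular value theorem (the $C^k$ version of the implicit function theorem), $h_{G,C}^{-1}[0]$ is, near each such point, a $C^k$-submanifold of $\mathbb{R}^{3|V|}$ of dimension $3|V|-|E|$, with tangent space at $(p,r)$ equal to $\ker Dh_{G,C}(p,r)=\ker R_C(G,p,r)$. Intersecting with the open set $S_{G,C}$ preserves the manifold structure, dimension, and tangent spaces, giving the conclusion. (One should note that $S_{G,C}$ could a priori be empty even when $G$ is planar; the empty set is vacuously a $C^k$-manifold of the stated dimension, so the statement still holds.)

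The only genuinely delicate point is verifying that the relevant maps are honestly $C^k$ and that the submersion is clean on all of $S_{G,C}$ rather than just generically — but this is exactly what Lemma \ref{lem:2.10a} buys us, since it asserts full row rank at \emph{every} $C$-packing, not merely at regular ones. The differentiability order $k$ of $h_{G,C}$ matches the differentiability order of $\|\cdot\|_C^2$ away from the origin, which is $C^k$ by hypothesis (the squaring does not lower the order on $\mathbb{R}^2\setminus\{0\}$), so the resulting manifold is $C^k$ as stated. Thus the main work has already been done in establishing Lemma \ref{lem:2.10}/\ref{lem:2.10a}; the present lemma is a bookkeeping application of standard differential topology.
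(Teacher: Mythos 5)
Your proposal is correct and follows essentially the same route as the paper: dispose of the non-planar case, observe that $S_{G,C}$ is open in $h_{G,C}^{-1}[0]$ and that $h_{G,C}$ is $C^k$ near it, invoke Lemma \ref{lem:2.10a} for full row rank of the derivative, and finish with standard differential topology (the paper cites the constant rank theorem, you invoke the regular value/submersion theorem — equivalent here since the derivative is surjective). One small observation: for the non-planar direction you correctly appeal to Lemma \ref{l:planar} (a regular symmetric body is strictly convex, hence not a parallelogram), which is actually the right reference — the paper's proof cites Theorem \ref{t:oded}, but that theorem only yields the existence direction (planar $\Rightarrow$ nonempty), so your citation is the more precise one for showing non-planar $\Rightarrow$ empty.
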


\begin{proof}
	By Theorem \ref{t:oded},
	$S_{G,C}$ is empty if and only $G$ is not planar.
	Assume $G$ is planar so that $S_{G,C} \neq \emptyset$.
	As $\| \cdot\|_C$ is $C^k$-differentiable on the non-zero points,
	the map $h_{G,C}$ is $C^k$-differentiable on an open neighbourhood of $S_{G,C}$.
	By Lemma \ref{lem:2.10a},
	$R_C(G,p,r)$ is surjective at every point $(p,r)$ of $S_{G,C}$;
	hence, by the continuity of the derivative of $h_{G,C}$,
	the matrix $R_C(G,p,r)$ is surjective at every point $(p,r)$ in an open neighbourhood of $S_{G,C}$.
	The result now holds by applying the constant rank theorem (\cite[Theorem 2.5.15]{manifold}).
\end{proof}

Since our maps and manifolds will usually not be infinitely differentiable,
we shall require the more general version of Sard's theorem.

\begin{theorem}(see \cite[Theorem 3.6.3]{manifold})\label{t:sard}
	Let $f:M \rightarrow N$ be a $C^k$-differentiable map between $C^k$-differentiable manifolds for $k \geq 1$.
	Let $K \subset N$ be the set of critical values of $f$,
	i.e.~the points $y \in N$ where there exists $x \in M$ with $f(x)=y$ and $\rank df(x) < \dim N$.
	If $k \geq \dim M - \dim N +1$ then $K$ has Lebesgue measure zero.
\end{theorem}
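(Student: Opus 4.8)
Theorem \ref{t:sard} is the classical theorem of Sard in its sharp finite-differentiability form, and the plan to prove it is the following standard three-step argument, the last two steps requiring some care to reach the optimal exponent $k\geq m-n+1$.

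First I would reduce to a purely local Euclidean statement. Since $M$ and $N$ are $C^k$-manifolds they are second countable, and ``Lebesgue measure zero'' is a local property, is preserved by $C^k$-diffeomorphisms, and is closed under countable unions; hence it suffices to show that for open $U\subseteq\mathbb{R}^m$, a $C^k$ map $f\colon U\to\mathbb{R}^n$ with $k\geq\max\{1,m-n+1\}$, and $\Sigma=\{x\in U:\rank Df(x)<n\}$, the image $f(\Sigma)$ has measure zero in $\mathbb{R}^n$ (one may also shrink $U$ to a bounded cube). I would prove this by induction on $m$, keeping $n$ and $k$ free throughout; the base case $m<n$ is a one-line covering estimate, since a $\delta$-cube maps into a set of $n$-volume $O(\delta^{n})$ and $O(\delta^{-m})$ of them cover a bounded region, for a total of $O(\delta^{n-m})\to0$. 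For $m\geq n$ I would stratify $\Sigma\supseteq\Sigma_{1}\supseteq\Sigma_{2}\supseteq\cdots$, where $\Sigma_{j}$ is the set of points at which every partial derivative of $f$ of order at most $j$ vanishes, and treat three pieces: (a) $f(\Sigma\setminus\Sigma_{1})$, using that near a point with some nonzero first partial a $C^k$ change of source coordinates puts $f$ in the form $(t,y)\mapsto(t,g(t,y))$, so that the critical points of $f$ on the slice $\{t=c\}$ are exactly those of the $C^{k}$ map $g(c,\cdot)$ of $m-1$ variables into $\mathbb{R}^{n-1}$ --- finish by the inductive hypothesis together with Fubini's theorem; (b) $f(\Sigma_{j}\setminus\Sigma_{j+1})$ for the small values of $j$, using that $\Sigma_{j}$ lies locally inside the level hypersurface of an order-$j$ partial of $f$ with nonvanishing gradient and restricting $f$ to that $(m-1)$-manifold; (c) $f(\Sigma_{i})$ for $i\geq\lceil m/n\rceil$, via Taylor's theorem: all partials of $f$ of order at most $i$ vanish on $\Sigma_{i}$ and $D^{i}f$ is uniformly continuous on compacta, so a $\delta$-cube meeting $\Sigma_{i}$ maps into a set of $n$-volume $\varepsilon(\delta)\,\delta^{in}$ with $\varepsilon(\delta)\to0$, and covering $\Sigma_i$ yields a total of $\varepsilon(\delta)\,O(\delta^{in-m})\to0$ because $in\geq m$. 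Here one uses $\lceil m/n\rceil\leq m-n+1$ (equivalently $(n-1)(m-n)\geq0$), so step (c) needs only strata $\Sigma_i$ and derivatives of $f$ that are actually available.

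The hard part is step (b). Restricting $f$ to the hypersurface $\{w=0\}$, where $w$ is an order-$j$ partial of $f$, leaves only a $C^{k-j}$ map on an $(m-1)$-manifold, and iterating this down the recursion a naive count demands far more regularity than $C^{m-n+1}$ --- it only yields, say, the $C^{\infty}$ version, or a version with a derivative requirement quadratic in $m$. Squeezing the hypothesis down to the sharp $k\geq\max\{1,m-n+1\}$ requires Sard's more economical treatment of this step: being frugal about which derivatives are ``spent'' in passing to the hypersurface, and invoking the Taylor/covering estimate of (c) as soon as it becomes available. That refinement is exactly what the cited monograph carries out, so for the present paper --- where the manifolds that arise are amply smooth --- I would simply invoke the theorem as stated rather than reproduce the analysis.
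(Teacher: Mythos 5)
The paper gives no proof of this statement: it is quoted verbatim, with citation, from Marsden, Ratiu and Abraham, and your closing remark that one should ``simply invoke the theorem as stated'' is exactly what the paper does. Your sketch of the classical Sard argument is nonetheless sound in outline --- reduction to a local Euclidean problem, induction on the source dimension, the stratification $\Sigma\supseteq\Sigma_1\supseteq\Sigma_2\supseteq\cdots$, the projection-plus-Fubini treatment of $\Sigma\setminus\Sigma_1$, and the Taylor--covering bound once $in\geq m$ --- and you correctly flag that the loss of differentiability in step (b) is precisely what makes the sharp hypothesis $k\geq m-n+1$ delicate. Since the paper offers no argument of its own, there is nothing here to compare against; you have correctly identified this as a textbook result to be cited, not reproved.
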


\begin{lemma}\label{lem:2.12}
	Let $G=(V,E)$ be a planar graph and $C \subset \mathbb{R}^2$ be a regular symmetric body where $\|\cdot\|_C$ is $C^k$-differentiable on the set $\mathbb{R}^2\setminus \{0\}$.
	Let $\Crit_G \subset \mathbb{R}_{>0}^{|V|}$ be the set of critical values of the map
	\begin{align*}
		\rho : S_{G,C} \rightarrow \mathbb{R}_{>0}^{|V|}, ~ (p,r) \mapsto r.
	\end{align*}
	If $k \geq 2|V|-|E| -1$ then $\Crit_G$ has Lebesgue measure zero.
	Further,
	if $r \in \rho(S_{G,C}) \setminus \Crit_G$ then $S_{G,C}(r)$ is a $(2|V|-|E|)$-dimensional $C^k$-differentiable manifold.
\end{lemma}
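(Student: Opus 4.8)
The plan is to apply Sard's theorem (Theorem \ref{t:sard}) to the map $\rho : S_{G,C} \rightarrow \mathbb{R}_{>0}^{|V|}$, $(p,r) \mapsto r$, and then deduce the manifold structure on the fibres from the constant rank theorem via the regular value condition. First I would record the relevant dimensions: by Lemma \ref{lem:2.11}, since $G$ is planar, $S_{G,C}$ is a $C^k$-differentiable manifold of dimension $3|V|-|E|$, while the target $\mathbb{R}_{>0}^{|V|}$ has dimension $|V|$. Thus $\dim S_{G,C} - \dim \mathbb{R}_{>0}^{|V|} + 1 = (3|V|-|E|) - |V| + 1 = 2|V| - |E| + 1$. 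The hypothesis $k \geq 2|V|-|E|-1$ is \emph{not quite} this bound, so the first thing to check carefully is the differentiability class of $\rho$ itself: the projection $(p,r) \mapsto r$ is the restriction of a linear (hence $C^\infty$) map on $\mathbb{R}^{2|V|}\times\mathbb{R}^{|V|}$ to the $C^k$-manifold $S_{G,C}$, so as a map \emph{between $C^k$-manifolds} it is $C^k$; but one should argue that the value of $k$ appearing in Theorem \ref{t:sard} can in fact be taken two larger here — or, more cleanly, observe that Theorem \ref{t:sard} is being applied with $M = S_{G,C}$ (a $C^k$-manifold), $N = \mathbb{R}^{|V|}_{>0}$, and that the measure-zero conclusion needs $k \geq \dim M - \dim N + 1$; reconciling "$k \geq 2|V|-|E|-1$" with "$k \geq 2|V|-|E|+1$" is the one genuine subtlety, and I expect the resolution is that one works instead with the map $h_{G,C}$ or with $\rho$ viewed on a slightly larger ambient open set where two extra derivatives are available, or the statement tacitly uses a sharpened version of Sard. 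This bookkeeping is the main obstacle.

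Granting the differentiability reconciliation, the argument is then routine. By Sard's theorem the set $\Crit_G$ of critical values of $\rho$ has Lebesgue measure zero in $\mathbb{R}^{|V|}_{>0}$, which gives the first assertion. For the second assertion, fix $r \in \rho(S_{G,C}) \setminus \Crit_G$. Then $r$ is a regular value of $\rho$: at every $(p,r) \in S_{G,C}$ with $\rho(p,r) = r$, the derivative $d\rho(p,r) : T_{(p,r)}S_{G,C} \rightarrow \mathbb{R}^{|V|}$ is surjective. By the constant rank theorem, or directly by the regular value / preimage theorem (\cite[Theorem 2.5.15]{manifold} or its corollary on level sets), the fibre $\rho^{-1}(r) = S_{G,C}(r)$ is a $C^k$-differentiable submanifold of $S_{G,C}$ of dimension $\dim S_{G,C} - |V| = (3|V|-|E|) - |V| = 2|V|-|E|$, as claimed.

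One point worth spelling out is why $S_{G,C}(r)$, as defined by $\{p : (p,r)\in S_{G,C}\}$, coincides with the fibre $\rho^{-1}(r)$ and inherits the manifold structure correctly: the projection $(p,r) \mapsto p$ restricted to the slice $\{(p,r) : (p,r)\in S_{G,C}\}$ is a bijection onto $S_{G,C}(r)$, and it is a diffeomorphism onto its image because $r$ is fixed, so the $C^k$-manifold structure on the fibre transports to $S_{G,C}(r) \subset \mathbb{R}^{2|V|}$. I would also note at the outset that $\rho(S_{G,C})$ may be a proper subset of $\mathbb{R}^{|V|}_{>0}$ — not every choice of radii is realisable as a packing with contact graph exactly $G$ — which is why the statement restricts to $r \in \rho(S_{G,C}) \setminus \Crit_G$ rather than to all of $\mathbb{R}^{|V|}_{>0} \setminus \Crit_G$; this is harmless since the measure-zero conclusion for $\Crit_G$ is what feeds into the "almost all $r$" statement of Theorem \ref{t:body}.
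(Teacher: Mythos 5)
You have correctly identified the one real subtlety in this lemma --- that a naive application of Theorem \ref{t:sard} with $M = S_{G,C}$ (dimension $3|V|-|E|$) and $N = \mathbb{R}^{|V|}_{>0}$ requires $k \ge 2|V|-|E|+1$, which is two more than the hypothesis $k \ge 2|V|-|E|-1$ grants. But you have not closed this gap; you only offer guesses at the resolution (switching to $h_{G,C}$, viewing $\rho$ on an ambient open set, or invoking a ``sharpened'' Sard), and none of those work. Viewing $\rho$ on the ambient $\mathbb{R}^{2|V|}\times\mathbb{R}^{|V|}_{>0}$ is useless because the ambient projection is a submersion with no critical points, and what is needed is the critical set of the \emph{restriction} to $S_{G,C}$; there is no obviously relevant sharpening of Sard; and the map $h_{G,C}$ does not by itself change the dimension count. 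So as written the plan does not establish the measure-zero claim.

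The paper's actual fix is the observation that $\rho$ is invariant under the (free) translation action on placements: fix any vertex $v_0$ and restrict $\rho$ to the slice $S_{G,C}^{v_0}$ of packings with $p_{v_0}=0$. Every packing is a translate of one in this slice, and translating does not change $r$, so the critical values of $\rho|_{S_{G,C}^{v_0}}$ are exactly those of $\rho$. By Lemma \ref{lem:2.11} the slice is a $C^k$-manifold of dimension $3|V|-|E|-2$, and now $(3|V|-|E|-2) - |V| + 1 = 2|V|-|E|-1$, so the hypothesis $k \ge 2|V|-|E|-1$ is precisely what Theorem \ref{t:sard} demands. This two-dimensional reduction via translation invariance is the missing idea. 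Your treatment of the second assertion (preimage theorem giving $S_{G,C}(r)=\rho^{-1}[r]$ a $C^k$-manifold of dimension $2|V|-|E|$, and the remark that $S_{G,C}(r)\subset\mathbb{R}^{2|V|}$ is diffeomorphic to the fibre since $r$ is fixed) agrees with the paper and is fine.
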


\begin{proof}
	Choose any $v_0 \in V$.
	Define $S_{G,C}^{v_0}$ to be the set of all $C$-packings with contact graph $G$ and $p_{v_0}=0$,
	and $\tilde{\rho}$ to be the restriction of $\rho$ to $S_{G,C}^{v_0}$.
	As $\rho$ is invariant under translation then the critical values of $\tilde{\rho}$ are exactly the critical values of $\rho$.
	The map $\tilde{\rho}$ is $C^k$-differentiable as it is the restriction of a linear map and $S_{G,C}^{v_0}$ is a $(3|V|-|E|-2)$-dimensional $C^k$-differentiable manifold (Lemma \ref{lem:2.11}).
	As
	\begin{align*}
		k  \geq 2|V|-|E| -1 = (3|V|-|E|-2) - |V| + 1
	\end{align*}
	then by Theorem \ref{t:sard},
	$\Crit_G$ has Lebesgue measure zero.
	
	Choose $r \in \rho(S_{G,C}) \setminus \Crit_G$.
	By the preimage theorem (see \cite[Theorem 3.5.4]{manifold}),
	$S_{G,C}(r) = \rho^{-1}[r]$ is a $C^k$-differentiable manifold with dimension $3|V|-|E| - |V| = 2|V|-|E|$.
\end{proof}

The projection map $\rho$ is also very closely related to the rigidity matrix.

\begin{lemma}\label{l:ranknull}
	Let $C \subset \mathbb{R}^2$ be a regular symmetric body and $P=(G,p,r)$ a $C$-packing.
	Then
	\begin{align*}
		\rank d \rho (p,r) = |V| \quad \Leftrightarrow \quad \rank R_C(G,p) = |E|.
	\end{align*}	
\end{lemma}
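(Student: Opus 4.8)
The plan is to identify $d\rho(p,r)$ explicitly as a linear map and then read off its rank by a dimension count. By Lemma \ref{lem:2.11}, $S_{G,C}$ is a $C^k$-differentiable manifold whose tangent space at $(p,r)$ is $\ker R_C(G,p,r)$, and since $\rho$ is the restriction to $S_{G,C}$ of the linear coordinate projection $\pi \colon \mathbb{R}^{2|V|} \times \mathbb{R}^{|V|} \to \mathbb{R}^{|V|}$, $(u,s) \mapsto s$, its derivative $d\rho(p,r)$ is precisely the restriction of $\pi$ to $\ker R_C(G,p,r)$. Hence $\rank d\rho(p,r) = \dim \pi\bigl(\ker R_C(G,p,r)\bigr)$, and everything reduces to computing this dimension.

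First I would record the dimension of the domain: by Lemma \ref{lem:2.10a}, $R_C(G,p,r)$ is surjective of rank $|E|$, so $\dim \ker R_C(G,p,r) = 3|V| - |E|$. Next I would apply rank–nullity to $\pi$ restricted to this kernel, namely
\[
	\dim \pi\bigl(\ker R_C(G,p,r)\bigr) = \dim \ker R_C(G,p,r) - \dim\bigl(\ker R_C(G,p,r) \cap \ker \pi\bigr).
\]
Since $\ker \pi = \mathbb{R}^{2|V|}\times\{0\}$ and $R_C(G,p,r) = [\, R_C(G,p) \quad I(G,r) \,]$, the intersection $\ker R_C(G,p,r) \cap \ker \pi$ consists exactly of the pairs $(u,0)$ with $R_C(G,p)u = 0$; this space is naturally isomorphic to $\ker R_C(G,p)$, which has dimension $2|V| - \rank R_C(G,p)$.

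Combining these identities gives $\rank d\rho(p,r) = (3|V| - |E|) - (2|V| - \rank R_C(G,p)) = |V| - |E| + \rank R_C(G,p)$, from which the claimed equivalence is immediate: $\rank d\rho(p,r) = |V|$ if and only if $\rank R_C(G,p) = |E|$. I do not expect any genuine obstacle here — it is a linear-algebra bookkeeping argument — the only point requiring a little care is the identification of $d\rho(p,r)$ with the restriction of $\pi$ to the tangent space, which I would justify using the submanifold structure of $S_{G,C}$ and the explicit tangent-space formula from Lemma \ref{lem:2.11} (which itself rests on the surjectivity in Lemma \ref{lem:2.10a}, so regularity of $C$ is used precisely there).
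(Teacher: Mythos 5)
Your proposal is correct and follows essentially the same route as the paper: both identify the tangent space of $S_{G,C}$ with $\ker R_C(G,p,r)$ via Lemma \ref{lem:2.11}, observe that $\ker d\rho(p,r)$ is exactly $\ker R_C(G,p) \times \{0\}$, and close with rank--nullity to get $\rank d\rho(p,r) - |V| = \rank R_C(G,p) - |E|$.
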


\begin{proof}
	By Lemma \ref{lem:2.11},
	$S_{G,C}$ is a $C^1$-differentiable manifold with dimension $3|V|-|E|$ and tangent space $\ker R_C(G,p,r)$ at $(p,r)$,
	hence $\rho$ is $C^1$-differentiable.
	We now observe that
	\begin{align*}
		\ker d\rho(p,r) =\left\{ (u,s) \in \ker R_C(G,p,r) : s = 0 \right\} = \ker R_C(G,p) \times \{0\}.
	\end{align*}
	By applying the rank nullity theorem to both $d\rho(p,r)$ and $R_C(G,p)$,
	we see that
	\begin{align*}
		\rank d \rho(p,r) -|V|=2|V|- |E| - \dim \ker d\rho(p,r) = \rank R_C(G,p) - |E|.
	\end{align*}
\end{proof}

Using this result coupled with Lemma \ref{lem:2.12}, we are now able to prove our first main result of the paper.

\begin{proof}[Proof of Theorem \ref{t:body}]
	Let $\|\cdot\|_C$ be $C^\ell$-differentiable on $\mathbb{R}^2\setminus \{0\}$.
	As $C$ is smooth then $\ell \geq 1$ by Proposition \ref{p:support}.
	Define $A$ to be the set of connected graphs $H$ where $|E(H)|\geq 2|V(H)|- 1- \ell$ and $V(H) \subset V$.
	By Lemma \ref{lem:2.12},
	$\Crit_H$ has Lebesgue measure zero for any graph $H \in A$,
	as $\ell \geq 2|V(H)| - |E(H)| - 1$.
	Hence the set 
	\begin{align*}
		\Crit := \bigcup \left\{ \Crit_H \times \mathbb{R}^{|V \setminus V(H)|} : H \in A \right\}
	\end{align*}
	also has Lebesgue measure zero.
	Fix $r \in \mathbb{R}_{>0}^V \setminus \Crit$,
	let $P=(G,p,r)$ be any $C$-packing and set $k:= 2|V|-|E|-1$.
	Without loss of generality,
	we may assume $G$ is connected.
	We shall now show that properties (\ref{t:body1}), (\ref{t:body2}) and (\ref{t:body3}) hold for $P$,
	and hence as we chose $r$ arbitrarily the required result will hold.	
	
	(\ref{t:body1}):
	Suppose $G$ is not $(2,2)$-sparse,
	i.e.~$G$ contains a subgraph $G'=(V',E')$ where $n =2|V'|-|E'|< 2$.
	We may suppose $G'$ is connected,
	hence $G' \in A$.
	By Lemma \ref{lem:2.12},
	the set $S_{G',C}(r|_{V'})$ is a $n$-dimensional $C^\ell$-differentiable manifold for $n \in \{0,1\}$.
	However this contradicts that $S_{G',C}(r|_{V'})$ is invariant under translation,
	as $\dim S_{G',C}(r|{V'}) \geq 2$.
	Hence $G$ is $(2,2)$-sparse.
	
	(\ref{t:body2}):	
	Suppose $\ell \geq k$.
	Since $r \in \mathbb{R}_{>0}^V \setminus \Crit$ then $\rank d\rho(p,r) = |V|$.
	By Lemma \ref{l:ranknull},
	$\rank R_C(G,p) = |E|$ and $P$ is independent.
	
	(\ref{t:body3}):
	Suppose $G$ is $(2,2)$-tight.
	As $C$ is smooth we have $\ell \geq k =1$,
	hence $P$ is independent by (\ref{t:body2}).
	As $\|\cdot\|_C$ is non-Euclidean,
	then by Theorem \ref{t:laman},
	$P$ is infinitesimally rigid.
	Since $C$ is smooth,
	then by Proposition \ref{p:infsr},
	$P$ is sticky rigid.
\end{proof}

\subsection{Sets of possible radii}

For any planar graph $G$ and symmetric body $C \subset \mathbb{R}^2$,
we define $\radii_C (G) \subset \mathbb{R}^V_{>0}$ be the set of radii $r$ where $S_{G,C}(r) \neq \emptyset$.

\begin{lemma}\label{l:isoimpgen}
	Let $C$ be a regular symmetric convex body in the plane and $P=(G,p,r)$ a $C$-packing.
	If $P$ independent then there exists an open neighbourhood $U \subset S_{G,C}$ of $(p,r)$ and open neighbourhood $U'$ of $r$ such that the restriction of the projection map $\rho : S_{G,C} \rightarrow \mathbb{R}_{>0}^{|V|}$ to $U$ and $U'$ is an open map.
	In particular,
	if there exists an independent $C$-packing $P$ with contact graph $G$ then $\radii_C(G)$ has non-empty interior.
\end{lemma}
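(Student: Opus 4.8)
The plan is to use independence to show that the projection $\rho$ is a submersion at $(p,r)$, and then read off the openness statement from the local normal form of a submersion.

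First I would record the ambient structure. Since $(p,r)\in S_{G,C}$ the set $S_{G,C}$ is non-empty, so by Lemma \ref{lem:2.11} the graph $G$ is planar; moreover, as $C$ is smooth the norm $\|\cdot\|_C$ is continuously differentiable on $\mathbb{R}^2\setminus\{0\}$ (Proposition \ref{p:support}), so applying Lemma \ref{lem:2.11} with $k=1$ shows that $S_{G,C}$ is a $C^1$-differentiable manifold of dimension $3|V|-|E|$ whose tangent space at $(p,r)$ is $\ker R_C(G,p,r)$; in particular $\rho$ is $C^1$ near $(p,r)$. Now, since $P$ is independent we have $\rank R_C(G,p)=|E|$, and hence by Lemma \ref{l:ranknull} we obtain $\rank d\rho(p,r)=|V|$; that is, $d\rho(p,r)$ is surjective onto $\mathbb{R}^{|V|}$, so $\rho$ is a submersion at $(p,r)$. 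Because $|V|$ is the maximal possible rank of $d\rho$ and the derivative of $\rho$ is continuous, $\rho$ remains a submersion on an open neighbourhood of $(p,r)$ in $S_{G,C}$.

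Next I would invoke the constant rank theorem (\cite[Theorem 2.5.15]{manifold}) at $(p,r)$: since $\rho$ has locally constant rank $|V|$, there is an open neighbourhood $U\subset S_{G,C}$ of $(p,r)$ together with charts on $U$ and on $\mathbb{R}^{|V|}_{>0}$ in which $\rho|_U$ is a coordinate projection. Coordinate projections are open maps, so $\rho|_U$ is an open map onto the open set $U':=\rho(U)$, which contains $r=\rho(p,r)$; this is exactly the first assertion. For the stated consequence, note that by definition every $(q,s)\in S_{G,C}$ has $S_{G,C}(s)\neq\emptyset$, hence $s\in\radii_C(G)$; therefore $\radii_C(G)\supseteq\rho(U)=U'$, a non-empty open subset of $\mathbb{R}^{|V|}_{>0}$, and so $\radii_C(G)$ has non-empty interior.

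The argument is essentially bookkeeping on top of the earlier results, so I do not anticipate a genuine obstacle; the one point that needs attention is that we have only $C^1$-regularity of $\|\cdot\|_C$ (because a regular symmetric body is smooth but need not have a smoother norm), but this is enough both for Lemma \ref{lem:2.11} to furnish the manifold structure and for the constant rank theorem to apply, so nothing further is required.
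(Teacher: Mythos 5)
Your proposal is correct and follows essentially the same route as the paper: both use Lemma \ref{lem:2.11} to obtain the $C^1$-manifold structure of $S_{G,C}$, Lemma \ref{l:ranknull} to turn independence into surjectivity of $d\rho(p,r)$, and then a standard submersion normal-form argument to conclude openness (the paper cites the local onto theorem, \cite[Theorem 3.5.2]{manifold}, where you cite the constant rank theorem — these are interchangeable here). Your closing remark correctly identifies that $C^1$-regularity from smoothness of $C$ is exactly what is needed.
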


\begin{proof}
	By Lemma \ref{lem:2.11},
	$\dim S_{G,C} = 3|V|-|E|$.
	As $\rank R_C(G,p) =|E|$ then by Lemma \ref{l:ranknull},
	$\rank d \rho(p,r) =|V|$ and $d \rho(p,r)$ is surjective.
	The result now holds by the local onto theorem (see \cite[Theorem 3.5.2]{manifold}).
\end{proof}

\begin{proposition}\label{p:radii}
	Let $C$ be a regular symmetric body in $\mathbb{R}^2$,
	$G$ be a planar graph and $k = 2|V|-|E|-1$.
	If the norm $\|\cdot\|_C$ is $C^k$-differentiable on $\mathbb{R}^2 \setminus \{0\}$,
	then $\radii_C(G)$ has positive Lebesgue measure if and only if there exists an independent $C$-packing with contact graph $G$.
\end{proposition}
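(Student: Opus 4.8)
The plan is to prove the two implications separately, with the forward implication essentially immediate from Lemma \ref{l:isoimpgen}, and the reverse implication following from Sard's theorem as packaged in Lemma \ref{lem:2.12} together with the rank comparison in Lemma \ref{l:ranknull}.

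First I would dispatch the direction ``an independent packing exists $\Rightarrow$ positive measure''. If $P=(G,p,r)$ is an independent $C$-packing with contact graph $G$, then Lemma \ref{l:isoimpgen} shows that the restriction of $\rho$ to suitable neighbourhoods is an open map, so $\radii_C(G)$ contains an open neighbourhood of $r$; hence $\radii_C(G)$ has non-empty interior and therefore positive Lebesgue measure. This direction uses nothing beyond $C$ being a regular symmetric body.

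For the converse I would first record that $\radii_C(G)$ is exactly the image $\rho(S_{G,C})$ of the projection $\rho:S_{G,C}\to\mathbb{R}^{|V|}_{>0}$, $(p,r)\mapsto r$. Since $\|\cdot\|_C$ is $C^k$-differentiable on $\mathbb{R}^2\setminus\{0\}$ with $k=2|V|-|E|-1$, the hypothesis of Lemma \ref{lem:2.12} is met (with equality), so the set $\Crit_G$ of critical values of $\rho$ has Lebesgue measure zero. If $\radii_C(G)$ has positive measure, it cannot be contained in the null set $\Crit_G$, so there is some $r\in\rho(S_{G,C})\setminus\Crit_G$. Pick any $(p,r)\in S_{G,C}$; as $r$ lies in the image but is not a critical value, $\rank d\rho(p,r)=|V|$. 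By Lemma \ref{l:ranknull} this is equivalent to $\rank R_C(G,p)=|E|$, i.e.\ the $C$-packing $P=(G,p,r)$ — which is automatically well-positioned since $C$ is smooth and strictly convex and every edge has length $r_v+r_w>0$ — is independent, with contact graph $G$.

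I expect no genuine obstacle here: the content is carried entirely by the three earlier lemmas, so the argument is short. The only points to state carefully are that ``$r$ in the image and not a critical value'' forces $\rank d\rho(p,r)=|V|$ at (at least) one preimage point, and the routine observation that a positive-measure set cannot be contained in a measure-zero set, which is what lets us extract such an $r$.
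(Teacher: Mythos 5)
Your argument is correct and follows the same route as the paper: Lemma \ref{l:isoimpgen} for the forward direction, and Lemma \ref{lem:2.12} plus Lemma \ref{l:ranknull} for the converse. The only difference is that you explicitly spell out the appeal to Lemma \ref{l:ranknull} at the end, which the paper leaves implicit in its ``It follows that\ldots''.
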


\begin{proof}
	If there exists an independent $C$-packing with contact graph $G$ then by Lemma \ref{l:isoimpgen},
	$\radii_C(G)$ has positive Lebesgue measure.
	Now suppose $\radii_C(G)$ has positive Lebesgue measure.
	By Lemma \ref{lem:2.12},
	$\Crit_G$ has Lebesgue measure zero,
	hence $\radii_C(G) \setminus \Crit_G$ is non-empty.
	It follows that if we choose $r \in \radii_C(G) \setminus \Crit_G$ then there exists an independent $C$-packing with contact graph $G$ and radii $r$.	
\end{proof}

\begin{corollary}
	Let $C$ be a regular symmetric body in $\mathbb{R}^2$ and $G$ be a $(2,2)$-tight planar graph.
	Then $\radii_C(G)$ has positive Lebesgue measure if and only if there exists a minimally rigid $C$-packing with contact graph $G$.
\end{corollary}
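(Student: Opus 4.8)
The plan is to obtain this corollary quickly by combining Proposition \ref{p:radii} with Theorem \ref{t:laman}. First I would record that a $(2,2)$-tight planar graph $G$ has $|E|=2|V|-2$, so the quantity $k:=2|V|-|E|-1$ appearing in Proposition \ref{p:radii} equals $1$; since $C$ is a regular symmetric body it is in particular smooth, so by Proposition \ref{p:support}(\ref{p:support1}) the norm $\|\cdot\|_C$ is continuously differentiable on $\mathbb{R}^2\setminus\{0\}$, i.e.\ $C^1$-differentiable, and hence $C^k$-differentiable. Proposition \ref{p:radii} then yields that $\radii_C(G)$ has positive Lebesgue measure if and only if there exists an independent $C$-packing with contact graph $G$. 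So it suffices to prove that an independent $C$-packing with contact graph $G$ exists if and only if a minimally rigid one does.

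One direction is trivial: a minimally rigid packing is by definition independent. For the other, suppose $P=(G,p,r)$ is an independent $C$-packing, so that $\rank R_C(G,p)=|E|$. This is the largest value the rank of $R_C(G,p)$ can take, since $R_C(G,p)$ has $|E|$ rows, so $(G,p)$ is a regular framework with respect to $\|\cdot\|_C$; thus Theorem \ref{t:laman} applies to it. Provided $\|\cdot\|_C$ is non-Euclidean, that theorem (with $k=2$) tells us $(G,p)$ is infinitesimally rigid, since $G$ is a $(2,2)$-tight spanning subgraph of itself, and therefore $(G,p)$ is minimally rigid and $P$ is a minimally rigid $C$-packing, as wanted.

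The one case that needs separate comment — the point I would flag as the only real subtlety, rather than a genuine obstacle — is when $\|\cdot\|_C$ is Euclidean (equivalently, $C$ is a linear image of a disc). Here Theorem \ref{t:laman} requires $(2,3)$-sparsity for independence, but a $(2,2)$-tight graph has $|E|=2|V|-2>2|V|-3$ and so is not $(2,3)$-sparse; since an independent framework is automatically regular, this means $G$ has no independent placement in the Euclidean plane at all, hence no independent $C$-packing with contact graph $G$. Then $\radii_C(G)$ has measure zero by Proposition \ref{p:radii}, and no minimally rigid $C$-packing with contact graph $G$ exists either, so the asserted equivalence holds with both sides false. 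Assembling these observations proves the corollary.
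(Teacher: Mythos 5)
Your proof is correct and follows the same basic strategy as the paper: combine Proposition~\ref{p:radii} (with $k=1$, justified by Proposition~\ref{p:support}) with the fact that for $(2,2)$-tight $G$ the existence of an independent $C$-packing is equivalent to the existence of a minimally rigid one. The paper's own proof is notably terser — it simply invokes Proposition~\ref{p:radii} with $k=1$ and leaves the independence-to-minimal-rigidity step (via Theorem~\ref{t:laman}, using that an independent framework is automatically regular) and the Euclidean edge case implicit — whereas you spell both out, which is arguably the more careful version of the same argument.
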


\begin{proof}
	By Proposition \ref{p:support},
	$\|\cdot\|_C$ is $C^1$-differentiable.
	The result now follows from Proposition \ref{p:radii} with $k=1$.
\end{proof}

\section{Topological spaces of convex bodies}\label{sec:regsymbodies}

\subsection{The space of centrally symmetric bodies}

Denote by $\mathcal{K}_d$ the set of centrally symmetric convex bodies in $\mathbb{R}^d$.
The pair $(\mathcal{K}_d, d_H)$ forms a metric space,
where $d_H$ is the Hausdorff metric.
Many operations that can be applied to convex sets can be seen to be continuous in the generated topology.
We refer the reader to the \cite[Theorem 7.2]{wijsman} for the following result.

\begin{lemma}\label{l:polar}
	The map $A \mapsto A^*$ is a homeomorphism from $(\mathcal{K}_d, d_H)$ to itself.
\end{lemma}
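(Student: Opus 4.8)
The plan is to prove Lemma \ref{l:polar} by establishing that the polar operation $A \mapsto A^*$ is well-defined as a self-map of $\mathcal{K}_d$, that it is its own inverse, and that it is continuous; then being a continuous involution it is automatically a homeomorphism. The well-definedness and the involution property $A^{**} = A$ have already been noted in the excerpt (just after the definition of $C^*$), so the content of the proof is really the continuity statement, and once we have continuity of $A \mapsto A^*$ the continuity of the inverse follows for free since the inverse \emph{is} the same map.

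First I would set up the key analytic translation: for a centrally symmetric convex body $A$, the polar $A^*$ is precisely the unit ball of the dual norm $\|\cdot\|_A^*$ by Proposition \ref{p:polar}(\ref{p:polar1}), and the dual norm is $\|x\|_A^* = \sup_{y \in A} |x \cdot y|$. So I would reduce Hausdorff convergence of bodies to uniform convergence of the associated (support-type) functions on the unit sphere $\mathbb{S}^{d-1}$. Concretely, it is standard that $d_H(A_n, A) \to 0$ if and only if the norms $\|\cdot\|_{A_n}$ converge to $\|\cdot\|_A$ uniformly on $\mathbb{S}^{d-1}$ (equivalently the support functions converge uniformly), using that all these bodies are sandwiched between fixed Euclidean balls after passing to a convergent subsequence. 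The crucial step is then: if $\|\cdot\|_{A_n} \to \|\cdot\|_A$ uniformly on $\mathbb{S}^{d-1}$, does $\|\cdot\|_{A_n}^* \to \|\cdot\|_A^*$ uniformly on $\mathbb{S}^{d-1}$? This is a soft compactness argument — for each fixed $x$, $\|x\|_{A_n}^* = \sup_{y\in A_n}|x\cdot y|$, and one controls the supremum using that $A_n \subseteq (1+\varepsilon_n)A$ and $A \subseteq (1+\varepsilon_n) A_n$ for $\varepsilon_n \to 0$ (which is exactly what $d_H$-convergence plus the norm reformulation gives), so $A_n^* \supseteq (1+\varepsilon_n)^{-1} A^*$ and $A^* \supseteq (1+\varepsilon_n)^{-1} A_n^*$, i.e. $d_H(A_n^*, A^*) \to 0$ directly.

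Actually the cleanest route avoids the norm language entirely: I would argue directly with the nesting characterization of Hausdorff distance on bodies containing the origin in their interior. Since each $A \in \mathcal{K}_d$ is centrally symmetric and compact with nonempty interior, there are radii $0 < \rho \le R$ with $\rho \mathbb{B}^d \subseteq A \subseteq R\mathbb{B}^d$, and for such bodies $d_H(A_n, A) \to 0$ is equivalent to the existence of $\lambda_n \to 1$ (with $\lambda_n \ge 1$) such that $\lambda_n^{-1} A \subseteq A_n \subseteq \lambda_n A$. Taking polars reverses inclusions and scales reciprocally: $\lambda_n^{-1} A^* \subseteq A_n^* \subseteq \lambda_n A^*$, which forces $d_H(A_n^*, A^*) \to 0$. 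Combined with $A^{**} = A$ this shows $A \mapsto A^*$ is a continuous bijection of $\mathcal{K}_d$ with continuous inverse (itself), hence a homeomorphism, and one can simply cite \cite[Theorem 7.2]{wijsman} as in the statement. The main obstacle — and it is a mild one — is justifying the equivalence between $d_H$-convergence and the existence of such scaling factors $\lambda_n \to 1$; this uses central symmetry and the uniform interior/exterior ball bounds on a convergent sequence, and is exactly the point where the hypothesis that we are working inside $\mathcal{K}_d$ (rather than general compact convex sets) is used. Everything else is formal.
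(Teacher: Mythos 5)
Your proof is correct, but it is a genuinely different route from the paper's, which offers no argument at all: the paper simply cites \cite[Theorem 7.2]{wijsman} for this fact. Your ``cleanest route'' — the nesting characterization — is a tidy self-contained proof and the one worth recording. To spell out the one technical point you flagged: since $A \in \mathcal{K}_d$ there are $0 < \rho \le R$ with $\rho\mathbb{B}^d \subseteq A \subseteq R\mathbb{B}^d$, and from $d_H(A_n,A) =: \varepsilon_n \to 0$ one first deduces (by a separating-hyperplane/support-function argument) that $(\rho - \varepsilon_n)\mathbb{B}^d \subseteq A_n$ for $n$ large, after which $A_n \subseteq A + \varepsilon_n\mathbb{B}^d \subseteq (1+\varepsilon_n/\rho)A$ and $A \subseteq A_n + \varepsilon_n\mathbb{B}^d \subseteq (1+\varepsilon_n/(\rho-\varepsilon_n))A_n$, giving the desired $\lambda_n \to 1$ with $\lambda_n^{-1}A \subseteq A_n \subseteq \lambda_n A$; the converse direction is immediate from $A \subseteq R\mathbb{B}^d$. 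Polarity reverses inclusions and obeys $(\mu C)^* = \mu^{-1}C^*$, so $\lambda_n^{-1}A^* \subseteq A_n^* \subseteq \lambda_n A^*$, and since $A^* \in \mathcal{K}_d$ (with $R^{-1}\mathbb{B}^d \subseteq A^* \subseteq \rho^{-1}\mathbb{B}^d$) the same converse gives $d_H(A_n^*, A^*) \to 0$. Together with $A^{**}=A$ this makes polarity a continuous involution, hence a homeomorphism. What the direct proof buys over the citation is that it isolates precisely where the $\mathcal{K}_d$ hypotheses are used — compactness and nonempty interior (equivalently $0 \in \operatorname{int} A$ and boundedness, which central symmetry plus being a body guarantee) are what give the two-sided Euclidean ball bounds; without them the scaling characterization of Hausdorff convergence fails and so does continuity of polarity (e.g.\ for sets degenerating to lower-dimensional ones).
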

%

We can alternatively define the topology of $\mathcal{K}_d$ by another metric.

\begin{proposition}\label{prop:hausvsdash}
	Define $\rho$ to be the metric of $\mathcal{K}_d$ given by
	\begin{align*}
		\rho(C,D) := \sup_{x \in \mathbb{B}^d} \left| \| x\|_{C} - \|x\|_D \right|.
	\end{align*}
	Then $(\mathcal{K}_d,\rho)$ and $(\mathcal{K}_d,d_H)$ are homeomorphic and $\rho(C,D) = d_H(C^*,D^*)$.
\end{proposition}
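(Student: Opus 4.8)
The plan is to prove the identity $\rho(C,D) = d_H(C^*,D^*)$ directly, and then deduce the homeomorphism claim by combining this identity with Lemma \ref{l:polar}. The key observation is that the Hausdorff distance between two centrally symmetric convex bodies can be computed from their support functions, and the support function of a body is the dual norm of its polar. Concretely, for any convex body $A$ with support function $h_A(x) := \sup_{y \in A} x.y$, one has the classical formula $d_H(A,B) = \sup_{x \in \mathbb{S}} |h_A(x) - h_B(x)|$ (this is standard; see e.g. Schneider's book on convex bodies). Applying this with $A = C^*$ and $B = D^*$, and using Proposition \ref{p:polar}(\ref{p:polar1}) which gives $h_{C^*}(x) = \sup_{y \in C^*} x.y = \|x\|_{C^{**}} = \|x\|_C$ (since $C^{**} = C$), we get $d_H(C^*,D^*) = \sup_{x \in \mathbb{S}} |\,\|x\|_C - \|x\|_D\,|$. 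Finally, since both $\|\cdot\|_C$ and $\|\cdot\|_D$ are positively homogeneous, the supremum over the unit ball $\mathbb{B}^d$ equals the supremum over its boundary $\mathbb{S}$ (the function $x \mapsto |\,\|x\|_C - \|x\|_D\,|$ is itself positively homogeneous, so its sup over $\mathbb{B}^d$ is attained on $\mathbb{S}$), which yields $\rho(C,D) = \sup_{x \in \mathbb{S}} |\,\|x\|_C - \|x\|_D\,| = d_H(C^*,D^*)$, as required.

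I should first verify that $\rho$ is genuinely a metric on $\mathcal{K}_d$: symmetry and the triangle inequality are immediate from the corresponding properties of the absolute value and the supremum, and $\rho(C,D) = 0$ forces $\|\cdot\|_C = \|\cdot\|_D$ on $\mathbb{B}^d$, hence everywhere by homogeneity, hence $C = D$ by property (ii) of the unit-ball correspondence recorded at the start of Section \ref{sec:prelim}. (Alternatively, once the identity $\rho(C,D) = d_H(C^*,D^*)$ is established, the fact that $\rho$ is a metric follows from the fact that $d_H$ is one together with the injectivity of $A \mapsto A^*$.)

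For the homeomorphism statement: the map $\Phi: (\mathcal{K}_d, d_H) \to (\mathcal{K}_d, d_H)$, $A \mapsto A^*$, is a homeomorphism by Lemma \ref{l:polar}. The identity just proved says exactly that $\Phi$ is also an isometry from $(\mathcal{K}_d, \rho)$ to $(\mathcal{K}_d, d_H)$, since $\rho(C,D) = d_H(\Phi(C), \Phi(D))$. Composing, the identity map $(\mathcal{K}_d, \rho) \to (\mathcal{K}_d, d_H)$ equals $\Phi^{-1} \circ \Phi$ where the first $\Phi$ is viewed as the isometry $(\mathcal{K}_d,\rho) \to (\mathcal{K}_d,d_H)$ and $\Phi^{-1}$ as the homeomorphism $(\mathcal{K}_d,d_H) \to (\mathcal{K}_d,d_H)$; both are continuous with continuous inverses, so the two topologies on $\mathcal{K}_d$ coincide.

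The only mild obstacle is pinning down the support-function formula for Hausdorff distance with the correct normalisation and making sure the passage between "supremum over $\mathbb{B}^d$" and "supremum over $\mathbb{S}$" is airtight — but both are routine consequences of positive homogeneity and compactness, so I expect no real difficulty; the substantive content is entirely carried by the duality $h_{C^*} = \|\cdot\|_C$ from Proposition \ref{p:polar}(\ref{p:polar1}).
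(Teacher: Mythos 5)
Your proposal is correct and follows the same overall structure as the paper's proof: establish the identity $\rho(C,D) = d_H(C^*,D^*)$, then invoke Lemma \ref{l:polar} to transfer the homeomorphism across the polar map. The paper simply cites Thompson's \emph{Minkowski Geometry} (Prop.\ 2.4.5) for the identity, whereas you derive it from the support-function formula for Hausdorff distance together with the duality $h_{C^*} = \|\cdot\|_C$ from Proposition \ref{p:polar}(\ref{p:polar1}) and a homogeneity argument to pass from $\mathbb{S}$ to $\mathbb{B}^d$; that derivation is sound and a bit more self-contained than the paper's bare citation.
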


\begin{proof}
	Choose any $C,D \in \mathcal{K}_d$.
	By \cite[Proposition 2.4.5]{minkowski},
	$d_H(C^*,D^*) = \rho(C,D)$,
	hence the result follows from Lemma \ref{l:polar}.
\end{proof}

For our next result we remember that a \emph{Baire space} is a topological space where every countable intersection of open dense subsets is dense. 

\begin{proposition}\label{p:kbaire}
	The space $\mathcal{K}_d$ is a locally compact Baire space.
\end{proposition}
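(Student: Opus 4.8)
The plan is to verify the two Baire-space assertions separately, using the metric $\rho$ introduced in Proposition~\ref{prop:hausvsdash} together with the homeomorphism $C \mapsto C^*$ from Lemma~\ref{l:polar}. For \textbf{local compactness}, I would fix $C \in \mathcal{K}_d$ and exhibit a compact neighbourhood. A natural candidate is a ``sandwich'' set: choose $0 < a < b$ with $a\,\mathbb{B}^d \subset C \subset b\,\mathbb{B}^d$, and consider $\mathcal{N} := \{ D \in \mathcal{K}_d : \tfrac{a}{2}\mathbb{B}^d \subset D \subset 2b\,\mathbb{B}^d \}$. This is a neighbourhood of $C$ in the Hausdorff metric, since any $D$ with $d_H(C,D)$ small enough is squeezed between these balls. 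To see $\mathcal{N}$ is compact, I would invoke the Blaschke selection theorem: any sequence in $\mathcal{N}$ consists of convex bodies lying in the fixed compact set $2b\,\mathbb{B}^d$, so it has a subsequence converging in $d_H$ to a compact convex set $D_\infty$; central symmetry passes to the limit, and the inclusion $\tfrac{a}{2}\mathbb{B}^d \subset D_\infty$ is preserved by Hausdorff convergence, so $D_\infty$ has non-empty interior and $D_\infty \in \mathcal{N}$. Hence $\mathcal{N}$ is sequentially compact, and since it sits inside the metric space $(\mathcal{K}_d, d_H)$ it is compact.

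For the \textbf{Baire property}, the cleanest route is to observe that $(\mathcal{K}_d, d_H)$ is a \emph{complete} metric space and then apply the Baire category theorem. Completeness again follows from the Blaschke selection theorem: a $d_H$-Cauchy sequence $(C_n)$ is bounded (Cauchy sequences are bounded, so all $C_n$ lie in a common ball $R\,\mathbb{B}^d$) and uniformly bounded below (a Cauchy sequence eventually stays within $d_H$-distance $\varepsilon$ of $C_N$, which contains some ball $\delta\,\mathbb{B}^d$, so all later $C_n$ contain $(\delta-\varepsilon)\mathbb{B}^d$ with $\delta - \varepsilon > 0$ for small $\varepsilon$); thus a subsequence converges in $d_H$ to a centrally symmetric convex body with non-empty interior, and a Cauchy sequence with a convergent subsequence converges. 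Alternatively, one can avoid re-proving completeness by quoting that $(\mathcal{K}_d, \rho)$ is a metric space with $\rho(C,D) = d_H(C^*,D^*)$, so completeness of one follows from completeness of the other via Lemma~\ref{l:polar}, but the direct Blaschke argument is self-contained.

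I would then assemble the two pieces: a complete metric space is a Baire space by the Baire category theorem (\cite[any standard reference]{manifoldlee} or similar; the paper cites \cite{manifold}), and local compactness was established above. The main obstacle is not conceptual but bookkeeping: one must be careful that central symmetry and the \emph{non-empty interior} requirement both survive Hausdorff limits, since $\mathcal{K}_d$ excludes degenerate (lower-dimensional) convex sets, and a naive limit of bodies could collapse. The uniform lower bound $\tfrac{a}{2}\mathbb{B}^d \subset D$ (for local compactness) respectively the extracted bound $(\delta-\varepsilon)\mathbb{B}^d \subset C_n$ (for completeness) is precisely what rules this out, so the proof hinges on carrying that lower inradius bound along. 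Everything else — symmetry of the limit, the inclusion being closed under $d_H$-convergence, boundedness of Cauchy sequences — is routine. I would also note that local compactness could be deduced from completeness plus the observation that closed bounded-and-bounded-below subsets are compact, so the two claims share essentially one proof via Blaschke selection, which I would state once and use twice.
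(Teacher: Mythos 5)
Your local-compactness argument is sound: the ``sandwich'' neighbourhood $\mathcal{N} = \{ D \in \mathcal{K}_d : \tfrac{a}{2}\mathbb{B}^d \subset D \subset 2b\,\mathbb{B}^d \}$ is indeed a compact neighbourhood of $C$, and the bookkeeping (Minkowski cancellation to get the inner inclusion, Blaschke selection for compactness, symmetry and the inradius bound surviving the $d_H$-limit) all checks out.

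However, the completeness claim for $(\mathcal{K}_d, d_H)$ is \textbf{false}, and your argument for it has a genuine gap. You want to extract a uniform inradius bound from the Cauchy condition by fixing $\varepsilon$, finding $N$ so the tail stays within $\varepsilon$ of $C_N$, and then using that $C_N$ contains some $\delta\,\mathbb{B}^d$ with $\delta > \varepsilon$. But $\delta$ depends on $N$, and $N$ depends on $\varepsilon$, so there is no guarantee that $\delta(N(\varepsilon)) > \varepsilon$ can ever be arranged. A concrete counterexample: $C_n := \tfrac{1}{n}\mathbb{B}^d$ is $d_H$-Cauchy in $\mathcal{K}_d$ (since $d_H(C_n,C_m) = |1/n - 1/m|$) but converges to $\{0\}$, which has empty interior and lies outside $\mathcal{K}_d$. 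So $\mathcal{K}_d$ is not complete; the space is not closed under degeneration of the inradius, and a Cauchy sequence gives you no control over the inradius of its elements.

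The good news is that you do not need completeness at all: a locally compact Hausdorff space is Baire (a standard form of the Baire category theorem), so your local-compactness argument already gives the result. This is essentially the paper's route in disguise. The paper works instead inside the \emph{complete} metric space $\mathcal{C}^s_d$ of all centrally symmetric compact convex sets, \emph{including the degenerate lower-dimensional ones}; this larger space is locally compact and complete (Schneider, \cite[Theorems 1.8.2, 1.8.3]{schneider}), and $\mathcal{K}_d$ is an open subset of it. Both local compactness and the Baire property then restrict to open subsets, so the conclusion follows. Your proof can be repaired either by dropping the completeness claim and appealing to the locally-compact-Hausdorff version of Baire, or by replacing ``$(\mathcal{K}_d,d_H)$ is complete'' with ``$\mathcal{K}_d$ is open in the complete metric space $\mathcal{C}^s_d$ and open subsets of completely metrizable spaces are Baire.''
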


\begin{proof}
	Define $\mathcal{C}_d$ to be the set of non-empty compact convex sets in $\mathbb{R}^d$ and $\mathcal{C}^s_d \subset \mathcal{C}_d$ to be the subset of centrally symmetric compact convex sets;
	importantly, both $C_d$ and $C_d^s$ contain sets with empty interior.
	By \cite[Theorems 1.8.2, 1.8.3]{schneider},
	$(\mathcal{C}_d,d_H)$ is a locally compact complete metric space.
	Since $\mathcal{C}_d^s$ is a closed subset of $\mathcal{C}_d$ then $(\mathcal{C}^s_d,d_H)$ is a locally compact complete metric space also.
	As $\mathcal{K}_d$ is an open dense subset of $\mathcal{C}^s_d$ then $\mathcal{K}_d$ is a locally compact Baire space by the Baire category theorem.
\end{proof}

\subsection{The space of regular symmetric convex bodies}

We now denote the set of all regular symmetric bodies in $\mathbb{R}^d$ by $\mathcal{B}_d$.
The next result allows us to say that a ``generic'' centrally symmetric body will be a regular symmetric body;
importantly, this will allow us to prove Theorem \ref{t:symperfect} by focusing solely on the regular symmetric bodies.
It was originally shown by Klee in \cite{klee59} that the set of convex bodies that are both smooth and strictly convex is a comeagre subset of the set of convex bodies.
Since much of the language used by Klee is different to our own,
we include a proof of this special case.

\begin{proposition}\label{p:baire}
	The space $\mathcal{B}_d$ is a comeagre Baire subspace of $\mathcal{K}_d$.
\end{proposition}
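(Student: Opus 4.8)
The plan is to show that $\mathcal{B}_d$ is comeagre by exhibiting it as a countable intersection of open dense subsets of $\mathcal{K}_d$; once we know it is comeagre in the Baire space $\mathcal{K}_d$ (Proposition \ref{p:kbaire}), it is itself a Baire space as a comeagre subset of a Baire space, which gives the ``Baire subspace'' claim. So the real work is to write $\mathcal{B}_d = \bigcap_{n} U_n$ with each $U_n$ open and dense. I would split the two defining conditions. Write $\mathcal{S}_d$ for the strictly convex bodies and $\mathcal{M}_d$ for the smooth bodies in $\mathcal{K}_d$, so $\mathcal{B}_d = \mathcal{S}_d \cap \mathcal{M}_d$. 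By the polarity homeomorphism (Lemma \ref{l:polar}) together with Proposition \ref{p:polar}(\ref{p:polar3}), which says $C$ is smooth iff $C^*$ is strictly convex, we have $\mathcal{M}_d = \{ C : C^* \in \mathcal{S}_d \}$; since $C \mapsto C^*$ is a homeomorphism, $\mathcal{M}_d$ is comeagre as soon as $\mathcal{S}_d$ is, and the intersection of two comeagre sets is comeagre. Hence it suffices to prove that $\mathcal{S}_d$ is comeagre in $\mathcal{K}_d$.

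For $\mathcal{S}_d$, I would quantify non-strict-convexity: a centrally symmetric body $C$ fails to be strictly convex exactly when $\partial C$ contains a nondegenerate line segment, equivalently there exist $x \neq y$ on $\partial C$ with the midpoint also on $\partial C$, equivalently $\|x\|_C = \|y\|_C = \|\tfrac{1}{2}(x+y)\|_C = 1$ for some linearly independent $x,y$ (using the reformulation of strict convexity via the norm given in the Preliminaries). For rational $\varepsilon > 0$ and a fixed countable dense family of pairs of directions, define $U_n$ roughly as the set of $C \in \mathcal{K}_d$ such that $\partial C$ contains no segment of Euclidean length $\geq 1/n$. Each such $U_n$ is open in the Hausdorff metric: if $C$ has no long boundary segment then a small Hausdorff perturbation still has none, because the presence of a segment of length $\ell$ on the boundary forces, for every nearby body, a near-flat region that persists — made precise via support hyperplanes, a segment $[x,y] \subset \partial C$ means the supporting hyperplane at the midpoint supports all of $[x,y]$, and this ``width-zero in one direction over a long stretch'' condition is not destroyed by small perturbations. (Alternatively, one transports the whole question to the dual side where smoothness of $C^*$ is an open condition expressed via uniform differentiability estimates on $\|\cdot\|_{C^*}$, using $\rho(C,D) = d_H(C^*,D^*)$ from Proposition \ref{prop:hausvsdash}.) Density of $U_n$ is the point where one actually uses an approximation argument: given any $C \in \mathcal{K}_d$ and $\delta > 0$, one produces a strictly convex $C'$ with $d_H(C,C') < \delta$, for instance by replacing $C$ with $(1-t) C + t\, \mathbb{B}^d$ for small $t > 0$ (a Minkowski average with a ball is strictly convex, is still centrally symmetric, and converges to $C$ as $t \to 0$), or by a smoothing/mollification of the support function; such $C'$ lies in $\mathcal{S}_d \subset U_n$, so $U_n$ is dense.

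Then $\mathcal{S}_d = \bigcap_n U_n$: a body has no boundary segment at all iff it has no boundary segment of length $\geq 1/n$ for every $n$. Since $\mathcal{K}_d$ is a Baire space, $\mathcal{S}_d$ is comeagre, hence so is $\mathcal{M}_d$ by the polarity argument, hence so is $\mathcal{B}_d = \mathcal{S}_d \cap \mathcal{M}_d$. Finally, a dense $G_\delta$ (more generally any comeagre set) inside a Baire space is again a Baire space in the subspace topology, so $\mathcal{B}_d$ is a Baire subspace of $\mathcal{K}_d$, completing the proof.

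The main obstacle I anticipate is the openness of $U_n$ — i.e.\ making rigorous the claim that ``$\partial C$ contains a segment of length $\geq 1/n$'' is a closed condition in the Hausdorff metric. Naively one worries that a sequence of bodies with ever-shorter boundary segments could converge to one with a genuine segment; ruling this out requires a compactness argument (the segments, being uniformly bounded, have a convergent subsequence of endpoints, and Hausdorff convergence of the bodies forces the limiting segment to lie in $\partial C$) or, more cleanly, the detour through polarity where the corresponding condition on $C^*$ is a quantitative smoothness (uniform strict differentiability of the norm on the unit sphere) that is manifestly stable under small perturbations. Either route works; I would likely take the dual route to reuse Propositions \ref{p:polar} and \ref{prop:hausvsdash} and avoid re-deriving segment-stability estimates from scratch.
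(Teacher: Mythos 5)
Your overall strategy matches the paper's: reduce to showing comeagreness in the Baire space $\mathcal{K}_d$ (Proposition \ref{p:kbaire}), write the bad set as a countable union of closed nowhere dense pieces indexed by a quantitative size parameter, and prove closedness by a compactness/subsequence argument. The organizational twist you add — handling only strict convexity explicitly and transporting smoothness through polarity via Proposition \ref{p:polar}(\ref{p:polar3}) and Lemma \ref{l:polar} — is sound and a bit cleaner than the paper, which instead runs two parallel arguments (the sets $A_i$ for non-smoothness and $B_i$ for non-strict-convexity, the latter being exactly your $\mathcal{K}_d \setminus U_n$). Your sketch of openness of $U_n$ (extract convergent endpoints of the offending segments, pass to the limit) is precisely the paper's closedness argument for $B_i$. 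So far so good.

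There is, however, a genuine gap in your density argument. You propose to approximate an arbitrary $C \in \mathcal{K}_d$ by the Minkowski average $(1-t)C + t\,\mathbb{B}^d$ and assert this is strictly convex. It is not: the Minkowski sum of a convex body with a ball smooths \emph{corners} (it makes the body smooth) but preserves \emph{flat faces}. Concretely, if $C = [-1,1]^2$ then $(1-t)C + t\,\mathbb{B}^2$ is a rounded square whose four sides are still straight segments of length $2(1-t)$, so it is not strictly convex for any $t \in (0,1)$. More generally, if $[x_1,x_2] \subset \partial C$ is a maximal segment with outer normal $u$, and $b$ is the unique point of $\mathbb{B}^d$ with outer normal $u$, then $(1-t)[x_1,x_2] + tb$ is a nondegenerate segment in the boundary of $(1-t)C + t\,\mathbb{B}^d$. (The sum of two \emph{strictly convex} bodies is strictly convex, but that does not help when $C$ itself is not.) Your fallback suggestion of ``mollification of the support function'' can be made to work, but as written it is a gesture rather than an argument, and the obvious literal reading (convolve $h_C$ with a bump) needs care to stay within support functions and to deliver strict convexity rather than just smoothness. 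The paper sidesteps all of this by citing \cite[Theorem 2.5.2]{minkowski} for density of $\mathcal{B}_d$ in $\mathcal{K}_d \setminus B_i$; you should either cite that result directly or replace your Minkowski-average construction with one that actually produces a strictly convex (indeed smooth and strictly convex) approximant — for example taking the body whose support function is $\sqrt{h_C^2 + \varepsilon\|\cdot\|^2}$, or appealing to the standard ``regularization'' construction in Schneider's book. Everything else in your write-up is correct.
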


\begin{proof}
	By Proposition \ref{p:kbaire},
	it suffices to prove $\mathcal{B}_d$ is a comeager subset of $\mathcal{K}_d$.
	To do so we shall prove that both the set $A$ of non-smooth centrally symmetric convex bodies and the set $B$ of not strictly convex centrally symmetric convex bodies are meager sets,
	as $\mathcal{B}_d = \mathcal{K}_d\setminus (A \cup B)$.
	
	Define for each $i \in \mathbb{N}$ the set $A_i \subset A$ of centrally symmetric convex bodies $C$ where the following holds;
	there exists $x\in \partial C$ with distinct supports $y,z \in C^*$,
	where $\|y-z\| \geq \frac{1}{i}$.
	Let $(C_n)_{n \in \mathbb{N}}$ be a convergent sequence in $A_i$ with limit $C \in \mathcal{K}_d$.
	Then for each $C_n$ there exists $x_n \in C_n$ with distinct supports $y_n,z_n \in C_n^*$ where $\|y_n-z_n\| \geq \frac{1}{i}$.
	Since both $(C_n)_{n \in \mathbb{N}}$ and $(C_n^*)_{n \in \mathbb{N}}$ are convergent sequences of compact bodies (Lemma \ref{l:polar}),
	we may take a subsequence $(C_{n_k})_{k \in \mathbb{N}}$ such that $x_{n_k} \rightarrow x$, $y_{n_k} \rightarrow y$ and $z_{n_k} \rightarrow z$ as $k \rightarrow \infty$ for some $x \in \partial C$, $y,z \in \partial C^*$.
	By continuity,
	both $y,z$ are supports of $x$ (with respect to $C$) and $\|y-z\| \geq \frac{1}{i}$,
	hence $C \in A_i$ and $A_i$ is closed.
	By \cite[Theorem 2.5.2]{minkowski},
	$\mathcal{B}_d$ is a dense subset of $\mathcal{K}_d \setminus A_i$,
	hence $A_i$ is a closed nowhere dense set for every $i \in \mathbb{N}$.
	As $A = \bigcup_{i=1}^n A_i$ then $A$ is meager.
	
	Define for each $i \in \mathbb{N}$ the set $B_i \subset A$ of centrally symmetric convex bodies $C$ where the following holds;
	there exists $x,y\in \partial C$ that both have support $z \in \partial C^*$ and $\|x-y\| \geq \frac{1}{i}$.
	Let $(C_n)_{n \in \mathbb{N}}$ be a convergent sequence in $B_i$ with limit $C \in \mathcal{K}_d$.
	Then for each $C_n$ there exists $x_n,y_n \in C_n$ with shared support $z_n \in \partial C_n^*$ so that $\|x_n-y_n\| \geq \frac{1}{i}$.
	Similarly to before,
	we may take a subsequence $(C_{n_k})_{k \in \mathbb{N}}$ such that $x_{n_k} \rightarrow x$, $y_{n_k} \rightarrow y$ and $z_{n_k} \rightarrow z$ as $k \rightarrow \infty$ for some $x,y \in \partial C$, $z \in \partial C^*$;
	likewise, $z$ is a support of both $x$ and $y$ (with respect to $C$) and $\|x-y\| \geq \frac{1}{i}$,
	hence $C \in B_i$ and $B_i$ is closed.
	By \cite[Theorem 2.5.2]{minkowski},
	$\mathcal{B}_d$ is a dense subset of $\mathcal{K}_d \setminus B_i$,
	hence $B_i$ is a closed nowhere dense set for every $i \in \mathbb{N}$.
	As $B = \bigcup_{i=1}^n B_i$ then $B$ is meager.
\end{proof}

Whilst dealing with convergent sequences in $\mathcal{K}_d$,
it is often desired that their respective duality maps also converge.
We shall prove with Proposition \ref{p:phicont} that this is indeed true for convergent sequences in $\mathcal{B}_d$.
We first state the following useful result regarding convex functions.

\begin{theorem}\cite[Theorem 25.7]{rockafellar}\label{t:convex}
	Let $U \subset \mathbb{R}^d$ be an open convex set and $(f_n)_{n \in \mathbb{N}}$ be a sequence of convex functions on $U$ with pointwise limit $f :U \rightarrow \mathbb{R}$.
	Suppose $f$ is differentiable and each $f_n$ is differentiable also.
	Then for any closed and bounded subset $K$ of $U$,
	\begin{align*}
		\sup_{x \in K} \|f'(x) - f_n'(x)\| \rightarrow 0
	\end{align*}
	as $n \rightarrow \infty$.
\end{theorem}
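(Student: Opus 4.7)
The plan is to argue by contradiction via subsequence extraction, leveraging two classical facts about convex functions that I would treat as black boxes: pointwise convergence of convex functions on an open convex set upgrades automatically to uniform convergence on compact subsets, and a differentiable convex function on an open convex set is automatically $C^1$. Combined with the subgradient inequality, these let me identify any limit point of $(f_n'(x_n))$ with $f'(x)$.

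First I would enlarge $K$ slightly: choose a compact $K' \subset U$ containing $K$ in its interior with $\mathrm{dist}(K, \partial K') \geq \delta > 0$. Pointwise convergence of the convex sequence $(f_n)$ implies it is uniformly bounded on $K'$, and a standard estimate for convex functions then yields that $(f_n)$ is uniformly Lipschitz on $K$ with some constant $L = L(\delta, \sup_n \sup_{K'}|f_n|)$. In particular, $(f_n)$ converges uniformly to $f$ on $K'$, and the gradient bound
\begin{equation*}
\sup_n \sup_{x \in K} \|f_n'(x)\| \leq L
\end{equation*}
follows from the Lipschitz bound together with the identity $f_n'(x)\cdot u = \lim_{t \to 0}\tfrac{f_n(x+tu)-f_n(x)}{t}$.

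Now suppose for contradiction that $\sup_{x \in K}\|f'(x) - f_n'(x)\|$ does not tend to $0$. Then there are $\varepsilon > 0$, a subsequence (relabeled $f_n$), and points $x_n \in K$ with $\|f'(x_n) - f_n'(x_n)\| \geq \varepsilon$. By compactness of $K$ and the uniform bound on the gradients, pass to a further subsequence with $x_n \to x \in K$ and $f_n'(x_n) \to v$ for some $v \in \mathbb{R}^d$. For any fixed $h$ with $x+h \in U$ the subgradient inequality
\begin{equation*}
f_n(x_n + h) \geq f_n(x_n) + f_n'(x_n) \cdot h
\end{equation*}
holds for all large $n$; letting $n \to \infty$ and using uniform convergence of $(f_n)$ on a compact neighbourhood of $\{x, x+h\}$ gives $f(x+h) \geq f(x) + v\cdot h$. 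Hence $v$ lies in the subdifferential of $f$ at $x$, which by differentiability of $f$ equals $\{f'(x)\}$, so $v = f'(x)$.

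Finally, using continuity of $f'$ (which follows from differentiability of the convex function $f$), I conclude $f'(x_n) \to f'(x) = v$, and combined with $f_n'(x_n) \to v$ this contradicts $\|f'(x_n) - f_n'(x_n)\| \geq \varepsilon$. The main obstacle is not the subsequence argument, which is routine, but the opening step: upgrading pointwise to locally uniform convergence of convex functions and obtaining a uniform Lipschitz bound. This is classical in convex analysis and I would cite it rather than reprove it; the genuinely analytic content of the theorem is the passage from the subgradient inequality to the identification $v = f'(x)$, for which differentiability of the limit $f$ (not merely of the $f_n$) is essential.
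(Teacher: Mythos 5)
The paper states this result only as a citation to Rockafellar's Theorem~25.7 and does not give its own proof. Your argument is correct and reproduces the standard proof of that theorem: the subsequence-extraction-by-contradiction, the identification of any limit point $v$ of $(f_n'(x_n))$ with $f'(x)$ via the subgradient inequality together with the fact that the subdifferential of a differentiable convex function is the singleton $\{f'(x)\}$, and the continuity of $f'$ for a differentiable convex function are exactly the ingredients in Rockafellar's treatment (which routes through his Theorems~24.5 and~25.5), with the correct emphasis that differentiability of the limit $f$ -- not merely of the $f_n$ -- is what makes the identification go through.
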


\begin{proposition}\label{p:phicont}
	Let $(C_n)_{n \in \mathbb{N}}$ be a convergent sequence in $\mathcal{B}_d$ with limit $C \in \mathcal{B}_d$.
	Then
	\begin{align*}
		\sup_{x \in \mathbb{B}^d} \|\varphi_{C_n}(x) - \varphi_C(x)\| \rightarrow 0
	\end{align*}
	as $n \rightarrow \infty$.
\end{proposition}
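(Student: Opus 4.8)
The plan is to recognise $\varphi_{C}$ as the gradient of the convex function $\tfrac12\|\cdot\|_C^2$ and to apply Theorem~\ref{t:convex} to the sequence of such functions. First I would set $f_n := \tfrac12\|\cdot\|_{C_n}^2$ and $f := \tfrac12\|\cdot\|_C^2$; these are convex functions on all of $\mathbb{R}^d$ (viewed as the open convex set $U = \mathbb{R}^d$), and since $C_n, C \in \mathcal{B}_d$ each is differentiable everywhere (at nonzero points by smoothness via Proposition~\ref{p:support}(\ref{p:support1}), and at the origin because $\tfrac12\|x\|_C^2$ is differentiable there with zero derivative). By the discussion preceding Proposition~\ref{p:support}, $f_n' = \varphi_{C_n}$ and $f' = \varphi_C$. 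So it remains to check the hypothesis of Theorem~\ref{t:convex}, namely pointwise convergence $f_n \to f$, and then apply it with the closed bounded set $K = \mathbb{B}^d$.

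The pointwise (in fact uniform on $\mathbb{B}^d$) convergence $\|x\|_{C_n} \to \|x\|_C$ is exactly the statement that $C_n \to C$ in the metric $\rho$ of Proposition~\ref{prop:hausvsdash}, and since $d_H$ and $\rho$ induce the same topology on $\mathcal{K}_d$, the assumed $d_H$-convergence $C_n \to C$ gives $\sup_{x \in \mathbb{B}^d}|\|x\|_{C_n} - \|x\|_C| \to 0$. Squaring and using that $\|x\|_{C_n}, \|x\|_C$ are uniformly bounded on $\mathbb{B}^d$ (both are bounded by a constant depending only on the $d_H$-limit, for $n$ large), we get $\sup_{x \in \mathbb{B}^d}|f_n(x) - f(x)| \to 0$, hence certainly pointwise convergence on $\mathbb{B}^d$. (If one wants pointwise convergence on all of $\mathbb{R}^d$ as literally stated in Theorem~\ref{t:convex}, just rescale: for fixed $x$, $\|x\|_{C_n} = \|x\| \cdot \|x/\|x\|\,\|_{C_n} \to \|x\| \cdot \|x/\|x\|\,\|_C = \|x\|_C$ using the $\mathbb{B}^d$-uniform convergence on the unit sphere, and $f_n(x) \to f(x)$ follows.)

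Now Theorem~\ref{t:convex}, applied with $U = \mathbb{R}^d$, the convex functions $f_n \to f$ (all differentiable), and the closed bounded set $K = \mathbb{B}^d$, yields precisely
\begin{align*}
	\sup_{x \in \mathbb{B}^d} \|f_n'(x) - f'(x)\| = \sup_{x \in \mathbb{B}^d} \|\varphi_{C_n}(x) - \varphi_C(x)\| \longrightarrow 0
\end{align*}
as $n \to \infty$, which is the claim. The only genuinely nontrivial input is Theorem~\ref{t:convex} (gradients of convex functions converging pointwise converge locally uniformly), which we are permitted to cite; the work on our side is the bookkeeping translating $d_H$-convergence into pointwise convergence of the squared-norm functions and confirming differentiability of $f_n, f$ at the origin. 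I expect the mild subtlety at $x = 0$ — ensuring $f_n, f$ are differentiable there and that the sup over the closed ball $\mathbb{B}^d$ (which includes $0$, where $\varphi_{C_n}(0) = \varphi_C(0) = 0$ anyway) is handled — to be the main place where a careful reader might want a sentence of justification, but it is not a real obstacle.
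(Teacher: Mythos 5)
Your proof is correct and follows exactly the same route as the paper's: identify $\varphi_{C_n}$ and $\varphi_C$ as the gradients of the convex functions $\tfrac12\|\cdot\|_{C_n}^2$ and $\tfrac12\|\cdot\|_C^2$, obtain pointwise convergence of these functions from Proposition~\ref{prop:hausvsdash}, and conclude via Theorem~\ref{t:convex} on the compact set $K=\mathbb{B}^d$. The paper's version is simply more terse, leaving implicit the differentiability at the origin and the passage from norm convergence to squared-norm convergence that you spell out.
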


\begin{proof}
	By Proposition \ref{prop:hausvsdash},
	$\|x\|_{C_n} \rightarrow \|x\|_C$ as $n \rightarrow \infty$ for any $x \in \mathbb{R}^d$.
	The result now holds by Theorem \ref{t:convex},
	with $U = \mathbb{R}^d$ and $K=\mathbb{B}^d$.
\end{proof}

\subsection{Special classes of regular symmetric bodies in the plane}\label{sec:specialclass}

Although regular symmetric bodies are very useful,
we will occasionally require that our regular symmetric bodies have stronger properties.
There are two important subspaces of $\mathcal{B}_2$ we shall require later in Sections \ref{sec:genedge} and \ref{sec:kldense};
the regular symmetric bodies with positive curvature and the regular symmetric bodies with analytic boundary.

Let $C \in \mathcal{K}_2$ have a $C^k$-differentiable norm on $\mathbb{R}^2 \setminus \{0\}$ for $k \geq 2$.
By the constant rank theorem (see \cite[Theorem 2.5.15]{manifold}),
the boundary $\partial C$ of $C$ is $C^k$-differentiable,
i.e.~it is a $C^k$-differentiable manifold.
The boundary of $\partial C$ has positive curvature if and only if the following holds;
for any $C^2$-diffeomorphism $\alpha : \mathbb{T} \rightarrow \partial C$ (where $\mathbb{T} := (-\pi,\pi]$ has the topology generated by the metric $|a - b|_{\sim} := \min \{ |a-b|, 2\pi - |a - b| \}$) we have that $\alpha'(t), \alpha''(t)$ are linearly independent.
As $\alpha''(t) \neq 0$ for any $t \in \mathbb{T}$,
it follows that $C$ will be strictly convex.

\begin{definition}
	We denote by $\mathcal{B}_2^+$ the set of regular symmetric bodies in the plane that have a $C^2$-differentiable boundary with positive curvature.
\end{definition}

\begin{lemma}\label{l:schneider}
	For any $C \in K_2$,
	the following are equivalent:
	\begin{enumerate}[(i)]
		\item \label{p:schneider1} $C \in \mathcal{B}_2^+$.
		\item \label{p:schneider2} There exists a $C^2$-diffeomorphism $\alpha : \mathbb{T} \rightarrow \partial C$,
		and every $C^2$-diffeomorphism $ \mathbb{T} \rightarrow \partial C$ has positive curvature.
		\item \label{p:schneider3} There exists a $C^2$-diffeomorphism $\alpha : \mathbb{T} \rightarrow \partial C$ with positive curvature.
		\item \label{p:schneider4} $C^* \in \mathcal{B}_2^+$.
	\end{enumerate}
\end{lemma}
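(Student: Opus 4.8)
The plan is to prove the chain of equivalences \eqref{p:schneider1} $\Leftrightarrow$ \eqref{p:schneider2} $\Leftrightarrow$ \eqref{p:schneider3} and then \eqref{p:schneider1} $\Leftrightarrow$ \eqref{p:schneider4}, using the fact that positive curvature is a property of the \emph{image} curve and not of its parametrisation. First I would observe that positive curvature for one $C^2$-diffeomorphism $\alpha : \mathbb{T} \to \partial C$ is equivalent to positive curvature for any other: if $\beta = \alpha \circ g$ for a $C^2$-diffeomorphism $g$ of $\mathbb{T}$, then $\beta' = (\alpha' \circ g)\, g'$ and $\beta'' = (\alpha'' \circ g)(g')^2 + (\alpha' \circ g)\, g''$, so $\beta'(t)$ and $\beta''(t)$ are linearly independent exactly when $\alpha'(g(t))$ and $\alpha''(g(t))$ are, because $g'(t) \neq 0$ and the change of basis from $\{\alpha', \alpha''\}$ to $\{\beta', \beta''\}$ is upper triangular with nonzero diagonal entries $g', (g')^2$. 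This settles \eqref{p:schneider1} $\Leftrightarrow$ \eqref{p:schneider2} $\Leftrightarrow$ \eqref{p:schneider3} once we know that the definition of $\mathcal{B}_2^+$ given just before the lemma already presupposes a $C^2$-differentiable boundary, which by the constant rank theorem is equivalent to $\|\cdot\|_C$ being $C^2$ on $\mathbb{R}^2 \setminus \{0\}$; and such $C$ is automatically strictly convex and smooth, hence in $\mathcal{B}_2$, as noted in the text.

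The substantive part is \eqref{p:schneider1} $\Leftrightarrow$ \eqref{p:schneider4}, relating $C$ to its polar dual $C^*$. The natural approach is to use the duality map $\varphi_C$, which by Proposition \ref{p:support}\eqref{p:support4} is a homeomorphism of $\mathbb{R}^2$ when $C \in \mathcal{B}_2$, restricting to a homeomorphism $\partial C \to \partial C^*$ (using $\varphi_{C^*} = \varphi_C^{-1}$ from Proposition \ref{p:polar}\eqref{p:polar2}, since $\varphi_C(x) \in \partial C^*$ for $x \in \partial C$). I would want to show that $\varphi_C$ is a $C^1$-diffeomorphism between the boundaries when $C \in \mathcal{B}_2^+$, and that its derivative along $\partial C$ is invertible precisely because of the positive curvature hypothesis. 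Concretely, if $\alpha : \mathbb{T} \to \partial C$ is a $C^2$-diffeomorphism with positive curvature, then $\gamma := \varphi_C \circ \alpha$ parametrises $\partial C^*$; the curve $\gamma$ is $C^1$ and one computes $\gamma'(t) = D\varphi_C(\alpha(t))\, \alpha'(t)$. Recalling that $\varphi_C$ is the gradient of $\tfrac12 \|\cdot\|_C^2$, its derivative $D\varphi_C$ is the Hessian of a convex function, which is symmetric positive semidefinite; positive curvature of $\partial C$ is exactly the statement that this Hessian is positive \emph{definite} on the tangent line to $\partial C$, so $\gamma'(t) \neq 0$ and $\gamma$ is a $C^1$ immersion, hence (after checking injectivity, which follows from $\varphi_C$ being a homeomorphism) a $C^1$ regular parametrisation of $\partial C^*$. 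To get the $C^2$ statement and positive curvature of $\partial C^*$ one then invokes the involutivity $C^{**} = C$ together with $\varphi_{C^*} = \varphi_C^{-1}$: running the same argument with the roles of $C$ and $C^*$ swapped shows the correspondence is symmetric, so $\partial C^*$ inherits the $C^2$ positive-curvature property. Alternatively, and perhaps more cleanly, I would cite a standard reference — the relation between positive curvature of $\partial C$ and $C^2$-smoothness of $\partial C^*$ is classical (e.g.\ in Schneider's book \cite{schneider} on the support function), so the cleanest route may be to appeal directly to such a result after translating our norm-theoretic formulation of positive curvature into the support-function language via $\rho(C,D) = d_H(C^*, D^*)$ from Proposition \ref{prop:hausvsdash} and the identification $\|\cdot\|_{C^*} = \|\cdot\|_C^*$.

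The main obstacle I anticipate is the bookkeeping in \eqref{p:schneider1} $\Leftrightarrow$ \eqref{p:schneider4}: making precise the claim that ``positive curvature of $\partial C$'' equals ``the Hessian of $\tfrac12\|\cdot\|_C^2$ is positive definite transverse to the radial direction'', and that this in turn is the invertibility condition needed to transfer $C^2$-regularity through $\varphi_C$. One has to be careful that $D\varphi_C$ is only guaranteed to exist and be continuous when $\|\cdot\|_C$ is $C^2$, and that the curvature condition is about the $2$-jet of $\partial C$, not just the $1$-jet; so the argument genuinely uses the $C^2$ hypothesis baked into the definition of $\mathcal{B}_2^+$ rather than merely $\mathcal{B}_2$. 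A secondary technical point is handling the periodic domain $\mathbb{T}$ and the fact that ``$C^2$-diffeomorphism $\mathbb{T} \to \partial C$'' implicitly endows $\partial C$ with a $C^2$ manifold structure that we must check is the same one coming from $\|\cdot\|_C$ being $C^2$ — this is exactly where the constant rank theorem quoted in the preceding paragraph of the text is used. Once these identifications are in place, the equivalences are essentially formal.
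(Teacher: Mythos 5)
Your handling of (i) $\Leftrightarrow$ (ii) $\Leftrightarrow$ (iii) via reparametrisation invariance of positive curvature matches the paper's. For (i) $\Leftrightarrow$ (iv) the paper does exactly what you label your ``alternative, cleaner'' route: it cites Schneider \cite{schneider} directly (Section 5, pp.~106 and 111) for the fact that $C^2$-smoothness of $\|\cdot\|_C$ with positive curvature of $\partial C$ corresponds, via $\|\cdot\|_{C^*} = \|\cdot\|_C^*$, to $C^2$-smoothness of $\|\cdot\|_{C^*}$ with positive curvature of $\partial C^*$; and then obtains (iv)~$\Rightarrow$~(i) by the symmetry $C^{**}=C$. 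So your recommended fallback is the paper's actual proof.

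Your primary first-principles route, however, has a genuine gap at the decisive step. You correctly show that positive curvature makes $D\varphi_C$ (the Hessian of $\tfrac12\|\cdot\|_C^2$) invertible along $\partial C$, hence $\gamma = \varphi_C\circ\alpha$ is a $C^1$ immersion parametrising $\partial C^*$. But to conclude that $\partial C^*$ is $C^2$ with positive curvature you propose to ``run the same argument with the roles of $C$ and $C^*$ swapped'' --- this is circular, since running it for $C^*$ presupposes $\|\cdot\|_{C^*}$ is $C^2$, which is exactly what you need to establish. A priori you only know $\varphi_{C^*}$ is continuous. The missing step is to first promote it to $C^1$: because $D\varphi_C$ is continuous (as $\|\cdot\|_C$ is $C^2$) and invertible at every point of $\partial C$ (positive curvature gives invertibility transverse to the radial direction, and degree-$1$ homogeneity of $\varphi_C$ gives eigenvalue $1$ radially by Euler's identity), the inverse function theorem shows $\varphi_{C^*} = \varphi_C^{-1}$ is $C^1$ in a neighbourhood of $\partial C^*$, hence --- being the gradient of $\tfrac12\|\cdot\|_{C^*}^2$ --- $\|\cdot\|_{C^*}$ is $C^2$ there, and by homogeneity on all of $\mathbb{R}^2\setminus\{0\}$. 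Only once this is in place can you apply your $C$-side computation to $C^*$ and deduce positive curvature of $\partial C^*$. Either supply this inverse-function-theorem step, or commit to the Schneider citation as the paper does.
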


\begin{proof}
	It is immediate that (\ref{p:schneider1}) $\Rightarrow$ (\ref{p:schneider2}) $\Rightarrow$ (\ref{p:schneider3}).
	As reparametrising a curve preserves its curvature, (\ref{p:schneider3}) $\Rightarrow$ (\ref{p:schneider2}).
	
	Suppose (\ref{p:schneider2}) holds.
	As shown in \cite[Section 5, pg 106]{schneider},
	the norm $\| \cdot \|_C^*$ is $C^2$-differentiable on $\mathbb{R}^2 \setminus \{0\}$.
	By Proposition \ref{p:polar},
	$\| \cdot \|_{C}^*=\| \cdot \|_{C^*}$.
	As shown in \cite[Section 5 pg 111]{schneider},
	this implies $C^*$ has a $C^2$-differentiable boundary with positive curvature,
	hence (\ref{p:schneider4}) holds.
	
	By our previous work we have that (\ref{p:schneider1}) $\Rightarrow$ (\ref{p:schneider4}).
	By symmetry we thus have (\ref{p:schneider4}) $\Rightarrow$ (\ref{p:schneider1}) also,
	which completes the proof.
\end{proof}

For any spanning set $a:=\{a_1,\ldots,a_n\} \subset \mathbb{R}^2$ with $n \geq 3$ and any $w > \log 2n$ (where $\log$ has base $e$),
define the analytic function
\begin{align*}
	\phi_{a,w}: \mathbb{R}^2 \rightarrow \mathbb{R}, ~x \mapsto \frac{1}{2n} \sum_{i=1}^n e^{w(a_i.x-1)} + e^{w(-a_i.x-1)}.
\end{align*}
We compute that $\phi_{a,w}(x) > 0$, $\phi_{a,w}(-x)=\phi_{a,w}(x)$ and 
\begin{align*}
	d \phi_{a,w}(x) := \frac{w}{2n} \sum_{i=1}^n a_i \left( e^{w(a_i.x-1)} - e^{w(-a_i.x-1)} \right)
\end{align*}
for all $x \in \mathbb{R}^2$.
We list another useful property of $\phi_{a,w}$ below.

\begin{lemma}\label{l:phi}
	Let $a=\{a_1, \ldots, a_n \} \subset \mathbb{R}^2$ be a spanning set with $n \geq 3$ and let $w > \log 2n$.
	Then for all $x \in \mathbb{R}^2 \setminus \{0\}$, the function $f: [0,\infty) \rightarrow [\phi_{a,w}(0),\infty), t \mapsto \phi_{a,w}(tx)$ is an analytic diffeomorphism.
\end{lemma}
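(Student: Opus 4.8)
The plan is to reduce the whole statement to a one-variable calculation by first putting $f$ into closed form. Factoring $e^{-w}$ out of each summand of $\phi_{a,w}$ and using $e^{u}+e^{-u}=2\cosh u$ with $u=w(a_i\cdot x)t$, one obtains
\[
f(t)\;=\;\phi_{a,w}(tx)\;=\;\frac{e^{-w}}{n}\sum_{i=1}^{n}\cosh\bigl(w(a_i\cdot x)\,t\bigr).
\]
Since $\cosh$ is entire, this exhibits $f$ as the restriction to $[0,\infty)$ of an entire function of $t$, so $f$ is real-analytic; it also makes plain that $f$ is even in $t$. Evaluating at $t=0$ gives $f(0)=\frac{e^{-w}}{n}\cdot n=e^{-w}=\phi_{a,w}(0)$, so $f$ indeed has the claimed endpoints of domain and codomain. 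It then remains to show that $f$ is a strictly increasing bijection of $[0,\infty)$ onto $[e^{-w},\infty)$ with analytic inverse.

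For monotonicity I would differentiate: $f'(t)=\frac{w e^{-w}}{n}\sum_{i=1}^{n}(a_i\cdot x)\sinh\bigl(w(a_i\cdot x)\,t\bigr)$. Because $\sinh$ is odd and nonnegative on $[0,\infty)$, each summand equals $|a_i\cdot x|\,\sinh(w|a_i\cdot x|\,t)\geq 0$ for $t\geq 0$, so $f'\geq 0$ and $f$ is nondecreasing. This is also exactly where the hypotheses enter: since $\{a_1,\dots,a_n\}$ spans $\mathbb{R}^2$ and $x\neq 0$, we cannot have $a_i\cdot x=0$ for every $i$, so fix $j$ with $m:=|a_j\cdot x|>0$; then the $j$-th summand equals $m\sinh(wmt)>0$ for every $t>0$, whence $f'(t)>0$ on $(0,\infty)$ and $f$ is strictly increasing on $[0,\infty)$. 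Properness comes from the same dominant term: $f(t)\geq \frac{e^{-w}}{n}\cosh(wmt)\to\infty$ as $t\to\infty$, so by continuity the image of $[0,\infty)$ is exactly $[e^{-w},\infty)$.

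Assembling these facts, $f$ is a continuous strictly increasing surjection $[0,\infty)\to[\phi_{a,w}(0),\infty)$, hence a homeomorphism, and since $f$ is analytic with $f'$ nonvanishing on the open half-line, the analytic inverse function theorem upgrades $f$ to an analytic diffeomorphism of $(0,\infty)$ onto $(e^{-w},\infty)$. I expect the only point requiring any care to be the endpoint $t=0$: evenness of $f$ forces $f'(0)=0$, so the genuine diffeomorphism statement lives on the open interval while $f$ itself is only a homeomorphism across $t=0$ — which is all that is used later, where one selects the unique $t$ with $\phi_{a,w}(tx)=1>e^{-w}$. Apart from that, the content of the lemma is simply the elementary remark that the spanning hypothesis on $\{a_i\}$ is precisely what rules out a nonzero direction $x$ being annihilated by all the $a_i$, the only way $f$ could degenerate to a constant.
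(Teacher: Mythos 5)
Your proof is correct and rests on the same essential observation as the paper's — the spanning hypothesis together with $x\neq 0$ produces an index $j$ with $a_j\cdot x\neq 0$, whose summand dominates — but runs through a cleaner route. The paper computes only the second derivative, observes $f''(t)>0$, and then concludes in one terse line that the result follows because $f$ is analytic, leaving implicit both that $f'(0)=0$ (so that $f''>0$ forces $f'>0$ on $(0,\infty)$) and that $f(t)\to\infty$. You instead rewrite $f(t)=\tfrac{e^{-w}}{n}\sum_i\cosh\bigl(w(a_i\cdot x)t\bigr)$ and argue at the level of the first derivative, using that $\sinh$ is odd and positive on $(0,\infty)$; the closed form makes $f(0)=e^{-w}=\phi_{a,w}(0)$, strict monotonicity, and properness all transparent. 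Your endpoint caveat is a genuine refinement of the lemma as stated: evenness of $\cosh$ forces $f'(0)=0$, so $f^{-1}$ cannot be differentiable at $\phi_{a,w}(0)$ and ``analytic diffeomorphism'' on the closed half-line is an overstatement. What is actually true --- and all that Lemma~\ref{l:eposcurv} later uses --- is that $f$ is a strictly increasing analytic homeomorphism of $[0,\infty)$ onto $[\phi_{a,w}(0),\infty)$ restricting to an analytic diffeomorphism of the open halves; the paper's two-line proof does not engage with this distinction.
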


\begin{proof}
	We compute that
	\begin{align*}
		f''(t) =  \frac{1}{2n} \sum_{i=1}^n (w a_i.x)^2 \left( e^{w(a_i.tx-1)} + e^{w(-a_i.tx-1)} \right).
	\end{align*}
	As $\{a_1, \ldots, a_n \} $ is a spanning subset of $\mathbb{R}^2$ then $f''(t) >0$ for all $t \geq 0$.
	The required result now holds as $f$ is analytic.
\end{proof}

The importance of the $\phi_{a,w}$ functions is that they allow us to define centrally symmetric convex bodies.

\begin{lemma}\label{l:eposcurv}
	Let $a=\{a_1, \ldots, a_n \} \subset \mathbb{R}^2$ be a spanning set with $n \geq 3$ and let $w > \log 2n$.
	If we define the set $C := \{ x \in \mathbb{R}^2 : \phi_{a,w}(x) \leq 1\}$ then the following holds:
	\begin{enumerate}[(i)]
		\item \label{l:eposcurvitem1} $\partial C = \{ x \in \mathbb{R}^2 : \phi_{a,w}(x) = 1\}$.
		\item	\label{l:eposcurvitem2} $C \in \mathcal{B}_2^+$.
		\item \label{l:eposcurvitem3} $\varphi_C(x) = \frac{d\phi_{a,w}(x)}{d\phi_{a,w}(x).x}$ for all $x \in \partial C$.
	\end{enumerate}
\end{lemma}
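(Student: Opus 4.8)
The plan is to extract a few structural properties of the defining function $\phi:=\phi_{a,w}$ and then read off the three items. Since $\phi$ is a sum of exponentials of affine functions it is convex, and clearly $\phi(-x)=\phi(x)$ and $\phi(0)=e^{-w}<1$ (as $w>\log 2n$). A short computation gives, for every $v\in\mathbb{R}^2$,
\begin{align*}
	v^T\operatorname{Hess}\phi(x)\,v=\frac{w^2}{2n}\sum_{i=1}^n(a_i.v)^2\bigl(e^{w(a_i.x-1)}+e^{w(-a_i.x-1)}\bigr),
\end{align*}
which is strictly positive for $v\neq 0$ because $\{a_1,\dots,a_n\}$ spans $\mathbb{R}^2$; hence $\operatorname{Hess}\phi(x)$ is positive definite everywhere and $\phi$ is strictly convex. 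Taking $v=x$ (or using Lemma \ref{l:phi}) shows $d\phi(x).x>0$ for all $x\neq 0$, and, writing $c_0:=\min_{\|u\|=1}\max_i|a_i.u|>0$, the bound $\phi(tu)\geq\frac{1}{2n}e^{w(c_0t-1)}$ for unit $u$ shows $\phi$ is coercive, so $C=\{\phi\leq 1\}$ is bounded. Being closed, bounded, centrally symmetric and containing $0$ in its interior, $C$ is a centrally symmetric convex body, i.e.\ $C\in\mathcal{K}_2$.

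For (i) I would argue as follows. As $\phi$ is continuous, $\{\phi<1\}$ is open and contained in $C$, so $\{\phi<1\}\subseteq C^\circ$. Conversely, if $\phi(x)=1$ then $x\neq 0$ and, by Lemma \ref{l:phi}, the map $t\mapsto\phi(tx)$ is strictly increasing, so $\phi(tx)>1$ for $t>1$; thus $x$ is a limit of points of $\mathbb{R}^2\setminus C$ and hence $x\notin C^\circ$. Since $\phi(x)>1$ forces $x\notin C$ altogether, we get $C^\circ=\{\phi<1\}$ and therefore $\partial C=C\setminus C^\circ=\{\phi=1\}$.

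For (ii), on $\partial C$ we have $d\phi(x).x>0$ and so $d\phi(x)\neq 0$, whence $1$ is a regular value of the analytic map $\phi$ and $\partial C$ is an analytic (in particular $C^2$) $1$-submanifold of $\mathbb{R}^2$. To produce a diffeomorphism of the form needed by Lemma \ref{l:schneider} I would use the radial parametrisation: applying the analytic implicit function theorem to $(\theta,t)\mapsto\phi(t(\cos\theta,\sin\theta))-1$, whose $t$-derivative $d\phi(t(\cos\theta,\sin\theta)).(\cos\theta,\sin\theta)$ is positive for $t>0$, yields an analytic radial function $r:\mathbb{T}\to(0,\infty)$, and $\alpha(\theta):=r(\theta)(\cos\theta,\sin\theta)$ is an analytic bijection onto $\partial C$ with $\alpha'(\theta)\neq 0$ (its component along $(-\sin\theta,\cos\theta)$ is $r(\theta)>0$), hence a $C^2$-diffeomorphism $\mathbb{T}\to\partial C$. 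Differentiating the identity $d\phi(\alpha(\theta)).\alpha'(\theta)=0$ gives $d\phi(\alpha(\theta)).\alpha''(\theta)=-\alpha'(\theta)^T\operatorname{Hess}\phi(\alpha(\theta))\,\alpha'(\theta)<0$, so $\alpha''(\theta)$ cannot be a scalar multiple of $\alpha'(\theta)$ (which is orthogonal to the nonzero vector $d\phi(\alpha(\theta))$); thus $\alpha'(\theta),\alpha''(\theta)$ are linearly independent for all $\theta$, i.e.\ $\alpha$ has positive curvature. By Lemma \ref{l:schneider} this gives $C\in\mathcal{B}_2^+$; in particular $C$ is a regular symmetric body and hence smooth.

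For (iii), fix $x\in\partial C$, so $\|x\|_C=1$, and set $y:=d\phi(x)/(d\phi(x).x)$, which is well defined since $d\phi(x).x>0$ and satisfies $y.x=1$. For any $z\in C$, convexity of $\phi$ gives $1\geq\phi(z)\geq\phi(x)+d\phi(x).(z-x)=1+d\phi(x).(z-x)$, hence $d\phi(x).z\leq d\phi(x).x$ and so $y.z\leq 1$; as the supremum is attained at $x\in C$, this gives $\|y\|_C^*=\sup_{z\in C}y.z=1=\|x\|_C$. Thus $y$ is a support of $x$, and since $C$ is smooth by (ii) the support of $x$ is unique, so $\varphi_C(x)=y=d\phi(x)/(d\phi(x).x)$. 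I expect the delicate point to be the curvature part of (ii) — verifying that the radial map is a genuine diffeomorphism onto $\partial C$ and converting positive definiteness of $\operatorname{Hess}\phi$ into the linear independence of $\alpha'$ and $\alpha''$ demanded by Lemma \ref{l:schneider}; the remaining steps are routine manipulations with convexity.
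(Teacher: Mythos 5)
Your proof is correct, and its overall structure mirrors the paper's, but the technical execution of (ii) and (iii) differs in instructive ways. For (i) you follow essentially the same monotonicity argument via Lemma~\ref{l:phi}. For (ii), the paper invokes the analytic constant rank theorem to get \emph{some} analytic diffeomorphism $\mathbb{T}\to\partial C$, reparametrises to unit speed, and then shows $\alpha''\neq 0$ by expanding $(\phi_{a,w}\circ\alpha)''=0$ term by term; you instead build the \emph{radial} parametrisation $\alpha(\theta)=r(\theta)(\cos\theta,\sin\theta)$ explicitly via the implicit function theorem (made global by the unique-intersection property from Lemma~\ref{l:phi}) and then deduce linear independence of $\alpha',\alpha''$ directly from positive definiteness of $\operatorname{Hess}\phi$ applied to $\alpha'$. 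Both routes work; yours is more self-contained (it does not rely on the "reparametrise to regular, then positive curvature $\Leftrightarrow\|\alpha''\|>0$" equivalence) while the paper's is slightly slicker once that equivalence is granted. For (iii) the paper argues geometrically: the tangent space of $\partial C$ at $x$ is $\ker d\phi_{a,w}(x)$, and $\varphi_C(x)$ is a normal vector normalised by $\varphi_C(x).x=1$, giving $d\phi_{a,w}(x)=\lambda\varphi_C(x)$ with $\lambda=d\phi_{a,w}(x).x$. You instead verify the support definition directly: convexity of $\phi_{a,w}$ gives $d\phi_{a,w}(x).z\le d\phi_{a,w}(x).x$ for all $z\in C$, whence $\|y\|_C^*=1=\|x\|_C$ for $y:=d\phi_{a,w}(x)/(d\phi_{a,w}(x).x)$, and smoothness from (ii) gives uniqueness. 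Your version avoids having to identify $\ker d\phi_{a,w}(x)$ with the tangent space, at the cost of a slightly longer computation; both are valid and no step is missing.
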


\begin{proof}
	(\ref{l:eposcurvitem1}):
	Since $\phi_{a,w}$ is continuous it is immediate that $\partial C \subset \{ x \in \mathbb{R}^2 : \phi_{a,w}(x) = 1\}$.
	Suppose there exists $x \in C^o$ with $\phi_{a,w}(x)=1$.
	Then there exists $\epsilon >0$ such that $\phi_{a,w}(tx) \leq 1$ for all $1 \leq t \leq 1 +\epsilon$.
	However by Lemma \ref{l:phi},
	$\phi_{a,w}(tx) > \phi_{a,w}(x)=1$ for all $t>1$,
	a contradiction.
	Hence, $\partial C = \{ x \in \mathbb{R}^2 : \phi_{a,w}(x) = 1\}$.
	
	(\ref{l:eposcurvitem2}):
	By applying the constant rank theorem for analytic maps (see \cite[Chapter 1.8]{KrantzPark92}) and (\ref{l:eposcurvitem1}),
	it follows that $\partial C$ is an analytic 1-dimensional compact manifold;
	hence,
	there exists an analytic diffeomorphism $\mathbb{T} \rightarrow \partial C$.
	Choose any $C^2$-diffeomorphism $\alpha : \mathbb{T} \rightarrow \partial C$.
	By reparametrising we may assume $\alpha$ is regular (i.e.~$\|\alpha'(t)\|=1$ for all $t \in \mathbb{T}$),
	and thus $C$ has positive curvature if and only if $\|\alpha''(t)\| >0$ for all $t \in \mathbb{T}$.
	By differentiating the map $t \mapsto (\phi_{a,w} \circ \alpha)(t)$ twice we see that $(\phi_{a,w} \circ \alpha)''(t)$ is equal to
	\begin{align*}
		 \frac{w}{2n} \sum_{i=1}^n w(a_i . \alpha'(t))^2 \left( e^{w(a_i.x-1)} + e^{w(-a_i.x-1)} \right) + a_i.\alpha''(t) \left( e^{w(a_i.x-1)} - e^{w(-a_i.x-1)} \right).
	\end{align*}
	For each $t \in \mathbb{T}$ we have $\phi_{a,w}\circ \alpha(t)=1$ as $\alpha(t) \in \partial C$,
	hence $(\phi_{a,w} \circ \alpha)''(t)=0$.
	Since
	\begin{align*}
		w(a_i . \alpha'(t))^2 \left( e^{w(a_i.x-1)} + e^{w(-a_i.x-1)} \right) >0
	\end{align*}
	for each $i \in \{1, \ldots, n\}$ and $(\phi_{a,w} \circ \alpha)''(t)=0$,
	we must have
	\begin{align*}
		\alpha''(t). \left(\sum_{i=1}^n a_i \left( e^{w(a_i.x-1)} - e^{w(-a_i.x-1)} \right) \right) < 0
	\end{align*}
	for all $t \in \mathbb{T}$.
	It follows that $\|\alpha''(t)\|>0$,
	thus $C \in \mathcal{B}_2^+$ by Lemma \ref{l:schneider}.
	
	(\ref{l:eposcurvitem3}):
	Choose any point $x \in \partial C$.
	Then the tangent space of $\partial C$ at $x$ is $\ker d\phi_{a,w}(x)$,
	hence $d \phi_{a,w}(x) = \lambda \varphi_C(x)$ for some $\lambda \neq 0$.
	Since $\varphi_C(x).x = 1$ then $\lambda = d \phi_{a,w}(x).x$ as required.
\end{proof}

This motivates the following class of convex bodies.

\begin{definition}\label{def:exp}
	For any integer $j \geq 3$,
	we define $\mathcal{B}_2^\phi(j)$ to be the set of centrally symmetric convex bodies $C \subset \mathbb{R}^2$ where there exists a spanning set $a=\{a_1, \ldots, a_j \} \subset \mathbb{R}^2$ and $w > \log 2j$ so that $C = \{ x \in \mathbb{R}^2 : \phi_{a,w}(x) \leq 1\}$.
	We define $\mathcal{B}_2^\phi := \bigcup_{j=3}^\infty \mathcal{B}_2^\phi(j)$.
\end{definition}

We shall be required later to approximate centrally symmetric bodies by elements $\mathcal{B}_2^\phi$.
To show this we first prove the following.

\begin{lemma}\label{l:poly}
	For each $n \geq 2$,
	define the subset $\mathcal{P}_2(n) \subset \mathcal{K}_2$ of convex bodies $C$ where for some subset $S \subset \mathbb{R}^2$ of $n$ pairwise linearly independent points we have $C=\{ x\in \mathbb{R}^2: |x.y| \leq 1 \text{ for all } y \in S \}$.
	Then for any open set $U \subseteq \mathcal{K}_2$,
	there exists $N \geq 2$ such that $\mathcal{P}_2(n) \cap U \neq \emptyset$ for all $n \geq N$.
\end{lemma}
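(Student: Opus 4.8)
The plan is to recognise $\mathcal{P}_2(n)$ as, essentially, the set of centrally symmetric convex polygons having at most $2n$ edges, and then to combine two ingredients: the density of such polygons in $(\mathcal{K}_2,d_H)$, and a trivial monotonicity $\mathcal{P}_2(n)\subseteq\mathcal{P}_2(n+1)$. First I would record the relevant structure of $\mathcal{P}_2(n)$. On the one hand, for a spanning set $S=\{y_1,\dots,y_n\}$ of pairwise linearly independent points, the body $\{x\in\mathbb{R}^2:|x.y|\le 1\text{ for all }y\in S\}$ is the intersection of the $n$ symmetric slabs $\{|x.y_i|\le 1\}$, hence a centrally symmetric convex polygon with at most $2n$ edges. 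On the other hand, if $P\in\mathcal{K}_2$ is a centrally symmetric convex polygon with $2m$ edges, then the map $x\mapsto -x$ preserves $P$ and fixes no edge (an edge sent to itself would be centred at $0$, forcing $0\in\partial P$, contrary to $0$ lying in the interior of $P$), so the edges split into $m$ antipodal pairs. Writing $P$ as the intersection of its $2m$ facet half-spaces $\{x:x.y_i\le 1\}$ (each writable in this form with $y_i\neq 0$, since $0$ lies strictly on the inner side of every edge line) collapses, pair by pair, to $P=\bigcap_{i=1}^{m}\{|x.y_i|\le 1\}$, where the $y_i$ point in $m$ pairwise distinct directions because a convex polygon has at most two mutually parallel edges; thus $S=\{y_1,\dots,y_m\}$ is a spanning set of pairwise linearly independent points and $P\in\mathcal{P}_2(m)$. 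In particular any two-dimensional centrally symmetric convex polygon has an even number $2m\ge 4$ of edges, so $m\ge 2$.

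Next I would verify the monotonicity $\mathcal{P}_2(n)\subseteq\mathcal{P}_2(n+1)$ for every $n\ge 2$. Given $C=\bigcap_{i=1}^{n}\{|x.y_i|\le 1\}\in\mathcal{P}_2(n)$, choose $y_{n+1}$ in the interior of $C^*$ that is not parallel to any $y_i$; this is possible because the directions of $y_1,\dots,y_n$ form a finite, hence Lebesgue-null, set, while $C^*$ has non-empty interior. Then $\|y_{n+1}\|_{C^*}<1$, so $\sup_{x\in C}|x.y_{n+1}|<1$ and the extra slab $\{|x.y_{n+1}|\le 1\}$ contains $C$; therefore $C=\bigcap_{i=1}^{n+1}\{|x.y_i|\le 1\}$ with $\{y_1,\dots,y_{n+1}\}$ still pairwise linearly independent and spanning, i.e.\ $C\in\mathcal{P}_2(n+1)$.

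Finally, given an open set $U\subseteq\mathcal{K}_2$, which we may assume non-empty, I would invoke the density of centrally symmetric convex polygons in $(\mathcal{K}_2,d_H)$ — a standard fact, obtained for instance by approximating $C\in\mathcal{K}_2$ from the inside by the convex hull of a finite, centrally symmetric, sufficiently fine subset of $\partial C$ — to select a centrally symmetric convex polygon $P\in U$. Writing $2N$ for its number of edges, the structural step gives $N\ge 2$ and $P\in\mathcal{P}_2(N)$, and the monotonicity step then gives $P\in\mathcal{P}_2(n)$ for every $n\ge N$; hence $P\in\mathcal{P}_2(n)\cap U$ for all $n\ge N$, which is exactly the assertion. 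The only points requiring any care are the density of centrally symmetric polygons in $\mathcal{K}_2$ and the antipodal edge-pairing that identifies a $2m$-gon with an element of $\mathcal{P}_2(m)$; neither presents a genuine obstacle, and the remainder is bookkeeping.
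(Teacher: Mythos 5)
Your proposal is correct and follows essentially the same route as the paper's own proof: approximate a body in $U$ by a centrally symmetric convex polygon, express the polygon as a symmetric slab intersection $\{x:|x.y|\le 1,\ y\in S\}$ with $S$ pairwise linearly independent, and then pad $S$ with redundant functionals to pass from $N$ to every $n\ge N$. You spell out two details the paper leaves implicit (the antipodal edge pairing that yields the slab form, and choosing the extra functionals in the interior of $C^*$ rather than on $\partial C^*$ so the added slabs are strictly redundant), but the argument is the same in substance.
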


\begin{proof}
	Choose any $C \in U$ and fix $\epsilon >0$ such that if $d_H(C',C)< \epsilon$ then $C' \in U$.
	As $\partial C$ is compact then there exists a finite set $S \subset \partial C$ where $x\in S$ if and only if $-x \in S$,
	so that $\partial C \subset S+ \epsilon \mathbb{D}$.
	We now note that if we define $C'$ to be the convex hull of $S$ then $C' \in \mathcal{K}_2$ and $C' \subset C \subset C' + \epsilon \mathbb{D}$,
	hence $d_H(C,C') \leq \epsilon$.	
	As $C'$ is a centrally symmetric polygon,
	there exists a set $S \subset \mathbb{R}^2$ of $N$ pairwise linearly independent points so that $C' = \{ x\in \mathbb{R}^2: |x.y| \leq 1 , y \in S\}$.
	We note that if we choose points $y_{1}, \ldots,y_{n-N} \in \partial C^*$ so that $S' := S \cup \{y_{1}, \ldots,y_{n-N}\}$ is a set of $n$ pairwise linearly independent points,
	then $C' = \{ x\in \mathbb{R}^2: |x.y| \leq 1 , y \in S\}$ also.
	Hence $\mathcal{P}_2(n) \cap U \neq \emptyset$ for all $n \geq N$ as required.
\end{proof}

%
%
%
%

\begin{proposition}\label{p:phisetdist}
	For any open set $U \subseteq \mathcal{K}_2$,
	there exists $N \geq 3$ such that $\mathcal{B}_2^\phi(j) \cap U \neq \emptyset$ for all $j \geq N$.
	Hence, the set $\mathcal{B}_2^\phi$ is dense in $\mathcal{K}_2$.
\end{proposition}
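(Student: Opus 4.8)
The plan is an approximation argument built on Lemma~\ref{l:poly}: I will show that every centrally symmetric polytope $P\in\mathcal{P}_2(j)$ is a limit in $\mathcal{K}_2$ of bodies $C_w\in\mathcal{B}_2^\phi(j)$ as $w\to\infty$. Granting this, fix a nonempty open $U\subseteq\mathcal{K}_2$. By Lemma~\ref{l:poly} there is $N_0\ge 2$ with $\mathcal{P}_2(n)\cap U\neq\emptyset$ for all $n\ge N_0$; set $N:=\max\{N_0,3\}$. For each $j\ge N$ pick $P\in\mathcal{P}_2(j)\cap U$, say $P=\{x\in\mathbb{R}^2:|a_i.x|\le 1,\ i=1,\dots,j\}$ with $a=\{a_1,\dots,a_j\}$ pairwise linearly independent (any two such points span $\mathbb{R}^2$, so $a$ is a spanning set). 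Choosing $w>\log 2j$ large enough that the body $C_w:=\{x\in\mathbb{R}^2:\phi_{a,w}(x)\le 1\}$ lies in the open $\rho$-ball around $P$ contained in $U$, Lemma~\ref{l:eposcurv}(ii) and Definition~\ref{def:exp} give $C_w\in\mathcal{B}_2^\phi(j)\cap U$. As $j\ge N$ was arbitrary this is the first assertion, and since $\mathcal{B}_2^\phi=\bigcup_{j\ge 3}\mathcal{B}_2^\phi(j)$ then meets every nonempty open subset of $\mathcal{K}_2$, it is dense.

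The substantive step is the convergence $C_w\to P$, which I would establish in the metric $\rho$ of Proposition~\ref{prop:hausvsdash} (which induces the same topology as $d_H$). Write $\|x\|_P=\max_i|a_i.x|$. By Lemma~\ref{l:phi}, for each $x\neq 0$ the map $\lambda\mapsto\phi_{a,w}(x/\lambda)$ is continuous and strictly decreasing on $(0,\infty)$ from $+\infty$ to $\phi_{a,w}(0)=e^{-w}<1$, so $\|x\|_{C_w}$ is the unique $\lambda$ with $\phi_{a,w}(x/\lambda)=1$. If $x\in P$ then every exponential in $\phi_{a,w}(x)=\tfrac{1}{2j}\sum_{i=1}^j(e^{w(a_i.x-1)}+e^{w(-a_i.x-1)})$ is at most $1$, so $\phi_{a,w}(x)\le 1$; hence $P\subseteq C_w$ and $\|x\|_{C_w}\le\|x\|_P$ for all $x$. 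For the reverse, fix $\delta\in(0,1)$ and set $\lambda=(1-\delta)\|x\|_P$ for $x\neq 0$; an index $i_0$ with $|a_{i_0}.x|=\|x\|_P$ satisfies $|a_{i_0}.x|/\lambda=1/(1-\delta)$, so one of $e^{w(a_{i_0}.x/\lambda-1)},e^{w(-a_{i_0}.x/\lambda-1)}$ equals $e^{w\delta/(1-\delta)}$ and therefore $\phi_{a,w}(x/\lambda)\ge\tfrac{1}{2j}e^{w\delta/(1-\delta)}$. This is $\ge 1$ whenever $w\ge(1-\delta)\log(2j)/\delta$, a bound independent of $x$; for such $w$ we get $(1-\delta)\|x\|_P\le\|x\|_{C_w}\le\|x\|_P$ for every $x$, hence $\rho(C_w,P)=\sup_{x\in\mathbb{B}^2}|\|x\|_{C_w}-\|x\|_P|\le\delta\sup_{x\in\mathbb{B}^2}\|x\|_P$. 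Letting $\delta\to 0$ gives $\rho(C_w,P)\to 0$.

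I expect the main obstacle to be precisely making this second estimate uniform over all directions $x$: the pointwise concentration of $\phi_{a,w}$ onto the polytope $P$ as $w\to\infty$ is transparent, but one needs the rate not to degenerate as $x$ ranges over the sphere in order to conclude Hausdorff convergence of the bodies rather than merely pointwise convergence of their gauges. The key point making this work is that the relevant exponents depend on $x$ only through the ratios $|a_i.x|/\|x\|_P\in[0,1]$, so the single threshold $w\ge(1-\delta)\log(2j)/\delta$ serves for all $x$; the finiteness of $\sup_{x\in\mathbb{B}^2}\|x\|_P$ (which uses that $a$ spans) then upgrades the multiplicative estimate to the additive one that $\rho$ requires. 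The remaining ingredients — the reduction via Lemma~\ref{l:poly}, verifying $C_w\in\mathcal{B}_2^\phi(j)$, and the interchangeability of $d_H$ and $\rho$ — are routine.
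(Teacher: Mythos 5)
Your proof is correct, and at the structural level it follows the same plan as the paper's: approximate $C$ by a centrally symmetric polytope $C'\in\mathcal{P}_2(j)$ via Lemma~\ref{l:poly}, show that $C(w):=\{x:\phi_{a,w}(x)\le 1\}\in\mathcal{B}_2^\phi(j)$ converges to $C'$ as $w\to\infty$, close by the triangle inequality, then take the union over $j$ for density. Where you part company with the paper is in the convergence step, and there your version is the more careful one. The paper asserts $\phi_{a,w}(x)>1$ on $\partial C'$, deduces $C(w)\subset C'$, and then claims $\phi_{a,w}\to 1_{\partial C'}$ uniformly on $C'$. With the $\tfrac{1}{2n}$ normalization in the definition of $\phi_{a,w}$, neither assertion is literally right: at a facet point $x$ with $a_{i_0}.x=1$ one has $\phi_{a,w}(x)=\tfrac{1}{2n}\bigl(1+e^{-2w}\bigr)+(\text{terms}\to 0)<1$, so the containment in fact runs the other way, $C'\subseteq C(w)$, exactly as you derive from $\phi_{a,w}\le1$ on $P$; and $\phi_{a,w}$ on $\partial C'$ tends to $m/(2n)$ (with $m$ the number of active constraints), not $1$. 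Your quantitative squeeze --- the upper bound $\|x\|_{C(w)}\le\|x\|_P$ from $P\subseteq C(w)$, and the lower bound $\|x\|_{C(w)}\ge(1-\delta)\|x\|_P$ obtained from $\phi_{a,w}(x/\lambda)\ge\tfrac{1}{2j}e^{w\delta/(1-\delta)}\ge1$ at $\lambda=(1-\delta)\|x\|_P$ once $w\ge(1-\delta)\log(2j)/\delta$, uniformly in $x$ because the exponents depend on $x$ only through $|a_i.x|/\|x\|_P\in[0,1]$ --- gives $\rho(C(w),C')\to0$ cleanly and genuinely repairs the uniformity that the paper glosses over. The remaining bookkeeping (reduction via Lemma~\ref{l:poly}, membership $C(w)\in\mathcal{B}_2^\phi(j)$ via Lemma~\ref{l:eposcurv}, equivalence of $\rho$ and $d_H$) matches the paper.
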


\begin{proof}
	Choose any $C \in U$ and fix $\epsilon >0$ such that if $d_H(C',C)< \epsilon$ then $C' \in U$.
	By Lemma \ref{l:poly},
	there exists $N \geq 3$ such that for each $n \geq N$ there exists $C' \in \mathcal{P}_d(n)$ with $d_H(C,C')<\epsilon/2$.
	Choose any $j \geq N$ and define $a_1,\ldots,a_j \in \mathbb{R}^2$ to be pairwise linearly independent points where $C' = \{ x\in \mathbb{R}^2: |x.a_i| \leq 1, i \in \{1,\ldots,j\} \}$.
	For $w > \log j$,
	define $C(w) := \{ x \in \mathbb{R}^2 : \phi_{a,w}(x) \leq 1\}$.
	We note that $\phi_{a,w}(x) >1$ for all $x \in \partial C'$,
	hence $C(w) \subset C'$ for all $w > \log n$.
	As $a$ is a set of distinct elements,
	then $\phi_{a,w} \rightarrow 1_{\partial C'}$ uniformly on $C'$ as $w \rightarrow \infty$,
	where $1_{\partial C'}: C' \rightarrow \{0,1\}$ is the indicator function of $\partial C'$ .
	It follows that $d_H(C(w),C') \rightarrow 0$ as $w \rightarrow \infty$,
	so we may choose $w' > \log 2j$ such that $d_H(C(w'),C') <\epsilon/2$.
	By the triangle inequality we have $d_H(C(w'),C) < \epsilon$ as required.
\end{proof}

Our interest in each set $\mathcal{B}_2^\phi(j)$ is that we can, in some sense, characterize them as open connected subsets of finite dimensional spaces.

\begin{lemma}\label{l:eparam}
	For each $j \geq 3$ define the set
	\begin{align*}
		X_j := \left\{(a,w) \in \mathbb{R}^{2n} \times (\log 2j, \infty) : a_1,\ldots,a_j \text{\emph{ span }} \mathbb{R}^2 \right\} 
	\end{align*}
	Then $X_j$ is an open connected set and
	\begin{align*}
		\lambda_j: X_j \rightarrow \mathcal{B}_2^\phi(j), ~ (a, w) \mapsto \left\{ x \in \mathbb{R}^2: \phi_{a,w}(x) \leq 1 \right\}
	\end{align*}
	is a continuous surjective map.
\end{lemma}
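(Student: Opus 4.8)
The plan is to verify the three claimed properties of $\lambda_j$ in turn: that $X_j$ is open, that $X_j$ is connected, and that $\lambda_j$ is a continuous surjection onto $\mathcal{B}_2^\phi(j)$.

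\textbf{Openness of $X_j$.} The set $\mathbb{R}^{2j} \times (\log 2j, \infty)$ is open in $\mathbb{R}^{2j+1}$, so it suffices to note that the spanning condition $a_1,\ldots,a_j \text{ span } \mathbb{R}^2$ is an open condition on $a \in \mathbb{R}^{2j}$. Indeed, $a_1,\ldots,a_j$ span $\mathbb{R}^2$ precisely when the $2 \times j$ matrix with columns $a_1,\ldots,a_j$ has rank $2$, equivalently when at least one of the finitely many $2 \times 2$ minors is nonzero; the complement is the common zero set of these minors, which is closed. Hence $X_j$ is open.

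\textbf{Connectedness of $X_j$.} Since $X_j$ is a product of $(\log 2j,\infty)$ with $\{a \in \mathbb{R}^{2j} : a_1,\ldots,a_j \text{ span } \mathbb{R}^2\}$, it suffices to show the latter set is connected (indeed, path-connected). The complement of this set inside $\mathbb{R}^{2j}$ is a finite union of proper algebraic subvarieties (each $2\times 2$ minor vanishing simultaneously forces a codimension-$\geq 2$ condition when $j \geq 2$), so it has codimension at least $2$ in $\mathbb{R}^{2j}$; removing a set of codimension $\geq 2$ from $\mathbb{R}^{2j}$ leaves a path-connected set. Alternatively, and more elementarily, given any two spanning tuples $(a_1,\ldots,a_j)$ and $(a_1',\ldots,a_j')$, one can first move $a_1,a_2$ along paths avoiding the configurations where they are parallel to each other (possible since $j\geq 3$ gives us room, or directly since $\mathbb{R}^2\setminus\{0\}$ minus a line is path-connected to move each coordinate around obstructions in $\mathbb{R}^2$), keeping $\{a_1,a_2\}$ spanning throughout, and then freely move $a_3,\ldots,a_j$ without constraint. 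Concatenating these paths gives a path in $X_j$, so $X_j$ is connected.

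\textbf{Continuity and surjectivity of $\lambda_j$.} Surjectivity is immediate from Definition \ref{def:exp}: every $C \in \mathcal{B}_2^\phi(j)$ is by definition $\{x : \phi_{a,w}(x) \leq 1\}$ for some $(a,w) \in X_j$. For continuity we must show that if $(a^{(n)}, w^{(n)}) \to (a,w)$ in $X_j$ then $\lambda_j(a^{(n)},w^{(n)}) \to \lambda_j(a,w)$ in the Hausdorff metric $d_H$ (equivalently, by Proposition \ref{prop:hausvsdash}, in the metric $\rho$ on $\mathcal{K}_2$). The key point is that $\phi_{a,w}(x)$ is jointly continuous in $(a,w,x)$, and moreover, by Lemma \ref{l:phi}, for fixed $(a,w) \in X_j$ the map $t \mapsto \phi_{a,w}(tx)$ is a strictly increasing analytic diffeomorphism of $[0,\infty)$ onto $[\phi_{a,w}(0),\infty)$ for each $x \neq 0$; hence $\|x\|_{\lambda_j(a,w)}$ is the unique $\lambda > 0$ with $\phi_{a,w}(x/\lambda) = 1$, and this radial function depends continuously on $(a,w)$ by the implicit function theorem applied to $\phi_{a,w}(x/\lambda) - 1 = 0$ (the $\lambda$-derivative is nonzero by strict monotonicity). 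Uniform control on the unit sphere $\mathbb{S}$, together with compactness of $\mathbb{S}$ and the uniform joint continuity of $\phi$ on compact sets, then gives $\sup_{x \in \mathbb{S}} |\|x\|_{\lambda_j(a^{(n)},w^{(n)})} - \|x\|_{\lambda_j(a,w)}| \to 0$, which by positive homogeneity of the norms yields $\rho(\lambda_j(a^{(n)},w^{(n)}), \lambda_j(a,w)) \to 0$. This establishes continuity.

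The main obstacle is the continuity argument: one must be careful that as $(a,w)$ varies the body $\lambda_j(a,w)$ stays uniformly bounded and uniformly bounded away from the origin (so that the radial functions are genuinely controlled uniformly on $\mathbb{S}$), which is why one works on a compact neighbourhood of the limit point $(a,w)$ in $X_j$ and invokes the strict positivity $f''(t) > 0$ from Lemma \ref{l:phi} to get uniform lower bounds on the relevant derivatives. The connectedness step is conceptually routine once one observes the codimension-$\geq 2$ structure of the non-spanning locus, and openness is entirely straightforward.
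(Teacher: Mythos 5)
Your proof is correct, and all three parts (openness, connectedness, continuity/surjectivity) are established. For openness, surjectivity, and connectedness you spell out what the paper dismisses as ``immediate''; your codimension-$\geq 2$ argument for connectedness is sound for $j \geq 3$ (note only that your parenthetical ``when $j \geq 2$'' is off --- the rank-$\leq 1$ determinantal locus has codimension $j-1$, which is $\geq 2$ only for $j \geq 3$ --- but this is harmless since the lemma assumes $j \geq 3$). The main place you diverge from the paper is the continuity argument. The paper takes a bounded open neighbourhood $U$ of $\lambda_j(a,w)$, observes $\phi_{a^n,w_n} \to \phi_{a,w}$ uniformly on $U$, and asserts Hausdorff convergence of the sublevel sets directly; this leaves implicit the fact (supplied by the radial strict monotonicity of Lemma \ref{l:phi}) that the boundary is nondegenerate, so near-level sets genuinely track each other. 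You instead work with the Minkowski functional: you characterise $\|x\|_{\lambda_j(a,w)}$ implicitly by $\phi_{a,w}(x/\lambda) = 1$, apply the implicit function theorem (using the nonzero radial derivative from Lemma \ref{l:phi}), upgrade to uniform convergence on $\mathbb{S}$ by compactness, and then invoke Proposition \ref{prop:hausvsdash} to pass from $\rho$-convergence to $d_H$-convergence. Your route is a bit longer but makes explicit exactly where strict radial monotonicity enters, and it naturally reuses the paper's own machinery ($\rho$ and Proposition \ref{prop:hausvsdash}) rather than arguing ad hoc about sublevel sets; the paper's route is shorter but requires the reader to supply the argument that uniform convergence of defining functions controls the Hausdorff distance of the associated bodies.
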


\begin{proof}
	By definition,
	$f$ is surjective,
	and it is immediate that $X_j$ is an open connected set.
	Suppose $((a^n,w_n))_{n \in \mathbb{N}}$ is a sequence in $X_j$ that converges to $(a,w) \in X_j$.
	Choose any bounded open neighbourhood $U$ of $\lambda(a,w)$.
	Since $\phi_{a^n,w_n} \rightarrow \phi_{a,w}$ uniformly on $U$ as $n \rightarrow \infty$,
	then $d_H(\lambda(a^n,w_n), \lambda(a,w)) \rightarrow 0$ as $n \rightarrow \infty$.
\end{proof}

\section{Packings with maximal planar contact graphs}\label{sec:maxplanar}

\subsection{Mapping triangulations to their unique packings}

It is well-known that a planar graph $G=(V,E)$ is maximal if and only if one of the following equivalent conditions hold:
\begin{enumerate}[(i)]
	\item $G$ is not a proper subgraph of any other planar graph.
	\item Every face of $G$ is a triangle.
	\item $|E|=3|V|-6$.
\end{enumerate}
If a disc packing has a maximal planar contact graph, then it is unique up to M\"{o}bius transformations and reflections by the KAT theorem.
In \cite{schramm91},
Schramm proved a similar uniqueness result for homothetic convex body packings where both smoothness and strict convexity are assumed.

\begin{theorem}\cite{schramm91}\label{t:odedmax}
	Let $G = (V,E)$ be a maximal planar graph and $C$ be a convex body in the plane that is both smooth and strictly convex.
	Choose any clique $\{a,b,c\} \subset V$ and three corresponding points $x_a,x_b,x_c \in \mathbb{R}^2$ that are not colinear.
	Then there exists a unique $C$-packing $P=(G,p,r)$ with $p_a=x_a$, $p_b = x_b$, $p_c = x_c$, and $\{p_v : v \in V\}$ lies inside the interior of the triangle formed by $x_a,x_b,x_c$.
\end{theorem}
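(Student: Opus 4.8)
The plan is to recast this as a statement about a ``boundary evaluation map'' between two manifolds of equal dimension, and to prove that this map is a degree-one covering by supplying two inputs: a compactness estimate (properness) and a rigidity statement (local injectivity). The base case $|V|=3$ is immediate: then $G=K_3$, the packing consists of the three mutually touching bodies $r_a C + x_a$, $r_b C + x_b$, $r_c C + x_c$, and the radii are the unique solution of the linear system $r_a+r_b=\|x_a-x_b\|_C$, $r_b+r_c=\|x_b-x_c\|_C$, $r_a+r_c=\|x_a-x_c\|_C$, which is positive precisely because $x_a,x_b,x_c$ are not colinear and $C$ is strictly convex (equality in the triangle inequality for the gauge of a strictly convex body forces colinearity). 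So assume $|V|\geq 4$ and fix a planar embedding of $G$ with $abc$ as the outer face, so that ``inscribed'' means every other centre lies strictly inside the triangle $x_ax_bx_c$.

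Let $\Gamma$ denote the group of maps $x\mapsto\lambda x+t$ with $\lambda>0$; it acts freely on the $C$-packings with contact graph $G$. Re-running the index-lemma arguments of Section~\ref{sec:edgestress} with the (possibly asymmetric) Minkowski gauge of $C$ and that of its polar in place of $\varphi_C$ and $C^*$ --- a nontrivial but routine adaptation, using that a smooth strictly convex body is not a parallelogram so that Theorem~\ref{t:DanzerGruenbaum} still forces planarity of the contact framework --- one finds that such a packing carries no edge-length equilibrium stress. Hence $S_{G,C}$ is a manifold of dimension $3|V|-|E|=6$, and $X:=S_{G,C}/\Gamma$ is a $3$-manifold; the space $Y$ of non-colinear ordered triples in $\mathbb{R}^2$ modulo $\Gamma$ is also a $3$-manifold, with exactly two components $Y^{+},Y^{-}$ separated by orientation. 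On the open subset $X^{\mathrm{ins}}\subset X$ of classes of inscribed packings I would study the continuous map $\Phi:X^{\mathrm{ins}}\to Y$, $[(G,p,r)]\mapsto[(p_a,p_b,p_c)]$. Since $\lambda x_a+t=x_a$ and $\lambda x_b+t=x_b$ force $\lambda=1$, $t=0$, the theorem is equivalent to the assertion that $\Phi$ is a bijection onto $Y$.

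I would establish this in three moves. (1) \emph{Properness}: if inscribed packings have boundary triangles converging in $Y$, then a Blaschke selection for the bodies together with ``ring lemma''-type a priori bounds on ratios of radii (each body is trapped between its neighbours inside a bounded triangle) yields a sublimit that is again an inscribed $C$-packing with contact graph $G$, the combinatorics of $G$ forbidding degeneration; so $\Phi$ is proper. (2) \emph{Local injectivity}: this is the infinitesimal rigidity of an inscribed packing relative to $a,b,c$, namely that the $6$-dimensional space $\ker R_C(G,p,r)$ maps isomorphically onto the velocities $(u_a,u_b,u_c)\in(\mathbb{R}^2)^3$; by invariance of domain this promotes $\Phi$ to a local homeomorphism. (3) \emph{Global conclusion}: by (1) and (2), $\Phi$ is a covering map; it is onto $Y$ because $S_{G,C}\neq\emptyset$ by Theorem~\ref{t:oded}, its image is open and (by properness) closed, and realisability of a boundary triple is unchanged by transposing two of the labels (which reverses orientation), so the image meets both components; and its degree is one, as one sees by deforming $C$ continuously to the Euclidean disc --- keeping the whole construction in a family over the smooth strictly convex bodies, in the spirit of this section --- and invoking the uniqueness half of the Koebe--Andreev--Thurston theorem at the disc. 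Hence $\Phi$ is a bijection, which is the theorem.

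The genuine obstacle is step (2), the rigidity, together with making the properness in step (1) uniform enough to carry the degree along the deformation in step (3); together these amount to ``rigidity of convex body packings of triangulations'', which is already the delicate point for circle packings. I would attack the local rigidity either by a maximum principle in the style of Section~\ref{sec:edgestress} --- given two inscribed packings with the same boundary triangle, compare them vertex by vertex, pass to a vertex extremising a suitable ratio of the two, and contradict an index/angle-count inequality analogous to Lemmas~\ref{lem:2.7} and \ref{lem:2.8} applied to the ``difference'' configuration --- or by a variational route, exhibiting the inscribed packing as the unique minimiser of an explicit convex functional of the radii (a convex-body analogue of Colin de Verdi\`ere's functional for circle packings), which would settle uniqueness, and hence the degree, at one stroke; for the latter I would expect to first need the positive-curvature class $\mathcal{B}_2^+$ of Section~\ref{sec:specialclass} to control the relevant Hessians, removing that restriction afterwards by approximation.
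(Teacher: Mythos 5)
This theorem is cited from Schramm \cite{schramm91}; the paper does not prove it, so your proposal must be assessed on its own terms rather than against an in-paper argument.

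Your degree-theoretic scaffolding (properness $+$ local injectivity $\Rightarrow$ covering map, then pin down the degree by deforming $C$ to a disc and quoting KAT) is a sensible and standard strategy, and it is broadly in the same spirit as Schramm's own topological approach. However, there are two substantial gaps, one of which you acknowledge and one of which you underestimate.

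First, the local injectivity (your step (2)) is the entire content of the theorem and you do not prove it; you offer two possible routes (an index-count maximum principle, or a convex functional à la Colin de Verdière) and say you would ``attack'' rigidity along one of them. Neither is carried out, and this is not a detail that the surrounding machinery supplies: in the paper, the only result that gives the needed invertibility of the reduced packing rigidity matrix is Lemma~\ref{l:maxinvert}, whose proof uses Theorem~\ref{t:odedmax} itself, so nothing internal to the paper can be invoked without circularity. Until this step is supplied, the argument does not establish uniqueness, and since the degree computation in step (3) presupposes it, existence is not established either.

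Second, and more worrying, you assert that the index-lemma arguments of Section~\ref{sec:edgestress} adapt ``routinely'' to a general smooth strictly convex $C$, dropping central symmetry. This is not routine. The entire framework of Sections~\ref{sec:defnsticky}--\ref{sec:packmaps} rests on the identity $\|p_v-p_w\|_C = r_v + r_w$ for externally tangent homothets, which is derived by using that $\varphi_C$ is an odd map, i.e.\ precisely by central symmetry. For an asymmetric gauge, $\|x\|_C \neq \|-x\|_C$ in general, the tangency point does not lie at the weighted average $\frac{r_w p_v + r_v p_w}{r_v+r_w}$, and the tangency constraint is no longer a clean algebraic equation in $\|p_v-p_w\|_C$. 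Consequently the packing rigidity map $h_{G,C}$, the rigidity matrix $R_C(G,p,r)$, the manifold structure of $S_{G,C}$, and the index/cone lemmas built on $\varphi_C$ and $C^*$ all need to be rebuilt from scratch before your dimension count $\dim S_{G,C} = 3|V|-|E|$ can even be stated. This is a genuine obstruction to quoting the paper's machinery, not a bookkeeping change.

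Two smaller remarks. (a) Your properness step hides a convex-body analogue of the Ring Lemma and a Blaschke-selection argument that a degenerating sublimit still has contact graph exactly $G$; this is plausible but needs an actual estimate. (b) You implicitly require a planar embedding of $G$ with $abc$ as the outer face, which only exists when $\{a,b,c\}$ is a face of the (essentially unique) embedding; if $\{a,b,c\}$ is a separating triangle, no inscribed packing exists at all, so the theorem as quoted must be read with $\{a,b,c\}$ a face, and your argument should say so.
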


This is stronger than Theorem \ref{t:oded} due to the uniqueness of the obtained packing,
the trade off being that strictly convexity is now required for the proof.
Using Theorem \ref{t:odedmax} and the added assumption of central symmetry,
we can obtain the following result that lets us determine how these unique packings change as the regular symmetric body is altered.

\begin{proposition}\label{p:contmax}
	Let $G = (V,E)$ be a maximal planar graph.
	Choose any clique $\{a,b,c\} \subset V$ and three corresponding points $x_a,x_b,x_c \in \mathbb{R}^2$ that are not colinear.
	Then there exists a unique continuous map $f: \mathcal{B}_2 \rightarrow \mathbb{R}^{2|V|} \times \mathbb{R}^{|V|}_{>0}$ such that,
	given $f(C) = (p,r)$,
	the triple $(G,p,r)$ is the unique $C$-packing with $p_a=x_a$, $p_b = x_b$, $p_c = x_c$, and $\{p_v : v \in V\}$ contained in the interior of the triangle formed by $x_a,x_b,x_c$.
\end{proposition}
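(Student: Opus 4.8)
The plan is to construct the map $f$ pointwise using Theorem \ref{t:odedmax}, which for each regular symmetric body $C$ gives a unique $C$-packing $(G,p,r)$ with the prescribed pinning data $p_a = x_a$, $p_b = x_b$, $p_c = x_c$ and all vertices inside the triangle $\triangle(x_a,x_b,x_c)$; this immediately defines $f(C) = (p,r)$ and establishes uniqueness of $f$ as a function. The entire content of the proposition is therefore the \emph{continuity} of $f$. Since $\mathcal{B}_2$ with the Hausdorff metric is metrizable, it suffices to prove sequential continuity: if $C_n \to C$ in $\mathcal{B}_2$, then $f(C_n) = (p^n, r^n) \to (p,r) = f(C)$.

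First I would establish that the sequence $(f(C_n))_{n}$ lives in a compact set. Each placement $p^n$ has all its points inside the fixed triangle $\triangle(x_a,x_b,x_c)$, so the $p^n$ are uniformly bounded; and since for a $C_n$-packing the radii satisfy $r^n_v + r^n_w = \|p^n_v - p^n_w\|_{C_n}$ across any edge $vw \in E$, together with convergence of the norms $\|\cdot\|_{C_n} \to \|\cdot\|_C$ uniformly on $\mathbb{B}^2$ (Proposition \ref{prop:hausvsdash}), the radii $r^n$ are uniformly bounded above (by the diameter of the triangle in the norms, which converges) and bounded below away from $0$ along convergent subsequences with distinct limit points. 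So by Bolzano--Weierstrass we may pass to a subsequence with $(p^n, r^n) \to (p^\infty, r^\infty)$ for some limit, with $p^\infty$ in the closed triangle and $r^\infty \in \mathbb{R}^{|V|}_{\geq 0}$.

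Next I would identify the limit. I claim $(G, p^\infty, r^\infty)$ is a $C$-packing: the packing conditions are closed conditions, namely $\|p^n_v - p^n_w\|_{C_n} \geq r^n_v + r^n_w$ for all distinct $v,w$ with equality exactly when $vw \in E$, and these pass to the limit using uniform convergence of $\|\cdot\|_{C_n}$ to $\|\cdot\|_C$. The one subtlety is ruling out degeneracy: one must show $r^\infty_v > 0$ for all $v$ and that the limit points $p^\infty_v$ are pairwise distinct, so that $(G,p^\infty,r^\infty)$ is genuinely a $C$-packing with contact graph $G$ (not something with extra or collapsed contacts). This is where I expect the main obstacle to be. A natural approach is to use that $G$ is maximal planar, so every bounded complementary region of the contact structure is a triangle; a collapse $r^\infty_v = 0$ or a coincidence $p^\infty_v = p^\infty_w$ would force the limiting configuration to fail to ``fill'' the triangle in a way incompatible with the combinatorial structure forced by maximality, or one could invoke a Schramm-type rigidity/nondegeneracy argument. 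Alternatively, one can use that the inradius-type quantities of the packing are controlled below by the combinatorics of $G$ uniformly as $C_n \to C$, since the $C_n$ are uniformly bounded between two fixed ellipsoids.

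Finally, once $(G,p^\infty,r^\infty)$ is shown to be a bona fide $C$-packing with $p^\infty_a = x_a$, $p^\infty_b = x_b$, $p^\infty_c = x_c$ and all vertices in the closed triangle, I would upgrade ``closed triangle'' to ``interior'': again by maximal planarity, the three pinned vertices $a,b,c$ form the outer face, so every other vertex is strictly interior in any valid packing, and the limit inherits this. Then Theorem \ref{t:odedmax} applies to $C$ and forces $(p^\infty, r^\infty) = (p,r) = f(C)$. Since every subsequence of $(f(C_n))$ has a further subsequence converging to the same limit $f(C)$, the full sequence converges to $f(C)$, proving continuity of $f$ and completing the proof.
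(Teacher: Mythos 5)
Your proposal follows essentially the same route as the paper: pointwise definition and uniqueness of $f$ come directly from Theorem \ref{t:odedmax}; continuity is established by showing the sequence $(f(C_n))_n$ is bounded (placements inside the fixed triangle, radii controlled by the converging norms), extracting a convergent subsequence, identifying the limit as a $C$-packing with the prescribed pinning by passing $h_{G,C_n}(p^n,r^n)=0$ to the limit via Proposition \ref{prop:hausvsdash}, and applying the uniqueness in Theorem \ref{t:odedmax} once more. Your direct subsequence argument is simply the contrapositive of the paper's proof by contradiction; these are the same proof.

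On the point you flag as the main obstacle — vanishing radii, coincident placements, or spurious contacts in the limit — you are right that some argument is needed, but note that the paper's proof also does not supply one: it passes from $h_{G,C}(p',r')=0$ directly to ``hence $(G,p',r')$ is a $C$-packing''. Your two heuristic sketches (Schramm-type nondegeneracy, inradius estimates) are not worked out, so your proposal is at the same level of detail as the paper. A concrete way to close the gap: the weak inequalities $\|p'_v-p'_w\|_C\ge r'_v+r'_w$ survive the limit, so the homothets retain disjoint interiors; solving the three contact equations on the outer clique $\{a,b,c\}$ and using the strict triangle inequality for $\|\cdot\|_C$ at the non-collinear points $x_a,x_b,x_c$ gives $r'_a,r'_b,r'_c>0$, and one propagates positivity inward using the fact that in a maximal planar graph the neighbours of an interior vertex form a cycle, while strictly convex bodies with disjoint interiors can touch pairwise at only one point; finally, once all radii are positive and placements distinct, any additional contact would make the (planar, by Lemma \ref{l:planar}) contact graph a simple graph on $|V|$ vertices with more than $3|V|-6$ edges, impossible since $G$ is already maximal planar.
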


\begin{proof}
	The existence and uniqueness of the map $f$ follows immediately from Theorem \ref{t:odedmax}, so we shall only be required to prove $f$ is continuous.
	Suppose otherwise;
	i.e.~there exists $\epsilon >0$ and a sequence $(C_n)_{n \in \mathbb{N}}$ in $\mathcal{B}_2$ with limit $C$ where,
	given $(p^n,r^n) := f(C_n)$ and $(p,r) := f(C)$,
	we have 
	\begin{align*}
		\|f(C_n)-f(C)\|_{\max} := \max \left\{ \left\| p^n_v-p_v \right\|,\left|r^n_v-r_v \right| :v \in V \right\} \geq \epsilon.
	\end{align*}
	
	For each $n \in \mathbb{N}$, the points $\{p_v^n : v \in V\}$ lie inside the triangle formed by $x_a,x_b,x_c$,
	hence $(p^n)_{n \in \mathbb{N}}$ is a bounded sequence.
	If we define
	\begin{align*}
		R:= \sup \left\{ \|x_a - x_b\|_{C_n}, \|x_a - x_c\|_{C_n} , \|x_b-x_c\|_{C_n}: n \in \mathbb{N} \right\}
	\end{align*}
	then $R<\infty$ by Proposition \ref{prop:hausvsdash},
	and for every point $x$ in the interior of the triangle formed by $x_a,x_b,x_c$ we have $x_a,x_b,x_c \in RC_n + x$ for all $n \in \mathbb{N}$.
	Hence each set $\{r_v^n : v \in V\}$ is bounded by $R>0$,
	and $(r^n)_{n \in \mathbb{N}}$ is a bounded sequence.
	
	Since the sequence $((p^n,r^n))_{n \in \mathbb{N}}$ is bounded,
	there exists a subsequence $((p^{n_k},r^{n_k}))_{k \in \mathbb{N}}$ with limit $(p',r')$.
	It follows from Proposition \ref{prop:hausvsdash} that as $k \rightarrow \infty$ we have
	\begin{align*}
		0 = h_{G,C_{n_k}}(p^{n_k},r^{n_k}) \rightarrow h_{G,C}(p',r');
	\end{align*}
	hence $(G,p',r')$ is a $C$-packing with $p'_a=x_a$, $p'_b = x_b$ and $p'_c = x_c$, and $\{p'_v : v \in V\}$ lies inside the triangle formed by $x_a,x_b,x_c$.
	By Theorem \ref{t:odedmax},
	$(p',r') = (p,r)$,
	contradicting our original assumption.
\end{proof}

\subsection{An important corollary to Proposition \ref{p:contmax}}\label{sec:genedge}

We begin with the following definition.

\begin{definition}
	We say a framework $(G,p)$ in $\mathbb{R}^d$ has the \emph{general edge condition} if $p_{v_1}-p_{w_1}$ and $p_{v_2}-p_{w_2}$ are linearly independent for any pair of edges $v_1w_1,v_2 w_2 \in E$.
	For any planar graph $G$,
	we define $C \in \mathcal{K}_2$ to have the \emph{general edge condition for $G$} there exists a $C$-packing with contact graph $G$ and contact framework with the general edge condition.
\end{definition}

In this section we shall prove the following corollary to Proposition \ref{p:contmax}.

\begin{corollary}\label{cor:generalpos}
	Let $G = (V,E)$ be a planar graph.
	Then there exists an open dense subset of $\mathcal{B}_2$ of regular symmetric bodies $C$ with the general edge condition for $G$.
\end{corollary}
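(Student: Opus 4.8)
The plan is to reduce to the case of a maximal planar graph and then use Proposition \ref{p:contmax} to transport the general edge condition across a continuous family of convex bodies. First I would observe that it suffices to treat the case when $G$ is maximal planar: an arbitrary planar $G$ embeds as a spanning subgraph of some maximal planar $G^+$ on the same vertex set, and any $C$-packing realising $G^+$ restricts to one realising $G$ (discarding the extra contacts); moreover the general edge condition for $G^+$ implies it for $G$ since we are only asking that edge vectors be pairwise linearly independent. So from now on assume $G$ is maximal planar, fix a triangular face $\{a,b,c\}$ and non-colinear reference points $x_a,x_b,x_c$, and let $f:\mathcal{B}_2 \to \mathbb{R}^{2|V|}\times\mathbb{R}^{|V|}_{>0}$ be the continuous map from Proposition \ref{p:contmax}, writing $f(C)=(p^C,r^C)$ for the corresponding unique $C$-packing $(G,p^C,r^C)$.

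Next I would show the set of bad bodies is \emph{closed and nowhere dense}, which gives the complement is open and dense. Define $D_{e_1,e_2} := \{ C \in \mathcal{B}_2 : p^C_{v_1}-p^C_{w_1} \text{ and } p^C_{v_2}-p^C_{w_2} \text{ are linearly dependent}\}$ for each unordered pair of distinct edges $e_1=v_1w_1$, $e_2=v_2w_2$. Since $C \mapsto (p^C,r^C)$ is continuous and linear dependence of two fixed-index difference vectors is a closed condition (vanishing of a determinant), each $D_{e_1,e_2}$ is closed in $\mathcal{B}_2$; the bad set is $\bigcup_{e_1,e_2} D_{e_1,e_2}$, a finite union, hence closed. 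It remains to prove each $D_{e_1,e_2}$ has empty interior in $\mathcal{B}_2$. For this I would produce, arbitrarily close to any given $C_0 \in \mathcal{B}_2$, a body $C$ for which $p^C_{v_1}-p^C_{w_1}$ and $p^C_{v_2}-p^C_{w_2}$ are linearly independent. The natural source of such bodies is the dense family $\mathcal{B}_2^\phi$ (Proposition \ref{p:phisetdist}) — or more simply, a direct perturbation argument: starting from $C_0$, if the two edge vectors happen to be parallel, apply a small non-conformal linear map $T$ to $C_0$ (equivalently, replace $C_0$ by $T(C_0)$, still in $\mathcal{B}_2$ since $\mathcal{B}_2$ is invariant under $\mathrm{GL}(2)$ and $d_H$-close to $C_0$ for $T$ near the identity) and use the fact that rescaling one coordinate direction breaks the parallelism of two edge vectors that are not themselves coordinate-aligned, while for coordinate-aligned ones a small rotation does the job. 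The key point is that no single non-identity linear map can keep \emph{all} parallel pairs parallel unless those pairs were already forced, and one can always find a direction of perturbation that separates a given parallel pair.

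The main obstacle I expect is the last step: verifying that the general edge condition can be achieved for at least one body near each $C_0$, i.e.~that $D_{e_1,e_2}$ is not all of $\mathcal{B}_2$ (nor has interior). Perhaps the cleanest route avoids delicate perturbation bookkeeping entirely: fix a single body $C^\star \in \mathcal{B}_2$ (for instance an element of $\mathcal{B}_2^\phi$ built from a generic spanning set of directions) whose unique packing $(G,p^{C^\star},r^{C^\star})$ has all edge vectors pairwise linearly independent — the existence of such a body follows because the edge-vector configuration depends real-analytically on the defining data $(a,w)$ of $\phi_{a,w}$ via Lemma \ref{l:eposcurv} and Proposition \ref{p:contmax}, so the linear-dependence locus is a proper analytic subset of the connected parameter space $X_j$ (Lemma \ref{l:eparam}) and cannot be everything, since e.g. one can check a symmetric/explicit choice gives non-degenerate edge vectors. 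Then $C^\star \notin \bigcup D_{e_1,e_2}$, so the open set $\mathcal{B}_2 \setminus \bigcup D_{e_1,e_2}$ is nonempty; but being nonempty is not yet density. To get density I would instead argue locally: for fixed $C_0$ and fixed pair $e_1,e_2$, the map $w \mapsto$ (determinant of the two edge vectors of the unique packing for the body $\lambda_j(a(w),w)$) along a suitable analytic path of bodies through $C_0$ is real-analytic and not identically zero (by the global non-vanishing just established together with connectedness of the parameter space), hence has isolated zeros, so arbitrarily near $C_0$ there are bodies outside $D_{e_1,e_2}$; intersecting over the finitely many pairs $e_1,e_2$ (each such condition being open and dense) yields density. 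This analytic-path argument is where the real work lies and is the step I would write out most carefully.
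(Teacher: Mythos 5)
Your overall architecture (reduce to maximal planar, prove the bad locus is a finite union of closed sets, then show each is nowhere dense via real-analyticity along the $\mathcal{B}_2^\phi(j)$ families) is the same skeleton as the paper's proof, but there are two genuine gaps.

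First, the reduction to a maximal planar $G^+$ does not work by ``discarding the extra contacts.'' A $C$-packing whose contact graph is $G^+$ \emph{has} contact graph $G^+$, not $G$: the extra tangencies are physically present and cannot be ignored. You must actively break them while preserving the contacts indexed by $G$ and staying in a packing of the same body $C$. The paper does this with a dedicated lemma (Lemma \ref{l:genedgesub}), which constructs a short $C^1$-flow inside the configuration space by integrating a vector field read off from $\tilde{R}_C(G,p,r)^{-1}$; the direction is chosen so that the constraints in $E'$ stay tight while the constraints in $E\setminus E'$ become strict, and the general edge condition persists since it is open. Without some such argument the reduction step fails.

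Second, and more seriously, the nowhere-density argument hinges on knowing that the zero locus of the analytic function $\Delta \circ f \circ \lambda_j$ is proper in $X_j$, i.e.\ that at least one body realises the general edge condition. Your proof dismisses this with ``one can check a symmetric/explicit choice gives non-degenerate edge vectors,'' but for a general maximal planar $G$ there is no explicit packing to write down and check, and this is precisely the content that must be supplied. The paper handles this with Lemmas \ref{l:mobius} and \ref{l:circlegenedge}: starting from any disc packing of $G$, the KAT theorem plus the $3$-transitivity of M\"obius transformations on $\hat{\mathbb{C}}$ lets one choose a M\"obius image of that packing whose edge vectors are pairwise linearly independent. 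This produces $\mathbb{D} \in A_G$, whence $A_G$ is open and nonempty, and only then does the density argument within each $\mathcal{B}_2^\phi(j)$ get off the ground. Relatedly, your alternative ``apply a small non-conformal $T \in \mathrm{GL}(2)$'' suggestion does not work as stated: because the unique packing $f(C)$ is pinned by $p_a = x_a$, $p_b = x_b$, $p_c = x_c$ for \emph{fixed} anchors, one has $f(T(C)) \neq T(f(C))$ in general, so the edge vectors of $f(T(C))$ are not simply the $T$-images of those of $f(C)$, and the linear-algebra reasoning about which pairs $T$ keeps parallel does not apply directly to the packings.

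The remaining pieces of your outline --- closedness of each $D_{e_1,e_2}$ via continuity of $f$ (Proposition \ref{p:contmax}), real-analyticity of $\Delta \circ f \circ \lambda_j$ (Lemma \ref{l:expgenpos}), density of $\mathcal{B}_2^\phi$ (Proposition \ref{p:phisetdist}), and the passage from density within each $\mathcal{B}_2^\phi(j)$ to density in $\mathcal{B}_2$ --- all match the paper and are fine.
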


Corollary \ref{cor:generalpos} we be utilised later in Section \ref{sec:kldense} for an important technical result (see Lemma \ref{kl:dense}).
To prove Corollary \ref{cor:generalpos} we will require the following lemmas.

\begin{lemma}\label{l:mobius}
	Let $\hat{\mathbb{C}} := \mathbb{C} \cup \{\infty \}$ be the Riemann sphere with M\"{o}bious transforms $PGL(2, \mathbb{C})$.
	Then for any four points distinct points $x,y,u,w \in \mathbb{C}$,
	the following holds:
	\begin{enumerate}[(i)]
		\item \label{l:mobius1} There exists an open dense subset $U \subset PGL(2, \mathbb{C})$ where $\phi(x),\phi(y),\phi(u)$ are not equal to $\infty$ and the pair $\phi(x)-\phi(y),\phi(x)-\phi(u)$ are linearly independent over $\mathbb{R}$.
		\item \label{l:mobius2} There exists an open dense subset $U \subset PGL(2, \mathbb{C})$ where $\phi(x),\phi(y),\phi(u),\phi(w)$ are not equal to $\infty$ and the pair $\phi(x)-\phi(y),\phi(u)-\phi(w)$ are linearly independent over $\mathbb{R}$.
	\end{enumerate}
\end{lemma}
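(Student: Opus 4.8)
The plan is to treat both parts as statements about the complement of a proper real-analytic subvariety of the real Lie group $PGL(2,\mathbb{C})$, which is connected and has real dimension $6$. A Möbius transform $\phi(z) = \frac{az+b}{cz+d}$ is a rational function of its matrix entries, so each of the conditions ``$\phi(x) = \infty$'', ``$\phi(x) - \phi(y)$ and $\phi(x) - \phi(u)$ are linearly dependent over $\mathbb{R}$'' is the vanishing locus of a real-analytic function on the group (away from the poles). A closed subset of $PGL(2,\mathbb{C})$ defined by the vanishing of a nonzero real-analytic function is automatically nowhere dense, and its complement is then open and dense; so in each part it suffices to exhibit a \emph{single} transform $\phi$ for which all the listed conditions fail simultaneously, thereby showing the relevant analytic functions are not identically zero. (One must also check the first three ``$\neq \infty$'' conditions are generic, but a transform fixing $\infty$, e.g. $z \mapsto z+1$, sends no finite point to $\infty$, so that is cheap.)

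For part (\ref{l:mobius1}), first compose with a transform sending $x,y,u$ to finite points (the condition $\phi(x),\phi(y),\phi(u)\ne\infty$ is open and generic as noted). It then suffices to find one $\phi$ with $\phi(x)-\phi(y)$ and $\phi(x)-\phi(u)$ $\mathbb{R}$-linearly independent, i.e. with the three image points $\phi(x),\phi(y),\phi(u)$ not collinear in $\mathbb{C} \cong \mathbb{R}^2$. Since $PGL(2,\mathbb{C})$ acts sharply $3$-transitively on $\hat{\mathbb{C}}$, I can choose $\phi$ with $\phi(x)=0$, $\phi(y)=1$, $\phi(u)=i$; these are visibly not collinear, so the linear-dependence analytic function is not identically zero, and the desired open dense set is its non-vanishing locus intersected with the (open dense) set where the three points avoid $\infty$. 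Part (\ref{l:mobius2}) is the same argument: by $4$-transitivity (or by $3$-transitivity followed by noting the fourth point can be moved off any prescribed line), pick $\phi$ sending $x,y,u,w$ to, say, $0,1,i,2$ — then $\phi(x)-\phi(y) = -1$ and $\phi(u)-\phi(w) = i-2$ are $\mathbb{R}$-linearly independent — so again the relevant analytic function is not identically zero and its non-vanishing locus (intersected with the open dense ``no image equals $\infty$'' set) is the required open dense subset.

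The only genuine subtlety — and the step I would be most careful about — is the passage from ``the defining function is not identically zero'' to ``its zero set is nowhere dense'', since $PGL(2,\mathbb{C})$ is a real manifold and the conditions, while holomorphic in $(a,b,c,d)$, become real-analytic (not holomorphic) once we impose $\mathbb{R}$-linear dependence of complex numbers, which involves $\mathrm{Im}$ and hence complex conjugation. The clean way to handle this is to note that $\mathbb{R}$-linear dependence of $z_1,z_2 \in \mathbb{C}$ is equivalent to $\mathrm{Im}(z_1\overline{z_2}) = 0$, a single real-analytic (indeed real-polynomial in the real and imaginary parts) equation in the entries, and then invoke the standard fact that the zero set of a real-analytic function on a connected real-analytic manifold is either the whole manifold or a closed nowhere dense set of measure zero; having produced an explicit $\phi$ off the zero set rules out the former. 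Everything else is routine transitivity of the Möbius action together with elementary planar geometry.
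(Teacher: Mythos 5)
Your overall framework --- reduce to exhibiting a single Möbius transform off the zero set of a real-analytic function, then use the fact that such a zero set is nowhere dense on the connected real-analytic manifold $PGL(2,\mathbb{C})$ --- is exactly the reduction the paper relies on, and you justify it more carefully than the paper does (the paper simply asserts ``it suffices to prove\ldots that a single such M\"obius transform exists''). Part (\ref{l:mobius1}) of your argument is correct.

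However, part (\ref{l:mobius2}) has a genuine gap. You write ``by $4$-transitivity (or by $3$-transitivity followed by noting the fourth point can be moved off any prescribed line), pick $\phi$ sending $x,y,u,w$ to, say, $0,1,i,2$.'' Neither version of this claim is available: $PGL(2,\mathbb{C})$ acts \emph{sharply} $3$-transitively, so once the images of three of the four points are prescribed the transform is uniquely determined and the image of the fourth is forced to be the value dictated by the cross-ratio $(x,y;u,w)$, which is a M\"obius invariant. There is no $4$-transitivity and no freedom to ``move the fourth point off a prescribed line.'' If the cross-ratio of $x,y,u,w$ happens to equal that of $0,1,i,2$ your choice is fine, but for generic input it does not. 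The paper handles precisely this: after sending three of the points to $0,1,i$, the fourth lands at some complex number $z$ one cannot choose; in the bad case $z - i$ is a nonzero real multiple of $-1$, so $z = a+i$ with $a \in \mathbb{R}\setminus\{0\}$, and the paper then applies a \emph{second} explicit transform $\phi(z) = 1/(z-a)$ and computes directly that the two image differences $\frac{1}{a(a-1)}$ and $\frac{a}{a^2+1}(-1+ai)$ are $\mathbb{R}$-linearly independent. Your proof is missing this extra step, and without it the existence of a single good $\phi$ --- the crux of the whole density argument --- is not established for part (\ref{l:mobius2}).
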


\begin{proof}
	It suffices to prove for (\ref{l:mobius1}) and (\ref{l:mobius2}) that a single such M\"{o}bius transform exists.
	For any two triples $\{a_1,a_2,a_3\}$, $\{b_1,b_2,b_3\}$,
	there exists a M\"{o}bius transform that maps $a_i \mapsto b_i$,
	hence it is immediate that (\ref{l:mobius1}) holds.
	Further,
	we may assume $x=0$, $y=1$ and $w=i$.
	Suppose $x-y,w-z$ are linearly dependent when considered as real vectors.
	Then $z=a+i$ for some non-zero $a \in \mathbb{R}$.
	If we define the M\"{o}bius transform $\phi(z) = \frac{1}{z-a}$ then $\phi(0)-\phi(1) = \frac{1}{a(a-1)}$ and $\phi(i)-\phi(a+i) = \frac{a}{a^2+1}( -1+ai)$.
	Since $a\neq 0$ then $\phi(x)-\phi(y),\phi(u)-\phi(w)$ are linearly independent when considered as real vectors and (\ref{l:mobius2}) holds.
\end{proof}

\begin{lemma}\label{l:circlegenedge}
	Let $G = (V,E)$ be a maximal planar graph.
	Then there exists a disc packing $P=(G,p,r)$ with the general edge condition.
\end{lemma}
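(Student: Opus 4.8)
The plan is to obtain the required packing as a M\"obius image of an arbitrary disc packing of $G$. First I would invoke the Koebe--Andreev--Thurston theorem (equivalently, Theorem~\ref{t:odedmax} with $C=\mathbb{D}$) to fix a disc packing $P_0=(G,p^0,r^0)$ of $G$, and record the elementary fact that distinct edges of a disc packing have distinct tangency points: if two distinct edges shared a tangency point $t$, then two of the discs meeting at $t$ would both be externally tangent to a third disc at $t$, forcing their interiors to overlap. Identifying $\mathbb{R}^2$ with $\mathbb{C}$, let $W$ be the (nonempty, open, containing the identity) set of M\"obius transformations $\phi$ for which $\phi(P_0)$ is again a packing by genuine bounded discs; for $\phi\in W$ the image $\phi(P_0)=:(G,p^\phi,r^\phi)$ is a disc packing with the same contact graph $G$, so it suffices to find $\phi\in W$ whose contact framework $(G,p^\phi)$ has the general edge condition.

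The key step is a transformation rule for the contact vectors. A M\"obius map $\phi$ is conformal and carries the tangency point $t^0_{vw}$ of $C^0_v,C^0_w$ to the tangency point of their images, while the vector joining the centres of two externally tangent discs spans the common normal line at the tangency point; combining these, I would show that for every edge $e=vw$,
\begin{align*}
	p^\phi_v-p^\phi_w \in \mathbb{R}_{>0}\cdot\phi'(t^0_e)\bigl(p^0_v-p^0_w\bigr),
\end{align*}
where $\phi'(t^0_e)\in\mathbb{C}$ denotes the complex derivative of $\phi$ at $t^0_e$. Hence $\phi(P_0)$ has the general edge condition precisely when, for every pair of distinct edges $e_1=v_1w_1$ and $e_2=v_2w_2$, the complex numbers $\phi'(t^0_{e_1})(p^0_{v_1}-p^0_{w_1})$ and $\phi'(t^0_{e_2})(p^0_{v_2}-p^0_{w_2})$ are linearly independent over $\mathbb{R}$.

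Finally I would show this holds for generic $\phi$. Let $D$ be the set of M\"obius transformations sending no tangency point of $P_0$ to $\infty$; it is connected, being the complement of finitely many complex hypersurfaces in the connected complex manifold $PGL(2,\mathbb{C})$, and $W\subseteq D$. For each ordered pair of distinct edges, the function
\begin{align*}
	g_{e_1,e_2}(\phi):=\operatorname{Im}\Bigl(\overline{\phi'(t^0_{e_1})(p^0_{v_1}-p^0_{w_1})}\cdot\phi'(t^0_{e_2})(p^0_{v_2}-p^0_{w_2})\Bigr)
\end{align*}
is real-analytic on $D$ (via $\phi'(z)=(ad-bc)/(cz+d)^2$), and it is not identically zero: since $t^0_{e_1}\neq t^0_{e_2}$, taking $c=1$ makes $\phi'(t^0_{e_1})/\phi'(t^0_{e_2})=\bigl((t^0_{e_2}+d)/(t^0_{e_1}+d)\bigr)^2$ range over all of $\mathbb{C}\setminus\{0\}$ as $d$ varies, so it can be forced off the relevant real line. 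Thus each $\{g_{e_1,e_2}\neq 0\}$ is open and dense in $D$, the finite intersection of these sets meets the nonempty open set $W$, and any $\phi$ in that intersection yields the desired disc packing $\phi(P_0)$. I expect the main difficulty to be establishing the transformation rule of the second paragraph carefully, since it combines conformality with the preservation of tangency points and the identification of the contact vector with the common normal direction; the genericity argument is then routine.
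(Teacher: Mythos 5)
Your proof is correct, and within the same overall strategy as the paper (apply a generic M\"obius transformation to a fixed disc packing of $G$) it takes a different, and arguably necessary, technical route. The paper's written argument works directly with the differences $\phi(p_v)-\phi(p_w)$ and concludes via Lemma~\ref{l:mobius} that $\phi(P)$ has the general edge condition; but a M\"obius map does not carry the centre of a disc to the centre of its image disc, so $\phi(p_v)-\phi(p_w)$ is not the contact vector of $\phi(P)$, and the paper's closing remark that a circle maps to a line only if its centre maps to $\infty$ is also incorrect. Your version avoids this by establishing that the contact vector of the transformed packing lies in $\mathbb{R}_{>0}\cdot\phi'(t^0_{vw})(p^0_v-p^0_w)$, i.e.\ it transforms by the complex derivative of $\phi$ at the tangency point (up to a positive scalar coming from orientation-preservation and both image discs being bounded), and then deducing genericity from the pairwise distinctness of the tangency points and the real-analyticity of each $g_{e_1,e_2}$ on the connected domain $D\supseteq W$. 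The transformation rule is the key step and the main thing that needs a careful write-up; the rest, including your nondegeneracy check with $c=1$ and varying $d$, is sound.
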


\begin{proof}
	We may consider any disc packing as being a circle packing on the Riemann sphere $\hat{\mathbb{C}} := \mathbb{C} \cup \{\infty \}$ by considering a point $(x,y)$ as $x+iy$ and any disc by its boundary;
	we do the latter as some M\"{o}bius transforms can map discs to every point outside of a disc's interior.
	It follows from Lemma \ref{l:mobius} that there exists an open dense set $U \subset PGL(2, \mathbb{C})$ of M\"{o}bius transforms $\phi$ where the set $\{ \phi(p_v)-\phi(p_w) : vw \in E\}$ is contained in $\mathbb{C}$ and has $|E|$ pairwise linearly independent elements over $\mathbb{R}$.
	It follows that $\phi(P)$ defines a disc packing in $\mathbb{R}^2$ with the general edge condition,
	as a circle is only mapped to a line if its center is mapped to $\infty$.
\end{proof}

\begin{lemma}\label{l:maxinvert}
	Let $C$ be a regular symmetric body, $G=(V,E)$ be a maximal planar graph and clique $\{a,b,c\} \subset V$.
	If $P=(G,p,r)$ is a $C$-packing and $\tilde{R}_C(G,p,r)$ is the matrix formed from $R_C(G,p,r)$ by deleting the point columns that correspond to $a,b,c$,
	then $\tilde{R}_C(G,p,r)$ is invertible.
\end{lemma}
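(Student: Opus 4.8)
The matrix $\tilde R_C(G,p,r)$ is square: deleting the $2\cdot 3 = 6$ point columns of $a,b,c$ from the $|E|\times 3|V|$ matrix $R_C(G,p,r)$ leaves $3|V|-6 = |E|$ columns (using $|E|=3|V|-6$ for a maximal planar graph), so invertibility is equivalent to having trivial kernel. The plan is to show that any vector $(u,s)$ in the kernel of $\tilde R_C(G,p,r)$ — where $u=(u_v)_{v\in V}$ is a tuple of plane vectors with $u_a=u_b=u_c=0$ forced by the column deletion, and $s=(s_v)_{v\in V}\in\mathbb{R}^{|V|}$ — must vanish. Extending $(u,s)$ by zeros in the deleted slots gives an element of $\ker R_C(G,p,r)$, i.e. an infinitesimal flex of the packing; by Lemma \ref{lem:2.11} this is a tangent vector to $S_{G,C}$ at $(p,r)$.

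First I would set up the flex equations: for each edge $vw\in E$,
\begin{align*}
\varphi_C(p_v-p_w)\cdot(u_v-u_w) = (r_v+r_w)(s_v+s_w).
\end{align*}
Then I would invoke the uniqueness of Theorem \ref{t:odedmax} together with the continuity/manifold structure: since $S_{G,C}$ has dimension $3|V|-|E|=6$, and the six conditions $p_a=x_a$, $p_b=x_b$, $p_c=x_c$ cut this down to a $0$-dimensional manifold (a single point, by Theorem \ref{t:odedmax}), the differential of the evaluation map $(q,t)\mapsto(q_a,q_b,q_c)$ from $T_{(p,r)}S_{G,C}$ to $\mathbb{R}^6$ must be injective. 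But $\ker$ of that differential is exactly the set of tangent vectors with $u_a=u_b=u_c=0$, which is precisely the (zero-extended) kernel of $\tilde R_C(G,p,r)$. Hence that kernel is trivial, which is the assertion. One must check that the evaluation map is genuinely a submersion onto a point, i.e. that the fibre is $0$-dimensional and reduced; this follows because $S_{G,C}$ is a manifold and the fibre is a single point (so its tangent space is $\{0\}$), and the fibre of the evaluation map at $(x_a,x_b,x_c)$ is cut out exactly by those six linear equations on the tangent space.

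The main obstacle is making the dimension-count argument rigorous: one needs that restricting $S_{G,C}$ by $p_a=x_a,p_b=x_b,p_c=x_c$ yields a $C^k$-manifold of dimension $0$ whose tangent space is $\{0\}$, and to identify that tangent space with $\ker\tilde R_C(G,p,r)$. The cleanest route is to consider the map $\Psi: S_{G,C}\to\mathbb{R}^{6}$, $(q,t)\mapsto(q_a-x_a,q_b-x_b,q_c-x_c)$, note $\Psi^{-1}[0]=\{(p,r)\}$ is a single point by Theorem \ref{t:odedmax}, and argue that $d\Psi(p,r)$ must be an isomorphism: it is a linear map between spaces of equal dimension $6$ whose kernel is $\ker\tilde R_C(G,p,r)$ (extended by zero), so if the kernel were nontrivial, the preimage $\Psi^{-1}[0]$ would be positive-dimensional near $(p,r)$ by the constant-rank theorem, contradicting that it is a point. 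A small subtlety to address is that the constant-rank/submersion argument is cleanest when $d\Psi$ has locally constant rank; since $\Psi$ is the restriction of a linear projection, its rank can only drop at $(p,r)$, so the lower semicontinuity of rank gives that on a neighbourhood the rank is at least $\rank d\Psi(p,r)$, and if that rank were $<6$ the level set through $(p,r)$ would contain a curve — again contradicting uniqueness. Hence $d\Psi(p,r)$ is surjective, its kernel is trivial, and therefore $\tilde R_C(G,p,r)$ is invertible.
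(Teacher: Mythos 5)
Your strategy follows the paper's own proof quite closely: reinterpret $\ker\tilde R_C(G,p,r)$, extended by zero, as the set of tangent vectors to $S_{G,C}$ at $(p,r)$ with vanishing $a,b,c$ point-components, and argue that a nonzero such vector would make the fibre of the evaluation map $\Psi:(q,t)\mapsto(q_a,q_b,q_c)$ through $(p,r)$ positive-dimensional, contradicting the uniqueness in Theorem~\ref{t:odedmax}. However, the step you flag as a ``small subtlety'' is a genuine gap, and your attempted repair does not close it. Lower semicontinuity only gives $\rank d\Psi\geq\rank d\Psi(p,r)$ near $(p,r)$; if the rank is strictly larger at every nearby point, the constant-rank theorem gives you nothing at $(p,r)$, and the fibre $\Psi^{-1}(\Psi(p,r))$ can still be a single point. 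The model example $\Psi(x,y)=(x,y^3)$ on $\mathbb{R}^2$ is decisive: $d\Psi(0,0)$ has rank $1$, yet $\Psi^{-1}(0,0)=\{(0,0)\}$, and this $\Psi$ is an injective smooth homeomorphism. So ``$d\Psi(p,r)$ singular'' does not imply ``the level set through $(p,r)$ contains a curve'', and neither invariance of domain nor the uniqueness from Theorem~\ref{t:odedmax} rules out a singular differential at an isolated point. (The paper's own one-line deduction of the injective path from Lemma~\ref{lem:2.11} has exactly the same lacuna, so you have faithfully reproduced the argument, gap included.)

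A way to actually close the gap, consistent with the toolkit of Section~\ref{sec:edgestress}, is to rule out the cokernel of $\tilde R_C(G,p,r)$ directly. A vector $\lambda:E\to\mathbb{R}$ in the left kernel of $\tilde R_C(G,p,r)$ satisfies $\sum_{w\in N(v)}\lambda_{vw}\|p_v-p_w\|_C=0$ for every $v\in V$ (all radii columns are retained) and $\sum_{w\in N(v)}\lambda_{vw}\varphi_C(p_v-p_w)=0$ for every $v\notin\{a,b,c\}$ (the deleted columns are precisely the ones imposing the support equations at $a,b,c$). The contact framework $(G,p)$ is planar by Lemma~\ref{l:planar}. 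Since the proofs of Lemmas~\ref{lem:2.8.1} and~\ref{lem:2.8} use only the equations at the given vertex, Lemma~\ref{lem:2.8} gives $I_v\geq 4$ for every relevant $v\notin\{a,b,c\}$, while Lemma~\ref{lem:2.8.1} together with the evenness of the index gives $I_v\geq 2$ for every relevant $v$. Hence $\sum_{v\in V'}I_v\geq 4|V'|-6$ for the set $V'$ of relevant vertices, contradicting the bound $\sum_{v\in V'}I_v\leq 4|V'|-8$ of Lemma~\ref{lem:2.7}. Thus $\lambda=0$, the square matrix $\tilde R_C(G,p,r)$ has full rank $|E|=3|V|-6$, and it is invertible.
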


\begin{proof}
	Suppose there exists a non-zero element $(u,s) \in \ker R_C(G,p,r)$ with $u_a=u_b=u_c=0$.
	By Lemma \ref{lem:2.11},
	there exists $\epsilon >0$ and an injective continuous path $\alpha:(-\epsilon, \epsilon) \rightarrow S_{G,C}$ with $\alpha(t)=(p(t),r(t))$ where $p_a(t)=p_a$, $p_b(t)=p_b$ and $p_c(t)=p_c$.
	However this contradicts Theorem \ref{t:odedmax},
	hence if $(u,s) \in \ker R_C(G,p,r)$ with $u_a=u_b=u_c=0$ then $(u,s)=0$.
	
	Choose any $(\tilde{u},s) \in \ker \tilde{R}_C(G,p,r)$.
	We can extend $(\tilde{u},s)$ to a vector $(u,s)$ by setting $u_a=u_b=u_c=0$ and $u_v= \tilde{u}_v$ for all $v \in V \setminus \{a,b,c\}$.
	We note that $R_C(G,p,r)(u,s) = \tilde{R}_C(G,p,r)(\tilde{u},s)=0$,
	hence $(\tilde{u},s)=0$.
	Since $|E|=3|V|-6$ and $\tilde{R}_C(G,p,r)$ is injective,
	$\tilde{R}_C(G,p,r)$ is invertible.
\end{proof}

The following is a much stronger result than Proposition \ref{p:contmax} for a specific class of convex body.
We direct the reader to Section \ref{sec:specialclass} for definitions of the sets $\mathcal{B}_2^\phi$, $\mathcal{B}_2^\phi(j)$, $X_j$ and the $\phi_{a,w}$ functions.

\begin{lemma}\label{l:expgenpos}
	Let $G = (V,E)$ be a maximal planar graph and $\{a,b,c\} \subset V$ be a clique,
	and choose any $x_a,x_b,x_c \in \mathbb{R}^2$ that are not colinear.
	For any $j \geq 3$,
	the map $f \circ \lambda_j : X_j \rightarrow  \mathbb{R}^{2|V|} \times \mathbb{R}^{|V|}_{>0}$ is 
	is analytic,
	where $\lambda_j: X_j \rightarrow \mathcal{B}_2^\phi$ is the continuous surjective map defined in Lemma \ref {l:eparam} and $f: \mathcal{B}_2 \rightarrow \mathbb{R}^{2|V|} \times \mathbb{R}^{|V|}_{>0}$ is the continuous map defined in Proposition \ref{p:contmax}.
\end{lemma}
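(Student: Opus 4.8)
The plan is to exhibit $f \circ \lambda_j$ as the unique solution of an analytic system of equations and to apply the analytic implicit function theorem, using Lemma \ref{l:maxinvert} to guarantee invertibility of the relevant Jacobian. First I would set up the domain: a point of $X_j$ is a pair $(a,w)$ with $a=(a_1,\dots,a_j)$ spanning $\mathbb{R}^2$ and $w>\log 2j$, and by Lemma \ref{l:eposcurv} the associated body $C=\lambda_j(a,w)$ lies in $\mathcal{B}_2^+ \subset \mathcal{B}_2$, with $\partial C = \{x : \phi_{a,w}(x)=1\}$ and $\varphi_C(x) = d\phi_{a,w}(x)/(d\phi_{a,w}(x).x)$ for $x \in \partial C$. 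Crucially $\phi_{a,w}(x)$ is \emph{jointly} analytic in $(a,w,x)$, being a finite sum of exponentials of bilinear forms, so $\varphi_C$ depends analytically on $(a,w)$ wherever the normalisation denominator is nonzero (which it is on $\partial C$, since $d\phi_{a,w}(x).x = f'(1) > 0$ in the notation of Lemma \ref{l:phi}).

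Next I would write down the defining equations for the packing. Writing $(p,r) = f(C)$ with $C = \lambda_j(a,w)$, the triple $(G,p,r)$ is the unique $C$-packing with $p_a=x_a$, $p_b=x_b$, $p_c=x_c$, and all points inside the triangle $x_ax_bx_c$. The packing condition is exactly $h_{G,C}(p,r)=0$, i.e. $\tfrac12(\|p_v-p_w\|_C^2 - (r_v+r_w)^2)=0$ for each $vw\in E$, together with the pinning constraints $p_a=x_a, p_b=x_b, p_c=x_c$. Since $|E| = 3|V|-6$ and we have $2|V|+|V| = 3|V|$ unknowns minus the $6$ pinned coordinates, this is a square system: $3|V|-6$ packing equations in $3|V|-6$ unknowns (the coordinates of $p_v$ for $v \notin\{a,b,c\}$ and all the $r_v$). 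The function $H(a,w,p,r) := h_{G,\lambda_j(a,w)}(p,r)$ is analytic in all its arguments near any solution, because each $\|p_v-p_w\|_{\lambda_j(a,w)}^2$ is analytic in $(a,w,p)$ on the relevant open set (one can, e.g., use that $\|y\|_C^2$ is the unique positive $t$ with $\phi_{a,w}(y/\sqrt t)=1$, or directly that $\varphi$ and the norm squared are analytic by the same exponential-sum argument as in Lemma \ref{l:eposcurv}).

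The key point is that the partial Jacobian of this square system with respect to the free coordinates $(p_v)_{v\notin\{a,b,c\}}$ and $(r_v)_{v\in V}$ is exactly the matrix $\tilde{R}_C(G,p,r)$ obtained from $R_C(G,p,r)$ by deleting the point columns of $a,b,c$ — this is precisely because $R_C(G,p,r)$ is the derivative of $h_{G,C}$ (as noted in Section \ref{sec:packmaps}), and the rows indexed by $E$ differentiated in the free variables give the deleted-column submatrix. By Lemma \ref{l:maxinvert}, $\tilde{R}_C(G,p,r)$ is invertible. Therefore the analytic implicit function theorem (for real-analytic maps) applies at every point $(a,w)\in X_j$: there is a unique analytic local solution $(a,w)\mapsto (p,r)$ of $H(a,w,p,r)=0$ through the given point. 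By Proposition \ref{p:contmax} the global continuous map $f\circ\lambda_j$ restricts on each such neighbourhood to this local analytic solution (the pinning and the ``inside the triangle'' condition single it out among all solutions, and continuity of $f\circ\lambda_j$ forces agreement near the base point), so $f\circ\lambda_j$ is analytic on all of $X_j$.

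The main obstacle I anticipate is the bookkeeping needed to verify that the packing map $h_{G,\lambda_j(a,w)}(p,r)$ is genuinely \emph{jointly} analytic in $(a,w,p,r)$ — in particular that $\|y\|_{\lambda_j(a,w)}^2$ is an analytic function of $(a,w,y)$ on the open set where $y\neq 0$. This needs a clean argument: for instance, the map $(a,w,t,y)\mapsto \phi_{a,w}(t^{-1/2}y)$ is analytic for $t>0$, $y\neq 0$, its $t$-derivative is nonzero by Lemma \ref{l:phi}, and $\|y\|_C^2$ is the unique $t$ solving $\phi_{a,w}(t^{-1/2}y)=1$, so another application of the analytic implicit function theorem gives analyticity of $\|\cdot\|_{\lambda_j(a,w)}^2$; then $h_{G,\lambda_j(a,w)}$ is analytic as a composition. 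Everything else is a direct application of the real-analytic implicit function theorem with the invertibility supplied by Lemma \ref{l:maxinvert}.
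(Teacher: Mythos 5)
Your proof is correct and follows the same essential strategy as the paper: set up a square analytic system pinning $p_a, p_b, p_c$, observe that the Jacobian in the free variables is (up to trivial rescaling) $\tilde{R}_C(G,p,r)$, invoke Lemma~\ref{l:maxinvert} for invertibility, apply the real-analytic implicit function theorem to get a local analytic solution, and glue these local solutions together using the uniqueness in Theorem~\ref{t:odedmax} / Proposition~\ref{p:contmax}. The one place where your route differs meaningfully is the choice of defining equations. You use $h_{G,C}(p,r)=0$ directly, which forces you to establish joint real-analyticity of $(a,w,y)\mapsto\|y\|_{\lambda_j(a,w)}^2$; your first suggested fix (applying the analytic implicit function theorem to $\phi_{a,w}(t^{-1/2}y)=1$, whose $t$-derivative is nonzero by Lemma~\ref{l:phi}) is correct, but your second parenthetical claim that the norm squared is \emph{directly} an exponential sum is not — it is defined implicitly, so the auxiliary IFT step is genuinely needed. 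The paper avoids this issue entirely by replacing the constraint $\|p_v-p_u\|_C=r_v+r_u$ with the equivalent constraint $\phi_{a,w}\bigl((p_v-p_u)/(r_v+r_u)\bigr)=1$, which is an honest analytic function of $(p,r,a,w)$; the price is that the resulting Jacobian $M$ is only a row-rescaling of $\tilde{R}_C(G,p,r)$ rather than $\tilde{R}_C$ itself, which the paper verifies by a short computation. Both routes work; the paper's choice of equations is slightly cleaner in that it dispenses with the extra IFT argument.
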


\begin{proof}
	Fix $j \geq 3$.	
	Let 
	\begin{align*}
		S := \left\{ (p,r) \in \mathbb{R}^{2|V|} \times \mathbb{R}_{>0}^{|V|} : p_a=x_a,p_b=x_b,p_c = x_c \right\}
	\end{align*}	
	and for each $vu \in E$ define the analytic function $\mu_{vu} : S \times X_j \rightarrow \mathbb{R}$ by
	\begin{align*}
		\mu_{uv}(p,r,a,w) := \phi_{a,w} \left( \frac{p_v-p_u}{r_v+r_u} \right) - 1.
	\end{align*}
	We now define
	\begin{align*}
		\mu: S \times X_j \rightarrow \mathbb{R}^{|E|}, ~ (p,r,a,w) \mapsto (\mu_{vu}(p,r,a,w))_{vu \in E}
	\end{align*}
	and note that the zero set of $\mu$ contains the open non-empty set 
	\begin{align*}
		O := \left\{ (p,r,a,w) : (G,p,r) \text{ is a $C$-packing for }C := \left\{ x \in \mathbb{R}^2 : \phi_{a,w}(x) \leq 1\right\} \right\}.
	\end{align*}	
	Let $(p,r)$ be a $C$-packing for $C := \{ x \in \mathbb{R}^2 : \phi_{a,w}(x) \leq 1\}$ with $(a,w)\in X_j$.
	For each edge $vu$ we compute the following partial derivatives at $(p,r,a,w)$ (see Lemma \ref{l:eposcurv}(\ref{l:eposcurvitem3}) for the substitution):
	\begin{eqnarray*}
		\frac{\partial \mu_{vu}}{\partial p_v}(p,r,a,w) &=&\frac{1}{r_v+r_u} d \phi_{a,w}\left( \frac{p_v-p_u}{r_v+r_u} \right) \\
		&=& \left( d \phi_{a,w}\left( \frac{p_v-p_u}{r_v+r_u} \right).\frac{p_v-p_u}{(r_v+r_u)^3} \right) \varphi_C(p_v-p_u), \\
		\frac{\partial \mu_{vu}}{\partial r_v}(p,r,a,w) &=& -d \phi_{a,w} \left( \frac{p_v-p_u}{r_v+r_u} \right).\frac{p_v-p_u}{(r_v+r_u)^2} \\
		&=& -\left( d \phi_{a,w}\left( \frac{p_v-p_u}{r_v+r_u} \right).\frac{p_v-p_u}{(r_v+r_u)^3} \right) (r_v+r_u)
	\end{eqnarray*}
	If we define the $|E| \times (3|V|-6)$ matrix $M(p,r,a,w)$ to be partial derivative of $\mu$ at $(p,r,a,w)$ in the $(p,r)$ coordinates,
	then $M(p,r,a,w)$ is the matrix obtained from $R_C(G,p,r)$ by deleting the columns that correspond to the point components of $a,b,c$ and multiplying each row $vu$ of $M$ by the non-zero scalar $d \phi_{a,w} \left( \frac{p_v-p_u}{r_v+r_u} \right).\frac{p_v-p_u}{(r_v+r_u)^3}$.
	Hence by Lemma \ref{l:maxinvert},
	$M(p,r,a,w)$ is invertible.	
	By the implicit function theorem for analytic maps (see \cite[Theorem 1.8.3]{KrantzPark92}),
	there exists an open neighbourhood $U \subset X_j$ of $(a,w)$
	and a unique analytic map $\gamma_{a,w} : U \rightarrow S$ such that $\gamma_{a,w}(a,w)=(p,r)$ and $\mu(\gamma_{a,w}(a',w'),a',w') = 0$ for all $(a',w') \in U$.
	Further,
	since $O \subset S \times X_j$ is an open subset then we may assume that $(\gamma_{a,w}(a',w'),a',w') \in O$ for all $(a',w') \in U$.
	
	By Theorem \ref{t:odedmax},
	for each $(a,w) \in X_j$ there exists $(p,r) \in S$ such that $(G,p,r)$ is a $C$-packing for $C := \{ x \in \mathbb{R}^2 : \phi_{a,w}(x) \leq 1\}$.
	Hence for each $(a,w) \in X_j$ there exists a unique analytic map $\gamma_{a,w}$ as previously described.
	By the uniqueness of each map $\gamma_{a,w}$,
	we can extend to an analytic map $\gamma: X_j \rightarrow S$,
	where $(\gamma(a,w),a,w) \in O$ for all $(a,w) \in X_j$.
	We now note that by the uniqueness of $f$ we must have $\gamma(a,w) = f \circ \lambda_j(a,w)$ for all $(a,w) \in X_j$.
\end{proof}

\begin{lemma}\label{l:genedgesub}
	Let $C \in \mathcal{B}_2$,
	$G=(V,E)$ be a maximal planar graph and $G'=(V,E')$ a connected subgraph of $G$.
	If there exists a $C$-packing $P=(G,p,r)$ with the general edge condition,
	then there exists a $C$-packing $P'=(G',p',r')$ with the general edge condition.
\end{lemma}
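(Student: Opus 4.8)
The plan is to keep the body $C$ fixed and perturb the given packing $(p,r)$ so as to break exactly the contacts belonging to edges of $E\setminus E'$, while leaving every other contact and non-contact intact and staying in general position; the whole argument rests on the surjectivity of the packing rigidity matrix at $(p,r)$, which is precisely Lemma \ref{lem:2.10a}.

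First I would set up a submersion. Since $C$ is a regular symmetric body, $\|\cdot\|_C$ is continuously differentiable on $\mathbb{R}^2\setminus\{0\}$ (Proposition \ref{p:support}(\ref{p:support1})), so $h_{G,C}$ is $C^1$ on a neighbourhood of $(p,r)$ with derivative $R_C(G,p,r)$, which is surjective by Lemma \ref{lem:2.10a}. By the local onto theorem (see \cite[Theorem 3.5.2]{manifold}), $h_{G,C}$ is an open map near $(p,r)$, so I would fix an open neighbourhood $U$ of $(p,r)$ small enough that: $h_{G,C}$ is $C^1$ on $U$; $U\subseteq\mathbb{R}^{2|V|}\times\mathbb{R}^{|V|}_{>0}$; $\|q_v-q_w\|_C>s_v+s_w$ for every $(q,s)\in U$ and every non-edge $vw$ of $G$ (an open condition, valid because $P$ has contact graph exactly $G$); and $q_{v_1}-q_{w_1}$, $q_{v_2}-q_{w_2}$ are linearly independent for every $(q,s)\in U$ and every pair of distinct edges $v_1w_1,v_2w_2\in E$ (again open, valid because $(G,p)$ has the general edge condition). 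Then $h_{G,C}(U)$ is an open neighbourhood of $h_{G,C}(p,r)=0$ in $\mathbb{R}^{|E|}$.

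Next I would identify $\mathbb{R}^{|E|}=\mathbb{R}^{E'}\times\mathbb{R}^{E\setminus E'}$ and pick a point $(0,t)\in h_{G,C}(U)$ with every coordinate of $t$ strictly positive, which exists because $h_{G,C}(U)$ is a neighbourhood of the origin. Choosing $(p',r')\in U$ with $h_{G,C}(p',r')=(0,t)$ then gives $\|p'_v-p'_w\|_C=r'_v+r'_w$ for $vw\in E'$ and $\|p'_v-p'_w\|_C>r'_v+r'_w$ for $vw\in E\setminus E'$; combined with the three properties built into $U$, this shows that the homothets $r'_vC+p'_v$ have pairwise disjoint interiors, that two of them meet exactly when the corresponding vertices are adjacent in $G'$, and that $p'$ satisfies the general edge condition for $G'$. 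Hence $P':=(G',p',r')$ is the required $C$-packing. I do not expect a genuine obstacle here: all the real content is in Lemma \ref{lem:2.10a}, and the only care needed is to shrink $U$ enough that the several open conditions hold simultaneously before the open-mapping step is applied inside it.
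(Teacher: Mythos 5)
Your proof is correct, but it follows a different route from the paper's. The paper first anchors the three outer vertices $a,b,c$ to kill the isometry degrees of freedom, so that the reduced packing rigidity matrix $\tilde{R}_C(G,q,s)$ becomes a square invertible matrix by Lemma~\ref{l:maxinvert}; it then defines the vector field $(q,s)\mapsto(0,\tilde{R}_C(G,q,s)^{-1}(a))$, where $a\in\mathbb{R}^{|E|}$ is the indicator of $E\setminus E'$, checks it is Lipschitz, and integrates it by Picard--Lindel\"of to obtain a $C^1$ path $\alpha(t)$ along which $h_{G,C}(\alpha(t))=ta$; a small $t>0$ then gives the desired packing. Your argument skips the anchoring and the ODE entirely: you only need surjectivity of the full matrix $R_C(G,p,r)$ (Lemma~\ref{lem:2.10a}) and the submersion/local-onto theorem to see that $h_{G,C}(U)$ is a neighbourhood of the origin, after which hitting a point $(0,t)$ with $t>0$ coordinatewise does the job. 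Both proofs rely in the end on the same geometric fact --- surjectivity of the packing rigidity matrix for regular symmetric bodies --- but your version is more elementary (no ODE machinery) and does not actually use the maximality of $G$, only that Lemma~\ref{lem:2.10a} applies; it also parallels the submersion argument already used in Lemma~\ref{l:isoimpgen}. The one piece of bookkeeping you handle explicitly that the paper leaves implicit is that the general-edge-condition and no-spurious-contact conditions are open, so they survive shrinking $U$; that is the right thing to say.
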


\begin{proof}
	Let $a \in \mathbb{R}^{|E|}$ to be the vector with $a_{e}=0$ for all $e \in E'$ and $a_e =1$ for all $e \in E \setminus E'$.
	Define $X \subset \mathbb{R}^{2|V|} \times \mathbb{R}^{|V|}_{>0}$ to be the set of pairs $(q,s)$ where $q_a=p_a$, $q_b=p_b$ and $q_c =p_c$.
	By Lemma \ref{l:maxinvert},
	there exists a neighbourhood $U \subset X$ of $(p,r)$ where for all $(q,s) \in U$,
	the matrix $\tilde{R}_C(G,q,s)$ is invertible.
	Define the continuous map
	\begin{align*}
		h : U \rightarrow \mathbb{R}^6 \times \mathbb{R}^{3|V|-6}, ~ (q,s) \mapsto \left(0,\tilde{R}_C(G,q,s)^{-1}(a) \right).
	\end{align*}
	We note that for sufficiently small neighbourhood $U$,
	$h$ is Lipschitz continuous. 
	By \cite[Lemma 4.1.6]{manifold},
	there exists $\epsilon>0$ and a unique $C^1$-differentiable path $\alpha :[0,\epsilon) \rightarrow U$,
	where $\alpha(0)= (p,r)$ and $\alpha'(t) = h(\alpha(t))$ for all $t \in [0,\epsilon)$.
	For each $t \in [0,\epsilon)$,
	if $(q,s) = \alpha(t)$ then $\|q_v - q_w\|_C = s_v+s_w$ for all $vw \in E'$,
	and $\|q_v - q_w\|_C > s_v+s_w$ for all $vw \in E \setminus E'$;
	further,
	there exists $0 <\delta \leq \epsilon$ such that for all $t \in (0, \delta)$ we have that $\|q_v - q_w\|_C > s_v+s_w$ for all $vw \notin E$.
	We now choose $(p',r') = \alpha(t)$ for some $t \in (0,\delta)$ and note that $(G',p',r')$ is a $C$-packing.
\end{proof}

We are now ready to prove Corollary \ref{cor:generalpos}.

\begin{proof}[Proof of Corollary \ref{cor:generalpos}]
	Define $A_G$ to be the set of regular symmetric bodies in $\mathbb{R}^2$ with the general edge condition for $G$.
	By Lemma \ref{l:genedgesub},
	we may suppose $G$ is a maximal planar graph.	
	By Lemma \ref{l:circlegenedge} we have $\mathbb{D} \in A_G$,
	i.e.~there exists a disc packing $(G,p',r')$ with the general edge condition.
	Let $a,b,c \in V$ be the vertices that define the outer face of $G$ and define $x_a := p'_a$, $x_b := p'_b$ and $x_c := p'_c$.
	If we let $f$ be the continuous map defined in Proposition \ref{p:contmax},
	it follows that $A_G$ is a non-empty open subset of $\mathcal{B}_2$,
	as the set of placements of $G$ with the general edge condition is an open set.
	
	By Proposition \ref{p:phisetdist},
	there exists $N \in \mathbb{N}$ such that $\mathcal{B}_2^\phi(j)$ has a non-empty intersection with $A_G$ for all $j \geq N$.
	Define the map $\Delta :\mathbb{R}^{2|V|} \times \mathbb{R}^{|V|}_{>0} \rightarrow \mathbb{R}$,
	where
	\begin{align*}
		\Delta(p,r) :=
		\prod_{v,w \in V, v \neq w} \det 
		\begin{bmatrix}
				[p_v]_1 & [p_v]_2 \\
				[p_w]_1 & [p_w]_2
		\end{bmatrix}.
	\end{align*}
	By Lemma \ref{l:expgenpos},
	the map $\Delta \circ f \circ \lambda_j$ is an analytic function.
	If we define $D \subset X_j$ to be the set of points $(a,w)$ where $\Delta \circ f \circ \lambda_j (a,w) \neq 0$,
	then either $D$ is an open dense subset of $X_j$ or $D= \emptyset$.
	By Lemma \ref{l:eparam},
	$\lambda_j$ is a continuous surjection.
	Hence,
	either $\lambda_j(D)$ is a dense subset of $\mathcal{B}_2^\phi(j)$ or $\lambda_j(D) = \emptyset$.
	Since $\lambda_j(D) = \mathcal{B}_2^\phi(j) \cap A_G$ then $\mathcal{B}_2^\phi(j) \cap A_G$ is a dense subset of $\mathcal{B}_2^\phi(j)$.
	As this holds for all $j \geq N$ then $A_G$ is dense in $\mathcal{B}_2$ also by Proposition \ref{p:phisetdist}.
\end{proof}

\section{Proof of Theorem \ref{t:symperfect}}\label{sec:symperfect}

We begin with the following definition.

\begin{definition}
	Let $G=(V,E)$ be a planar graph.
	A regular symmetric body $C$ is \emph{$G$-independent} if there exists an independent $C$-packing with contact graph $G$.
\end{definition}

\begin{remark}
	With our new terminology,
	we see that Conjecture \ref{conj:generic} is equivalent to the following statement;
	if $C$ is a regular symmetric body that is not the linear transform of a disc,
	then $C$ is $G$-independent for every $(2,2)$-sparse graph $G$.
\end{remark}

For the entirety of this section we shall be proving the following result.

\begin{lemma}\label{l:symperfect}
	Let $G$ be a $(2,2)$-sparse planar graph.
	Then the set of $G$-independent regular symmetric bodies is an open dense subset of $\mathcal{B}_2$.
\end{lemma}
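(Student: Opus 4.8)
The statement asserts that $\mathcal{I}_G := \{C \in \mathcal{B}_2 : C \text{ is } G\text{-independent}\}$ is open and dense, and I would prove the two parts separately.

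\textit{Openness.} Suppose $C \in \mathcal{I}_G$, witnessed by an independent $C$-packing $(G,p,r)$, so $\rank R_C(G,p) = |E|$ and the map $q \mapsto h_{G,C}(q,r)$ is a submersion at $p$. For $C' \in \mathcal{B}_2$ near $C$, the maps $q \mapsto h_{G,C'}(q,r)$ converge to $q \mapsto h_{G,C}(q,r)$ in the $C^1$ sense on a fixed bounded neighbourhood of $p$ --- uniform convergence of the values comes from Proposition \ref{prop:hausvsdash}, of the derivatives $\varphi_{C'}(q_v-q_w) \to \varphi_C(q_v-q_w)$ from Proposition \ref{p:phicont} (both extended off $\mathbb{B}^2$ by homogeneity). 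Composing with a fixed right inverse of $R_C(G,p)$ and applying a quantitative inverse function theorem, a $C^1$-small perturbation of a submersion vanishing at $p$ still vanishes at some nearby $p'$; thus $h_{G,C'}(p',r) = 0$, $\rank R_{C'}(G,p') = |E|$ by lower semicontinuity of rank, and the open conditions $\|p_v-p_w\|_C > r_v+r_w$ for the non-edges persist, so $(G,p',r)$ is an independent $C'$-packing with contact graph $G$. Hence $C' \in \mathcal{I}_G$ and $\mathcal{I}_G$ is open.

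\textit{Denseness.} We may assume $G$ is connected with $|V| \geq 3$ ($|V| \leq 2$ is trivial, disconnected $G$ reduces to its components). By Corollary \ref{cor:generalpos} the set of $C \in \mathcal{B}_2$ with the general edge condition for $G$ is open and dense, so it suffices to show each such $C$ lies in $\mathcal{I}_G$. Fix a $C$-packing $(G,p,r)$ whose contact framework satisfies the general edge condition, so $\{p_v - p_w : vw \in E\}$ are pairwise linearly independent. Since $C$ is a regular symmetric body, $\varphi_C$ is a homogeneous homeomorphism (Proposition \ref{p:support}(\ref{p:support4})), whence $\varphi_C(x) \parallel \varphi_C(y) \iff x \parallel y$; therefore the direction vectors $\{\varphi_C(p_v - p_w) : vw \in E\}$ in the rows of $R_C(G,p)$ are also pairwise linearly independent. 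I would then prove $\rank R_C(G,p) = |E|$, i.e.\ that the packing is independent, by induction along a Henneberg-type construction of $(2,2)$-tight planar graphs, first reducing the $(2,2)$-sparse case to the $(2,2)$-tight case by completing $G$ to a $(2,2)$-tight planar supergraph and opening the surplus contacts --- possible since, by Lemma \ref{lem:2.10}, no such packing has an edge-length equilibrium stress, a Gordan-type alternative then yielding a deformation that breaks exactly the surplus contacts while keeping the rank full (an open condition). The $0$-extension step is immediate: a new degree-$2$ vertex contributes two rows on two fresh columns with non-parallel directions, raising the rank by exactly $2$.

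\textit{The main obstacle.} The crux is the $1$-extension. In Euclidean rigidity one deletes an edge $u_1u_2$, introduces a degree-$3$ vertex $v$ adjacent to $u_1,u_2,u_3$, and places $v$ collinear with $p_{u_1},p_{u_2}$, so that the unique dependency among the three new rows is carried by $\operatorname{row}(vu_1),\operatorname{row}(vu_2)$ and is absorbed by the deleted row $u_1u_2$; that placement is unavailable here, since it makes the edges $vu_1,vu_2$ parallel and hence violates the general edge condition. I expect the way around this is to work inside the analytic family $\mathcal{B}_2^\phi$, dense in $\mathcal{B}_2$ by Proposition \ref{p:phisetdist}: by Lemma \ref{l:eparam} the bodies $\mathcal{B}_2^\phi(j)$ are the continuous image of the \emph{connected} parameter set $X_j$, and arguing as in Lemma \ref{l:expgenpos} one shows that the maximal minors of the packing rigidity matrix of the relevant $C$-packing of $G$ depend analytically on $(a,w) \in X_j$; non-vanishing at the single instance furnished by the general-edge-condition packing then propagates to an open dense subset of $X_j$, hence to a dense subset of $\mathcal{B}_2^\phi(j)$ and, by Proposition \ref{p:phisetdist}, of $\mathcal{B}_2$. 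Granting this, $(G,p,r)$ is independent, so $C \in \mathcal{I}_G$; with the openness established above, $\mathcal{I}_G$ is an open dense subset of $\mathcal{B}_2$.
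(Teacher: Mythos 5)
Your openness argument is essentially the paper's: fix an independent $C'$-packing, use $C^1$-continuity of $(C,q)\mapsto h_{G,C}(q,r)$ (Propositions \ref{prop:hausvsdash} and \ref{p:phicont}) together with an implicit-function / submersion argument to solve $h_{G,C}(p',r)=0$ for $C$ near $C'$, then invoke lower semicontinuity of rank and persistence of the strict non-edge inequalities. The paper packages the quantitative inverse function theorem into Theorem \ref{t:implicit}, but your route is the same.

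Your denseness argument has a genuine gap, and also takes a route the paper deliberately avoids. The gap: you reduce to showing that the general edge condition implies $G$-independence, and when the Henneberg $1$-extension fails you pivot to an analytic-propagation argument on $\mathcal{B}_2^\phi(j)$. But that argument needs a single witness $(a,w)\in X_j$ at which the relevant $|E|\times|E|$ minor of $R_C(G,p)$ is nonzero, and you assert this witness is ``furnished by the general-edge-condition packing''. It is not: pairwise linear independence of the row directions $\varphi_C(p_v-p_w)$ does not imply that the $|E|\times 2|V|$ matrix has rank $|E|$ (this is precisely what makes the paper's Lemma \ref{l:dense3} a density statement rather than an ``always'' statement), so you have not exhibited a witness anywhere. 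Without it the analyticity argument has nothing to propagate. There is also a secondary issue you only wave at: the analytic parametrisation $\gamma = f\circ\lambda_j$ of Lemma \ref{l:expgenpos} exists only for maximal planar graphs, so descending to a $(2,2)$-sparse $G$ requires the ODE/flow argument of Lemma \ref{l:genedgesub} to open surplus contacts while preserving independence, which needs to be carried out.

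The paper avoids both difficulties with a different mechanism. Given a general-edge-condition $C'$-packing $(G,p,r)$ with $C'\in\mathcal{B}_2^+$, it freezes $(p,r)$ and performs surgery on the body: Lemma \ref{l:dense2} (proved via the radial parametrisation $F:\mathcal{B}_2^+\to\mathcal{F}_{\mathbb{T}}$ and local bump functions, Lemmas \ref{l:funisom}--\ref{l:dense1}) shows that the supports $\varphi_{C''}(x_{vw})$ at the finitely many boundary points $x_{vw}=(p_v-p_w)/\|p_v-p_w\|_{C'}$ can be steered to any prescribed nearby directions, up to scale, while keeping $\|x_{vw}\|_{C''}=1$ and $d_H(C',C'')$ small. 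Lemma \ref{l:dense3} (White--Whiteley) then supplies a full-rank choice of directions arbitrarily close to the given ones. So the witness problem is solved by changing $C$, not the placement, and no Henneberg induction or analyticity of packings is used. That is the idea missing from your proposal.
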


Using Lemma \ref{l:symperfect},
Theorem \ref{t:symperfect} follows almost immediately.

\begin{proof}[Proof of Theorem \ref{t:symperfect}]
	For each planar graph $G$,
	the set $\mathcal{G}_G$ of $G$-independent bodies is an open dense subset of $\mathcal{B}_2$ by Lemma \ref{l:symperfect}.
	By Proposition \ref{p:baire},
	$\mathcal{B}_2$ is a Baire space,
	hence if we define 
	\begin{align*}
		\mathcal{G} := \bigcap \{ \mathcal{G}_G: G \text{ is $(2,2)$-sparse} \} 
	\end{align*}
	then $\mathcal{G}$ is a comeager subset of $\mathcal{B}_2$.
	As $\mathcal{B}_2$ is a comeager subset of $\mathcal{K}_2$ then $\mathcal{G}$ is a comeager subset of $\mathcal{K}_2$ also.
\end{proof}

We shall prove Lemma \ref{l:symperfect} with two key lemmas;
the first (Lemma \ref{kl:open}) shall prove the set of $G$-independent bodies an open subset of $\mathcal{B}_2$,
and the second (Lemma \ref{kl:dense}) shall prove the set of $G$-independent bodies is dense in $\mathcal{B}_2$.

We will prove Lemma \ref{kl:open} by constructing a continuous map from the neighbourhood of a $G$-independent body into the space of all pairs $(p,r)$.
As the rank of the packing rigidity matrix is lower semi-continuous,
this will prove that every $G$-independent body is contained in an open neighbourhood of $G$-independent bodies.

To prove Lemma \ref{kl:dense},
we devise a method that will allow us to alter the convex body of a given packing so that the placement, radii and contact graph will remain the same but tangents between any two touching convex bodies can be altered.
By doing so we can perturb the entries (and hence change the rank) of the packing rigidity matrix of the original packing by choosing a new regular symmetric body that is sufficiently close to the original.
This will thus allow us to find a $G$-independent body near to any regular symmetric body.

\subsection{The set of \texorpdfstring{$G$}{G}-independent regular symmetric bodies is open}

We will first define the following topological space.
Let $C^1(\mathbb{R}^n)$ be the space of all $C^1$-differentiable maps $f :\mathbb{R}^n \rightarrow \mathbb{R}^n$.
For each compact set $K \subset \mathbb{R}^n$ we define the semi-norm
\begin{align*}
	\|f\|_{C^1,K} := \sup_{x\in K} \|f(x)\| + \sup_{x \in K} \|df(x)\|_{op}.
\end{align*}
With this set of semi-norms,
$C^1(\mathbb{R}^n)$ is a Fr\'{e}chet space.
Now we choose any compact set $D \subset \mathbb{R}^n$ that is the image of $\mathbb{B}^n$ under some diffeomorphism.
We define $C^1(D)$ to be the quotient of $C^1(\mathbb{R}^n)$ by the norm $\|\cdot\|_{C^1,D}$;
alternatively, we can consider $C^1(D)$ to be the set of all maps $f: D \rightarrow \mathbb{R}^n$ which can be extended to some $C^1$-differentiable map $\overline{f} :\mathbb{R}^n \rightarrow \mathbb{R}^n$ with $\overline{f}(x)=f(x)$ for all $x \in D$.
When gifted the norm $\|f\|_{C^1}:=\|f\|_{C_1,D}$,
the space $C^1(D)$ is a Banach space.

\begin{remark}
	By the Whitney extension theorem (see \cite{whitney}),
	if $U$ is an open set containing $D$ and $f:U \rightarrow \mathbb{R}^n$ is $C^1$-differentiable then $f|_D \in C^1(D)$.
\end{remark}

We shall extend the definition of local $C^1$-diffeomorphism to $C^1(D)$;
namely, a map $f \in C^1(D)$ is a \emph{local $C^1$-diffeomorphism} if $df(x)$ is invertible for all $x \in D$.
By the inverse function theorem (see \cite[Theorem 2.5.2]{manifold}),
if $f$ is an injective local $C^1$-diffeomorphism then $f^{-1} \in C^1(f(D))$ is also an injective local $C^1$-diffeomorphism with $df^{-1}(f(x)) = df(x)^{-1}$ for all $x \in D$.

\begin{lemma}\label{l:difflip}
	Let $f \in C^1(D)$ be an injective local $C^1$-diffeomorphism.
	Then for all $x,y \in D$,
	\begin{align*}
		\frac{1}{\sup_{z\in D} \|df(z)^{-1}\|_{op}} \|x-y\| \leq \|f(x)-f(y)\| \leq \sup_{z\in D} \|df(z)\|_{op} \|x-y\|.
	\end{align*}
\end{lemma}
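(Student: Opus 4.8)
The plan is to prove the upper (Lipschitz) estimate first and then derive the lower one by applying it to the inverse map. Set $L:=\sup_{z\in D}\|df(z)\|_{op}$ and $M:=\sup_{z\in D}\|df(z)^{-1}\|_{op}$; both are finite, since $D$ is compact, $z\mapsto df(z)$ is continuous on $D$, and $df(z)$ is invertible for every $z\in D$. Recall from the remarks preceding the lemma that, as $f$ is an injective local $C^1$-diffeomorphism, the inverse $g:=f^{-1}$ lies in $C^1(f(D))$, is again an injective local $C^1$-diffeomorphism, and satisfies $dg(f(z))=df(z)^{-1}$ for all $z\in D$; in particular $\sup_{w\in f(D)}\|dg(w)\|_{op}=M$.

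For the upper estimate, fix $x,y\in D$ and let $\overline{f}:\mathbb{R}^n\to\mathbb{R}^n$ be a $C^1$-differentiable extension of $f$. Using that $D$ is convex, the segment $\gamma(t):=(1-t)y+tx$ stays in $D$ for all $t\in[0,1]$, so along it $\overline{f}\circ\gamma=f\circ\gamma$ and $d\overline{f}(\gamma(t))=df(\gamma(t))$; note that the integrand below is therefore determined by $f$ alone and is independent of the chosen extension. By the fundamental theorem of calculus,
\begin{align*}
	f(x)-f(y)=\int_0^1\frac{d}{dt}\,\overline{f}(\gamma(t))\,dt=\int_0^1 df(\gamma(t))(x-y)\,dt,
\end{align*}
whence $\|f(x)-f(y)\|\le\Big(\int_0^1\|df(\gamma(t))\|_{op}\,dt\Big)\|x-y\|\le L\,\|x-y\|$, which is the right-hand inequality.

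For the lower estimate one applies the inequality just proved to $g=f^{-1}$: writing $u:=f(x)$ and $v:=f(y)$, one obtains $\|x-y\|=\|g(u)-g(v)\|\le M\,\|u-v\|=M\,\|f(x)-f(y)\|$, and rearranging gives $\|f(x)-f(y)\|\ge\frac{1}{M}\|x-y\|$. The step I expect to be the main obstacle is precisely this reapplication: the mean-value integral for $g$ runs along the segment from $v$ to $u$, which must lie inside $f(D)$, so one needs $f(D)$ — not merely $D$ — to contain that segment. (Indeed, the estimate genuinely fails for non-convex domains; a thin ``U''-shaped $D$ straightened isometrically gives endpoints $x,y$ that are Euclidean-close while $\|f(x)-f(y)\|$ is of order the length of the $U$.) In all the situations where this lemma is invoked the domain $D$ is a ball or a convex region and the relevant $f(D)$ is likewise convex, so the argument goes through; in the write-up I would state the convexity hypothesis on the domains explicitly (or restrict to a convex sub-region on which it is available) and otherwise the proof is the routine mean-value estimate above.
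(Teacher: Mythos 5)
Your argument coincides in substance with the paper's: both obtain the right-hand (Lipschitz) estimate from the mean value inequality, and both obtain the left-hand estimate by running the same inequality for the inverse $f^{-1}$ on $f(D)$, using $df^{-1}(f(z)) = df(z)^{-1}$. The paper does not spell out the integral form of the mean value bound, but that is merely a matter of detail.

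The more interesting content of your proposal is the observation that the mean value estimate along the straight segment requires that segment to stay in the set over which the supremum of $\|df\|_{op}$ is taken; hence the upper bound implicitly needs $D$ to be convex, and the lower bound implicitly needs the segment $[f(x),f(y)]$ to lie in $f(D)$. The paper's proof glosses over this, stating only that ``$f^{-1}$ is well-defined on the domain $f(D)$,'' which by itself does not justify the mean value estimate for $f^{-1}$. Your $U$-shaped counterexample correctly shows that the inequalities can fail without such a convexity hypothesis. However, the resolution you propose is incomplete: while the domains $D$ appearing downstream (e.g.\ $r\mathbb{B}^n$ in Lemma~\ref{l:bilip}) are indeed balls, it is not automatic that $f(r\mathbb{B}^n)$ is convex for a general local $C^1$-diffeomorphism, so the assertion that ``the relevant $f(D)$ is likewise convex'' is left unjustified. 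A clean repair would be either to shrink $r$ further so that the chord $[f(x),f(y)]$ is guaranteed to stay in a region where $\|df^{-1}\|_{op}$ is still controlled (e.g.\ take the supremum over the larger ball $R\mathbb{B}^n$ and its image, exploiting that $f(r\mathbb{B}^n)$ sits well inside $f(R\mathbb{B}^n)$ by the upper Lipschitz bound), or to state the lemma with the convexity of $f(D)$ as an explicit hypothesis and verify it where used. As written, both your proof and the paper's share this same small gap; you deserve credit for flagging it, but you should not assert convexity of the image without proof.
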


\begin{proof}
	It follows from the mean value theorem that 
	\begin{align}\label{eq:difflip1}
		\|f(x)-f(y)\| \leq \sup_{z \in D} \|df(z)\|_{op} \|x-y\|
	\end{align}
	for all $x,y \in D$.
	Since $f^{-1}$ is well-defined on the domain $f(D)$ then we similarly note that
	\begin{align}\label{eq:difflip2}
		\|x-y\|=\|f^{-1}(f(x)))-f^{-1}(f(y))\| \leq \sup_{z \in D} \|df^{-1}(f(z))\|_{op} \|f(x)-f(y)\|
	\end{align}
	for all $x,y \in D$.
	As $d f^{-1}(f(x)) = df(x)^{-1}$ for all $x \in D$,
	we can combine (\ref{eq:difflip1}) and (\ref{eq:difflip2}) to arrive at the desired result.
\end{proof}

\begin{lemma}\label{l:immopen}
	The set of local $C^1$-diffeomorphisms in $C^1(D)$ is an open set.
	Further,
	the maps $f \mapsto \sup_{z \in D} \|df(z)\|_{op}$ and $f \mapsto \sup_{z \in D} \|df(z)^{-1}\|_{op}$ are both continuous on the set of local $C^1$-diffeomorphisms.
\end{lemma}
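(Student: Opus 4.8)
The plan is to deduce all three assertions from the standard perturbation theory of invertible matrices, made uniform over the compact set $D$. First I would fix a local $C^1$-diffeomorphism $f \in C^1(D)$. Since $z \mapsto df(z)$ is continuous on the compact set $D$ and each $df(z)$ is invertible, the quantity $M := \sup_{z \in D} \|df(z)^{-1}\|_{op}$ is finite. Given $g \in C^1(D)$ with $\|g-f\|_{C^1} < 1/M$, for every $z \in D$ we have $\|dg(z) - df(z)\|_{op} \leq \|g - f\|_{C^1} < 1/\|df(z)^{-1}\|_{op}$; writing $dg(z) = df(z)\bigl(I + df(z)^{-1}(dg(z) - df(z))\bigr)$ and noting that $I + N$ is invertible (via its Neumann series) whenever $\|N\|_{op} < 1$, we conclude that $dg(z)$ is invertible for all $z \in D$. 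Hence $g$ is a local $C^1$-diffeomorphism, so the set of local $C^1$-diffeomorphisms is open.

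The continuity of $f \mapsto \sup_{z \in D}\|df(z)\|_{op}$ is immediate, and in fact holds on all of $C^1(D)$: for any $f, g \in C^1(D)$,
\[
	\left| \sup_{z \in D}\|df(z)\|_{op} - \sup_{z \in D}\|dg(z)\|_{op} \right| \leq \sup_{z \in D} \bigl\| df(z) - dg(z) \bigr\|_{op} \leq \|f - g\|_{C^1},
\]
so this map is $1$-Lipschitz.

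For the continuity of $f \mapsto \sup_{z \in D}\|df(z)^{-1}\|_{op}$ on the set of local $C^1$-diffeomorphisms, I would again fix such an $f$ and put $M := \sup_{z\in D}\|df(z)^{-1}\|_{op}$. The key ingredient is the elementary perturbation inequality: if $A, B$ are invertible with $t := \|A^{-1}\|_{op}\,\|A - B\|_{op} < 1$, then expanding $B^{-1} = \bigl(I - A^{-1}(A-B)\bigr)^{-1} A^{-1}$ as a Neumann series gives $\|B^{-1} - A^{-1}\|_{op} \leq \tfrac{t}{1-t}\,\|A^{-1}\|_{op}$. Applying this with $A = df(z)$ and $B = dg(z)$ for each $z \in D$: whenever $\|g - f\|_{C^1} \leq \epsilon < 1/(2M)$ we have $t \leq M\epsilon < 1/2$ uniformly in $z$, hence $\|dg(z)^{-1} - df(z)^{-1}\|_{op} \leq 2M^2 \epsilon$ for all $z \in D$. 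Taking suprema and using the reverse triangle inequality for norms yields $\bigl| \sup_{z}\|dg(z)^{-1}\|_{op} - \sup_z \|df(z)^{-1}\|_{op} \bigr| \leq 2M^2\epsilon$, which tends to $0$ as $\epsilon \to 0$, giving continuity at $f$.

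I do not expect a serious obstacle here; the argument is routine functional-analytic bookkeeping. The only points meriting care are the extraction of the finite bound $M$ from compactness of $D$ together with continuity of $z \mapsto df(z)$ (needed before the pointwise matrix estimates can be made uniform), and the reliance on the Neumann series bound both to invert $I+N$ in the first part and to control $\|B^{-1}-A^{-1}\|_{op}$ in the last part.
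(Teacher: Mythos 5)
Your proof is correct and follows essentially the same approach as the paper's, which deduces the result from the fact that $dg(D)$ is a compact subset of $GL_n$, that inversion is continuous on $GL_n$, and that $f\mapsto df$ is continuous in the relevant norms. The only difference is one of style: you replace the paper's abstract appeal to compactness and continuity of the matrix inversion map with explicit Neumann-series perturbation estimates (yielding quantitative moduli of continuity such as the $2M^2\epsilon$ bound), which makes the uniformity-over-$D$ step fully explicit rather than implicit. Both proofs are sound; yours is somewhat more self-contained at the cost of a bit more bookkeeping.
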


\begin{proof}
	Define $M_{n}$ to be the space of $n \times n$ matrices and $GL_{n}$ to be open dense subset of invertible matrices.
	Choose any local $C^1$-diffeomorphism $g \in C^1(D)$.
	As the map $df: x \mapsto df(x)$ is a continuous map,
	then the set $df(D)$ is a compact subset of $GL_{n}$.
	Hence there exists $\epsilon >0$ such that,
	given $M \in M_{n}$,
	if $\|M - df(x)\|_{op} < \epsilon$ for some $x \in D$ then $M \in GL_{n}$.
	It follows that if $\|f - g\|_{C^1} < \epsilon$ then $f$ is a local $C^1$-diffeomorphism also.
	
	Define the Banach space $X := \{ df : f \in C^1(D) \}$ with the norm
	\begin{align*}
		\|df\|_X:= \sup_{z \in D} \|df(z)\|_{op}
	\end{align*}
	As the map $f  \mapsto df$ is continuous with respect to either $\|\cdot \|_{C^1}$ or $\|\cdot\|_X$,
	then $f \mapsto \|df(z)\|_X$ is continuous.
	As the inversion map $\sigma : M \mapsto M^{-1}$ is continuous on $GL_{n}$,
	then the map $df  \mapsto \sigma \circ df$ is continuous on the open set of local $C^1$-diffeomorphisms;
	hence $f \mapsto \|df(z)^{-1}\|_X$ is continuous also.	
\end{proof}

While not all local $C^1$-diffeomorphisms will be injective,
we can always restrict the domain to form an injective map by the inverse function theorem.
Later we shall require some control over how much we need to restrict our domain by.
To do so we first need the following result.

\begin{theorem}\cite[Theorem 4]{MusRog07}\label{t:MusRog07}
	Let $U \subset \mathbb{R}^n$ be an open subset that contains the closed ball $R \mathbb{B}^n$,
	and let $f :U \rightarrow \mathbb{R}^n$ be a local $C^1$-diffeomorphism.
	Then $f|_{M_1^{-1} M_2 \mathbb{B}^n}$ is injective,
	where 
	\begin{align*}
		M_1 := \sup_{\|z\| \leq R} \|df(z)\|_{op}, \qquad M_2 := \int_0^R \frac{1}{\sup_{\|z\| \leq u} \|df(z)^{-1}\|_{op}} d u.
	\end{align*}
\end{theorem}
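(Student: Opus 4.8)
The plan is to prove that $f$ is injective on $\mathbf{B} := M_1^{-1}M_2\,\mathbb{B}^n$ by producing a \emph{single-valued} $C^1$ right inverse of $f$ on the ball $B_0 := \{w \in \mathbb{R}^n : \|w - f(0)\| < M_2\}$ and then showing this right inverse is also a left inverse on $\mathbf{B}$. First I would record the elementary bound $M_1^{-1}M_2 \le R$: since $\|A\|_{op}\,\|A^{-1}\|_{op} \ge 1$ for every invertible $A$, we have $M_1 \ge \|df(0)\|_{op} \ge \|df(0)^{-1}\|_{op}^{-1}$, while $\sup_{\|z\| \le u}\|df(z)^{-1}\|_{op} \ge \|df(0)^{-1}\|_{op}$ for every $u$, so $M_2 \le R\,\|df(0)^{-1}\|_{op}$ and hence $M_1^{-1}M_2 \le R$. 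In particular $\mathbf{B} \subseteq R\mathbb{B}^n$, where the mean value inequality shows $f$ is $M_1$-Lipschitz (the ball is convex and $\|df\|_{op} \le M_1$ there); consequently $f(\mathbf{B}) \subseteq \overline{B_0}$.

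Next I would build the right inverse $g$ of $f$ on $B_0$ by radial path lifting. For each unit vector $v$, consider the ray $c_v(s) := f(0) + sv$ and lift it through the local $C^1$-diffeomorphism $f$ starting at $0$: near each already-lifted point the inverse function theorem supplies a $C^1$ local inverse of $f$, so we obtain a $C^1$ curve $z_v$ with $z_v(0) = 0$, $f(z_v(s)) = c_v(s)$, and $z_v'(s) = df(z_v(s))^{-1}v$, defined on a maximal interval, and extendable as long as it stays in $U$. While $\|z_v(s)\| < R$ this gives
\[
	\frac{d}{ds}\|z_v(s)\| \;\le\; \|z_v'(s)\| \;\le\; \|df(z_v(s))^{-1}\|_{op} \;\le\; \sup_{\|w\| \le \|z_v(s)\|}\|df(w)^{-1}\|_{op},
\]
so, writing $\Phi(u) := \int_0^u \bigl(\sup_{\|w\| \le \tau}\|df(w)^{-1}\|_{op}\bigr)^{-1}\,d\tau$, the function $s \mapsto \Phi(\|z_v(s)\|)$ has derivative at most $1$; since $\Phi(0) = 0$ this forces $\Phi(\|z_v(s)\|) \le s$, and as $\Phi$ is strictly increasing with $\Phi(R) = M_2$, the lift cannot reach $\|z_v(s)\| = R$ before $s$ reaches $M_2$. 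Thus the lift stays inside $R\mathbb{B}^n$ on $[0, M_2)$ and (as $U$ is open around the closed ball $R\mathbb{B}^n$) extends a little past $s = M_2$. Setting $g(f(0) + sv) := z_v(s)$ defines a map on $\overline{B_0}$; it is well defined (the representation $w = f(0)+sv$ is unique for $w \ne f(0)$, and $z_v(0) = 0$ for every $v$), it is $C^1$ by smooth dependence of the lift on $(s,v)$, it satisfies $f \circ g = \mathrm{id}$ by construction, and $g(f(0)) = 0$.

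Finally I would show $g \circ f = \mathrm{id}$ on $\mathbf{B}$, which immediately gives injectivity. Fix $x \in \mathbf{B}$ and let $\sigma(t) := tx$ for $t \in [0,1]$; then $\sigma$ lies in $\mathbf{B} \subseteq R\mathbb{B}^n$ and, by the Lipschitz bound, $f \circ \sigma$ lies in $\overline{B_0}$. Both $t \mapsto \sigma(t)$ and $t \mapsto g(f(\sigma(t)))$ are continuous lifts of the path $f \circ \sigma$ through $f$ — indeed $f\bigl(g(f(\sigma(t)))\bigr) = f(\sigma(t))$ since $f \circ g = \mathrm{id}$ on $\overline{B_0}$ — and they agree at $t = 0$, because $g(f(\sigma(0))) = g(f(0)) = 0 = \sigma(0)$. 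Since $f$ is a local homeomorphism, two lifts of a path that agree at one point agree throughout (the agreement set is open and closed in $[0,1]$), so $g(f(x)) = g(f(\sigma(1))) = \sigma(1) = x$. Hence $f(x) = f(y)$ with $x,y \in \mathbf{B}$ forces $x = g(f(x)) = g(f(y)) = y$.

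The step I expect to be the crux is the construction of $g$: one must verify that the lift of each ray genuinely survives up to parameter $M_2$ without leaving $R\mathbb{B}^n$, and it is precisely here that the integral form of $M_2$ enters, through the separation-of-variables estimate above. A related point of care is that, since $f$ is only $C^1$, the curve $z_v$ should be obtained by lifting through the local homeomorphism $f$ and differentiating via the inverse function theorem — not by solving the merely continuous vector field $w \mapsto df(w)^{-1}v$ directly — and it is uniqueness of path lifting that makes $g$ single valued and lets the radial pieces patch into a $C^1$ map. The remaining bookkeeping (the inequality $M_1^{-1}M_2 \le R$, the Lipschitz estimate, and the boundary case $\|x\| = M_1^{-1}M_2$) is routine.
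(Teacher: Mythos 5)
The paper does not prove this theorem --- it is quoted from \cite{MusRog07} and used only as a black box to derive Lemma~\ref{l:imminv}, so there is no in-paper proof to compare against. Your argument is correct and follows the same idea as the cited source: radial continuation of the local inverse along rays through $f(0)$, with the integral $M_2$ controlling how far the lift can travel before $\|z\|$ reaches $R$, followed by an appeal to uniqueness of path lifts to conclude $g\circ f=\mathrm{id}$ on the stated ball.

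A few points of hygiene are worth flagging. The line ``$M_2 \le R\,\|df(0)^{-1}\|_{op}$'' has a misplaced inverse --- it should read $M_2 \le R/\|df(0)^{-1}\|_{op}$ --- though your conclusion $M_1^{-1}M_2 \le R$ is unaffected. The inequality $\tfrac{d}{ds}\Phi(\|z_v(s)\|)\le 1$ should be read almost everywhere, since $s\mapsto\|z_v(s)\|$ is only Lipschitz (and in particular not differentiable at $s=0$ where $z_v=0$); the bound $\Phi(\|z_v(s)\|)\le s$ then follows by integrating the a.e.\ estimate. Finally, you do not need $g$ to be $C^1$, and asserting $C^1$-ness at $f(0)$ via ``smooth dependence on $(s,v)$'' is not quite right because $(s,v)$ are polar coordinates, singular at $s=0$; continuity of $g$ is all that the lift-uniqueness step uses, and that is available from uniqueness of lifts through the local homeomorphism together with the bound $\|z_v(s)\|\le\Phi^{-1}(s)\to 0$ as $s\to 0^+$.
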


We now can simplify Theorem \ref{t:MusRog07} to the following.

\begin{lemma}\label{l:imminv}
	Let $f \in C^1(R\mathbb{B}^n)$ be a local $C^1$-diffeomorphism for some $R >0$.
	If 
	\begin{align*}
		r < \frac{R}{\left( \sup_{\|z\| \leq R} \|df(z)\|_{op} \right) \left(\sup_{\|z\| \leq R} \|df(z)^{-1}\|_{op} \right) }.
	\end{align*}
	then $f|_{r\mathbb{B}^n}$ is injective.
\end{lemma}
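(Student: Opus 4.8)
The plan is to apply Theorem \ref{t:MusRog07} with the same map $f$ but replacing the radius $R$ by $r$, so that we are guaranteed injectivity on a ball of radius $M_1^{-1}M_2$ where now $M_1$ and $M_2$ are computed over $r\mathbb{B}^n$. The point is to show that our hypothesis on $r$ forces $r \leq M_1^{-1}M_2$ with these rescaled quantities, and then invoke that $f|_{M_1^{-1}M_2\mathbb{B}^n}$ is injective together with $r\mathbb{B}^n \subseteq M_1^{-1}M_2\mathbb{B}^n$.

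First I would set
\begin{align*}
	M_1' := \sup_{\|z\| \leq r} \|df(z)\|_{op}, \qquad M_2' := \int_0^r \frac{1}{\sup_{\|z\| \leq u} \|df(z)^{-1}\|_{op}}\, du,
\end{align*}
and observe the two crude monotone bounds $M_1' \leq \sup_{\|z\|\leq R}\|df(z)\|_{op}$ and, since the integrand is at least $1/\sup_{\|z\|\leq R}\|df(z)^{-1}\|_{op}$ on all of $[0,r]$, that $M_2' \geq r/\sup_{\|z\|\leq R}\|df(z)^{-1}\|_{op}$. Combining these gives
\begin{align*}
	\frac{M_2'}{M_1'} \geq \frac{r}{\left(\sup_{\|z\|\leq R}\|df(z)\|_{op}\right)\left(\sup_{\|z\|\leq R}\|df(z)^{-1}\|_{op}\right)}.
\end{align*}
By the stated hypothesis the right-hand side is strictly greater than $r^2/R$... wait—more directly, the hypothesis says $r$ is less than that same product-quotient evaluated with the original radius on top, i.e.\ $r < R/\big((\sup_{\|z\|\leq R}\|df(z)\|_{op})(\sup_{\|z\|\leq R}\|df(z)^{-1}\|_{op})\big)$. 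Multiplying through, this rearranges to $r \cdot (\sup_{\|z\|\leq R}\|df(z)\|_{op})(\sup_{\|z\|\leq R}\|df(z)^{-1}\|_{op}) < R$; dividing the bound above for $M_2'/M_1'$ is then not quite what is needed, so instead I would note $M_2' \geq r/\sup_{\|z\|\leq R}\|df(z)^{-1}\|_{op}$ and hence $M_1'^{-1} M_2' \geq r/\big((\sup_{\|z\|\leq R}\|df(z)\|_{op})(\sup_{\|z\|\leq R}\|df(z)^{-1}\|_{op})\big) > r^2/R \geq r$ when $r \le R$, which is automatic since $f$ is only assumed defined on $R\mathbb{B}^n$ and $r\mathbb{B}^n$ must lie in its domain. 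Thus $r < M_1'^{-1}M_2'$.

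Finally, applying Theorem \ref{t:MusRog07} to $f$ restricted to the open set containing $r\mathbb{B}^n$ (take $U$ to be the interior of $R\mathbb{B}^n$, which contains $r\mathbb{B}^n$), we conclude $f|_{M_1'^{-1}M_2'\mathbb{B}^n}$ is injective; since $r\mathbb{B}^n \subseteq M_1'^{-1}M_2'\mathbb{B}^n$ the restriction $f|_{r\mathbb{B}^n}$ is injective as well. The only mild care needed is the bookkeeping in the chain of inequalities—making sure the crude lower bound on the integral $M_2'$ is taken over the correct range and that the supremum bounds point in the right direction—but there is no substantial obstacle here; this is essentially a clean specialisation of the Musin–Roganov bound.
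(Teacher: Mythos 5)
There is a genuine gap, and it lies at the very core of the approach. You apply Theorem \ref{t:MusRog07} with the smaller radius $r$ in place of $R$, producing the quantities $M_1'$ and $M_2'$ over $r\mathbb{B}^n$. But this can never yield what is needed: one always has $M_1'^{-1}M_2' \leq r$. Indeed, since $\sup_{\|z\| \leq u}\|df(z)^{-1}\|_{op} \geq \|df(0)^{-1}\|_{op}$ for every $u$, the integrand defining $M_2'$ is pointwise at most $1/\|df(0)^{-1}\|_{op}$, so $M_2' \leq r/\|df(0)^{-1}\|_{op}$; combined with $M_1' \geq \|df(0)\|_{op}$ and the universal bound $\|df(0)\|_{op}\|df(0)^{-1}\|_{op} \geq 1$, this forces $M_1'^{-1}M_2' \leq r$. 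So the injectivity ball $M_1'^{-1}M_2'\mathbb{B}^n$ produced by the rescaled theorem in general does \emph{not} contain $r\mathbb{B}^n$, and the last sentence of your argument (``$r\mathbb{B}^n \subseteq M_1'^{-1}M_2'\mathbb{B}^n$'') is unjustified.

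The concrete arithmetic slip that masks this is the inequality ``$r^2/R \geq r$ when $r \leq R$'' near the end: that inequality is reversed. For $0 < r \leq R$ one has $r^2/R \leq r$, so the chain $M_1'^{-1}M_2' > r^2/R \geq r$ breaks down exactly where it needs to close. The fix is to leave $R$ alone and apply Theorem \ref{t:MusRog07} directly with the original radius $R$: bounding the integrand of $M_2$ from below by the constant $1/\sup_{\|z\|\leq R}\|df(z)^{-1}\|_{op}$ gives $M_2 \geq R/\sup_{\|z\|\leq R}\|df(z)^{-1}\|_{op}$, hence $M_1^{-1}M_2 \geq R/\bigl((\sup_{\|z\|\leq R}\|df(z)\|_{op})(\sup_{\|z\|\leq R}\|df(z)^{-1}\|_{op})\bigr) > r$ by hypothesis, so $r\mathbb{B}^n \subset M_1^{-1}M_2\mathbb{B}^n$ and injectivity of $f|_{r\mathbb{B}^n}$ follows. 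The lesson is that the quoted theorem shrinks the domain; you want to start from the large ball $R\mathbb{B}^n$ and show the shrunk injectivity ball still covers $r\mathbb{B}^n$, rather than start from $r\mathbb{B}^n$ and hope the shrunk ball covers itself.
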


\begin{proof}
	Any map in $C^1(R\mathbb{B}^n)$ can be extended to a $C^1$-differentiable map on $U=\mathbb{R}^n$.
	From Theorem \ref{t:MusRog07} we only need to show that if $r$ is as stated then $r \leq M_1^{-1} M_2$.
	This now follows as
	\begin{align*}
		M_2 = \int_0^R \frac{1}{\sup_{\|z\| \leq u} \|df(z)^{-1}\|_{op}} d u \geq \int_0^R \frac{1}{\sup_{\|z\| \leq R} \|df(z)^{-1}\|_{op}} d u = \frac{R}{\sup_{\|z\| \leq R} \|df(z)^{-1}\|_{op}}.
	\end{align*}
\end{proof}

\begin{remark}
	For any invertible $n \times n$ matrix $M$,
	we have
	$1 = \|M M^{-1}\|_{op} \leq \|M \|_{op}\| M^{-1}\|_{op}$.
	It follows that the value $r >0$ given in Lemma \ref{l:imminv} is strictly less than $R$,
	hence $r \mathbb{B}^n \subset R \mathbb{B}^n$.
\end{remark}

\begin{lemma}\label{l:bilip}
	Suppose $g\in C^1(R\mathbb{B}^n)$ is an injective local $C^1$-diffeomorphism.
	Then there exists $c,\epsilon >0$ and $0<r<R$ such that for all $f\in C^1(R\mathbb{B}^d)$ with $\|f - g\|_{C^1} < \epsilon$,
	we have that $f|_{r\mathbb{B}^n}$ is an injective local $C^1$-diffeomorphism with
	\begin{align*}
		\frac{1}{c}\|x-y \| \leq \|f(x) - f(y) \| \leq c\|x - y\|
	\end{align*}
	for all $x,y \in r \mathbb{B}^n$.
\end{lemma}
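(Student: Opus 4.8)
The plan is to glue together the three preceding lemmas: Lemma~\ref{l:immopen} (openness of the set of local $C^1$-diffeomorphisms together with continuity of the operator-norm suprema), Lemma~\ref{l:imminv} (a quantitative radius of injectivity), and Lemma~\ref{l:difflip} (the two-sided Lipschitz estimate for an injective local $C^1$-diffeomorphism). For a local $C^1$-diffeomorphism $h\in C^1(R\mathbb{B}^n)$ I would write $M_1(h):=\sup_{\|z\|\le R}\|dh(z)\|_{op}$ and $M_2(h):=\sup_{\|z\|\le R}\|dh(z)^{-1}\|_{op}$, so that Lemmas~\ref{l:difflip} and \ref{l:imminv} are phrased directly in terms of $M_1,M_2$.

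First I would invoke Lemma~\ref{l:immopen}: the set of local $C^1$-diffeomorphisms is open in $C^1(R\mathbb{B}^n)$ and $h\mapsto M_1(h)$, $h\mapsto M_2(h)$ are continuous on it. Hence there is $\epsilon>0$ so that every $f$ with $\|f-g\|_{C^1}<\epsilon$ is again a local $C^1$-diffeomorphism and, crucially, satisfies the \emph{uniform} bounds $M_1(f)\le A:=M_1(g)+1$ and $M_2(f)\le B:=M_2(g)+1$; the point of adding $1$ (rather than taking, say, $2M_1(g)$) is just to guarantee $A,B\ge 1$, hence $AB\ge 1$. Then I would pick any $r$ with $0<r<R/(AB)$; since $AB\ge 1$ this already forces $r<R$, so $r\mathbb{B}^n\subseteq R\mathbb{B}^n$ and every supremum over $r\mathbb{B}^n$ is dominated by the corresponding one over $R\mathbb{B}^n$.

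Now fix such an $f$. Because $M_1(f)M_2(f)\le AB$ we have $r<R/(AB)\le R/(M_1(f)M_2(f))$, so Lemma~\ref{l:imminv} yields that $f|_{r\mathbb{B}^n}$ is injective; a restriction of a local $C^1$-diffeomorphism is again one, so $f|_{r\mathbb{B}^n}$ is an injective local $C^1$-diffeomorphism on $r\mathbb{B}^n$. Applying Lemma~\ref{l:difflip} with $D=r\mathbb{B}^n$ and bounding $\sup_{z\in r\mathbb{B}^n}\|df(z)\|_{op}\le M_1(f)\le A$ and $\sup_{z\in r\mathbb{B}^n}\|df(z)^{-1}\|_{op}\le M_2(f)\le B$ then gives the claimed bi-Lipschitz inequality with $c:=\max\{A,B\}$.

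There is no genuine obstacle here; the statement is essentially a bookkeeping exercise in stability. The two points that deserve a moment's care are (a) that the constants $A$ and $B$ must be chosen once and for all, valid uniformly over the entire $\epsilon$-ball about $g$, which is precisely what the continuity in Lemma~\ref{l:immopen} provides, and (b) that $r$ must be taken strictly below $R$ (not merely below $R/(M_1M_2)$) so that the domain $r\mathbb{B}^n$ lies inside $R\mathbb{B}^n$ — this is what makes recording $AB\ge 1$ worthwhile.
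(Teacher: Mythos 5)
Your proposal is correct and follows essentially the same three-step route as the paper: use Lemma~\ref{l:immopen} for a uniform two-sided bound on the operator-norm suprema over the $\epsilon$-ball, Lemma~\ref{l:imminv} to shrink to a radius of guaranteed injectivity, and Lemma~\ref{l:difflip} for the bi-Lipschitz estimate. One remark worth recording: where you take $r < R/(AB)$ with $A,B$ uniform upper bounds on $M_1,M_2$, the paper writes ``Choose $r < (1/c_1)^2$,'' which appears to be a slip --- with $c_1 \leq M_1(f),M_2(f) \leq c_2$ the correct quantity from Lemma~\ref{l:imminv} is $r < R/c_2^2$ (an upper bound on the product must appear in the denominator, and the factor of $R$ cannot be dropped), and your version gets this right; your observation that the uniform bounds automatically force $r<R$ also makes explicit a point the paper leaves implicit via the remark following Lemma~\ref{l:imminv}.
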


\begin{proof}
	By Lemma \ref{l:immopen},
	there exists $c_1,c_2,\epsilon >0$ such that for all $\|f - g\|_{C^1} < \epsilon$ we have that $df(x)$ is invertible for each $x \in R\mathbb{B}^n$ and 
	\begin{align*}
		c_1 \leq \sup_{\|z\| \leq R} \|df(z)^{-1}\|_{op} \leq c_2 ,\qquad c_1\leq  \sup_{\|z\| \leq R} \|df(z)\|_{op} \leq c_2.
	\end{align*}
	Choose $r < (1/c_1)^2$.
	By Lemma \ref{l:imminv},
	if $\|f - g\|_{C^1} < \epsilon$ then $f|_{r\mathbb{B}^n}$ is injective.
	If we set $c =c_2$,
	then the required result holds by Lemma \ref{l:difflip}.
\end{proof}

We are now ready for the following generalisation of the implicit function theorem.

\begin{theorem}\label{t:implicit}
	Let $(S,d)$ be a metric space,
	$X$ be a $n$-dimensional affine space,
	$X' \subset X$ be an open set,
	$g: S \times X' \rightarrow \mathbb{R}^n$ be a continuous function,
	and $(s,t) \in S \times X'$ be a point where $g(s,t)=0$.
	Suppose that there exists open sets $A \subset S$ and $B \subset X'$ with $s \in A$ and $t \in B$ so that for every $a \in A$ the map 
	\begin{align*}
		g_a: X' \rightarrow \mathbb{R}^n, ~ b \mapsto g(a,b)
	\end{align*}
	is $C^1$-differentiable on $B$ with invertible derivative at every point of $B$,
	and the map $(a,b) \mapsto d g_a (b)$ is continuous on $A \times B$.
	Then there exists an open neighbourhood $U \subset A$ of $s$ and a unique continuous map $f: U \rightarrow B$ so that $f(s)=t$ and $g(a,f(a))=0$.
\end{theorem}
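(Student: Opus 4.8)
The plan is to run the classical contraction--mapping proof of the implicit function theorem, the key observation being that although $S$ carries no linear or differentiable structure, we only ever need to differentiate $g$ in the $X$-variable; continuity of $g$ and of $(a,b)\mapsto dg_a(b)$ in the $S$-variable is exactly what is needed to make every estimate uniform over a small neighbourhood of $s$. So the $S$-factor is treated purely as an index set.

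First I would fix a norm on the finite-dimensional vector space $V_X$ of translations of the affine space $X$, so that closed balls of $X'$ are convex and complete. Set $L:=dg_s(t)$; this is invertible by hypothesis, and write $\kappa:=\|L^{-1}\|_{op}$. Using the assumed continuity of $(a,b)\mapsto dg_a(b)$ at $(s,t)$, choose $r>0$ with the closed ball $\overline{B}(t,r)\subset B$ and an open neighbourhood $A_0\subset A$ of $s$ such that $\|dg_a(b)-L\|_{op}\le\frac{1}{2\kappa}$ for all $a\in A_0$ and $b\in\overline{B}(t,r)$. For each $a\in A_0$ define the map $\Phi_a:\overline{B}(t,r)\to X$ by $\Phi_a(b):=b-L^{-1}g(a,b)$, noting that the fixed points of $\Phi_a$ are precisely the zeros of $g_a$ lying in $\overline{B}(t,r)$. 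The two standard estimates go through verbatim: writing $g_a(b_1)-g_a(b_2)=\int_0^1 dg_a(b_2+\theta(b_1-b_2))(b_1-b_2)\,d\theta$ (legitimate since $g_a$ is $C^1$ on the convex set $\overline{B}(t,r)$) gives $\|\Phi_a(b_1)-\Phi_a(b_2)\|=\|L^{-1}\int_0^1(L-dg_a(\cdots))(b_1-b_2)\,d\theta\|\le\frac12\|b_1-b_2\|$, so each $\Phi_a$ is a $\frac12$-contraction; and $\|\Phi_a(b)-t\|\le\frac12\|b-t\|+\|L^{-1}g(a,t)\|\le\frac r2+\kappa\|g(a,t)\|$, so after shrinking $A_0$ to a neighbourhood $U\subset A_0$ of $s$ on which $\kappa\|g(a,t)\|\le r/2$ (possible as $g(s,t)=0$ and $g$ is continuous in its first variable), $\Phi_a$ maps $\overline{B}(t,r)$ into itself.

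By the Banach fixed-point theorem, for each $a\in U$ the map $\Phi_a$ has a unique fixed point in $\overline{B}(t,r)$, which I call $f(a)$; since $\Phi_s(t)=t-L^{-1}g(s,t)=t$ we get $f(s)=t$, and $g(a,f(a))=0$ by construction. Continuity of $f$ at a point $a'\in U$ follows from the decomposition $f(a)-f(a')=[\Phi_a(f(a))-\Phi_a(f(a'))]+[\Phi_a(f(a'))-\Phi_{a'}(f(a'))]$: the first bracket has norm $\le\frac12\|f(a)-f(a')\|$ by the uniform contraction estimate, while the second equals $L^{-1}\bigl(g(a',f(a'))-g(a,f(a'))\bigr)$ and tends to $0$ as $a\to a'$ by continuity of $g$ in its first argument (here $f(a')$ is a fixed point of $\overline{B}(t,r)$), whence $\|f(a)-f(a')\|\le 2\kappa\,\|g(a',f(a'))-g(a,f(a'))\|\to0$. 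For uniqueness, if $\tilde f:U\to B$ is another continuous map with $\tilde f(s)=t$ and $g(a,\tilde f(a))=0$, then $\{a\in U: f(a)=\tilde f(a)\}$ is closed, and it is open because around any point $a_0$ of agreement the identical contraction argument centred at $(a_0,f(a_0))$ --- valid since $f(a_0)\in B$, so $dg_{a_0}(f(a_0))$ is invertible and the continuity hypotheses hold nearby --- produces a ball in which the zero of $g_a$ is unique, forcing $f=\tilde f$ near $a_0$; shrinking $U$ to a small enough (in particular connected) neighbourhood of $s$ then makes this locus all of $U$.

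I do not expect a substantive obstacle here; the one thing requiring care is that $r$, the contraction factor $\tfrac12$, and the neighbourhood $U$ must be chosen to work \emph{simultaneously} for all $a$ near $s$, and this uniformity is precisely what the continuity of $g$ and of $(a,b)\mapsto dg_a(b)$ delivers. A minor bookkeeping point is the affine (rather than vector-space) nature of $X$, handled by fixing a norm on $V_X$ at the outset so that $b-L^{-1}g(a,b)$ and the integral mean-value formula make sense.
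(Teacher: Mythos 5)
Your proposal is correct in its core (existence and continuity of $f$), and it takes a genuinely different route from the paper. You run the classical Banach contraction--mapping proof: fix $L:=dg_s(t)$, set up the family $\Phi_a(b)=b-L^{-1}g(a,b)$, and use the joint continuity of $(a,b)\mapsto dg_a(b)$ to make the $\tfrac12$-contraction and self-mapping estimates uniform over a neighbourhood of $(s,t)$. The paper instead deploys the $C^1(D)$ machinery developed in the surrounding subsection (Lemmas \ref{l:difflip}--\ref{l:bilip}, which themselves rest on Theorem \ref{t:MusRog07}): it shows the map $a\mapsto g_a|_{R\mathbb{B}^n}$ is continuous into $C^1(R\mathbb{B}^n)$, invokes Lemma \ref{l:bilip} to get a neighbourhood $A'\times B'$ on which each $g_a$ is bi-Lipschitz with uniform constants, and then argues that $\Psi(a,b)=(a,g_a(b))$ is a homeomorphism onto an open set, defining $f$ via $\Psi^{-1}$. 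Both approaches extract the same uniformity from the same hypothesis (joint continuity of $dg_a(b)$); yours is more self-contained and elementary, while the paper's repackages the estimate through the bi-Lipschitz lemmas it needed elsewhere. Your approach also sidesteps a questionable step in the paper: the claimed two-sided Lipschitz bound on $\Psi$ with respect to the sum metric does not actually follow from the hypotheses (the term $\|g_a(b')-g_{a'}(b')\|$ is controlled only by continuity, not by $d(a,a')$), so openness of $\Psi(A'\times B')$ and continuity of $\Psi^{-1}$ need a more careful argument than the one written; the contraction route avoids this entirely.

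One genuine weak point in your write-up is the uniqueness argument. You propose to conclude by making the agreement set $\{a\in U : f(a)=\tilde f(a)\}$ clopen and then ``shrinking $U$ to a small enough (in particular connected) neighbourhood of $s$.'' In a general metric space $S$ there need not exist any connected open neighbourhood of $s$, so this step does not go through as stated. What your construction does give cleanly is local (germ) uniqueness: any continuous $\tilde f$ with $\tilde f(s)=t$ and $g(a,\tilde f(a))=0$ satisfies $\tilde f(a)\in\overline{B}(t,r)$ for $a$ in some (\,$\tilde f$-dependent\,) neighbourhood of $s$, where it must equal $f(a)$ by uniqueness of the fixed point. Whether the literal statement --- a single $U$ on which the solution is unique, chosen before $\tilde f$ --- holds without connectedness is delicate; the paper's own proof does not address uniqueness at all, so this is a shared gloss rather than a defect peculiar to your argument, but you should replace the connectedness appeal with the germ-uniqueness observation or flag the hypothesis it implicitly requires.
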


\begin{proof}
	By translating both $X'$ and $X$, we may assume $t=0$.
	By the inverse function theorem,
	the map $g_a|_B$ is a local $C^1$-diffeomorphism for each $a \in A$.
	Choose $R>0$ such that $R \mathbb{B}^n \subset B$ and $g_s|_{R\mathbb{B}^n}$ is injective.
	Since $g$ and $(a,b) \mapsto d g_a (b)$ are continuous on $A \times B$,
	then the map 
	\begin{align*}
		A \rightarrow C^1(R \mathbb{B}^n) , ~ a \mapsto g_a|_{R\mathbb{B}^n}
	\end{align*}
	is also continuous.
	Hence by Lemma \ref{l:bilip},
	there exists $c>0$ and open neighbourhoods $A' \subset A$, $B' \subset R \mathbb{B}^n$ of $s,t$ respectively such that 
	\begin{align*}
		\frac{1}{c}\|x-y \| \leq \|g_a(x) - g_a(y) \| \leq c\|x - y\|
	\end{align*}
	for all $a \in A'$ and $x,y \in B'$.
	
	Define the map
	\begin{align*}
		\Psi : A' \times B' \rightarrow A' \times \mathbb{R}^n, ~ (a,b) \mapsto (a, g(a,b)) = (a,g_a(b)).
	\end{align*}
	If we define $d'$ to be the metric on $A' \times \mathbb{R}^n$ with $d'((a,x),(a',x')) := d(a,a') + \|x-x'\|$,
	and likewise define $d''$ to be the metric on $A' \times B'$ with $d''((a,b),(a',b')) := d(a,a') + \|b-b'\|$,
	then
	\begin{align*}
		\frac{1}{\max\{ 1,c\}} d''((a,b),(a',b')) \leq d'(\Psi(a,b), \Psi(a',b')) \leq \max\{ 1,c\} d''((a,b),(a',b')).
	\end{align*}
	Hence, $\Psi$ is a homeomorphism onto it's image and $\Psi(A' \times B')$ is an open set.
	Define 
	\begin{align*}
		\Gamma : \Psi(A' \times B') \rightarrow A' \times B', ~ x \mapsto (\Gamma_A(x),\Gamma_B(x))
	\end{align*}
	to be the inverse of $\Psi$,
	with $\Gamma_A(x) \in A'$ and $\Gamma_B(x) \in B'$.
	As $\Psi(A' \times B')$ is an open set that contains $(s,0) = (s, g(s,t))$,
	there exists an open neighbourhood $U \subset A'$ of $s$ where $(a,0) \in \Psi(A' \times B')$ for all $a \in U$.
	We now note that
	\begin{align*}
		(a,0) = \Psi \circ \Gamma (a, 0) = (  \Gamma_A(a, 0), g(\Gamma_A(a, 0),\Gamma_B(a, 0)) )
	\end{align*}
	 for all $a \in U$,
	 hence $\Gamma_A(a, 0) = a$ and $g(a,\Gamma_B(a, 0)) = 0$.
	We now set $f (a) := \Gamma_B(a, 0)$ for every $a \in U$.	
\end{proof}

We are now ready to prove our first key lemma of the section.
We note that we may drop the requirement that $G$ is $(2,2)$-sparse;
this is as if $G$ is not $(2,2)$-sparse then the set of $G$-independent bodies can be seen to be empty (and hence open) by applying Theorem \ref{t:laman}.

\begin{lemma}\label{kl:open}
	For any planar graph $G=(V,E)$,
	the set of $G$-independent regular symmetric bodies is an open subset of $\mathcal{B}_2$.
\end{lemma}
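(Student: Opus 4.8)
The plan is to show that if $C_0 \in \mathcal{B}_2$ is $G$-independent then some neighbourhood of $C_0$ in $\mathcal{B}_2$ consists entirely of $G$-independent bodies. Fix an independent $C_0$-packing $P_0 = (G, p^0, r^0)$; since $G$ is planar and, by Theorem~\ref{t:laman}, $(2,2)$-sparse (otherwise there are no $G$-independent bodies and the set is vacuously open), we have $|E| \le 2|V| - 2$. Because $P_0$ is independent, the packing rigidity matrix $R_{C_0}(G, p^0, r^0)$ has rank $|E|$, so by Lemma~\ref{lem:2.10a} (equivalently by the rank condition itself) there is a subset of $|E|$ of the $(3|V|)$ columns on which $R_{C_0}(G,p^0,r^0)$ restricts to an invertible square matrix; after fixing coordinates, say by pinning down enough point-coordinates, we can arrange that the map $\mu_{G,C}(p,r) := h_{G,C}(p,r)$ has an invertible partial derivative in a chosen set of $|E|$ of the remaining variables at $(p^0, r^0)$.

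The key step is to apply the generalised implicit function theorem, Theorem~\ref{t:implicit}, with parameter space $S := \mathcal{B}_2$ (a metric space under $d_H$), affine space $X$ the space of the $|E|$ chosen free variables among the $(p,r)$-coordinates (the others held at their values in $P_0$), and $g(C, \cdot)$ the corresponding $|E|$ coordinates of $h_{G,C}$ viewed as a function of those free variables. The hypotheses of Theorem~\ref{t:implicit} that need checking are: (a) $g$ is continuous on $S \times X'$ — this follows from Proposition~\ref{prop:hausvsdash}, since $\|\cdot\|_{C}$ depends continuously on $C$ and $h_{G,C}$ is built from squared norms; (b) for each $C$ near $C_0$, the map in the $X$-variables is $C^1$ with invertible derivative near the relevant point — here the $C^1$-ness comes from smoothness of $C$ (Proposition~\ref{p:support}(\ref{p:support1})), the derivative is the appropriate submatrix of $R_C(G,p,r)$, and invertibility at $(C_0, p^0, r^0)$ was arranged above, so it persists on a neighbourhood; (c) the joint continuity of $(C, b) \mapsto dg_C(b)$, i.e.\ of the submatrix of $R_C(G,p,r)$ in its arguments, which follows from the joint continuity of $(C, x) \mapsto \varphi_C(x)$ — this is exactly Proposition~\ref{p:phicont} combined with continuity of $\varphi_C$ in the point argument (Proposition~\ref{p:support}(\ref{p:support1})). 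Theorem~\ref{t:implicit} then yields a continuous map $C \mapsto (p^C, r^C)$, defined on a neighbourhood $U$ of $C_0$ in $\mathcal{B}_2$, with $(p^{C_0}, r^{C_0}) = (p^0, r^0)$ and $h_{G,C}(p^C, r^C) = 0$.

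It remains to promote ``$h_{G,C}(p^C,r^C) = 0$'' to ``$(G, p^C, r^C)$ is a $C$-packing'' and to ``$(G,p^C,r^C)$ is independent''. For the former, the strict inequalities $\|p^0_v - p^0_w\|_{C_0} > r^0_v + r^0_w$ for non-edges $vw$ are open conditions, preserved under small perturbations of both $C$ and $(p,r)$ (again by Proposition~\ref{prop:hausvsdash}), so shrinking $U$ makes $(G, p^C, r^C)$ a genuine $C$-packing. For independence, note that $R_{C_0}(G, p^0, r^0)$ has rank $|E|$ and the entries of $R_C(G, p, r)$ vary continuously in $(C, p, r)$ by the continuity of $\varphi_C$ discussed above; since rank is lower semicontinuous, $R_C(G, p^C, r^C)$ has rank at least $|E|$, hence exactly $|E|$, for $C$ near $C_0$. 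Thus every $C \in U$ is $G$-independent, proving the set is open.

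The main obstacle I anticipate is bookkeeping rather than conceptual: correctly choosing which $|E|$ coordinates to declare ``free'' so that the relevant submatrix of $R_{C_0}(G,p^0,r^0)$ is invertible (using the independence of $P_0$), and verifying cleanly that the derivative hypothesis of Theorem~\ref{t:implicit} holds uniformly on a neighbourhood — which is where the joint continuity statement (Propositions~\ref{p:support}(\ref{p:support1}) and~\ref{p:phicont}) does the real work. The geometric input about smoothness of $C$ (so that $h_{G,C}$ is $C^1$ near the packing, its edges having nonzero length) is what makes all the differentiability hypotheses available in the first place.
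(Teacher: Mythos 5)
Your proposal follows essentially the same strategy as the paper: fix an independent $C_0$-packing, select $|E|$ columns of the packing rigidity matrix to make a square invertible submatrix, apply the generalised implicit function theorem (Theorem~\ref{t:implicit}) with $\mathcal{B}_2$ as the parameter space to produce a continuous family $C\mapsto (p^C,r^C)$, verify the hypotheses with Propositions~\ref{prop:hausvsdash}, \ref{p:support}, \ref{p:phicont}, and shrink the neighbourhood so the non-edge inequalities remain strict.

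There is, however, a concrete slip in your final independence argument. You write that $R_{C_0}(G,p^0,r^0)$ has rank $|E|$ and invoke lower semicontinuity to deduce $\rank R_C(G,p^C,r^C)\ge |E|$ and hence independence. But $\rank R_C(G,p,r)=|E|$ holds \emph{automatically} for every $C$-packing of a regular symmetric body (Lemma~\ref{lem:2.10a}), so this establishes nothing, and in particular it does not establish independence: independence of the packing $(G,p^C,r^C)$ means $\rank R_C(G,p^C)=|E|$ for the \emph{framework} rigidity matrix, i.e.\ only the point columns. The fix is straightforward and is the place where you actually must use the assumption that $P_0$ is independent: since $\rank R_{C_0}(G,p^0)=|E|$, you may choose the $|E|$ free columns $I$ entirely from among the $2|V|$ point columns, so that $R_C(G,p,r)|_I = R_C(G,p)|_I$, and invertibility of that submatrix for $C$ near $C_0$ and $(p,r)$ near $(p^0,r^0)$ directly gives $\rank R_C(G,p^C)=|E|$; equivalently, argue lower semicontinuity for $R_C(G,p)$ rather than for the packing rigidity matrix. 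With that amendment your argument matches the paper's.
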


\begin{proof}
	Suppose there exists $G$-independent regular symmetric body $C'$ with independent $C'$-packing $P=(G,p',r')$.
	For the matrix $R_{C'}(G,p,r)$,
	let $c(v,i)$ be the $i$-th coordinate point column of $v$ and $c(v)$ be the radii column of $v$ (see Section \ref{sec:packmaps}).
	As $R_{C'}(G,p',r')$ has independent rows then there exists $|E|$ independent columns $I \subset \{ c(v,i),c(v) : i \in \{1,2\}, v \in V\}$.
	For any $|E| \times 3|V|$ real valued matrix $M$, 
	define the $|E| \times |E|$ matrix $M|_I$ by deleting any column not in $I$;
	by our choice of columns we have $R_{C'}(G,p',r')|_I$ is invertible.
	
	Define $X$ to be the affine subspace of $\mathbb{R}^{2|V|} \times \mathbb{R}^{|V|}$ of pairs $(p,r)$ where for any $c(v,i) \notin I$ we have $[p_v]_i=[p'_v]_i$ and for any $c(v) \notin I$ we have $r_v=r'_v$,
	and define the open subset $X' := X \cap \mathbb{R}^{2|V|} \times \mathbb{R}^{|V|}_{>0}$ of $X$.
	By Proposition \ref{p:phicont},
	there exist open neighbourhoods $U_\mathcal{B} \subset \mathcal{B}_2$ and $U_{X'} \subset X'$ of $C'$ and $(p',r')$ respectively where $R_C(G,p,r)|_I$ is invertible for all $C \in U_\mathcal{B}$ and $(p,r) \in U_{X'}$.	
	Now define the map 
	\begin{align*}
		g : \mathcal{B}_2 \times X' \rightarrow \mathbb{R}^{|E|}, ~ (C,(p,r)) \mapsto h_{G,C}(p,r).
	\end{align*}
	It is immediate that $g(C',(p',r'))=0$; also, $g$ is continuous by Proposition \ref{prop:hausvsdash}.
	For any $C \in \mathcal{B}_2$,
	the map
	\begin{align*}
		g_C:  X' \rightarrow \mathbb{R}^n, ~ (p,r) \mapsto g(C,(p,r)) = h_{G,C}(p,r)
	\end{align*}
	is $C^1$-differentiable by Proposition \ref{p:support},
	and the derivative of $g_c$ at $(p,r)$ is $R_C(G,p,r)|_I$.
	As the map $(C,x) \mapsto \varphi_C(x)$ is continuous on $\mathcal{B}_2 \times \mathbb{R}^2$ by Proposition \ref{p:phicont},
	the map $(C,(p,r)) \mapsto dg_C(p,r)$ is also continuous.
	By Theorem \ref{t:implicit},
	there exists an open subset $U_{\mathcal{B}_2}' \subset U_{\mathcal{B}_2}$ and a unique continuous function $f : U_{\mathcal{B}_2}' \rightarrow U_{X'}$ such that $f(C') = (p',r')$ and $g(C,f(C)) = 0$ for all $C \in U_{\mathcal{B}_2}'$;
	further,
	we may choose $U_{\mathcal{B}_2}'$ to be sufficiently small so that for $C \in U'_{\mathcal{B}_2}$ and $(p,r)=f(C)$ we have $\|p_v-p_w\|_C > r_v+r_w$ for all $vw \notin E$.
	It follows that given $C \in U_{\mathcal{B}_2}'$ and $(p,r) = f(C)$, the triple $(G,p,r)$ is an independent $C$-packing.
	Hence $U_{\mathcal{B}_2}'$ is an open neighbourhood of $C'$ of $G$-independent regular symmetric bodies.
\end{proof}

\subsection{The set of \texorpdfstring{$G$}{G}-independent regular symmetric bodies is dense}\label{sec:kldense}

We remember that $\mathbb{T} := (-\pi,\pi]$ has the topology generated by the metric $|a - b|_{\sim} := \min \{ |a-b|, 2\pi - |a - b| \}$.
For any closed connected subset $I \subseteq \mathbb{T}$,
we denote by $C(I,\mathbb{R})$ the Banach space of all functions $f :I \rightarrow \mathbb{R}$ with the supremum norm $\|f\|_I := \sup_{t \in I} | f(t)|$.
We now define $\mathcal{F}_I \subset C(I,\mathbb{R})$ to be the set of all $C^2$-differentiable functions $f : I \rightarrow \mathbb{R}_{>0}$ where $f(t \pm \pi) = f(t)$ (if $t \pm \pi \in I$) and
\begin{align*}
	c_f(t) := f(t)^2 + 2(f'(t))^2 - f(t)f''(t) >0
\end{align*}	
for all $t \in I$.

\begin{lemma}\label{l:funisom}
	There exists a well-defined homeomorphism $F: \mathcal{B}_2^+ \rightarrow \mathcal{F}_{\mathbb{T}}$ where
	\begin{align*}
		F_C(t) := \frac{1}{\| (\cos t , \sin t )\|_C}
	\end{align*}
	for each $C \in \mathcal{B}^+_2$ and $t \in \mathbb{T}$.
\end{lemma}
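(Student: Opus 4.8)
The function $F_C$ is the radial function of $C$ written in polar coordinates: since $u(\cos t,\sin t)\in C$ precisely when $u\,\|(\cos t,\sin t)\|_C\leq 1$, the point $F_C(t)(\cos t,\sin t)$ is the unique point of $\partial C$ on the ray from the origin through $(\cos t,\sin t)$. The plan is to prove $F$ is a bijection by writing down its inverse explicitly, and then to check that $F$ and this inverse are continuous for the metric $\rho$ of Proposition \ref{prop:hausvsdash} on $\mathcal{B}_2^+$ and the supremum norm $\|\cdot\|_{\mathbb{T}}$ on $\mathcal{F}_{\mathbb{T}}$.

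First I would check that $F_C\in\mathcal{F}_{\mathbb{T}}$ for each $C\in\mathcal{B}_2^+$. Positivity and $F_C(t\pm\pi)=F_C(t)$ are immediate from $\|\cdot\|_C$ being the norm of a centrally symmetric body. Since $\partial C$ is a $C^2$ manifold and, $C$ being convex with the origin in its interior, no supporting line of $C$ passes through $0$, the radial parametrisation $\gamma(t):=F_C(t)(\cos t,\sin t)$ is a $C^2$-diffeomorphism $\mathbb{T}\to\partial C$; in particular $F_C$ is $C^2$. The main computation is the polar-coordinate curvature identity: writing $e_1(t)=(\cos t,\sin t)$ and $e_2(t)=(-\sin t,\cos t)$, one finds $\gamma'=F_C'e_1+F_Ce_2$ and $\gamma''=(F_C''-F_C)e_1+2F_C'e_2$, hence
\[
	\det\big(\gamma'(t),\gamma''(t)\big)=F_C(t)^2+2F_C'(t)^2-F_C(t)F_C''(t)=c_{F_C}(t).
\]
By the definition of positive curvature the left side never vanishes, so $c_{F_C}$ has constant sign on the connected space $\mathbb{T}$; evaluating at a point where $F_C$ is maximal (so $F_C'=0$ and $F_C''\leq 0$) gives $c_{F_C}=F_C^2-F_CF_C''\geq F_C^2>0$, so $c_{F_C}>0$ throughout and $F_C\in\mathcal{F}_{\mathbb{T}}$.

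Next I would construct the inverse. Given $f\in\mathcal{F}_{\mathbb{T}}$, set $C_f:=\{u(\cos t,\sin t):t\in\mathbb{T},\ 0\leq u\leq f(t)\}$; using $f(t\pm\pi)=f(t)$ this is a compact, centrally symmetric set containing a disc about the origin, and its boundary is the image of the simple closed $C^2$ curve $\gamma_f(t):=f(t)(\cos t,\sin t)$. Since $\|\gamma_f'(t)\|^2=f(t)^2+f'(t)^2>0$ and, by the identity above, $\det(\gamma_f'(t),\gamma_f''(t))=c_f(t)>0$, the curve $\gamma_f$ has everywhere positive signed curvature, and by the classical fact that such a Jordan curve bounds a convex body, $C_f$ is a convex body. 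Positive curvature prevents segments in $\partial C_f$, so $C_f$ is strictly convex, while its $C^2$ boundary gives a unique supporting line at each boundary point, so $C_f$ is smooth; thus $C_f\in\mathcal{B}_2^+$. A direct check shows $F_{C_f}=f$ (the point $f(t)(\cos t,\sin t)$ lies on $\partial C_f$) and $C_{F_C}=C$ (every nonzero $x\in C$ equals $u(\cos t,\sin t)$ with $u\leq F_C(t)$), so $f\mapsto C_f$ is a two-sided inverse of $F$, establishing bijectivity.

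Finally, for continuity: for $C,C'\in\mathcal{B}_2^+$ and $t\in\mathbb{T}$,
\[
	|F_C(t)-F_{C'}(t)|=\frac{\big|\,\|(\cos t,\sin t)\|_{C'}-\|(\cos t,\sin t)\|_{C}\,\big|}{\|(\cos t,\sin t)\|_{C}\,\|(\cos t,\sin t)\|_{C'}}\leq\frac{\rho(C,C')}{\|(\cos t,\sin t)\|_{C}\,\|(\cos t,\sin t)\|_{C'}},
\]
and since $t\mapsto\|(\cos t,\sin t)\|_{C}$ is continuous and positive on the compact unit circle it is bounded below by some $m>0$, so once $\rho(C,C')<m/2$ the right-hand side is at most $2\rho(C,C')/m^2$, uniformly in $t$; this gives continuity of $F$. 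Feeding $C=C_f$, $C'=C_g$ into the same estimate, and using $\|(\cos t,\sin t)\|_{C_f}=1/f(t)$, bounds $\rho(C_f,C_g)$ by a constant multiple of $\|f-g\|_{\mathbb{T}}$ whenever $\|f-g\|_{\mathbb{T}}$ is small, giving continuity of $f\mapsto C_f$. I expect the only genuinely non-routine ingredient to be the classical fact invoked above --- that a simple closed $C^2$ plane curve with nowhere-vanishing curvature bounds a convex body --- together with the small amount of care needed to promote ``convex body'' to full membership in $\mathcal{B}_2^+$; everything else is bookkeeping organised around the identity $\det(\gamma',\gamma'')=c_f$.
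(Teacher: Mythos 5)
Your proof is correct and takes essentially the same route as the paper: verify the polar curvature identity $\det(\gamma',\gamma'')=c_f$, use it to show $F$ maps into $\mathcal{F}_{\mathbb{T}}$ and that the radial construction $f\mapsto C_f$ inverts $F$, then check continuity in both directions. The only differences are cosmetic (you obtain $c_{F_C}>0$ by evaluating at a maximum of $F_C$ rather than via the paper's observation that convexity forces the sign to be non-negative, and you spell out the Lipschitz estimate for the continuity of $F$ that the paper attributes to Proposition \ref{prop:hausvsdash}).
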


\begin{proof}
	Choose any $C \in \mathcal{B}_2^+$.
	As $\|\cdot\|_C$ is $C^2$-differentiable, centrally symmetric and has a non-empty interior that contains $0$,
	then it is immediate that $F(C)$ is also $C^2$-differentiable, positive and $F_C(t \pm \pi) = F_C(t)$ for all $t \in \mathbb{T}$.
	Define the $C^2$-differentiable curve $\alpha :\mathbb{T} \rightarrow \mathbb{R}^2 \times \{0\}$ where $\alpha(t) := (F_C(t)\cos t,F_C(t)\sin t,0)$.
	We immediately note that $\alpha'(t) \times \alpha''(t) = (0,0,c_{F_C}(t))$.
	As $C$ is convex then $\alpha$ is always ``turning left'', i.e.~the third coordinate of $\alpha'(t) \times \alpha''(t)$ must be non-negative.
	It follows that $C$ has positive curvature (i.e.~$\alpha'(t), \alpha''(t)$ are linearly independent for all $t \in \mathbb{T}$) if and only if $c_{F_C}(t) >0$ for all $t \in \mathbb{T}$,
	hence $F_C \in \mathcal{F}_\mathbb{T}$.
	
	Choose any $f \in \mathcal{F}_\mathbb{T}$ and define the set
	\begin{align*}
		C:= \{ (r \sin t, r \cos t) : t \in \mathbb{T}, r \leq f(t) \}.
	\end{align*}
	As $f(t \pm \pi) = f(t)$ for all $t \in \mathbb{T}$ and $f$ is positive,
	$C$ is centrally symmetric with non-empty interior.
	If we define the $C^2$-differentiable curve $\alpha :\mathbb{T} \rightarrow \mathbb{R}^2 \times \{0\}$ with $\alpha(t) := (f(t)\cos t,f(t)\sin t,0)$,
	then as the third coordinate of $\alpha'(t) \times \alpha''(t)$ is $c_f(t) >0$ we have that $\alpha$ is always ``turning left'';
	it follows that $C$ must be convex and thus $C \in \mathcal{K}_2$.
	As $\alpha'(t), \alpha''(t)$ are linearly independent for all $t \in \mathbb{T}$ then $C \in \mathcal{B}^+_2$ by Lemma \ref{l:schneider}.
	It now follows that $F$ is bijective, since $F$ is clearly injective also.
	
	By Proposition \ref{prop:hausvsdash},
	the map $F_C$ is continuous.
	Choose any convergent sequence $(f_n)_{n \in \mathbb{N}}$ in $\mathcal{F}_\mathbb{T}$ with limit $f \in \mathcal{F}_{\mathbb{T}}$,
	and define $C_n := F^{-1}(f_n)$, $C := F^{-1}(f)$.
	Given $s(t) := (\cos t, \sin t)$,
	we now note that
	\begin{align*}
		d_H(C,C_n) \leq \sup_{t \in \mathbb{T}} \| f(t) s(t) - f_n(t)s(t)\| = d_{\mathbb{T}}(f,f_n) \rightarrow 0
	\end{align*}
	as $n \rightarrow \infty$.
	Hence $F$ is a homeomorphism as required.
\end{proof}

\begin{lemma}\label{l:raderiv1}
	Let $C \in \mathcal{B}_2^+$ and $f := F_C$.
	Then given $s(t) := (\cos t, \sin t)$, we have
	\begin{align*}
		\varphi_C( s(t)) = f(t)^{-2} s(t) - f'(t) f(t)^{-3} s'(t).
	\end{align*}	
	for all $t\in \mathbb{T}$.
\end{lemma}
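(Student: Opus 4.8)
The plan is to exploit the characterisation of $\varphi_C$ as the derivative of $\tfrac12\|\cdot\|_C^2$ (as recalled in Section~\ref{sec:prelim}), which for $C\in\mathcal{B}_2^+$ is a $C^1$ map on $\mathbb{R}^2\setminus\{0\}$, together with the fact that $\{s(t),s'(t)\}$ is an orthonormal basis of $\mathbb{R}^2$ for each $t$ (explicitly $s(t)=(\cos t,\sin t)$ and $s'(t)=(-\sin t,\cos t)$). Fixing $t\in\mathbb{T}$ and writing $\varphi_C(s(t))=a(t)\,s(t)+b(t)\,s'(t)$, it then suffices to identify the coefficients $a(t),b(t)$ by computing the two inner products $\varphi_C(s(t)).s(t)$ and $\varphi_C(s(t)).s'(t)$.

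For the first, I would use the definition of a support: since $C$ is smooth every point is smooth, and $z=\varphi_C(x)$ satisfies $z.x=\|x\|_C^2$, so
\[
a(t)=\varphi_C(s(t)).s(t)=\|s(t)\|_C^2=F_C(t)^{-2}=f(t)^{-2},
\]
the middle equality being the definition of $F_C$ from Lemma~\ref{l:funisom}.

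For the second, I would differentiate along the curve $t\mapsto s(t)$. The key point demanding care is that $s(t)$ traces the \emph{Euclidean} unit circle, not $\partial C$, so $\|s(t)\|_C$ is \emph{not} constant; rather $\tfrac12\|s(t)\|_C^2=\tfrac12 f(t)^{-2}$. Since the curve never meets the origin, the chain rule applies to the $C^1$ function $\tfrac12\|\cdot\|_C^2$ and gives
\[
b(t)=\varphi_C(s(t)).s'(t)=D\!\left(\tfrac12\|\cdot\|_C^2\right)\!(s(t)).s'(t)=\frac{d}{dt}\!\left(\tfrac12 f(t)^{-2}\right)=-f'(t)f(t)^{-3},
\]
using that $f=F_C$ is $C^2$ and positive (so $t\mapsto f(t)^{-2}$ is differentiable). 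Substituting $a(t)$ and $b(t)$ back into $\varphi_C(s(t))=a(t)s(t)+b(t)s'(t)$ yields the claimed identity.

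I expect the only genuine subtlety to be the one flagged above: resisting the false temptation to differentiate ``$\|s(t)\|_C^2\equiv 1$'', and instead tracking $\|s(t)\|_C=f(t)^{-1}$; one also needs to make sure the ambient hypotheses ($C^2$-smoothness of $\|\cdot\|_C$ on $\mathbb{R}^2\setminus\{0\}$ and of $f$, positivity of $f$), all of which hold for $C\in\mathcal{B}_2^+$, are explicitly invoked. Everything else is routine two-coefficient bookkeeping in the moving orthonormal frame $\{s(t),s'(t)\}$.
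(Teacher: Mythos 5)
Your proof is correct and takes essentially the same route as the paper: decompose $\varphi_C(s(t))$ in the orthonormal frame $\{s(t),s'(t)\}$, pin down the $s(t)$-component via the support identity $\varphi_C(s(t)).s(t)=\|s(t)\|_C^2$, and pin down the $s'(t)$-component by differentiating along $t\mapsto s(t)$ via the chain rule. The paper differentiates $f(t)=\|s(t)\|_C^{-1}$ directly where you differentiate $\tfrac12\|s(t)\|_C^2=\tfrac12 f(t)^{-2}$, but this is merely a rearrangement of the same computation, and your cautionary note about not treating $\|s(t)\|_C$ as constant is exactly the right point to flag.
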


\begin{proof}
	By differentiating we have
	\begin{align} \label{l:raderiv11}
		f'(t) = \frac{d}{dt} \frac{1}{\|s(t) \|_C} = \frac{-\varphi_C(s(t)/\|s(t)\|_C) . s'(t)}{ \|s(t) \|_C^{2}} = -(\varphi_C(s(t)) . s'(t)) f(t)^3.
	\end{align}
	As $\varphi_C(s(t))$ is the support of $s(t)$ then
	\begin{align} \label{l:raderiv12}
		\varphi_C(s(t)) . s(t) = \|s(t)\|_C^2 = f(t)^{-2}.
	\end{align}
	As $\{s(t),s'(t)\}$ is an orthonormal basis of $\mathbb{R}^2$ for each $t \in \mathbb{T}$ then 
	\begin{align} \label{l:raderiv13}
		\varphi_C(s(t)) = \left(\varphi(s(t)). s(t) \right) s(t) + \left(\varphi(s(t)). s'(t) \right) s'(t).
	\end{align}
	We now reach our desired equality by equations (\ref{l:raderiv11}), (\ref{l:raderiv12}) and (\ref{l:raderiv13}).
\end{proof}

The following results will enable us to alter the tangents of a packing without breaking any contacts.
This will allow us later to approximate convex bodies with $G$-independent regular symmetric bodies while maintaining the structure of the original packing.

\begin{lemma}\label{l:dense1}
	Let $I$ be a closed interval in $\mathbb{T}$ with end points $x_1,x_2$,
	and suppose $|x_1-x_2|_{\sim} < \pi$.
	Fix any	$f \in \mathcal{F}_I$, $c \in I \setminus \{x_1,x_2\}$ and $\epsilon >0$.
	Then there exists $\delta >0$ such that for all $a \in \mathbb{R}$ where $|a| < \delta$,
	there exists $g \in \mathcal{F}_I$ where
	\begin{enumerate}[(i)]
		\item \label{l:dense1item1} $\|f-g\|_I <\epsilon$,
		\item \label{l:dense1item2} $g(c)= f(c)$ and $g'(c)=f'(c)+a$, and
		\item \label{l:dense1item4} $g(x_i)= f(x_i)$, $g'(x_i)= f'(x_i)$, $g''(x_i) = f''(x_i)$ for all $i \in \{1,2\}$.
	\end{enumerate}
\end{lemma}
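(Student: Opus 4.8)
The plan is to obtain $g$ by adding to $f$ a small fixed multiple of a bump function concentrated near $c$. Since $I$ has $\mathbb{T}$-length $|x_1-x_2|_{\sim}<\pi$, we may identify it with a compact subinterval of $\mathbb{R}$ via a lift and write points of $I$ as real numbers; in particular $t-c$ is unambiguous for $t\in I$. As $c$ is an interior point of $I$, fix $\eta>0$ with $[c-\eta,c+\eta]\subset I\setminus\{x_1,x_2\}$, and fix a $C^\infty$ bump function $\chi:I\to[0,1]$ with $\chi(c)=1$ that is supported in $(c-\eta,c+\eta)$. Put $\psi(t):=(t-c)\chi(t)$; then $\psi$ is $C^2$ on $I$, $\psi(c)=0$, $\psi'(c)=\chi(c)=1$ (so $\psi\not\equiv 0$), and $\psi$ is supported in $(c-\eta,c+\eta)$, hence $\psi,\psi',\psi''$ all vanish near each $x_i$. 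For $a\in\mathbb{R}$ set $g_a:=f+a\psi$.

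Conditions (\ref{l:dense1item2}) and (\ref{l:dense1item4}) then hold for every $a$: $g_a(c)=f(c)+a\psi(c)=f(c)$, $g_a'(c)=f'(c)+a\psi'(c)=f'(c)+a$, and $g_a,g_a',g_a''$ coincide with $f,f',f''$ on a neighbourhood of each endpoint $x_i$ because $\psi$ vanishes there together with its first two derivatives. Condition (\ref{l:dense1item1}) holds as soon as $|a|<\epsilon/\|\psi\|_I$. Thus it only remains to arrange that $g_a\in\mathcal{F}_I$ for all small $|a|$, which will supply a further bound $\delta_0>0$; taking $\delta:=\min\{\epsilon/\|\psi\|_I,\delta_0\}$ then proves the lemma.

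To see $g_a\in\mathcal{F}_I$ we must check $g_a>0$ and $c_{g_a}>0$ on $I$; the periodicity condition $g_a(t\pm\pi)=g_a(t)$ is vacuous here, since $I$ has $\mathbb{T}$-length $<\pi$ and so contains no antipodal pair $\{t,t\pm\pi\}$. For positivity, $f$ is continuous and strictly positive on the compact set $I$, so $f\geq m_0>0$ there, whence $g_a\geq m_0-|a|\,\|\psi\|_I>0$ once $|a|<m_0/\|\psi\|_I$. For the curvature expression,
\[
	c_{g_a}(t)-c_f(t)=\bigl(g_a(t)^2-f(t)^2\bigr)+2\bigl(g_a'(t)^2-f'(t)^2\bigr)-\bigl(g_a(t)g_a''(t)-f(t)f''(t)\bigr);
\]
each bracket is a sum of terms each carrying a factor $a\psi$, $a\psi'$ or $a\psi''$, and since $f,f',f'',\psi,\psi',\psi''$ are all bounded on the compact set $I$, there is a constant $K=K(f,\psi)$ with $\sup_{t\in I}|c_{g_a}(t)-c_f(t)|\leq K|a|$ for $|a|\leq 1$. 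As $c_f$ is continuous and positive on the compact $I$, say $c_f\geq m_1>0$, we get $c_{g_a}\geq m_1/2>0$ whenever $|a|\leq\min\{1,m_1/(2K)\}$. Hence $\delta_0:=\min\{1,\ m_0/\|\psi\|_I,\ m_1/(2K)\}$ works, and for $|a|<\delta$ the function $g_a$ has all the required properties.

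There is no genuine obstacle here: the argument is a routine smooth-perturbation estimate. The only point to watch is that membership in $\mathcal{F}_I$ is an open condition in the $C^2$ topology, so although the conclusion (\ref{l:dense1item1}) only asks for sup-norm closeness, the perturbation must be kept $C^2$-small — which is exactly what the estimate $|c_{g_a}-c_f|\leq K|a|$ delivers.
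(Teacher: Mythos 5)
Your proposal is correct and takes essentially the same approach as the paper: both obtain $g=f+(\text{scalar multiple of }a)\cdot\psi$ for a fixed $C^2$ function $\psi$ that vanishes together with its first two derivatives at $x_1,x_2$, vanishes at $c$, and has nonzero derivative at $c$, and then conclude by the $C^2$-openness of the defining conditions of $\mathcal{F}_I$. The only difference is cosmetic — the paper takes $\psi$ to be the polynomial $x^3(x-1)^3(x-c)$ after normalizing $I=[0,1]$, while you take $(t-c)\chi(t)$ for a compactly supported bump $\chi$ — and your uniform bound $\sup_I|c_{g_a}-c_f|\le K|a|$ is in fact a slightly cleaner way to finish than the paper's term-by-term $\inf$ estimate.
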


\begin{proof}
	By altering the domains of maps in $\mathcal{F}_I$, we may assume $x_1=0$ and $x_2=1$.
	Define the polynomial
	\begin{align*}
		p (x) := x^3(x-1)^3 (x-c).
	\end{align*}
	Then 
	\begin{eqnarray*}
		p' (x) &=& 3x^2(x-1)^3 (x-c) + 3x^3(x-1)^2 (x-c)  + x^3(x-1)^3, \\
		p'' (x) &=& 18 x^2(x-1)^2(x-c) + 6x(x-1)^3(x-c) +6x^3(x-1)(x-c)\\
		&~& + \, 6x^2(x-1)^3 + 6x^3(x-1)^2,
	\end{eqnarray*}
	and 
	\begin{align*}
		p(0)=p(1)=p(c)=0, \quad p'(0)=p'(1)=0, \quad p''(0)=p''(1)=0,\quad p'(c) = c^3(c-1)^3.
	\end{align*}
	Set $\lambda_a :=\frac{a}{c^3(c-1)^3}$ and define $g_a := f +  \lambda_a p$ for each $a \in \mathbb{R}$.
	It is immediate that $g_a$ is a $C^2$-differentiable function that satisfies the properties (\ref{l:dense1item2}) and (\ref{l:dense1item4}).
	Define $C^2(I,\mathbb{R}) \subset C(I,\mathbb{R})$ to be the subset of $C^2$-differentiable functions with the subspace topology.
	Then the map $a \mapsto g_a$ is a continuous curve in $C^2(I,\mathbb{R})$,
	hence there exists $\delta_0>0$ so that $\|f-g_a\|_I< \epsilon$ for all $|a|< \delta_0$.
	
	Define the functions $k_1,k_2 : C^2(I,\mathbb{R}) \rightarrow \mathbb{R}$,
	where 
	\begin{align*}
		k_1(h) := \inf_{t \in I} h(t), \quad k_2(h) := \inf_{t \in I} h(t)^2 + 2(h'(t))^2 - h(t)h''(t).
	\end{align*}
	The function $k_1$ is continuous on $C^2(I,\mathbb{R})$,
	hence there exists $\delta_1>0$ such that $k_1(g_a) >0$ for all $|a|< \delta_1$.
	For any $a \in \mathbb{R}$,
	we have 	
	\begin{eqnarray*}
		k_2(g_a) 
		&=& \inf_{t \in I} \left( (f(t)+\lambda_a p(t))^2 +2 (f'(t)+\lambda_a p'(t))^2 - (f(t)+\lambda_a p(t))(f''(t)+\lambda_a p''(t)) \right) \\
		&\geq& k_2(f) + \lambda_a^2 k_2(p) + \lambda_a \left( \inf_{t \in I} \left( 2f(t)p(t) + 4 f'(t)p'(t) - f(t)p''(t) - p(t)f''(t) \right) \right).
	\end{eqnarray*}
	As $k_2(f)>0$ and the map $a \mapsto \lambda_a$ is continuous,
	there exists $\delta_2>0$ such that $k_2(g_a)>0$ for all $|a|< \delta_2$.
	If $|a| <\min\{\delta_1,\delta_2\}$ then $g_a \in \mathcal{F}_I$,
	hence the result holds with $\delta := \min\{\delta_0,\delta_1,\delta_2\}$.
\end{proof}

\begin{lemma}\label{l:dense2}
	Let $C \in \mathcal{B}_2^+$ and $x_1,\ldots,x_n \in \partial C$ be pairwise linearly independent,
	and choose any $\epsilon >0$.
	Then there exists $\delta >0$ so that for any set $y_1,\ldots, y_n \in \mathbb{R}^2 \setminus \{0\}$ where $\|\varphi_C(x_i) - y_i\| < \delta$ for all $i=1,\ldots,n$, the following holds;
	there exists $C' \in \mathcal{B}^+_2$ where $d_H(C,C')<\epsilon$,
	$\varphi_{C'} (x_i)$ is a scalar multiple of $y_i$
	and $\|x_i\|_{C'} =1$ for all $i=1,\ldots,n$.
\end{lemma}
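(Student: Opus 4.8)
The plan is to transport the problem into the function space $\mathcal{F}_\mathbb{T}$ via the homeomorphism $F:\mathcal{B}_2^+\to\mathcal{F}_\mathbb{T}$ of Lemma \ref{l:funisom}, make a localised $C^2$ perturbation of $f:=F_C$ near the angular coordinates of the points $x_i$, and then pull back along $F^{-1}$. Write $s(t):=(\cos t,\sin t)$ and let $t_i\in\mathbb{T}$ be the Euclidean angle of $x_i$, so that $x_i=f(t_i)s(t_i)$. As the $x_i$ are pairwise linearly independent, the $2n$ angles $t_1,\dots,t_n,t_1+\pi,\dots,t_n+\pi$ are pairwise distinct in $\mathbb{T}$. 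If $C':=F^{-1}(g)$ for some $g\in\mathcal{F}_\mathbb{T}$, then $\|x_i\|_{C'}=1$ holds precisely when $g(t_i)=f(t_i)$; and, writing $y_i=\alpha_i s(t_i)+\beta_i s'(t_i)$ in the orthonormal frame $\{s(t_i),s'(t_i)\}$ and applying Lemma \ref{l:raderiv1} to $C'$, the vector $\varphi_{C'}(x_i)$ is a scalar multiple of $y_i$ (assuming $\alpha_i\neq0$, which we arrange below) precisely when $g'(t_i)=f'(t_i)+a_i$, where
\[
	a_i := -\,f(t_i)\,\frac{\beta_i}{\alpha_i}\;-\;f'(t_i).
\]
By Lemma \ref{l:raderiv1}, $\varphi_C(x_i)$ has frame coordinates $(f(t_i)^{-1},-f'(t_i)f(t_i)^{-2})$, so $\alpha_i=y_i\cdot s(t_i)$ is within $\delta$ of $f(t_i)^{-1}>0$ and $\beta_i/\alpha_i\to-f'(t_i)/f(t_i)$ as $y_i\to\varphi_C(x_i)$; hence for $\delta$ small enough we have $\alpha_i>0$ and $a_i\to0$. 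It therefore suffices to produce, for all sufficiently small $|a_i|$, a function $g\in\mathcal{F}_\mathbb{T}$ with $\|f-g\|_\mathbb{T}$ as small as we wish, $g(t_i)=f(t_i)$, and $g'(t_i)=f'(t_i)+a_i$ for every $i$.

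To build $g$, I would first fix a finite set $A\subset\mathbb{T}$ of anchor angles, invariant under $t\mapsto t+\pi$ and disjoint from all $t_i$ and $t_i+\pi$, that partitions $\mathbb{T}$ into arcs each of Euclidean length $<\pi$ and each containing at most one of the $2n$ points $t_i,t_i+\pi$ in its interior; such an $A$ exists, obtained by starting from $\{t_i,t_i+\pi\}$ and repeatedly inserting antipodal pairs of points until every arc is short and isolates at most one distinguished point. Given $\epsilon>0$, continuity of $F^{-1}$ (Lemma \ref{l:funisom}) provides $\epsilon'>0$ with $\|g-f\|_\mathbb{T}<\epsilon'\Rightarrow d_H(F^{-1}(g),C)<\epsilon$, and we note $f|_I\in\mathcal{F}_I$ for each arc $I$ since the $\pi$-periodicity clause is vacuous on a short arc. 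On an arc $I$ whose interior contains some $t_i$, apply Lemma \ref{l:dense1} with data $(f|_I,\,c=t_i,\,\epsilon')$ to obtain a threshold $\delta_I>0$ such that, whenever $|a_i|<\delta_I$, there is $g_I\in\mathcal{F}_I$ with $\|f-g_I\|_I<\epsilon'$, $g_I(t_i)=f(t_i)$, $g_I'(t_i)=f'(t_i)+a_i$, and $g_I,g_I',g_I''$ agreeing with $f$ at the endpoints of $I$. On the antipodal arc $I+\pi$ (which contains $t_i+\pi$) set $g_{I+\pi}(t):=g_I(t-\pi)$: as the inequalities defining $\mathcal{F}$ are translation invariant and $f$ is $\pi$-periodic, $g_{I+\pi}\in\mathcal{F}_{I+\pi}$, it agrees with $f$ to second order at the endpoints of $I+\pi$, and $g_{I+\pi}'(t_i+\pi)=f'(t_i+\pi)+a_i$. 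On every remaining arc put $g:=f$.

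Gluing these pieces yields a single map $g:\mathbb{T}\to\mathbb{R}_{>0}$ which is $C^2$ (all pieces agree with $f$ to second order at every anchor, and each arc interior carries either $f$ or a $C^2$ function) and $\pi$-periodic (the pieces on antipodal arcs are mutual $\pi$-translates and $f$ is $\pi$-periodic); the positivity and positive-curvature inequalities hold pointwise because each piece lies in the relevant $\mathcal{F}_I$, so $g\in\mathcal{F}_\mathbb{T}$, and $\|f-g\|_\mathbb{T}$ is the maximum of the finitely many $\|f-g_I\|_I<\epsilon'$. Now choose $\delta>0$ small enough that every admissible family $(y_i)$ yields $\alpha_i>0$ and $|a_i|<\delta_I$ for each $i$ and the corresponding arc $I$ — possible since $\alpha_i\to f(t_i)^{-1}>0$ and $a_i\to0$ as $\delta\to0$. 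Then $C':=F^{-1}(g)\in\mathcal{B}_2^+$ with $d_H(C,C')<\epsilon$ by Lemma \ref{l:funisom}; $g(t_i)=f(t_i)$ gives $\|x_i\|_{C'}=1$; and Lemma \ref{l:raderiv1} with the choice of $a_i$ gives $\varphi_{C'}(x_i)=(\alpha_i f(t_i))^{-1}y_i$, a scalar multiple of $y_i$.

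The step I expect to be the main obstacle is the bookkeeping in this gluing: arranging the $\pi$-symmetric partition of the circle into short arcs isolating the $t_i$, performing the local perturbations of Lemma \ref{l:dense1} on antipodal arcs in a mutually consistent way so that the assembled $g$ is simultaneously $C^2$, $\pi$-periodic, and everywhere satisfies $c_g>0$, and keeping all estimates uniform over the finitely many indices $i$ so that one $\delta$ suffices. Everything else reduces to direct applications of Lemmas \ref{l:funisom}, \ref{l:raderiv1} and \ref{l:dense1}.
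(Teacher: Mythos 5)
Your proof is correct and follows essentially the same route as the paper's: transport via $F$ into $\mathcal{F}_{\mathbb{T}}$, perturb $f'$ locally at each $t_i$ via Lemma \ref{l:dense1}, glue, pull back, and read off $\varphi_{C'}(x_i)$ from Lemma \ref{l:raderiv1}. The only difference is bookkeeping: the paper fixes disjoint intervals $I_i$ around each $t_i$ and implicitly reflects the construction onto the antipodal intervals, whereas you set up an explicit $\pi$-symmetric partition of $\mathbb{T}$ and spell out the rule $g_{I+\pi}(t):=g_I(t-\pi)$; you also derive the explicit target increment $a_i=-f(t_i)\beta_i/\alpha_i-f'(t_i)$ rather than the paper's route of describing the image set $Y_i$ and invoking linear independence of $\varphi_C(x_i)$ and $s'(t_i)$ to absorb a neighbourhood of $\varphi_C(x_i)$ up to positive scalars. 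Both handle the same point; your version is, if anything, a touch more explicit about the $\pi$-periodic gluing, which the paper glosses over.
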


\begin{proof}
	Let $F$ be the homeomorphism from Lemma \ref{l:funisom}, $f := F^{-1}(C)$ and $s(t) := (\cos t, \sin t)$.
	Choose $t_1, \ldots, t_n \in \mathbb{T}$ so that $x_i = f(t_i)s(t_i)$ for each $i\in \{1,\ldots,n\}$.
	Then there exists $\epsilon'>0$ such that if $\|g-f\|_I< \epsilon'$ then $d_H(F^{-1}(g),C) <\epsilon$.
	For each $i\in \{1,\ldots,n\}$,
	choose a closed interval $I_i :=[w^1_i,w^2_i] \subset \mathbb{T}$ so that $|w^1_i-w^2_i|_{\sim}<\pi$,
	$I_i \cap I_j = \emptyset$ and $I_i \cap -I_j = \emptyset$ for all $i \neq j$.
	By Lemma \ref{l:dense1},
	there exists $\delta' >0$ such that for every $i \in \{1,\ldots,n\}$ and $|a_i |<\delta'$,
	there exists a function $g_i :I_i \rightarrow \mathbb{R}$ where 
	\begin{enumerate}[(i)]
		\item $\|f-g_i\|_{I_i} <\epsilon'$,
		\item $g_i(t_i)= f(t_i)$ and $g'_i(t_i)= f'(t_i)+ a_i$, and
		\item $g(w^j_i)= f(w^j_i)$, $g'(w^j_i)= f'(w^j_i)$, $g''(w^j_i) = f''(w^j_i)$ for all $j\in \{1,2\}$.
	\end{enumerate}
	We now define $g \in \mathcal{F}_{\mathbb{T}}$ by
	\begin{align*}
		g(x) :=
		\begin{cases}
			g_i(x) &\text{if } x \in I_i, \\
			g_i(x) &\text{if } x \in -I_i, \\
			f(x) &\text{otherwise}.
		\end{cases}
	\end{align*}
	If $C'\in \mathcal{B}_2^+$ is the unique element where $F(C')=g$,
	then $d_H(C,C')< \epsilon$, $\|x_i\|_{C'} = 1$ and by Lemma \ref{l:raderiv1} we have for each $i \in \{1,\ldots,n\}$,
	\begin{eqnarray*}
		\varphi_{C'}(x_i) &=& g(t_i)^{-2} s(t_i) - g(t_i)^{-3} g'(t_i) s'(t_i) \\
		&=& f(t_i)^{-2} s(t_i) - f(t_i)^{-3} f'(t_i) s'(t_i) - f(t_i)^{-3} a_i s'(t_i) \\
		&=& \varphi_C(x_i) - f(t_i)^{-3} a_i s'(t_i).
	\end{eqnarray*}
	For each $i \in \{ 1,\ldots, n\}$,
	define 
	\begin{align*}
		Y_i := \left\{ \varphi_C(x_i) +b_i s'(t_i) :  |b_i f(t')^3)| < \delta' \right\}.
	\end{align*}
	By Lemma \ref{l:raderiv1},
	$\varphi_C(x_i)$ and $s'(t_i)$ must be linearly independent.
	Hence there exists $\delta >0$ such that for any $i \in \{1,\ldots,n\}$,
	if $\|\varphi_C(x_i) - y_i\|<\delta$ then $\lambda_i y_i \in Y_i$ for some $\lambda_i >0$.

	Choose $y_1, \ldots, y_n \in \mathbb{R}^2 \setminus \{0\}$ where $\|\varphi_C(x_i) - y_i\|<\delta$,
	and let $\lambda_1,\ldots, \lambda_n$ be the positive scalars where $\lambda_i y_i \in Y_i$ for each $i \in \{1,\ldots,n\}$.
	Then as previously shown,
	there exists $C'\in \mathcal{B}_2^+$ such that $\varphi_{C'}(x_i) = \lambda_i y_i$ for each $i \in \{1,\ldots,n\}$,
	$\|x_i\|_{C'}=1$ and $d_H(C,C')<\epsilon$ as required.
\end{proof}

The following lemma is a slight rewording of \cite[Theorem 2.18]{WhiteWhiteley87} so as to avoid having to use the language of \emph{$k$-frames};
a generalisation of rigidity matrices.

\begin{lemma}\label{l:dense3}
	Let $G=(V,E)$ be a finite graph and define $M_G$ to be the set of all $|E|\times d|V|$ real-valued matrices with entries $a_{vw,(u,i)}$ for each $vw \in E$ and $(u,i) \in V \times \{1,\ldots,d\}$,
	where $a_{vw,(v,i)} = -a_{vw,(w,i)}$ and $a_{vw,(u,i)} = 0$ if $u \notin \{v,w\}$.
	Then the set
	\begin{align*}
		M_G' := \{ M \in M_{G} : \rank M = |E| \}
	\end{align*}
	is an open dense subset of $M_{G}$ if $G$ is $(d,d)$-sparse,
	and $M_G'=\emptyset$ otherwise.
\end{lemma}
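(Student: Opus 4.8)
The plan is to identify $M_G$ with the affine space $\mathbb{R}^{d|E|}$ and then argue by Zariski genericity. Fixing an orientation of each edge, a matrix $M \in M_G$ is determined by the vectors $a_e := (a_{vw,(v,1)}, \dots, a_{vw,(v,d)}) \in \mathbb{R}^d$ for $e = vw \in E$ (the $(w,\cdot)$-block of row $e$ being $-a_e$ and all remaining entries $0$), so $M_G \cong \mathbb{R}^{d|E|}$ with coordinates $(a_e^i)$. Each $|E| \times |E|$ minor of $M$ is a polynomial in these coordinates, so $M_G' = \{M : \text{some } |E| \times |E| \text{ minor is nonzero}\}$ is a finite union of complements of hypersurfaces, hence Zariski-open; in particular it is open, and if it is nonempty then it is also dense, since a nonzero polynomial cannot vanish on any nonempty open subset of $\mathbb{R}^{d|E|}$. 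It therefore suffices to show that $M_G' \neq \emptyset$ if and only if $G$ is $(d,d)$-sparse.

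If $G$ is not $(d,d)$-sparse, fix a subgraph $G' = (V',E')$ with $|E'| \geq d|V'| - d + 1$. For any $M \in M_G$, every row indexed by $E'$ vanishes outside the columns $V' \times \{1,\dots,d\}$, so the rank of the submatrix $M[E', V'\times\{1,\dots,d\}]$ bounds the rank contributed by these rows. For each $i \in \{1,\dots,d\}$ the vector $t_i \in \mathbb{R}^{d|V'|}$ with a $1$ in coordinate $(u,i)$ for every $u \in V'$ lies in the kernel of this submatrix, because pairing row $e = vw$ with $t_i$ gives $a_e^i + (-a_e^i) = 0$; the $t_i$ are linearly independent, so the submatrix has rank at most $d|V'| - d < |E'|$. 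Hence the rows of $M$ indexed by $E'$ are linearly dependent, $\rank M < |E|$, and $M_G' = \emptyset$.

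If $G$ is $(d,d)$-sparse, then $|E(H)| \leq d(|V(H)|-1)$ for every subgraph $H$ with at least one edge, so by the Nash--Williams arboricity theorem $G$ is a union of $d$ forests, which (thinning a minimal cover) we may take to be edge-disjoint: $E = E_1 \sqcup \dots \sqcup E_d$ with each $(V, E_j)$ a forest. Let $M \in M_G$ be the matrix obtained by setting $a_e$ equal to the $j$-th standard basis vector of $\mathbb{R}^d$ whenever $e \in E_j$. Then each row $e \in E_j$ of $M$ is supported on the two columns $(v,j)$ and $(w,j)$, so after grouping the columns by their second index $M$ becomes block-diagonal with $d$ blocks, the $j$-th of which is the oriented incidence matrix of the forest $(V, E_j)$ (padded by zero columns at the vertices missed by $E_j$). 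Since the incidence matrix of a forest has full row rank equal to its number of edges, $\rank M = \sum_{j=1}^d |E_j| = |E|$, so $M \in M_G'$ and $M_G' \neq \emptyset$.

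I expect the substance to lie entirely in the last direction, and the crux is the realisation that the generic rank can be read off at the single highly degenerate point where $a_e$ is a standard basis vector matched to a forest containing $e$: the combinatorial ingredient is then exactly the decomposition of a $(d,d)$-sparse graph into $d$ forests, which is the $(d,d)$-sparse inequality seen through the Nash--Williams theorem, and the only linear algebra needed is that a forest has a full-row-rank incidence matrix. (This reproduces, in the language of rigidity matrices, the matroid-union statement of \cite[Theorem 2.18]{WhiteWhiteley87} cited in the lemma.)
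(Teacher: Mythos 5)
Your proof is correct, and it is worth noting that the paper itself supplies no argument for this lemma: it is established by citation to \cite[Theorem 2.18]{WhiteWhiteley87}, which is phrased in the language of $k$-frames. Your proof makes the underlying content explicit with elementary tools. In particular, the necessity direction (via the $d$ translation-type kernel vectors $t_i$ associated to an over-counted subgraph) and the sufficiency direction (via Nash--Williams' arboricity theorem, decomposing a $(d,d)$-sparse graph into $d$ edge-disjoint forests and then specialising $a_e$ to coordinate vectors so that $M$ becomes block-diagonal with forest incidence matrices as blocks) reproduce precisely the matroid-union/Nash--Williams mechanism that White and Whiteley's theorem encodes, and the genericity step is handled cleanly by observing that $M_G'$ is the nonvanishing locus of the $|E|\times|E|$ minors, hence Zariski-open and dense as soon as it is nonempty. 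The only thing you gain over the paper is self-containedness; the only thing the paper gains by citing is brevity, since the cited result also covers the identification of the full generic matroid on $M_G$, which is more than what this lemma needs.

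One small point of rigour you gloss over but that holds: the $(d,d)$-sparsity condition in the paper is stated for subgraphs with at least one edge, which is exactly $|E(H)| \le d(|V(H)|-1)$, i.e.\ the hypothesis of Nash--Williams with arboricity bound $d$; subgraphs with no edges satisfy the bound vacuously, so the theorem applies as you use it.
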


We are now ready for our final key lemma.
 
\begin{lemma}\label{kl:dense}
	For any planar $(2,2)$-sparse graph $G=(V,E)$,
	the set of $G$-independent regular symmetric bodies is a dense subset of $\mathcal{B}_2$.
\end{lemma}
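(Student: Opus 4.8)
The plan is to prove that given an arbitrary $C \in \mathcal{B}_2$ and $\epsilon > 0$, we can find a $G$-independent body $C'$ with $d_H(C,C') < \epsilon$. Since $\mathcal{B}_2^+$ is dense in $\mathcal{B}_2$ (this follows from Proposition \ref{p:phisetdist}, as $\mathcal{B}_2^\phi \subset \mathcal{B}_2^+$ by Lemma \ref{l:eposcurv}(\ref{l:eposcurvitem2})), we may assume $C \in \mathcal{B}_2^+$. By Corollary \ref{cor:generalpos}, after a further small perturbation inside $\mathcal{B}_2^+$ we may also assume $C$ has the general edge condition for $G$: there is a $C$-packing $P = (G,p,r)$ whose contact framework $(G,p)$ has the property that the edge vectors $p_v - p_w$, $vw \in E$, are pairwise linearly independent. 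Rescaling, we may assume $\|p_v - p_w\|_C = 1$, i.e.\ the normalised edge vectors $x_{vw} := p_v - p_w$ lie in $\partial C$ and are pairwise linearly independent.

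The core idea is to use Lemma \ref{l:dense2} to realise a generic rigidity matrix. The rows of the packing rigidity matrix $R_C(G,p,r)$ (after substituting the contact equalities, see (\ref{e:edgestress})) have the form with $(d,d)=(2,2)$-type support in the point columns given by $\pm\varphi_C(p_v - p_w)$, together with the radii columns $-\|p_v-p_w\|_C = -1$. By Lemma \ref{l:dense3} applied with $d=2$ to the point-column part, since $G$ is $(2,2)$-sparse the set of matrices $M \in M_G$ with $\rank M = |E|$ is open dense; so we may choose, for each edge $vw$, a vector $y_{vw}$ arbitrarily close to $\varphi_C(x_{vw})$ such that the $|E| \times 2|V|$ matrix with rows built from $\pm y_{vw}$ has rank $|E|$. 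Crucially, replacing $\varphi_C(x_{vw})$ by any positive scalar multiple $\lambda_{vw} y_{vw}$ preserves the rank of this matrix (scaling a row is harmless), and it also preserves whether the full packing rigidity matrix $[R \mid I]$ has rank $|E|$ — indeed if the point-column submatrix alone already has rank $|E|$, then $R_{C'}(G,p,r)$ has rank $|E|$, so $P$ is an independent $C'$-packing. This is exactly what Lemma \ref{l:dense2} delivers: for $\delta$ small enough, if $\|\varphi_C(x_{vw}) - y_{vw}\| < \delta$ for all edges, there is $C' \in \mathcal{B}_2^+$ with $d_H(C,C') < \epsilon$, $\|x_{vw}\|_{C'} = 1$, and $\varphi_{C'}(x_{vw})$ a positive scalar multiple of $y_{vw}$ for every edge $vw$.

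Assembling: apply Lemma \ref{l:dense2} to the pairwise linearly independent points $\{x_{vw} : vw \in E\} \subset \partial C$ with the given $\epsilon$ to get $\delta > 0$; choose the $y_{vw}$ within $\delta$ of $\varphi_C(x_{vw})$ and making the point-submatrix have rank $|E|$, using Lemma \ref{l:dense3}; obtain $C'$. Then $(G,p,r)$ is still a $C'$-packing: the contact equalities $\|p_v-p_w\|_{C'} = 1 = r_v + r_w$ hold for $vw \in E$ by construction, and the strict inequalities $\|p_v - p_w\|_{C'} > r_v + r_w$ for $vw \notin E$ persist after a sufficiently small Hausdorff perturbation (so shrink $\epsilon$ at the outset if needed). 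Finally $R_{C'}(G,p,r)$ has a rank-$|E|$ point-column submatrix, hence the contact framework $(G,p)$ is independent with respect to $\|\cdot\|_{C'}$, so $P$ is an independent $C'$-packing and $C'$ is $G$-independent. Combined with Lemma \ref{kl:open}, this proves Lemma \ref{l:symperfect}.

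The main obstacle is bookkeeping the order of quantifiers around the non-contact edges: the strict inequalities $\|p_v-p_w\|_{C'} > r_v+r_w$ for $vw \notin E$ must survive the perturbation to $C'$, but the size of the allowed perturbation in Lemma \ref{l:dense2} ($\delta$, hence $\epsilon'$) is chosen \emph{after} we know $C$ and $P$, so this just requires fixing, at the start, a small enough $\epsilon_0 \le \epsilon$ controlling the Hausdorff distance so that no new contacts are created, and then running Lemma \ref{l:dense2} with $\epsilon_0$. A secondary subtlety is that Corollary \ref{cor:generalpos} only guarantees an \emph{open dense} set with the general edge condition, so one must perturb into $\mathcal{B}_2^+$ and into the general-edge set simultaneously — but since both are dense and the latter is relatively open, a single small perturbation lands in the intersection, and the rescaling to put edge vectors in $\partial C$ is harmless. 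Everything else is a direct application of the already-established lemmas.
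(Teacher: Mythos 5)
Your argument is essentially the paper's proof: perturb $C$ into $\mathcal{B}_2^+$ with the general edge condition (Proposition \ref{p:phisetdist} and Corollary \ref{cor:generalpos}), then invoke Lemmas \ref{l:dense2} and \ref{l:dense3} to retarget the supports on the normalized edge vectors so that $R_{C'}(G,p)$ has rank $|E|$, while preserving the contact equalities and the strict inequalities on non-edges. One small slip: you cannot ``rescale so that $\|p_v-p_w\|_C = 1$'' for all edges simultaneously, since the edge lengths $r_v+r_w$ vary across the packing; the correct move is to normalize each edge vector separately, $x_{vw} := (p_v-p_w)/\|p_v-p_w\|_C$, exactly as the paper does, after which homogeneity of $\varphi_{C'}$ makes each row of $R_{C'}(G,p)$ a positive scalar multiple of the corresponding $y_{vw}$-row, so the rank conclusion you want still goes through.
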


\begin{proof}
	Choose any $C \in \mathcal{B}_2$ and fix some $\epsilon >0$.
	By Corollary \ref{cor:generalpos},
	there exists $C' \in \mathcal{B}_2$ with the general edge condition such that $d_H(C,C') < \epsilon/2$;
	as $\mathcal{B}_2^+$ is dense in $\mathcal{B}_2$ (Proposition \ref{p:phisetdist}),
	we may assume $C' \in \mathcal{B}_2^+$ also.
	
	Let $P=(G,p,r)$ be a $C'$-packing in general edge position and order the vertices of $V$.
	For each $vw \in E$ with $v<w$,
	define $x_{vw} := \frac{p_v-p_w}{\|p_v-p_w\|_{C'}} \in \partial C'$.
	As $P$ is in general edge position,
	the pair $x_{vw},x_{v'w'}$ are linearly independent for every distinct pair $vw,v'w'\in E$.
	By Lemmas \ref{l:dense2} and \ref{l:dense3} applied to $C'$ and $\{x_{vw} : vw \in E\}$,
	there exists $C'' \in \mathcal{B}_+^2$ so that $\| x_{vw}\|_{C'}=1$ for each $vw \in E$,
	$d_H(C',C'')< \epsilon/2$ and $\rank R_{C'}(G,p) = |E|$;
	we may also assume that $\|p_v-p_w\|_{C''} > r_v + r_w$ for all $vw \notin E$ by Proposition \ref{prop:hausvsdash}.
	For any $vw \in E(G)$ we have
	\begin{align*}
		\|p_v-p_w\|_{C''} = \|p_v-p_w\|_{C'} = r_v + r_w,
	\end{align*}
	hence $(G,p,r)$ is an independent $C''$-packing.
	Finally, 
	\begin{align*}
		d_H(C,C'') \leq d_H(C,C') + d_H(C',C'') < \epsilon.
	\end{align*}
\end{proof}

\begin{finalremark}
	Although we have been restricting to the case of central symmetric convex bodies,
	we can also consider what results we could gain when this assumption is dropped.
	There are, unfortunately, a few immediate problems with this.
	For a pair of touching homothetic copies $r_1C+p_1$ and $r_2 C+p_2$ of a centrally symmetric convex body $C$,
	the orientation of their separating hyperplane is determined solely by the vector $p_1-p_2$.
	If $C$ is not centrally symmetric then this may not remain true, however,
	and hence we cannot obtain all the required information we need from the Minkowski functionals alone.
	An alternative function we could use to determine the constraints of a packing would be the support functions of the individual convex bodies,
	although the orientation of the separating hyperplane would need to be kept in consideration for all calculations.
	With this alternative method we would conjecture that similar results to ours could be obtained for homothetic packings of general convex bodies,
	and possibly also for packings that allow for rotations.
\end{finalremark}

\end{document}